\newcommand{\change}[1]{{\color{black} #1}}
\newcounter{cprop}[section]
\newtheorem{theorem}[cprop]{Theorem}
\theoremstyle{plain}
\newtheorem{corollary}[cprop]{Corollary}
\newtheorem{lemma}[cprop]{Lemma}
\newtheorem{proposition}[cprop]{Proposition}
\newtheorem{assumption}[cprop]{Assumption}
\numberwithin{equation}{section}
\theoremstyle{definition}
\newtheorem{definition}[cprop]{Definition}
\newtheorem{example}[cprop]{Example}
\theoremstyle{remark}
\newtheorem{remark}[cprop]{Remark}
\def\namedlabel#1#2{\begingroup
	\def\@currentlabel{#2}%
	\label{#1}\endgroup
}
\def\@tocline#1#2#3#4#5#6#7{\relax
	\ifnum #1>\c@tocdepth 
	\else
	\par \addpenalty\@secpenalty\addvspace{#2}%
	\begingroup \hyphenpenalty\@M
	\@ifempty{#4}{%
		\@tempdima\csname r@tocindent\number#1\endcsname\relax
	}{%
		\@tempdima#4\relax
	}%
	\parindent\z@ \leftskip#3\relax \advance\leftskip\@tempdima\relax
	\rightskip\@pnumwidth plus4em \parfillskip-\@pnumwidth
	#5\leavevmode\hskip-\@tempdima
	\ifcase #1
	\or\or \hskip 1em \or \hskip 2em \else \hskip 3em \fi%
	#6\nobreak\relax
	\hfill\hbox to\@pnumwidth{\@tocpagenum{#7}}\par
	\nobreak
	\endgroup
	\fi}
\def\R{\mathbb{R}}
\def\C{\mathbb{C}}
\def\N{\mathbb{N}}
\def\P{\mathbb{P}}
\def\E{\mathbb{E}}
\def\P{\mathbb{P}}
\def\cA{\mathcal{A}}
\def\cC{\mathcal{C}}
\def\cD{\mathcal{D}}
\def\cF{\mathcal{F}}
\def\cH{\mathcal{H}}
\def\cL{\mathcal{L}}
\def\cO{\mathcal{O}}
\def\txtd{{\textnormal{d}}}
\def\txtD{{\textnormal{D}}}
\def\Id{{\textnormal{Id}}}
\newcommand{\vertiii}[1]{{\left\vert\kern-0.25ex\left\vert\kern-0.25ex\left\vert #1 
		\right\vert\kern-0.25ex\right\vert\kern-0.25ex\right\vert}}
\begin{document}
	\title[A mild rough Gronwall Lemma]{A mild rough Gronwall Lemma with applications to non-autonomous evolution equations}
	
	\author{Alexandra Blessing Neam\c tu, Mazyar Ghani Varzaneh~~and~~Tim Seitz}
	
	\address{University of Konstanz, Department of Mathematics and Statistics,  Universit\"atsstra\ss{}e~10 78464 Konstanz, Germany.}
	\email{alexandra.blessing@uni-konstanz.de, mazyar.ghani-varzaneh@uni-konstanz.de}
	\email{tim.seitz@uni-konstanz.de} 
	
	\begin{abstract}
		We derive a Gronwall type inequality for mild solutions of non-autonomous parabolic rough partial differential equations (RPDEs).~This inequality together with an analysis of the Cameron-Martin space associated to the noise, allows us to obtain the existence of moments of all order for the solution of the corresponding RPDE and its Jacobian when the random input is given by a Gaussian Volterra process.~Applying further the multiplicative ergodic theorem, these integrable bounds entail the existence of Lyapunov exponents for RPDEs.~We illustrate these results for stochastic partial differential equations with multiplicative boundary noise.
	\end{abstract}    
	\maketitle
	{\bf Keywords}: rough partial differential equations, mild Gronwall lemma, Lyapunov exponents, rough boundary noise. \\
	{\bf Mathematics Subject Classification (2020)}: 60G22, 60L20, 
	60L50, 
	37H10, 37L55. 

	{
		\hypersetup{hidelinks}
		\tableofcontents
	}
	\section{Introduction}
	
	The main goal of this work is to derive a Gronwall inequality for mild solutions of parabolic rough partial differential equations {of the form}
	\begin{align} \label{Main_Equation}
		\begin{cases}\txtd u_t = [A(t)u_t +F(t,u_t) ]~\txtd t + G(t,u_t)~\txtd \mathbf{X}_t \\
			u_0\in E_\alpha,
		\end{cases}
	\end{align}
	on a family of Banach spaces $(E_\alpha)_{\alpha\in\R}$. Here $\mathbf{X}$ is the rough path lift of a Gaussian Volterra process and the coefficients $A, F$ and $G$ satisfy suitable assumptions specified in Section~\ref{integrable}. Our approach complements the results in~\cite{DGHT19,Hof18} that establish a Gronwall inequality for rough PDEs with transport-type noise using energy estimates in the framework of unbounded rough drivers. \\
	
	Furthermore, the mild Gronwall inequality stated in Lemma~\ref{lem:RoughGronwall} allows us to obtain a-priori bounds for the global solution of~\eqref{Main_Equation} together with its linearization around an arbitrary trajectory, which turn out to be crucial in establishing the existence of Lyapunov exponents for rough PDEs. Motivated by applications in fluid dynamics~\cite{BeBlPS:22,BeBlPs2:22}  and bifurcations in infinite-dimensional stochastic systems~\cite{BlEnNe:23,BN:23}, Lyapunov exponents recently captured lots of attention.~However, to our best knowledge, there are no works that systematically analyze Lyapunov exponents in the context of RPDEs. Here we contribute to this aspect and first provide, based on Gronwall's Lemma, a-priori integrable bounds for the solution of~\eqref{Main_Equation} and its Jacobian, which entail the existence of Lyapunov exponents for a fixed initial data based on the multiplicative ergodic theorem. \\
	
	Since we are considering parabolic RPDEs on a scale of Banach spaces, a natural question is whether the Lyapunov exponents depend on the underlying norm. This turns out not to be the case, as shown in \cite{BLPS} and applied to models arising from fluid dynamics perturbed by noise which is white in time.~This is natural, since Lyapunov exponents reflect intrinsic dynamical properties of the system and should therefore be independent of the chosen norm.~We provide a proof of this statement in the context of rough PDEs in Section~\ref{indep:LY} using a version of the multiplicative ergodic theorem stated in Theorem~\ref{METT} together with a duality argument inspired by~\cite{GTQ15} and~\cite{GVR23A}.\\
	
	We emphasize that the existence of Lyapunov exponents for rough PDEs based on the multiplicative ergodic theorem is strongly related to the existence of moments of all orders for the solution of equation~\eqref{Main_Equation} and its Jacobian, which is known to be a challenging task. In the finite-dimensional case, such integrable bounds are also essential for the existence of densities of rough differential equations under H\"ormander's condition.~The existence of moments of all order for the Jacobian of the solution flow of  differential equations driven by Gaussian rough paths, have been obtained in the seminal work~\cite{CLL13}.~Later~\cite{GH19} proved that the finite-dimensional projections of solutions of rough PDEs admit densities with respect to the Lebesgue measure, circumventing the integrability issue.~However, for our aims in Section~\ref{sec:InvSets} which follow a random dynamical systems based approach, integrable bounds of the solution of~\eqref{Main_Equation} and its Jacobian are crucial.~Generalizing the finite-dimensional results in~\cite{CLL13},~\cite{GVR25} obtained such bounds under additional assumptions on the Cameron-Martin space associated to the noise.~This assumption can be checked for fractional Brownian motion but is challenging to verify for other Gaussian processes. Here, we analyze in Subsection~\ref{cm} the Cameron-Martin space associated to Volterra processes, which can be represented as an integral of a kernel with respect to the Brownian motion. We provide conditions, which can easily be verified under natural assumptions on the kernel, in order to guarantee integrable bounds for~\eqref{Main_Equation} driven by the rough path lift of such processes.  \\

\\

This manuscript is structured as follows.~In Section~\ref{prelim}, we state basic concepts from rough path theory and parabolic evolution families. Section \ref{integrable} is devoted to the local and global well-posedness of~\eqref{Main_Equation} using a controlled rough path approach.~The local and global well-posedness of rough PDEs has recently received lots of attention due~\cite{GH19,GHT21,HN22} and~\cite{Tappe}.~The works~\cite{GH19,GHT21} 
consider parabolic (non-autonomous) rough PDEs, where the differential operator $A$ in~\eqref{Main_Equation} generates an analytic semigroup, respectively a parabolic evolution family in the non-autonomous case, and the noise is a finite-dimensional rough path.~The work of~\cite{Tappe} deals with differential operators $A$ which generate arbitrary $C_0$-semigroups and consider infinite-dimensional noise.~As already mentioned, here we go a step further and obtain the existence of moments of all order for the controlled rough path norm of the solution and its Jacobian. Therefore, we first replace the H\"older norms of the random input by suitable control functions~\cite{CLL13,GVR25} which enjoy better integrability properties.~These are incorporated in the sewing Lemma~\ref{lem:IneqRV}, which allows us to define the rough integral. We point out that these techniques heavily rely on the assumption that the diffusion coefficient $G$ of~\eqref{Main_Equation} is bounded.~\change{This restriction was recently removed in \cite{BGV25} by a different approach, which uses another concept of controlled rough paths and control functions.}~In Subsection~\ref{cm}, we analyze the Cameron-Martin space associated to the noise, providing a criterion for integrable bounds for~\eqref{Main_Equation} driven by Gaussian Volterra processes.\\

In Section~\ref{sec:gronwall}, we derive the Gronwall inequality in Lemma~\ref{lem:RoughGronwall} using the mild formulation of~\eqref{Main_Equation}, regularizing properties of parabolic evolution families, and a suitable discretization argument.~We present an application of this result in Subsection~\ref{sec:LinDynamics}, where we linearize~\eqref{Main_Equation} along an arbitrary trajectory. The bound entailed by the mild rough Gronwall inequality is crucial for our analysis of Lyapunov exponents in Section~\ref{sec:InvSets}.~This section contains further the application of the results in Section~\ref{sec:gronwall} to random dynamical systems.~Since the coefficients of~\eqref{Main_Equation} are time-dependent, we first enlarge the probability space in order to incorporate this dependency to use the framework of random dynamical systems. One could also work with non-autonomous dynamical systems, as for e.g.~\cite{CL17}.~However, our approach makes the application of the multiplicative ergodic theorem more convenient.~This is a main goal of our work, since we address the existence of Lyapunov exponents for~\eqref{Main_Equation}.~To this aim, we  obtain integrable bounds for the solution of the linearization of~\eqref{Main_Equation} along a stationary solution using the mild rough Gronwall lemma.~Furthermore, in Subsection~\ref{indep:LY}, in order to show the independence of the Lyapunov exponents on the underlying norm in Theorem~\ref{METT2} and Theorem~\ref{unnns}, we first associate to each finite Lyapunov exponent, a unique finite-dimensional space called fast-growing space.~We prove that these spaces do not depend on the underlying norm, which is, to the best of our knowledge, the first result in this direction.~As a consequence of the multiplicative ergodic theorem, under further sign information on the Lyapunov exponents, one can derive the existence of invariant sets for the corresponding random dynamical system. We illustrate this for stable manifolds in Subsection~\ref{inv:m}.~These are infinite-dimensional invariant sets of the phase space which contain solutions
starting from initial data that asymptotically exhibit an exponential decay.~Their existence for stochastic partial differential equations in the Young regime was stated as a conjecture in~\cite{LS11} and was later obtained in~\cite{LNZ23} for a trace-class fractional Brownian using tools from fractional calculus and~\cite{GVR24} using rough path theory.~To analyze the existence of stable manifolds, we additionally derive a stability statement for the difference of two solutions of the linearizations of~\eqref{Main_Equation} along a suitable trajectory in Subsection~\ref{sec:LinDynamics}, which are again based on Gronwall's inequality.~By analogue arguments, one can derive the existence of random unstable and center manifolds, significantly extending the results obtained in~\cite{GVR25,GVR23C, KN23,LNZ23} by different techniques.\\

We conclude with two applications in Section~\ref{sec:app}.~These are given by parabolic RPDEs with time-dependent coefficients and SPDEs with rough boundary noise.~In the case of white noise, non-autonomous SPDEs were considered in~{\cite{Ver10}}, where the generators are additionally allowed to be time-dependent. Here, we further assume that the generators have bounded imaginary powers, which implies that the interpolation spaces are time-independent.~Otherwise, one would need another concept of controlled rough paths according to a monotone time-dependent scale of interpolation spaces reflecting an interplay between the regularity of the noise, the spatial regularity and the time-dependency.~This aspect will be investigated in a future work.~Moreover, it would also be desirable to combine the rough path approach presented here with the theory of maximal regularity for SPDEs, see~\cite{AV25} for a recent survey on this topic.  \\

Furthermore, it is well-known that stochastic partial differential equations (SPDEs) with boundary noise are challenging to treat. For instance~\cite{DPZ93}, the well-posedness of SPDEs with Dirichlet boundary conditions fails for the Brownian motion, see for e.g. \cite{AB02,DPZ93,GP23} for more details and alternative approaches. 
However, for a fractional Brownian motion with Hurst parameter $H>3/4$, also Dirichlet boundary conditions can be incorporated.~This aspect was investigated for the heat equation in~\cite{DPDM02} and the 2D-Navier Stokes equation in~\cite{ABL24} perturbed by an additive fractional boundary noise. 
On the other hand, the well-posedness theory in the case of Neumann boundary noise is more feasible and well-established~\cite{DFT07,Mun17,SV11,AL24}. To the best of our knowledge, all references specified above deal with additive noise, while nonlinear multiplicative noise was considered in \cite{NS23}, using rough path theory.~This turned out to be very useful for the analysis of the long-time behavior of such systems. Due to the noise acting on the boundary, one cannot perform flow-type transformations in order to reduce such equations into PDEs with random non-autonomous coefficients and obtain the existence of a random dynamical system. This issue does not occur in a pathwise approach, which was exploited in~\cite{NS23,BS24} to establish the well-posedness of PDEs with nonlinear multiplicative boundary noise and study their long-time behavior by means of random attractors. 
However, the influence of boundary noise on the long-term behavior of such systems has not been fully analyzed. For example, stability criteria were investigated in \cite{AB02}, a stabilization effect by boundary noise was shown for the Chaffee-Infante equation in \cite{FSTT19}, and the existence of attractors was investigated in \cite{BS24}. We further refer to~\cite{BDK24} for the analysis of warning signs for a Boussinesq model with boundary noise. Here we establish the existence of Lyapunov exponents based on the techniques developed in Sections~\ref{sec:gronwall} and~\ref{sec:InvSets}, which is, to our best knowledge, the first result in this direction.~We further mention that, in applications to fluid dynamics, for e.g.~in the context of a simplified version of the 3D-Navier Stokes system called the primitive equation~\cite{BHHS24}, the boundary noise models random wind-driven boundary effects.  \\

Finally, we provide two appendices on stationary solutions for SPDEs with boundary noise and translation compact functions.~Their properties are used in Section~\ref{sec:InvSets} in order to obtain an autonomous random dynamical system, enlarging the probability space by incorporating the non-autonomous dependence of~\eqref{Main_Equation}.

\subsubsection*{Acknowledgements} A. Blessing and M. Ghani Varzaneh acknowledge support from DFG CRC/TRR 388 {\em Rough Analysis, Stochastic Dynamics and Related Fields}, Project A06. The authors thank the referee for the numerous valuable comments and suggestions.

\section{Preliminaries.~Rough path theory and parabolic evolution families}\label{prelim}
We first provide some fundamental concepts from rough path theory and parabolic evolution families.

For $d\geq 1$ we consider a $d$-dimensional $\gamma$-H\"older rough path $\textbf{X}:=(X,\mathbb{X})$, for $\gamma\in(\frac{1}{3},\frac{1}{2}]$ with $X_0=0$. 
More precisely, we have for $T>0$ that
\begin{align*}
	X\in C^{\gamma}([0,T];\mathbb{R}^d) ~~\mbox{ and } ~~ \mathbb{X}\in C_2^{2\gamma}(\Delta_{[0,T]};\mathbb{R}^d\otimes\mathbb{R}^d)
\end{align*}
where $\Delta_{J}\coloneqq\{(s,t)\in J\times J~:~s\leq t\}$ for $J\subset \R$ and the connection between $X$ and $\mathbb{X}$ is given by Chen's relation
\begin{align*}
	\mathbb{X}_{s,t}- \mathbb{X}_{s,u}-\mathbb{X}_{u,t}=(\delta X)_{s,u}\otimes (\delta X)_{u,t},
\end{align*}
for $s\leq u\leq t$, where we write $(\delta X)_{s,u}:=X_u-X_s$ for an arbitrary path. Here, we denote by $C^\gamma$ the space of $\gamma$-H\"older continuous paths, as well as by $C^{2\gamma}_2$ the space of $2\gamma$-Hölder continuous two-parameter functions. We further set $\rho_{\gamma,[s,t]}(\mathbf{X}):=1+[X]_{\gamma,\R^d.[s,t]}+[\mathbb{X}]_{2\gamma,\R^d\otimes \R^d,[s,t]}$, where $[\cdot]$ denotes the Hölder semi-norm. If it is clear from the context, we omit the interval in the index.

Since we consider parabolic RPDEs, we work with families $(E_\alpha)_{\alpha\in \R}$ of interpolation spaces endowed with the norms $(|\cdot|_{\alpha})_{\alpha\in \R}$, such that $E_\beta \hookrightarrow E_\alpha$ for $\alpha<\beta$ and the following interpolation inequality holds
\begin{align}\label{interpolation:ineq}
	|x|^{\alpha_3-\alpha_1}_{\alpha_2} \lesssim |x|^{\alpha_3-\alpha_2}_{\alpha_1} |x|^{\alpha_2-\alpha_1}_{\alpha_3},
\end{align}
for $\alpha_1\leq \alpha_2\leq \alpha_3$ and $x\in  E_{\alpha_3}$. Tailored to this setting, we define the notion of a controlled rough path according to such a family of function spaces, as introduced in \cite{GHT21}. 
\begin{definition}\label{def:crp}
	Let $\alpha\in\R$. We call a pair $(y,y')$ a controlled rough path if $$(y,y')\in C([0,T];E_\alpha) \times (C([0,T];E_{\alpha-\gamma} ) \cap C^{\gamma}([0,T];E_{\alpha-2\gamma}))^d$$ and the remainder  
	\begin{align*}
		(s,t)\in \Delta_{[0,T]}\mapsto R^y_{s,t}:= (\delta y)_{s,t} -y'_s \cdot (\delta X)_{s,t}
	\end{align*}
	belongs to $ C_2^{\gamma}(\Delta_{[0,T]};E_{\alpha-\gamma})\cap C_2^{2\gamma}(\Delta_{[0,T]};E_{\alpha-2\gamma})$, where $y'_s \cdot (\delta X)_{s,t}=\sum_{i=1}^d y_s^{i,\prime}(\delta X^{i})_{s,t}$. The component $y'$ is referred to as the Gubinelli derivative of $y$. 
	The space of controlled rough paths is denoted by $\cD^{\gamma}_{\mathbf{X},\alpha}([0,T])$ and endowed with the norm $\|\cdot,\cdot\|_{\cD^{\gamma}_{\mathbf{X},\alpha}([0,T])}$ given by
	\begin{align}\label{g:norm}
		\begin{split}
			\|y,y'\|_{\cD^{\gamma}_{\mathbf{X},\alpha}([0,T])}:= \left\|y \right\|_{\infty,E_\alpha} 
			+ \|y' \|_{\infty,E^d_{\alpha-\gamma}}
			+ \left[y'\right]_{\gamma,E^d_{\alpha-2\gamma}}
			+\left[R^y \right]_{\gamma,E_{\alpha-\gamma}}    + \left[R^y \right]_{2\gamma,E_{\alpha-2\gamma}},
		\end{split}
	\end{align}
	where $|y'_s|_{E_\alpha^d} \coloneqq \sup\limits_{1\leq i\leq d}|y_s^{i,\prime}|_{\alpha}$.
\end{definition}
In this context, we mostly omit the time dependence if it is clear from the context, meaning that we write $\mathcal{D}^\gamma_{\mathbf{X},\alpha}([0,T])=\mathcal{D}^\gamma_{\mathbf{X},\alpha}$ and $C^\gamma(E_\alpha)=C^\gamma([0,T];E_\alpha)$. Also, we write for simplicity $\|y\|_{\infty,\alpha}:=\|y\|_{\infty,E_\alpha}, \|y^\prime\|_{\infty,\alpha-\gamma}:=\|y^\prime\|_{\infty,E^d_{\alpha-\gamma}}$ and $[y']_{\gamma,\alpha-2\gamma}:=[y']_{\gamma,E^d_{\alpha-2\gamma}}$ and analogously for the remainder. Then, the first index always indicates the time regularity, and the second one stands for the space regularity.
\begin{remark}\label{rem:nDimCRP}
	If the path component $y=(y^k)_{k=1,\ldots,d}$ is $d$-dimensional, the resulting Gubinelli derivative $y^\prime:=(y^{kl,\prime})_{0\leq k,l\leq d}$ is matrix valued. We then write for simplicity $(y,y^\prime):=(y^k,y^{k,\prime})_{1\leq k\leq d}\in (\cD^\gamma_{\mathbf{X},\alpha})^d$.
\end{remark}
\begin{remark}\label{rem:Hcontpath}
	Let $(y,y')\in \cD^{\gamma}_{\mathbf{X},\alpha}$. Then we have for $i=1,2$
	\[ [y]_{\gamma,\alpha-i\gamma} \leq \|y'\|_{\infty,\alpha-i\gamma} [X]_{\gamma,\R^d} + [R^y]_{\gamma,\alpha-i\gamma} \leq \rho_{\gamma,[0,T]}(\mathbf{X}) \|y,y'\|_{\cD^\gamma_{\mathbf{X},\alpha}}. \]
\end{remark}
Before we define the rough convolution, let us recall some sufficient conditions on the linear part to ensure the existence of an evolution family. 
\begin{itemize}\namedlabel{ass:A}{\textbf{(A)}}
	\item[\textbf{(A1)}\namedlabel{ass:A1}{\textbf{(A1)}}] The family $(A(t))_{t\in [0,T]}$ consists of closed and densely defined operators $A(t):E_1\to E_0$ on a time independent domain $D(A)=E_1$. Furthermore, they have bounded imaginary powers, i.e. there exists $C>0$ such that
	\begin{align*}
		\sup_{|s|\leq 1} \|(-A(t))^{is}\|_{\mathcal{L}(D(A))}\leq C
	\end{align*}
	for every $t,s\in \R$, where $i$ denotes the imaginary unit. 
	\item[\textbf{(A2)}\namedlabel{ass:A2}{\textbf{(A2)}}] There exists $\vartheta\in (\pi,\frac{\pi}{2})$ and a constant $\change{M_0>0}$ such that $\Sigma_{\vartheta}:=\{z\in \C~:~|\arg(z)|<\vartheta\}\subset R(A(t))$ where $R(A(t))$ denotes the resolvent set of $A(t)$ and
	\begin{align*}
		\lVert (z-A(t))^{-1}\rVert_{\cL(E_k)}\leq \frac{\change{M_0}}{1+|z|},
	\end{align*}
	for all $z\in \Sigma_\vartheta, \change{k=0,1}$ and $t\in [0,T]$. \change{Further assume there exists a constant $M_1>0$ such that 
		\begin{align*}
			\lVert (z-A(t))^{-1}\rVert_{\cL(E_0;E_1)}\leq M_1.
	\end{align*}}
	\item[\textbf{(A3)}\namedlabel{ass:A3}{\textbf{(A3)}}] There exists a $\varrho\in(0,1]$ such that 
	\begin{align*}
		\lVert A(t)-A(s)\rVert_{\cL(E_1;E_0)}\lesssim |t-s|^\varrho,
	\end{align*}
	for all $s,t\in[0,T]$.
\end{itemize}
These conditions are known as the Kato-Tanabe assumptions and are often used in the context of non-autonomous evolution equations, see for example~\cite[p.~150]{Pazy} and \cite{Ama86}. In particular, \ref{ass:A2} implies that the operator $A(t)$ is sectorial. Therefore, we can define $E_\alpha:=D((-A(t))^\alpha)$ endowed with the norm $|\cdot|_\alpha:=|(-A(t))^\alpha \cdot|_{E_0}$. 
Under these assumptions, we obtain an evolution family which is a generalization of a semigroup in the non-autonomous setting.
\begin{theorem}{\em(\cite[Theorem 2.3]{AT87})}\label{thm:OpFam}
	Let $(A(t))_{t\in [0,T]}$ satisfy Assumption \ref{ass:A1}-\ref{ass:A3}. Then there exists a unique parabolic evolution family $(U_{t,s})_{0\leq s\leq t\leq T}$ of linear operators $U_{t,s}:E_0\to E_0$ such that the following properties hold:
	\begin{itemize}
		\item[i)] For all $0\leq r\leq s\leq t\leq T$ we have
		\begin{align*}
			U_{t,s}U_{s,r}=U_{t,r}
		\end{align*}
		as well as $U_{t,t}=\Id_{E_0}$.
		\item[ii)] The mapping $(s,t)\mapsto U_{t,s}$ is strongly continuous. 
		\item[iii)] For $s\leq t$ we have the identity
		\begin{align*}
			\frac{\txtd}{\txtd t} U_{t,s}=A(t)U_{t,s}.
		\end{align*}
	\end{itemize}
\end{theorem}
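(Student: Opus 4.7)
The plan is to follow the classical Tanabe/Acquistapace--Terreni construction. Under the sectorial bound \ref{ass:A2}, for each fixed $t\in[0,T]$ the operator $A(t)$ generates a bounded analytic semigroup $\{e^{\tau A(t)}\}_{\tau\geq 0}$ defined through the Dunford integral
\[
e^{\tau A(t)} = \frac{1}{2\pi \txti}\int_\Gamma e^{\tau z}\,(z-A(t))^{-1}\,\txtd z,
\]
where $\Gamma$ is a suitable keyhole contour around the spectrum. The resolvent bound in \ref{ass:A2}, being uniform in $t$, translates into the uniform-in-$t$ estimates $\|e^{\tau A(t)}\|_{\cL(E_0)}\lesssim 1$ and $\|(-A(t))^\alpha e^{\tau A(t)}\|_{\cL(E_0)}\lesssim \tau^{-\alpha}$ for $\alpha\in[0,1]$. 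The bounded imaginary powers from \ref{ass:A1} ensure that the fractional powers $(-A(t))^\alpha$ are well-defined and the scale $E_\alpha=D((-A(t))^\alpha)$ is independent of $t$, which is what the statement implicitly uses.

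The second step is to make the ansatz
\[
U_{t,s} = e^{(t-s)A(t)} + \int_s^t e^{(t-\sigma)A(t)}\,\Phi(\sigma,s)\,\txtd\sigma,
\]
and derive by formal differentiation that $\Phi$ must solve a Volterra integral equation of the form $\Phi(t,s)=K(t,s)+\int_s^t K(t,\sigma)\Phi(\sigma,s)\,\txtd\sigma$, whose kernel is built from the operator difference $\bigl[A(t)e^{(t-s)A(t)}-A(s)e^{(t-s)A(s)}\bigr]$. Using the resolvent identity $(z-A(t))^{-1}-(z-A(s))^{-1}=(z-A(t))^{-1}(A(t)-A(s))(z-A(s))^{-1}$ together with the H\"older regularity \ref{ass:A3}, contour manipulation yields the kernel estimate $\|K(t,s)\|_{\cL(E_0)}\lesssim (t-s)^{\varrho-1}$, i.e.\ $K$ has only an integrable singularity on the diagonal.

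The third step is to solve the Volterra equation via a Picard/Neumann series. Iterating the kernel bound gives $\|K^{*n}(t,s)\|\lesssim (t-s)^{n\varrho-1}\,C^n/\Gamma(n\varrho)$, so the series converges uniformly on $\Delta_{[0,T]}$ and produces a continuous solution $\Phi$ with an integrable singularity at the diagonal. Substituting $\Phi$ back into the ansatz defines the candidate evolution family $U_{t,s}$.

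Finally, one verifies the three properties: $U_{t,t}=\Id_{E_0}$ is immediate from the ansatz; the differential identity $\partial_t U_{t,s}=A(t)U_{t,s}$ follows by differentiating the ansatz term-by-term and using the integral equation satisfied by $\Phi$; the composition rule $U_{t,s}U_{s,r}=U_{t,r}$ follows because both sides, for $r$ fixed, solve the same Cauchy problem starting from $\Id$ at the relevant time, and one then invokes uniqueness for this Cauchy problem (which itself reduces to uniqueness for the Volterra equation). Strong continuity of $(s,t)\mapsto U_{t,s}$ is obtained by combining strong continuity of each frozen semigroup $\tau\mapsto e^{\tau A(t)}$ with dominated convergence in the integral representation, using the uniform kernel bound. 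The main technical obstacle is the sharp kernel estimate $\|K(t,s)\|\lesssim (t-s)^{\varrho-1}$: all cancellations between $e^{(t-s)A(t)}$ and $e^{(t-s)A(s)}$ must be extracted by careful contour work, and it is precisely at this point that both the H\"older exponent $\varrho$ of \ref{ass:A3} and the sectoriality angle $\vartheta$ of \ref{ass:A2} enter quantitatively; the bounded imaginary powers \ref{ass:A1} then ensure that these estimates lift cleanly to the fractional scale $E_\alpha$.
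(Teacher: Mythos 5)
The paper does not prove this result: it is stated verbatim as \cite[Theorem 2.3]{AT87}, a classical theorem, so there is no internal proof to compare against. Your sketch is a reasonable outline of the Tanabe/Sobolevskii parametrix construction, but two points deserve scrutiny.

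First, the ansatz
\begin{align*}
U_{t,s} = e^{(t-s)A(t)} + \int_s^t e^{(t-\sigma)A(t)}\,\Phi(\sigma,s)\,\txtd\sigma
\end{align*}
has $A(t)$ in both exponents, so when you differentiate in $t$ you also hit the $A(t)$-slot inside the semigroup, i.e.\ you need $t\mapsto (z-A(t))^{-1}$ to be differentiable (and its derivative H\"older), which is the genuinely harder Kato--Tanabe hypothesis for time-dependent domains. Under the constant-domain assumption \ref{ass:A1}--\ref{ass:A3} the standard and cleaner parametrix is
\begin{align*}
U_{t,s} = e^{(t-s)A(s)} + \int_s^t e^{(t-\sigma)A(\sigma)}\,R(\sigma,s)\,\txtd\sigma ,
\end{align*}
so that $\partial_t e^{(t-\sigma)A(\sigma)} = A(\sigma)e^{(t-\sigma)A(\sigma)}$ with no extra terms; the Volterra kernel then becomes $R_1(t,\sigma)=\big(A(t)-A(\sigma)\big)e^{(t-\sigma)A(\sigma)}$, and the bound $\|R_1(t,\sigma)\|_{\cL(E_0)}\lesssim (t-\sigma)^{\varrho-1}$ follows directly from \ref{ass:A3} combined with the smoothing $\|A(\sigma)e^{\tau A(\sigma)}\|\lesssim \tau^{-1}$; no delicate contour cancellation is required beyond the resolvent identity. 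Second, Assumption \ref{ass:A1} (bounded imaginary powers) is irrelevant to the conclusion of this theorem --- $U_{t,s}\colon E_0\to E_0$ is built from \ref{ass:A2}--\ref{ass:A3} alone. In the paper, \ref{ass:A1} is invoked separately (via complex interpolation) to make the fractional-power scale $E_\alpha$ time-independent, which is used downstream but is not part of the statement you are proving. The rest of your outline (Neumann series for the Volterra equation, uniqueness of the Cauchy problem giving the cocycle identity, strong continuity from the frozen semigroups plus dominated convergence) is correct.
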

From now on, we say $(A(t))_{t\in [0,T]}$ satisfies Assumption \ref{ass:A} if $(A(t))_{t\in [0,T]}$ satisfies \ref{ass:A1}-\ref{ass:A3} on $(E_\alpha,E_{\alpha+1})$ for every $\alpha>0$. Then the resulting evolution family satisfies for $t>s$ similar estimates as in the autonomous case, i.e. there exist constants $C_{\alpha,\sigma_1}, \tilde{C}_{\alpha,\sigma_2}$ such that
\begin{align}\label{PTR}
	\begin{split}
		|(U_{t,s}-\text{Id}) x|_{\alpha}&\leq C_{\alpha,\sigma_1} |t-s|^{\sigma_1} |x|_{\alpha+\sigma_1},\\
		|U_{t,s}x|_{\alpha+\sigma_2}&\leq \widetilde{C}_{\alpha,\sigma_2}|t-s|^{-\sigma_2}|x|_\alpha,
	\end{split}
\end{align}
for $\sigma_2\in [k_-,k_+]$ and $\sigma_1\in [0,1]$, where $k_-<k_+$ are fixed natural numbers and the constants {$C_{\alpha,\sigma_1},\widetilde{C}_{\alpha,\sigma_2}>0$ in \eqref{PTR}} may depend on $k_-,k_+$\change{, see \cite[Theorem 3.9]{GHT21}.}
\begin{remark}
	\begin{itemize}
		\item[i)]  We suppose in Assumption \ref{ass:A1} that the domain of $A(t)$ is independent of $t$. However, this is not enough to ensure that the fractional power spaces do not depend on time. Since we further assume that $A(t)$ has bounded imaginary powers, the fractional power spaces can be identified using complex interpolation \cite[Theorem V.1.5.4]{Amann1995}. This means that for any $\alpha\in(0,1)$ we have $E_\alpha=[E_0,D(A)]_\alpha=D((-A(t))^\alpha)$ and therefore $E_\alpha$ does not depend on time.~For examples in this setting, we refer to Section~\ref{sec:app}. 
		\item[ii)] It is also possible to consider non-autonomous evolution equations in the context of time-dependent domains. In this setting, the stated Kato-Tanabe conditions \ref{ass:A2}-\ref{ass:A3} are not enough to ensure the existence of a parabolic evolution family. With stronger conditions, for example, under the assumptions of Acquistapace-Terreni \cite[Hypothesis I-II]{AT87}, a similar statement as in Theorem \ref{thm:OpFam} holds. For a detailed discussion on different assumptions for non-autonomous evolution equations, see \cite[Section 7]{AT87} and also \cite{Acquistapace, Yagi}. 
		\change{\item iii) As a convention, for $s \leq t$, we write $U_{t,s}$ to denote the evolution family, and $(\delta X)_{s,t}$ and $\mathbb{X}_{s,t}$ to denote the corresponding components of the rough path.} 
	\end{itemize}
\end{remark}
Coming back to the equation \eqref{Main_Equation}, we need to define the rough convolution in the sense of \cite{GHT21} in order to make sense of its mild forumulation.~Therefore, we define for $s<t$ the partition $\pi$ of $[s,t]$. 
Then it was shown in \cite[Theorem 4.1]{GHT21} that for  $(y,y^\prime)\in(\mathcal{D}_{\mathbf{X},\alpha}^{\gamma}([s,t]))^{d}$ the rough convolution
\begin{align}\label{SEW}
	\int_{s}^{t}U_{t,r}y_{r}~\mathrm{d}\mathbf{X}_{r}:=\lim_{|\pi|\rightarrow 0} \sum_{[u,v]\in\pi} U_{t,u}\left(y_{u}\cdot (\delta X )_{u,v}+y^{\prime}_{u}\circ\mathbb{X}_{u,v}\right).
\end{align}
exists, where $|\pi| = \max_{[u,v]\in \pi}|v-u|$ is the mesh size, and satisfies the estimate
\begin{align}\label{est:RPIntegral}
	\left\|\int_s^\cdot U_{\cdot,r} y_r~\txtd \mathbf{X}_r,y\right\|_{\cD^{\gamma}_{\mathbf{X},\alpha+\sigma}([s,t])}&\lesssim \rho_{\gamma,[s,t]}(\mathbf{X}) (|y_s|_\alpha+|y^\prime_s|_{\alpha-\gamma}+(t-s)^{\gamma-\sigma}\|y,y^\prime\|_{\cD^\gamma_{\mathbf{X},\alpha}([s,t])}),
\end{align}
with $\sigma\in [0,\gamma)$.  Here we use 
\begin{align*}
	y^{\prime}_{u}\circ\mathbb{X}_{u,v} \coloneqq \sum_{1\leq k,l\leq d} y^{kl,\prime}_{u}\mathbb{X}^{kl}_{u,v}.
\end{align*}
Given~\eqref{SEW} we can define a solution concept for \eqref{Main_Equation}. 
\begin{definition}\label{Definition of solution}
	We say that $(\change{u},\change{u}^\prime)\in \mathcal{D}_{\mathbf{X},\alpha}^{\gamma}([0,T])$, solves equation \eqref{Main_Equation} with initial datum $\change{u}_{0}\in E_\alpha$ if the path component satisfies the mild formulation 
	\begin{align}\label{eq:MildFormulation}
		\change{u}_t=U_{t,0}\change{u}_0+\int_{0}^{t}U_{t,r}F(r,\change{u}_r)~\mathrm{d}r+\int_{0}^{t}U_{t,r}G(r,\change{u}_r)~\mathrm{d}\mathbf{X}_r
	\end{align}
	with Gubinelli derivative $\change{u}_t^\prime=G(t,\change{u}_t)$ for $t\in [0,T]$.
\end{definition}
The assumptions on the nonlinearities $F$ and $G$ will be specified in Section \ref{integrable}, where we will also prove that the rough convolution in \eqref{eq:MildFormulation} is well-defined. 

To obtain an integrable bound as in \cite{GVR25}, which is a key part of our computations, we need to replace the H\"older-norms of the noise, appearing in $\rho_{\gamma,[0,T]}(\mathbf{X})$, by suitable controls which will lead to better integrability conditions. The controls are specified in the following definition.
\begin{definition}\label{def:control}
	For $0\leq \eta<\gamma$ define the function $W_{\mathbf{X},\gamma,\eta} : \Delta_{[0,T]}\rightarrow \mathbb{R}$ through
	\begin{align}
		W_{\mathbf{X},\gamma,\eta}(s,t) \coloneqq \sup_{\pi\subset [s,t]} \left\{ \sum_{[u,v]\in\pi}(v-u)^{\frac{-\eta}{\gamma-\eta}}\big{[}|(\delta X)_{u,v}|^{\frac{1}{\gamma-\eta}}+|\mathbb{X}_{u,v}|^{\frac{1}{2(\gamma-\eta)}} \big{]} \right\}.
	\end{align}
	where the supremum is taken over all partitions $\pi$ of $[s,t]$ and $|\cdot|$ is the norm in $\R^d$ respectively $\R^d\otimes\R^d$. It is easy to show that $W$ is continuous and satisfies the subadditivity property, i.e.~for $s\leq \change{r}\leq t$ \change{we have}
	\begin{align*}
		W_{\mathbf{X},\gamma,\eta}(s,\change{r})+W_{\mathbf{X},\gamma,\eta}(\change{r},t)\leq 	W_{\mathbf{X},\gamma,\eta}(s,t).
	\end{align*}
\end{definition}


\section{Existence and integrable bounds of global solutions}\label{integrable}
\subsection{\texorpdfstring{Local and global well-posedness}{}}
In this section, we examine the solvability of the non-autonomous RPDE, allowing nonlinearities with explicit time dependencies. To the best of our knowledge, there are only a few results on non-autonomous RPDEs. In \cite{GHT21}, the linear part has a time-dependence, and in \cite{HN24}, the authors investigated quasilinear equations with a time-dependent drift term.  Recently, \cite{Tappe} investigates equations that are not parabolic and uses a different approach for the space of controlled rough paths, which does not require an analytic semigroup but also allows time-dependent data. In this article, we stick to the approach of \cite{GHT21}, since this fits nicely in our setting of parabolic equations, and extend this approach to non-autonomous drift and diffusion terms.

Thus, we must first examine the behavior of the controlled rough paths in terms of Definition \ref{def:crp} by composition with time-dependent nonlinearities. For this, we state the following assumptions on the coefficients.
\begin{itemize}
	\item[\textbf{(F)}\namedlabel{ass:F}{\textbf{(F)}}] There exists $\delta\in[0,1)$ such that $F:[0,T]\times E_{\alpha}\to E_{\alpha-\delta}$ is Lipschitz continuous in $E_\alpha$, uniformly in $[0,T]$. That means, for every $t\in [0,T]$ there exists a constant $L_{F,t}>0$ such that $F(t,\cdot)$ is Lipschitz and $L_F\coloneqq \sup_{t\in [0,T]} L_{F,t}<\infty$. In particular, we have  for all $x,y\in E_\alpha$ and $t\in [0,T]$ that
	\begin{align*}
		|F(t,x)-F(t,y)|_{\alpha-\delta}&\leq L_F |x-y|_{\alpha},\\
		|F(t,x)|_{\alpha-\delta}&\leq C_F (1+|x|_\alpha),
	\end{align*}
	where $C_F:=\max\{L_F,\sup_{t\in [0,T]}|F(t,0)|_{\alpha-\delta}\}<\infty$. 
	
	\item[\textbf{(G1)}\namedlabel{ass:G1}{\textbf{(G1)}}] There exists $\sigma<\gamma$ such that $G:[0,T]\times E_{\alpha-i\gamma}\to E^d_{\alpha-i\gamma-\sigma}$ for $i=0,1,2$ satisfies the following conditions:
	\begin{itemize}
		\item[i)] For every $t\in [0,T]$, $G(t,\cdot)$ is bounded and three times continuously Fréchet differentiable with  bounded derivatives uniformly in time. 
		\item[ii)] For every $x\in E_{\alpha-i\gamma}$, $G(\cdot,x)$, as well as $\txtD_2G(\cdot,x),\txtD_2^2G(\cdot,x)$ and $\txtD_2^3 G(\cdot,x)$, are H\"older continuous with parameter $ 2\gamma$. We further assume that these H\"older constants are uniform in $E_{\alpha-i\gamma}$.
	\end{itemize}
	We set $C_G$ as the maximum of all constants involving the bounds of $G$ and its derivatives.
	\item[\textbf{(G2)}\namedlabel{ass:G2}{\textbf{(G2)}}] For every $t\in [0,T]$, the derivative of $$\txtD_2G(t,\cdot)G(t,\cdot):E_{\alpha-2\gamma-\sigma}\to E^{d\times d}_{\alpha-\gamma}$$ is bounded.
\end{itemize}
\begin{remark}
	\begin{itemize}
		\item[i)] To prove the local existence, it is enough to assume \ref{ass:G1}. In fact, \ref{ass:G1} is even stronger than actually necessary for the existence of a local solution, the boundedness of $G$ could be dropped, see for example \cite[Theorem 2.15]{GHT21}. Since we need an integrable bound for the solution, we need that $G$ is bounded, see also Remark \ref{rem:Gbound}. 
		\item[ii)] To ensure the existence of a global-in-time solution, we must also assume \ref{ass:G2} as originally developed in \cite{HN22}. Note that it is possible to prove that \ref{ass:G1} implies \ref{ass:G2} due to the boundedness of $G$. However, we have decided to state \ref{ass:G2} separately in order to emphasize an additional condition that is required to obtain a global solution. 
	\end{itemize}
\end{remark}
\begin{lemma}\label{lem:NonAutoCRP}
	Let $(y,y^\prime)\in \mathcal{D}^\gamma_{\mathbf{X},\alpha}$ be a controlled rough path, and $G$ a nonlinearity satisfying \ref{ass:G1}. Then we have $(G(\cdot,y),\txtD_2G(\cdot,y)y^\prime)\in (\mathcal{D}^\gamma_{\mathbf{X},\alpha})^d$, where we write $\txtD_2G(\cdot,y)y^\prime:=(\txtD_2G^k(\cdot,y)y^{l,\prime})_{1\leq k,l\leq d}$, see Remark \ref{rem:nDimCRP}.
\end{lemma}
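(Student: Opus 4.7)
The plan is to verify the three requirements in Definition~\ref{def:crp} for the pair $(G(\cdot,y),\txtD_2G(\cdot,y)y')$: continuity of the path component, the Hölder/boundedness properties of the Gubinelli derivative, and the two remainder estimates. The strategy is to add and subtract terms so that each difference is of one of three canonical types: a pure time increment (handled by the $2\gamma$-Hölder assumption in \ref{ass:G1}(ii)), a pure space increment (handled by the mean value theorem and the boundedness of $\txtD_2G$), or a combination thereof (handled by Taylor expansion up to second order), while throughout using Remark~\ref{rem:Hcontpath} to control $[y]_{\gamma,\alpha-i\gamma}$.

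First, I would handle the easy parts. Continuity of $s\mapsto G(s,y_s)$ in $E_\alpha$ follows from continuity of $G$ and $y$ together with the boundedness in \ref{ass:G1}(i). Similarly, $\|\txtD_2G(\cdot,y)y'\|_{\infty,\alpha-\gamma}^d \leq C_G\|y'\|_{\infty,\alpha-\gamma}^d$ since $\txtD_2G$ is bounded uniformly. For the Hölder estimate of $\txtD_2G(\cdot,y)y'$ in $E_{\alpha-2\gamma}^d$, decompose
\begin{align*}
\txtD_2G(t,y_t)y_t' - \txtD_2G(s,y_s)y_s' &= \bigl(\txtD_2G(t,y_t)-\txtD_2G(s,y_t)\bigr)y_t' \\
&\quad + \bigl(\txtD_2G(s,y_t)-\txtD_2G(s,y_s)\bigr)y_t' + \txtD_2G(s,y_s)(y_t'-y_s').
\end{align*}
The first term is bounded using the $2\gamma$-Hölder continuity in time from \ref{ass:G1}(ii); the second term using the uniform boundedness of $\txtD_2^2 G$ and $[y]_{\gamma,\alpha-2\gamma}$; the third using the boundedness of $\txtD_2 G$ and $[y']_{\gamma,\alpha-2\gamma}$.

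The main step, and the one requiring the most care, is the remainder bound for $R^{G(\cdot,y)}_{s,t}=G(t,y_t)-G(s,y_s)-\txtD_2G(s,y_s)y_s'\cdot(\delta X)_{s,t}$. I would write
\begin{align*}
R^{G(\cdot,y)}_{s,t} &= \bigl(G(t,y_t)-G(s,y_t)\bigr) + \bigl(G(s,y_t)-G(s,y_s)-\txtD_2G(s,y_s)(\delta y)_{s,t}\bigr) \\
&\quad + \txtD_2G(s,y_s)\bigl((\delta y)_{s,t}-y_s'\cdot(\delta X)_{s,t}\bigr).
\end{align*}
The last bracket equals $R^y_{s,t}$, which is controlled in the required spaces by assumption. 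The first bracket is $2\gamma$-Hölder in time by \ref{ass:G1}(ii) (and also $\gamma$-Hölder by interpolation with the $L^\infty$ bound). The middle bracket is a standard second-order Taylor remainder: writing it as $\int_0^1(1-\tau)\txtD_2^2G(s,y_s+\tau(\delta y)_{s,t})((\delta y)_{s,t},(\delta y)_{s,t})\,\txtd\tau$ and using boundedness of $\txtD_2^2G$ shows it is bounded by $[y]_{\gamma,\alpha-i\gamma}^2|t-s|^{2\gamma}$, which fits into $E_{\alpha-2\gamma}$ (and a $\gamma$ bound in $E_{\alpha-\gamma}$ follows with one factor of $[y]_{\gamma,\alpha-\gamma}$ and the $L^\infty$ bound on the other factor).

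The only genuine obstacle is bookkeeping: one must be careful to use $y\in C^\gamma(E_{\alpha-i\gamma})$ for the correct index $i$ at each occurrence so that the second-order Taylor term lands in $E_{\alpha-\gamma}$ with exponent $\gamma$ and in $E_{\alpha-2\gamma}$ with exponent $2\gamma$, and to invoke the uniform-in-space Hölder-in-time assumption of \ref{ass:G1}(ii) at the correct interpolation level. Once these index choices are made, the estimate is a direct calculation and yields the controlled-rough-path norm bound, completing the proof.
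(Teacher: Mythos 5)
Your proposal is correct and takes essentially the same route as the paper: the three-term decomposition for the H\"older estimate of the Gubinelli derivative matches (up to the order in which the mixed time/space increment is inserted), and the remainder is organized into the same three conceptual pieces — the pure time increment $G(t,y_t)-G(s,y_t)$ controlled by \ref{ass:G1}(ii), a Taylor-type term controlled by boundedness of $\txtD_2^2G$, and a term carrying $R^y_{s,t}$. The only cosmetic difference is that the paper applies a first-order mean-value representation to $G(s,y_t)-G(s,y_s)$ and then substitutes $(\delta y)_{s,t}=y_s'\cdot(\delta X)_{s,t}+R^y_{s,t}$, whereas you perform a direct second-order Taylor expansion and keep $\txtD_2G(s,y_s)R^y_{s,t}$ separately; the two are algebraically equivalent and lead to the same bounds.
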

\begin{proof}
	For the sake of completeness, we provide a proof for pointing out the main differences from the autonomous case~\cite[Lemma 4.7]{GHT21}.~Without loss of generality, we assume $d=1$ since the generalization can be made componentwise.
	We first note that $G(\cdot,y)\in C(E_{\alpha-\sigma})$ due to~\ref{ass:G1} i), as well as
	\begin{align*}
		\lVert \txtD_2G(\cdot,y)y^\prime \rVert_{\infty,\alpha-\gamma-\sigma}\lesssim \lVert y^\prime \rVert_{\infty,\alpha-\gamma}\lesssim \lVert y, y^\prime\rVert_{\mathcal{D}^\gamma_{\mathbf{X},\alpha}}.
	\end{align*}
	To establish the Hölder continuity of the Gubinelli derivative, we use~\ref{ass:G1} ii) to obtain 
	\begin{align*}
		| \txtD_2G(t,y_t)y_t^\prime&-\txtD_2G(s,y_s)y_s^\prime |_{\alpha-2\gamma-\sigma}\leq | (\txtD_2G(t,y_t)-\txtD_2G(t,y_s))y_t^\prime |_{\alpha-2\gamma-\sigma}\\
		&+| (\txtD_2G(t,y_s)-\txtD_2G(s,y_s))y_t^\prime |_{\alpha-2\gamma-\sigma}+
		| \txtD_2G(s,y_s)(y_t^\prime-y_s^\prime) |_{\alpha-2\gamma-\sigma}\\
		&\lesssim |(\delta y)_{s,t}|_{\alpha-2\gamma}|y_t^\prime|_{\alpha-2\gamma} + (t-s)^{2\gamma} |y^\prime_{t}|_{\alpha-2\gamma}+|(\delta y^\prime)_{s,t}|_{\alpha-2\gamma}\\
		&\lesssim (t-s)^\gamma \rho_{\gamma,[0,T]}(\mathbf{X})\lVert y,y^\prime\rVert_{\mathcal{D}^\gamma_{\mathbf{X},\alpha}}(1+\lVert y,y^\prime\rVert_{\mathcal{D}^\gamma_{\mathbf{X},\alpha}})+(t-s)^{2\gamma} \lVert y,y^\prime\rVert_{\mathcal{D}^\gamma_{\mathbf{X},\alpha}},
	\end{align*}
	which leads to
	\begin{align}\label{ineq:quadraticTerm}
		\lVert \txtD_2G(\cdot,y)y^\prime\rVert_{\gamma,\alpha-2\gamma-\sigma}\lesssim  \lVert y,y^\prime\rVert_{\mathcal{D}^\gamma_{\mathbf{X},\alpha}}(1+\lVert y,y^\prime\rVert_{\mathcal{D}^\gamma_{\mathbf{X},\alpha}})+T^{\gamma} \lVert y,y^\prime\rVert_{\mathcal{D}^\gamma_{\mathbf{X},\alpha}}.
	\end{align}
	\change{A straightforward computation leads to the following representation of the remainder
		\begin{align*}
			R^{G(\cdot,y)}_{s,t}&=G(t,y_t)-G(s,y_s)-\txtD_2G(s,y_s)\big(y_s^\prime (\delta X_{s,t})\big)\\
			&=G(t,y_t)-G(s,y_t)+G(s,y_t)-G(s,y_s)-\txtD_2G(s,y_s)\big(y_s^\prime (\delta X_{s,t})\big)\\
			&=G(t,y_t)-G(s,y_t)+G(s,y_t)-G(s,y_s)-\txtD_2G(s,y_s)\big((\delta y_{s,t})-R_{s,t}^y\big)\\
			&=G(t,y_t)-G(s,y_t)+\txtD_2G(s,y_s)R_{s,t}^y\\
			& +\int_0^1\int_0^1 \tilde{r}\txtD_2^2G(s,y_s+r\tilde{r} (\delta y_{s,t}))(\delta y_{s,t})(\delta y_{s,t})~\txtd r \txtd\tilde{r},
	\end{align*}}
	where we used the $2\gamma$-H\"older continuity of $G(\cdot,x)$ to estimate the difference $G(t,y_t)-G(s,y_t)$.~In this case we obtain
	\begin{align*}
		\Vert R^{G(\cdot,y)}\Vert_{i\gamma,\alpha-i\gamma-\sigma}\change{\lesssim 1+\varrho_{\gamma,[0,T]}(\mathbf{X})^2\|y,y^\prime\|_{\cD^\gamma_{\mathbf X,\alpha}}\Big(1+\|y,y^\prime\|_{\cD^\gamma_{\mathbf X,\alpha}}\Big),}
	\end{align*}
	\change{for $i=1,2$.}
\end{proof}
\begin{remark}
	Instead of $\txtD_2G(t,y_t)y_t^\prime$ as the Gubinelli derivative, we could also choose $DG(t,y_t)\circ (1,y_t^\prime)=\txtD_1G(t,y_t)+\txtD_2G(t,y_t)y_t^\prime$, \change{provided that $G$ is differentiable with respect to time.}
\end{remark}

The computations to obtain a solution to \eqref{Main_Equation} are similar to those in \cite{GHT21} for the local existence, and \cite{HN22} for the global existence. For the sake of completeness, we give an outline of the proofs, highlighting the main differences from the autonomous case. To simplify the presentation, we assume that $T<1$. 

\begin{theorem}\label{ex:nona}
	Fix $\alpha\in \R, \gamma\in (\frac{1}{3},\frac{1}{2}]$. Let $(A(t))_{t\in [0,T]},F$ and $G$ satisfy Assumption \ref{ass:A},\ref{ass:F} and \ref{ass:G1}. Then there exists for every $\change{u}_0\in E_\alpha$ a time $T^*\leq T$ and an unique controlled rough path $(\change{u},\change{u}^\prime)\in \mathcal{D}^\gamma_{\mathbf{X},\alpha}([0,T^*))$ such that $\change{u}^\prime_t=G(t,y_t)$ and 
	\begin{align*}
		\change{u}_t:=U_{t,0}\change{u}_0+\int_0^t U_{t,r}F(r,\change{u}_r)~\txtd r+\int_0^t U_{t,r}G(r,\change{u}_r)~\txtd\mathbf{X}_r,
	\end{align*}
	for $t\in[0,T^*]$.
\end{theorem}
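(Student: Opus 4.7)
The plan is to prove local existence and uniqueness via a Banach fixed-point argument in the complete metric space of controlled rough paths, following the scheme of \cite{GHT21} but adapted to the non-autonomous coefficients. Concretely, for a time horizon $T^*\leq T$ to be chosen and radius $\rho>0$, consider the closed ball
\[
B_{T^*,\rho}:=\{(y,y^\prime)\in \mathcal{D}^\gamma_{\mathbf{X},\alpha}([0,T^*])~:~y_0=y_0,~y_0^\prime=G(0,y_0),~\|y,y^\prime\|_{\mathcal{D}^\gamma_{\mathbf{X},\alpha}([0,T^*])}\leq \rho\}
\]
and define the solution map $\mathcal{M}(y,y^\prime)_t:=(\mathcal{M}^1_t,\mathcal{M}^2_t)$ by
\[
\mathcal{M}^1_t:=U_{t,0}y_0+\int_0^t U_{t,r}F(r,y_r)\,\mathrm{d}r+\int_0^tU_{t,r}G(r,y_r)\,\mathrm{d}\mathbf{X}_r, \qquad \mathcal{M}^2_t:=G(t,y_t).
\]
The rough integral is well defined because, by Lemma~\ref{lem:NonAutoCRP}, $(G(\cdot,y),\txtD_2 G(\cdot,y)y^\prime)$ is a controlled rough path in $(\mathcal{D}^\gamma_{\mathbf{X},\alpha-\sigma})^d$, so the sewing bound \eqref{est:RPIntegral} applies.

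The first step is to verify self-mapping. For the deterministic convolution with $F$, assumption \ref{ass:F} together with the smoothing property \eqref{PTR} with $\sigma_2=\delta<1$ yields a bound of the form $(T^*)^{1-\delta}C_F(1+\|y,y^\prime\|_{\mathcal{D}^\gamma_{\mathbf{X},\alpha}})$ in the $\mathcal{D}^\gamma_{\mathbf{X},\alpha}$-norm, using that $r\mapsto U_{\cdot,r}F(r,y_r)$ produces a $C^\gamma$ path by interpolation between the supremum bound and the smoothing at the endpoints. For the rough convolution, applying \eqref{est:RPIntegral} on $[0,T^*]$ with exponent $\sigma$ and then bounding the controlled rough path norm of $(G(\cdot,y),\txtD_2G(\cdot,y)y^\prime)$ by Lemma~\ref{lem:NonAutoCRP} gives a factor of $\rho_{\gamma}(\mathbf{X})(|G(0,y_0)|_\alpha + |\txtD_2 G(0,y_0)G(0,y_0)|_{\alpha-\gamma}+(T^*)^{\gamma-\sigma}\cdot \text{polynomial}(\|y,y^\prime\|))$. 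The Gubinelli derivative component $\mathcal{M}^2=G(\cdot,y)$ has already been controlled in Lemma~\ref{lem:NonAutoCRP}. Collecting these bounds, one chooses $\rho$ slightly larger than the constants depending on $y_0$ and $\rho_\gamma(\mathbf{X})$, and then $T^*$ small enough so that the superlinear contributions, which always appear multiplied by a strictly positive power of $T^*$, are absorbed; this is where the boundedness assumptions in \ref{ass:G1} are exploited crucially.

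The second step is contraction: for $(y,y^\prime),(\tilde y,\tilde y^\prime)\in B_{T^*,\rho}$ one estimates the $\mathcal{D}^\gamma_{\mathbf{X},\alpha}$-distance of the images by Lipschitz-type estimates on $F$, $G$, $\txtD_2 G$ (using the uniform Fréchet bounds from \ref{ass:G1}) combined with \eqref{est:RPIntegral} applied to the difference of the controlled rough paths, plus the smoothing inequalities \eqref{PTR}. The resulting bound is of the form $C(\rho,\rho_\gamma(\mathbf{X}))(T^*)^\kappa\|y-\tilde y,y^\prime-\tilde y^\prime\|_{\mathcal{D}^\gamma_{\mathbf{X},\alpha}([0,T^*])}$ for some $\kappa>0$, so reducing $T^*$ further makes $\mathcal{M}$ a strict contraction. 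Banach's fixed-point theorem gives the unique controlled rough path with the required mild formulation, and uniqueness on $[0,T^*]$ in all of $\mathcal{D}^\gamma_{\mathbf{X},\alpha}([0,T^*])$ (not just the ball) follows by the standard argument of shrinking the horizon further and using continuity of the norm in time.

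The main obstacle is the non-autonomous time-dependence of $G$: one must ensure that the remainder and Gubinelli-derivative of $G(\cdot,y)$ pick up the correct small factors in $T^*$ when replacing $G(\tau,\cdot)$ by $G(\tau,\cdot)-G(s,\cdot)$ or handling the extra term $\txtD_1 G$. Lemma~\ref{lem:NonAutoCRP} already confirms that the $2\gamma$-Hölder in time assumption in \ref{ass:G1} ii) yields controlled rough path estimates with the same structure as in the autonomous case, so this does not affect the powers of $T^*$ driving the contraction; the remaining task is only the bookkeeping of constants through \eqref{est:RPIntegral} and the smoothing \eqref{PTR}, exactly as in \cite[Theorem~2.15]{GHT21}.
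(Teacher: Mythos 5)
Your proposal is correct and uses the same Banach fixed-point strategy as the paper, but the invariant set is chosen differently, so let me briefly compare. The paper (mirroring \cite[Theorem~2.15]{GHT21}) centers its ball at the reference controlled path
\[
\xi_t:=U_{t,0}y_0+\int_0^t U_{t,r}G(r,y_0)~\txtd \mathbf{X}_r,\qquad \xi^\prime_t:=G(t,y_0),
\]
keeps the radius equal to $1$, and moves to the weaker Hölder scale $\mathcal{D}^{\gamma^\prime}_{\mathbf{X},\alpha}$ with $\gamma^\prime<\gamma$; subtracting $\xi$ cancels the $y_0$-dependent contributions in \eqref{est:RPIntegral}, so neither the radius nor the small time $T^*$ ever see $|y_0|_\alpha$, and the exponent drop makes every Hölder seminorm contribute a power of $T^*$. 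You instead center the ball at the origin (subject to the initial constraint $(y_0,y_0^\prime)=(y_0,G(0,y_0))$), take the radius $\rho$ large enough to absorb the contributions from $U_{\cdot,0}y_0$ and the frozen-at-zero parts of the sewing bound, and stay in $\mathcal{D}^\gamma_{\mathbf{X},\alpha}$, relying solely on the $(T^*)^{\gamma-\sigma}$ factor from \eqref{est:RPIntegral} and the $(T^*)^{\min\{1-\delta,1-2\gamma\}}$ factor from the drift convolution. This works because for the contraction the initial-value terms in \eqref{est:RPIntegral} cancel (both competitors start from the same $y_0$, hence $G(0,y_0)$ and $\txtD_2G(0,y_0)G(0,y_0)$ agree), and for the self-map the nonshrinking part defines $C_0$, after which $\rho\sim C_0$ and then $T^*$ small close the argument. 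So it is a legitimate alternative: the paper buys a $y_0$-independent radius and a cleaner small-time mechanism, whereas you avoid introducing $\xi$ at the cost of a $y_0$-dependent radius and a direct reliance on $\sigma<\gamma$. One minor bookkeeping correction: the exponent in your bound for $\int_0^\cdot U_{\cdot,r}F(r,y_r)\,\txtd r$ should be $(T^*)^{\min\{1-\delta,1-2\gamma\}}$ rather than $(T^*)^{1-\delta}$, since the $C^{2\gamma}_2$ remainder component of the $\mathcal{D}^\gamma_{\mathbf{X},\alpha}$-norm measured in $E_{\alpha-2\gamma}$ only gains the smoothing power $1-2\gamma$ from \eqref{PTR}. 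This matters for the borderline $\gamma=\tfrac12$ (where $1-2\gamma=0$), but does not affect the structure of the argument elsewhere.
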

\begin{proof}
	To obtain a mild solution for \eqref{Main_Equation}, we seek a fixed point of
	\begin{align*}
		P_T(y,y^\prime):=\left(U_{\cdot,0}y_0+\int_0^\cdot U_{\cdot,r}F(r,y_r)~\txtd r+\int_0^\cdot U_{\cdot,r}G(r,y_r)~\txtd\mathbf{X}_r, G(\cdot,y)\right).
	\end{align*}
	Instead of proving the existence of a fixed point in $\mathcal{D}^\gamma_{\mathbf{X},\alpha}$, we define for $\gamma^\prime<\gamma$ 
	\begin{align*}
		B_T(y_0):=\left\{(y,y^\prime)\in\mathcal{D}^{\gamma^\prime}_{\mathbf{X},\alpha}([0,T])~\colon~(y_0,y_0^\prime)=(y_0,G(0,y_0))~\text{and}~\lVert y-\zeta,y^\prime-\zeta^\prime\rVert_{\mathcal{D}^{\gamma^\prime}_{\mathbf{X},\alpha}([0,T])}<1\right\},
	\end{align*}
	where ${\zeta}_t:=U_{t,0}y_0+\int_0^t U_{t,r}G(r,y_0)~\txtd \mathbf{X}_r$ and ${\zeta}_t^\prime:=G(t,y_0)$.  Similar to \cite{GHT21} it is possible to show that there exists a time $T^*>0$ such that $P_{T^*}:B_{T^*}\to B_{T^*}$ is invariant and contractive. Then Banach's fixed point theorem ensures the existence of $(\change{u},\change{u}^\prime)\in B_{T^*}$ such that $\change{u}$ satisfies \eqref{eq:MildFormulation}.
\end{proof}
To get a global-in-time solution, we use the same strategy as established in \cite{HN22}. This means that we exploit the fact that the solution of \eqref{Main_Equation} has the form $(y,G(\cdot,y))$ for $(y,y')\in \cD^{\gamma}_{\mathbf{X},\alpha}$. Therefore, we obtain a bound on the solution which does not involve quadratic terms such as \eqref{ineq:quadraticTerm}. 
\begin{lemma}
	Let $G$ satisfy \ref{ass:G1}-\ref{ass:G2} and $(y,G(\cdot,y))\in \mathcal{D}^\gamma_{\mathbf{X},\alpha}$. Then $(G(\cdot,y),\txtD_2G(\cdot,y)G(\cdot,y))\in (\mathcal{D}^\gamma_{\mathbf{X},\alpha-\sigma})^d$ and we have the \change{estimate}
	\begin{align*}
		\lVert G(\cdot,y),\txtD_2G(\cdot,y)\rVert_{(\mathcal{D}^\gamma_{\mathbf{X},\alpha})^d}\lesssim 1+\lVert y,G(\cdot,y)\rVert_{\mathcal{D}^\gamma_{\mathbf{X},\alpha}}.
	\end{align*}
\end{lemma}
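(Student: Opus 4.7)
The plan is to replay the composition argument of Lemma \ref{lem:NonAutoCRP} on the pair $(y, G(\cdot, y))$, but to track the estimates more carefully so as to exploit two structural improvements. First, the Gubinelli derivative $G(\cdot, y)$ is now \emph{globally bounded} by \ref{ass:G1}, so every factor of $\|y^\prime\|$ that appeared multiplicatively in \eqref{ineq:quadraticTerm} collapses to a constant depending only on $C_G$. Second, \ref{ass:G2} asserts that $x \mapsto \txtD_2 G(t, x) G(t, x)$ has bounded Fréchet derivative, which lets me treat this composition as globally Lipschitz in $x$ in the appropriate scale rather than applying Leibniz's rule that would produce a square. Together these two observations linearize the bound that was quadratic in Lemma \ref{lem:NonAutoCRP}.

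Concretely, the sup-norm bounds $\|G(\cdot, y)\|_{\infty, \alpha - \sigma} \leq C_G$ and $\|\txtD_2 G(\cdot, y) G(\cdot, y)\|_{\infty, \alpha - \gamma - \sigma} \leq C_G^2$ are immediate from \ref{ass:G1} i). For the $\gamma$-Hölder seminorm of the new Gubinelli derivative I would split
\begin{align*}
\txtD_2 G(t, y_t) G(t, y_t) - \txtD_2 G(s, y_s) G(s, y_s) &= \big[\txtD_2 G(t, y_t) G(t, y_t) - \txtD_2 G(t, y_s) G(t, y_s)\big] \\
&\quad + \big[\txtD_2 G(t, y_s) G(t, y_s) - \txtD_2 G(s, y_s) G(s, y_s)\big],
\end{align*}
estimating the first bracket by the Lipschitz property derived from \ref{ass:G2} combined with Remark \ref{rem:Hcontpath}, which gives a bound of order $(t-s)^\gamma \rho_{\gamma}(\mathbf{X}) \|y, G(\cdot, y)\|_{\cD^\gamma_{\mathbf{X}, \alpha}}$, and the second bracket by $(t-s)^{2\gamma}$ via the time $2\gamma$-Hölder regularity in \ref{ass:G1} ii) together with the boundedness of $G$. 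In neither piece does a factor $\|y, G(\cdot, y)\|_{\cD^\gamma_{\mathbf{X}, \alpha}}^{2}$ arise.

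For the remainder I would reuse the Taylor-type decomposition from the proof of Lemma \ref{lem:NonAutoCRP}, namely
\begin{align*}
R^{G(\cdot, y)}_{s,t} &= \big(G(t, y_t) - G(s, y_t)\big) + \int_0^1 \big(\txtD_2 G(s, y_s + r(\delta y)_{s,t}) - \txtD_2 G(s, y_s)\big)\big(G(s, y_s) \cdot (\delta X)_{s,t}\big) \, \txtd r \\
&\quad + \int_0^1 \txtD_2 G(s, y_s + r(\delta y)_{s,t}) \, \txtd r \cdot R^y_{s,t},
\end{align*}
controlling the three summands by, respectively, the $2\gamma$-time-Hölder property in \ref{ass:G1} ii); the product $|(\delta y)_{s,t}| \cdot C_G \cdot |(\delta X)_{s,t}|$, where again boundedness of $G$ prevents a quadratic appearance of the controlled-rough-path norm; and $|R^y_{s,t}|$ scaled by the uniform bound on $\txtD_2 G$. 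Summing up yields the claimed linear estimate. The main subtlety will be the bookkeeping of interpolation indices, verifying that each estimate lands in the scale $(E_{\alpha - \sigma - i\gamma})_{i = 0, 1, 2}$ corresponding to $\cD^\gamma_{\mathbf{X}, \alpha - \sigma}$; this is guaranteed by the uniform $\sigma$-loss across $i = 0, 1, 2$ built into \ref{ass:G1} together with the mapping property in \ref{ass:G2}.
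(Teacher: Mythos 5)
Your proof is correct and follows the same route as the paper, which merely displays the Lipschitz-type estimate coming from \ref{ass:G2} and then refers to \cite[Lemma~3.6]{HN22} for the remaining computation that you carry out in full; the decisive observation you make — that when the Gubinelli derivative equals the globally bounded $G(\cdot,y)$, the factor $\|y'\|_{\infty}$ that produced the quadratic term in \eqref{ineq:quadraticTerm} collapses to a constant — is exactly the paper's point, and your Taylor-type decomposition of the remainder is the one already used in Lemma~\ref{lem:NonAutoCRP}. A minor correction to your motivation (not to any step of the argument): \ref{ass:G2} is not what prevents a quadratic — boundedness of $G$ alone already does that, and the Leibniz rule does not generate a square in the controlled-rough-path norm once $G$ is bounded — rather its role is the interpolation-index bookkeeping ensuring the composition $\txtD_2 G(t,\cdot)G(t,\cdot)$ incurs only a single $\sigma$-loss, and indeed the remark following \ref{ass:G2} in the paper notes that it is implied by \ref{ass:G1} under the boundedness hypothesis.
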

\begin{proof}
	Due to \ref{ass:G2} we have the Lipschitz type estimate
	\begin{align*}
		|(\txtD_2G(t,x)-\txtD_2G(t,y))G(t,x)|_{\alpha-2\gamma-\sigma}\lesssim |x-y|_{\alpha-\gamma},
	\end{align*}
	for every $x,y\in E_{\alpha-\gamma}$. Using that the Gubinelli derivative is given by $G(\cdot,y)$, we conclude as in \cite[Lemma 3.6]{HN22}.
\end{proof}
With this essential estimate, it is now possible to state the existence of a global-in-time solution to \eqref{Main_Equation}. We omit the proof of this theorem, since it is similar to \cite[Theorem 3.9]{HN22}.
\begin{theorem}\label{thm:GlobEx}
	Fix $\alpha\in \R, \gamma\in (\frac{1}{3},\frac{1}{2}], \sigma \in [0,\gamma)$ and $\delta\in[0,1)$.  Let $(A(t))_{t\in [0,T]},F$ and $G$ satisfy Assumption \ref{ass:A},\ref{ass:F} and \ref{ass:G1}-\ref{ass:G2}. Then there exists for every $\change{u}_0\in E_\alpha$ an unique controlled rough path $(\change{u},\change{u}^\prime)\in \mathcal{D}^\gamma_{\mathbf{X},\alpha}([0,T])$ such that $\change{u}^\prime_t=G(t,\change{u}_t)$ and 
	\begin{align*}
		\change{u}_t=U_{t,0}\change{u}_0+\int_0^t U_{t,r}F(r,\change{u}_r)~\txtd r+\int_0^t U_{t,r}G(r,\change{u}_r)~\txtd\mathbf{X}_r,~\quad \text{ for } t\in[0,T].
	\end{align*}
\end{theorem}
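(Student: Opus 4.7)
The plan is to combine the local existence from Theorem~\ref{ex:nona} with an a-priori bound on the controlled rough path norm that does not blow up on bounded intervals, and then iterate. The crucial structural observation, as in~\cite{HN22}, is that the Gubinelli derivative of the solution is $G(\cdot,y)$, which is bounded in $E_{\alpha-\sigma}$ by~\ref{ass:G1}. Together with the preceding lemma, which shows that $(G(\cdot,y),\txtD_2G(\cdot,y)G(\cdot,y))$ is a controlled rough path of \emph{linear} (rather than quadratic) growth in $\lVert y,G(\cdot,y)\rVert_{\cD^\gamma_{\mathbf{X},\alpha}}$, this allows us to bypass the quadratic term~\eqref{ineq:quadraticTerm}.

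Concretely, I would argue as follows. Fix a local solution $(y,G(\cdot,y))\in \cD^\gamma_{\mathbf{X},\alpha}([s,t])$ on a subinterval $[s,t]\subset[0,T]$. Applying the rough convolution estimate~\eqref{est:RPIntegral} with integrand $(G(\cdot,y),\txtD_2G(\cdot,y)G(\cdot,y))$, combined with the smoothing bounds~\eqref{PTR} on the evolution family and the Lipschitz/growth conditions~\ref{ass:F} on $F$, yields an inequality of the form
\begin{align*}
\lVert y,G(\cdot,y)\rVert_{\cD^\gamma_{\mathbf{X},\alpha}([s,t])}
\leq C_1\bigl(|y_s|_\alpha+1\bigr)
+ C_2\,\rho_{\gamma,[s,t]}(\mathbf{X})\,(t-s)^{\gamma-\sigma}\bigl(1+\lVert y,G(\cdot,y)\rVert_{\cD^\gamma_{\mathbf{X},\alpha}([s,t])}\bigr),
\end{align*}
where $C_1,C_2$ depend on $C_F,C_G,T$ and the interpolation constants but not on $t-s$. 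The boundedness of $G$ (crucially used to control the Gubinelli derivative of the integrand at the left endpoint) is what prevents a quadratic term from appearing on the right-hand side.

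Choosing the mesh size $\tau>0$ so that $C_2\,\rho_{\gamma,[0,T]}(\mathbf{X})\,\tau^{\gamma-\sigma}\leq \tfrac12$, the inequality can be absorbed and produces a uniform bound on each subinterval of length at most $\tau$ of the form
\begin{align*}
\lVert y,G(\cdot,y)\rVert_{\cD^\gamma_{\mathbf{X},\alpha}([s,s+\tau])}
\leq 2C_1(|y_s|_\alpha+1)+1.
\end{align*}
Since $\tau$ depends only on the (global) rough path norm $\rho_{\gamma,[0,T]}(\mathbf{X})$ and not on the solution itself, one can iterate the local existence result of Theorem~\ref{ex:nona} step by step. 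At each step, the bound above controls $|y_{s+\tau}|_\alpha$ in terms of $|y_s|_\alpha$, so after finitely many steps one reaches time $T$. Uniqueness transfers from the local result by the usual concatenation argument.

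The main obstacle is making the abstract inequality above precise in a way that is uniform in the subinterval, since the constants $C_1, C_2$ implicitly involve the Hölder constants from~\ref{ass:F}, \ref{ass:G1}--\ref{ass:G2}, the evolution-family bounds~\eqref{PTR}, and the sewing inequality~\eqref{est:RPIntegral}, all of which must be carefully tracked so that the coefficient of $\lVert y,G(\cdot,y)\rVert$ on the right is genuinely a \emph{positive power} of $(t-s)$. The non-autonomous time-dependence of $A(t)$, $F(t,\cdot)$, $G(t,\cdot)$ only enters through uniform bounds and the $2\gamma$-Hölder continuity assumed in~\ref{ass:G1}~ii), which were already incorporated in Lemma~\ref{lem:NonAutoCRP}, so the argument of~\cite[Theorem~3.9]{HN22} applies with essentially cosmetic changes.
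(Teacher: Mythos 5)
Your proposal is correct and follows essentially the same route as the paper: the paper explicitly omits the proof and refers to \cite[Theorem~3.9]{HN22}, and the strategy you spell out---using the preceding lemma to get linear (rather than quadratic) growth of the controlled rough path norm of $(G(\cdot,y),\txtD_2 G(\cdot,y)G(\cdot,y))$, combining with the sewing estimate~\eqref{est:RPIntegral} and evolution-family bounds to obtain an absorbable inequality on subintervals of a mesh depending only on $\rho_{\gamma,[0,T]}(\mathbf{X})$, then iterating the local existence theorem---is exactly that argument, and is in fact the same structure the paper later carries out in full detail in the proof of Lemma~\ref{lem:RoughGronwall}. The only small inaccuracy is that the absorbable power of $(t-s)$ should be $\nu=\min\{1-2\gamma,1-\delta,\gamma-\sigma\}$ rather than just $\gamma-\sigma$, since the drift term contributes the exponent $\min\{1-\delta,1-2\gamma\}$; this does not affect the argument because all these exponents are positive.
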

\subsection{\change{Sewing lemma revisited}}
In \cite{GVR25}, the existence of moments of all orders was shown for the controlled rough path norm of the solution of an autonomous semilinear rough partial differential equation with a bounded diffusion coefficient. Here we extend the results to the non-autonomous case and also extend the class of possible rough inputs.~The main idea is to accordingly modify the sewing lemma replacing the H\"older norms of the rough input by controls as in Definition \ref{def:control}, since such controls have better integrability properties compared to H\"older norms. This is the topic of the next lemma, which is a generalization of \cite[Proposition 2.7]{GVR25} to the non-autonomous setting. 
\begin{lemma}\label{lem:IneqRV}
	Let $(y,y^\prime)\in \mathcal{D}^\gamma_{\mathbf{X},\alpha}$, $\sigma\in[0,\frac{1-\gamma}{2})$ and choose $\varepsilon>0$ such that $\sigma+\varepsilon<\gamma$. Then we obtain for $i=0,1,2$ the inequality
	\begin{align}\label{ineq:VarzanehRiedel}
		\begin{split}
			&\left|\int_s^t U_{t,r}G(r,y_r)~\txtd \mathbf{X}_r-U_{t,s}\left(G(s,y_s)\cdot (\delta X)_{s,t}-\txtD_2G(s,y_s)G(s,y_s)\circ \mathbb{X}_{s,t}\right)\right|_{\alpha-i\gamma}\\
			&\lesssim (t-s)^{i\gamma} \lVert y,y^\prime\rVert_{\mathcal{D}^\gamma_{\mathbf{X},\alpha}}\max\{(t-s)^\varepsilon W_{\mathbf{X},\gamma,\sigma+\varepsilon}(s,t)^{\gamma-\sigma-\varepsilon},(t-s)^{2\varepsilon} W_{\mathbf{X},\gamma,\sigma+\varepsilon}(s,t)^{2(\gamma-\sigma-\varepsilon)}\} \\
			&+\max\limits_{k=1,2,3}\{(t-s)^{i\gamma+k(\gamma-\sigma)}\} P([X]_{\gamma,\R^d},[\mathbb{X}]_{2\gamma,\R^{d}\otimes \R^d}),
		\end{split}
	\end{align}
	where $P(\cdot,\cdot)$ is a polynomial.
\end{lemma}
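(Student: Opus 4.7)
The plan is to follow the strategy of \cite{GVR25}, adapted to the non-autonomous setting where the time-dependence of $G$ contributes extra Hölder-in-time corrections. Denote by $\Xi_{s,t}$ the local germ appearing on the right-hand side of the lemma, so that by the definition \eqref{SEW} of the rough convolution we have $\int_s^t U_{t,r}G(r,y_r)\,\txtd\mathbf{X}_r = \lim_{|\pi|\to 0}\sum_{[u,v]\in\pi}U_{t,u}\Xi_{u,v}$ (modulo the sign convention used for the $\mathbb{X}$ term). The task is then to estimate the deviation of the left-hand side from $\Xi_{s,t}$, for which I would run a sewing-type argument on dyadic refinements of an initial partition of $[s,t]$ and control the sewing defect $\delta\Xi_{s,u,t} = \Xi_{s,t} - U_{t,u}\Xi_{s,u} - \Xi_{u,t}$.

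Using Chen's relation $\mathbb{X}_{s,t}-\mathbb{X}_{s,u}-\mathbb{X}_{u,t} = (\delta X)_{s,u}\otimes(\delta X)_{u,t}$ together with the evolution property $U_{t,u}U_{u,s} = U_{t,s}$, the defect decomposes into three kinds of contributions: (a) the remainder $R^{G(\cdot,y)}_{s,u}$ of the controlled rough path $(G(\cdot,y),\txtD_2G(\cdot,y)G(\cdot,y))$ paired with $(\delta X)_{u,t}$, which is controlled via Lemma \ref{lem:NonAutoCRP}; (b) the $\gamma$-Hölder increment of $\txtD_2G(\cdot,y)G(\cdot,y)$ paired with $\mathbb{X}_{u,t}$; (c) the regularization correction $(U_{t,s}-U_{t,u})$ applied to the first-order terms, which by \eqref{PTR} yields a $(u-s)^{\sigma+\varepsilon}$ gain at the cost of raising the space index by $\sigma+\varepsilon$. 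For each of these I would replace the Hölder factors of the rough input by the control $W_{\mathbf{X},\gamma,\sigma+\varepsilon}$ via the elementary estimates
$$|(\delta X)_{u,v}|\leq (v-u)^{\sigma+\varepsilon}W_{\mathbf{X},\gamma,\sigma+\varepsilon}(u,v)^{\gamma-\sigma-\varepsilon}, \qquad |\mathbb{X}_{u,v}|\leq (v-u)^{2(\sigma+\varepsilon)}W_{\mathbf{X},\gamma,\sigma+\varepsilon}(u,v)^{2(\gamma-\sigma-\varepsilon)},$$
which follow from Definition \ref{def:control}. Linear-in-noise contributions thus inherit a control factor while the smoothing gain $(v-u)^{\sigma+\varepsilon}$ or $(v-u)^{2(\sigma+\varepsilon)}$ is preserved; quadratic and cubic combinations of $X$ and $\mathbb{X}$ arising from Taylor remainders of $G$ and its derivatives are kept in Hölder norm and absorbed into the polynomial $P([X]_\gamma,[\mathbb{X}]_{2\gamma})$ with the three respective powers $k(\gamma-\sigma)$, $k\in\{1,2,3\}$.

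The abstract sewing step is then standard: telescoping along a dyadic refinement and using the subadditivity of $W_{\mathbf{X},\gamma,\sigma+\varepsilon}$, the sum of $W(u,v)^{\gamma-\sigma-\varepsilon}(v-u)^\beta$ with $\beta>1$ converges to something proportional to $(t-s)^{\beta-1+\varepsilon}W_{\mathbf{X},\gamma,\sigma+\varepsilon}(s,t)^{\gamma-\sigma-\varepsilon}$, respectively its square when both $X$ and $\mathbb{X}$ contribute linearly, and the assumption $\sigma<(1-\gamma)/2$ is precisely what makes all relevant exponents summable. The overall factor $(t-s)^{i\gamma}$ emerges from applying $|U_{t,u}\,\cdot|_{\alpha-i\gamma}\lesssim (t-u)^{-i\gamma}|\cdot|_{\alpha}$ from \eqref{PTR} and worst-casing $(t-u)$ to $(t-s)$ through the partition. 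The main obstacle I anticipate is the careful bookkeeping of the different orders of error terms in $\delta\Xi_{s,u,t}$, so as to correctly separate the contributions that admit the control-bound improvement from those that must be retained in Hölder norm; a secondary subtlety, absent in the autonomous case of \cite{GVR25}, is the treatment of purely time-dependent differences such as $G(u,\cdot)-G(s,\cdot)$ and $\txtD_2G(u,\cdot)G(u,\cdot)-\txtD_2G(s,\cdot)G(s,\cdot)$. By Assumption \ref{ass:G1}\,ii) these are of order $(u-s)^{2\gamma}$, hence strictly subleading compared to the rough contributions of order $(u-s)^\gamma$, so that they do not alter the final form of the bound.
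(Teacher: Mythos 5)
Your overall strategy follows the paper's: a dyadic sewing argument on the local germ, Chen's relation together with Taylor expansion of $G$ to compute the defect, conversion of Hölder increments of $(X,\mathbb{X})$ into control factors, and dyadic summation exploiting the parabolic smoothing of the evolution family. The elementary bounds $|(\delta X)_{u,v}|\leq (v-u)^{\sigma+\varepsilon}W_{\mathbf{X},\gamma,\sigma+\varepsilon}(u,v)^{\gamma-\sigma-\varepsilon}$ and the analogue for $\mathbb{X}$, which you read directly off Definition \ref{def:control} with the one-interval partition, are indeed correct and are the key device. Your remark that the purely time-dependent increments $G(v,\cdot)-G(u,\cdot)$ are of order $(v-u)^{2\gamma}$ and hence absorbable into the polynomial remainder matches the one term the paper's proof works out in detail.

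Two points, however, need tightening. First, controlling contribution (a) by invoking the \emph{conclusion} of Lemma \ref{lem:NonAutoCRP} for $R^{G(\cdot,y)}$ does not reproduce the stated form of \eqref{ineq:VarzanehRiedel}: that remainder estimate implicitly carries Hölder norms of $\mathbf{X}$ and quadratic dependence on $\|y,y'\|_{\mathcal{D}^\gamma_{\mathbf{X},\alpha}}$, while the target inequality has one linear factor $\|y,y'\|_{\mathcal{D}^\gamma_{\mathbf{X},\alpha}}$ multiplying the $W$-part and a polynomial $P$ that does not contain $\|y,y'\|$. You must instead Taylor-expand the increments of $G$ and of $\txtD_2 G\cdot G$ \emph{inside} the sewing defect---this is exactly what produces the long decomposition \eqref{eq:IntApprox}---so that each resulting term is tagged as either (i) one $R^y$-type or $y'$-type factor times a single noise increment convertible into a $W$ bound, or (ii) a product of two or three noise increments kept in Hölder norm and sent to $P$. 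Second, the $(t-s)^{i\gamma}$ prefactor does not come from ``worst-casing $(t-u)$ to $(t-s)$'': the smoothing estimate actually used is $|U_{t,u}x|_{\alpha-i\gamma}\lesssim(t-u)^{(i-2)\gamma-\sigma}|x|_{\alpha-2\gamma-\sigma}$, whose exponent is negative so $(t-u)^{(i-2)\gamma-\sigma}$ diverges as $u\to t$ and cannot simply be replaced by $(t-s)^{(i-2)\gamma-\sigma}$. The factor $(t-s)^{i\gamma}$ falls out of the exact dyadic sum, after peeling off a small $\varepsilon$-geometric factor in $k$ and identifying the remaining $m$-sum with a Riemann sum for $\int_0^1(1-x)^{(i+1)\gamma-\sigma-\varepsilon-1}\,\txtd x$, which converges precisely under $\sigma+\varepsilon<\gamma$ (and this is where the hypothesis is used, rather than in an abstract sewing step).
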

\begin{proof}
	We define for $s\leq u\leq v\leq t$
	\begin{align*}
		\Xi_{s,t}^{u,v}:=U_{t,u}\left(G(u,y_{u})\cdot (\delta X)_{u,v}+\txtD_2G(u,y_u)G(u,y_{u})\circ\mathbb{X}_{u,v}\right)
	\end{align*}
	and  consider the dyadic partition $\pi^k:=\{\tau^m_k:=s+\frac{m}{2^k}(t-s)~\colon~ 0\leq m\leq 2^k\}$ of $[s,t]$. Then we have
	\begin{align*}
		&\left|\int_s^t U_{t,r}G_r(y_r)~\txtd \mathbf{X}_r-U_{t,s}\left(G(s,y_s)\cdot (\delta X)_{s,t}-\txtD_2G(s,y_s)G(s,y_s)\circ \mathbb{X}_{s,t}\right)\right|_{\alpha-i\gamma}\\
		&\leq \sum_{k\geq 0} \sum_{0\leq m < 2^k} |\Xi_{s,t}^{\tau_{k}^{m},\tau_{k+1}^{2m+1}}+\Xi_{s,t}^{\tau_{k+1}^{2m+1},\tau_{k}^{m+1}}-\Xi_{s,t}^{\tau_{k}^{m},\tau_{k}^{m+1}}| .
	\end{align*}
	Using Chen's relation and Taylor's theorem, we obtain 		for $s\leq u\leq v\leq w\leq t$
	\begin{align}\label{eq:IntApprox}
		\begin{split}
			\Xi_{s,t}^{u,v}&+\Xi_{s,t}^{v,w}-\Xi_{s,t}^{u,w}\\
			&= U_{t,u}\left(G(v,y_v)-G(u,y_v)\right)(\delta X)_{v,w}\\
			&+U_{t,u}\left(\int_0^1 \int_0^1 \change{\tilde{r}} \txtD_2^2 G(u,y_u+r\tilde{r}(\delta y)_{u,v})(G(u,y_u)\cdot (\delta X)_{u,v})(G(u,y_u)\cdot (\delta X)_{u,v})~\txtd r\txtd\tilde{r}\right)\cdot (\delta X)_{v,w}\\
			&+U_{t,u}\left(\int_0^1 \int_0^1 \change{\tilde{r}} \txtD_2^2 G(u,y_u+r\tilde{r}(\delta y)_{u,v})(G(u,y_u)\cdot (\delta X)_{u,v})(R^y_{u,v})~\txtd r\txtd\tilde{r}\right)\cdot (\delta X)_{v,w}\\
			&+U_{t,u} \left(\int_0^1 \txtD_2G(u,y_u+r (\delta y)_{u,v})R^y_{u,v}~\txtd r\right)\cdot (\delta X)_{v,w}\\
			&+U_{t,u} \left(\txtD_2G(v,y_v)\int_0^1 \txtD_2G(v,y_u+r (\delta y)_{u,v})G(u,y_u)\cdot (\delta X)_{u,v}~\txtd r)\right)\circ \mathbb{X}_{v,w}\\
			&+U_{t,u} \left(\txtD_2G(v,y_v)\int_0^1 \txtD_2G(v,y_u+r (\delta y)_{u,v})[R^y_{u,v}]~\txtd r)\right)\circ \mathbb{X}_{v,w}\\
			&+U_{t,u}\left(\int_0^1\txtD_2^2G(v,y_u+r (\delta y)_{u,v})G(u,y_u)\cdot (\delta X)_{u,v}G(v,y_u)~\txtd r\right)\circ \mathbb{X}_{v,w}\\
			&+ \change{U_{t,u}\left(\int_0^1\txtD_2^2G(v,y_u+r (\delta y)_{u,v})R^y_{u,v}G(v,y_u)~\txtd r\right)\circ \mathbb{X}_{v,w}}\\	&+U_{t,u}\left((\txtD_2G(v,y_u)-\txtD_2G(u,y_u))G(v,y_u)+\txtD_2G(u,y_u)(G(v,y_u)-G(u,y_u))\right)\circ \mathbb{X}_{v,w}\\
			&-U_{t,v}(U_{v,u}-\Id)\left(G(v,y_v)\cdot (\delta X)_{v,w}+\txtD_2G(v,y_v)G(v,y_v)\circ \mathbb{X}_{v,w}\right).
		\end{split}
	\end{align}
	
	We show how to treat the term in the first line, since the other terms {can be handled} by analogous arguments. We refer to \cite[Lemma 2.5]{GVR25} for similar computations.\\
	For $i=0,1,2$ we obtain using the smoothing property of the evolution family \eqref{PTR}, the $\gamma$-H\"older continuity of $X$ and the $2\gamma$-H\"older continuity of $G(\cdot,y)$ 
	\begin{align*}
		\sum_{k\geq 0}\sum_{0\leq m< 2^k}&|U_{t,u}\left(G(v,y_v)-G(u,y_v)\right)(\delta X)_{v,w}|_{\alpha-i\gamma}\\
		&\lesssim \sum_{k\geq 0}\sum_{0\leq m< 2^k} (t-u)^{(i-2)\gamma-\sigma}(w-v)^\gamma |G(v,y_v)-G(u,y_v)|_{\alpha-2\gamma-\sigma}\\
		&\lesssim \sum_{k\geq 0}\sum_{0\leq m< 2^k} (t-u)^{(i-2)\gamma-\sigma}(w-v)^\gamma (v-u)^{2\gamma}\\
		&\lesssim (t-s)^{i\gamma+\gamma-\sigma} \sum_{k\geq 0}\sum_{0\leq m< 2^k} \left(1-\frac{2m}{2^{k+1}}\right)^{(i-2)\gamma-\sigma}\left(\frac{1}{2^{k+1}}\right)^{3\gamma}\\
		&\lesssim (t-s)^{i\gamma+\gamma-\sigma} \sum_{k\geq 0}\left(\frac{1}{2^{k+1}}\right)^\varepsilon\sum_{0\leq m < 2^k} \left(1-\frac{2m}{2^{k+1}}\right)^{(i+1)\gamma-\sigma-\varepsilon-1}\frac{1}{2^{k+1}}\\
		&\lesssim (t-s)^{i\gamma+\gamma-\sigma} \sum_{k\geq 0}\left(\frac{1}{2^{k+1}}\right)^\varepsilon \frac{1}{2} \int_0^1 (1-x)^{(i+1)\gamma-\sigma-\varepsilon-1}~\txtd x\lesssim (t-s)^{i\gamma+\gamma-\sigma},
	\end{align*}
	which provides the necessary regularity stated in~\eqref{ineq:VarzanehRiedel}.
\end{proof}
\begin{remark}\label{rem:Gbound}
	We highlight why the boundedness assumption of $G$ cannot be relaxed in order to obtain integrable bounds. 
	For example, for $u=\tau_n^{m+1}, v=\tau_{n+1}^{m+1}$ and $w=\tau_{n+2}^{m+1}$ we obtain 
	\begin{align*}
		| &U_{t,v}(U_{v,u}-\textrm{Id})G(v,y_v)\cdot (\delta X)_{v,w}|_{\alpha-i\gamma}\lesssim (t-v)^{-\sigma_1}(v-u)^{\sigma_2} |G(v,y_v)|_{\alpha-i\gamma+\sigma_2-\sigma_1}|(\delta X)_{v,w}|\\
		&\lesssim (t-v)^{-\sigma_1}(v-u)^{\sigma_2} (w-v)^{\change{\sigma}+\varepsilon}W^{\gamma-\sigma-\varepsilon}_{\mathbf{X},\gamma,\sigma+\varepsilon}(v,w)  |G(v,y_v)|_{\alpha-i\gamma+\sigma_2-\sigma_1}\\
		&\lesssim (t-s)^{\sigma_2-\sigma_1+\sigma+\varepsilon}\left(1-\frac{2n}{2^{m+1}}\right)^{-\sigma_1}\left(\frac{1}{2^{m+1}}\right)^{\sigma_2+\sigma+\varepsilon}W^{\gamma-\sigma-\varepsilon}_{\mathbf{X},\gamma,\sigma+\varepsilon}(v,w) |G(v,y_v)|_{\alpha-i\gamma+\sigma_2-\sigma_1},
	\end{align*}
	with suitable choices of $\sigma_1, \sigma_2$. Using that $(y,G(\cdot,y))\in \cD^{\gamma}_{\mathbf{X},\alpha}$  is a solution of~\eqref{Main_Equation} together with a bound of the form $$|G(v,y_v)|_{\alpha-i\gamma+\sigma_2-\sigma_1}\leq \|G(v,y)\|_{\infty,\alpha-\gamma}\leq  \lVert y,G(\cdot,y)\rVert_{\mathcal{D}^\gamma_{\mathbf{X},\alpha}},$$ would lead to the choice $-i\gamma+\sigma_2-\sigma_1=-\gamma$, which entails $\sigma_2-\sigma_1+\sigma+\varepsilon=i\gamma+\sigma+\varepsilon-\gamma$. Since we assume $\sigma+\varepsilon<\gamma$, we see that the time regularity, i.e.~the exponent of $(t-s)$, is less than $i\gamma$. On the other hand, one could try to bound $G(v,y_v)$ by its Hölder norm
	\begin{align*}
		|G(v,y_v)|_{\alpha-i\gamma+\sigma_2-\sigma_1}\leq |G(0,y_0)|_{\alpha-i\gamma+\sigma_2-\sigma_1}+ v[G(\cdot,y)]_{\gamma,\alpha-2\gamma},
	\end{align*}
	but such a bound is only helpful if we further assume $G(0,y_0)=0$. In conclusion, using the control defined in~\eqref{def:control}, we cannot drop the boundedness of $G$. 
    
    \change{\begin{remark}
        This limitation has been removed in \cite{BGV25} by different techniques using another concept of controlled rough paths and control functions. The results in \cite{BGV25} also allow one to treat rough paths of lower regularity, i.e.~$\gamma\in(1/4,1/3)$. 
    \end{remark} } 
\end{remark}
\subsection{\change{An integrable a-priori bound}}\label{sec:integrable_bound}
In order to obtain integrable bounds for the rough path norm of the solution of~\eqref{Main_Equation}, we need to make certain assumptions on the noise. To be more precise, we need a Gaussian process such that the corresponding abstract Wiener and Cameron-Martin space {satisfies} the following property.

\begin{itemize}
	\item[\textbf{(N)}\namedlabel{ass:Noise}{\textbf{(N)}}]
	Let $X$ be a $d$-dimensional continuous and centered Gaussian process defined on an abstract Wiener space with associated Cameron-Martin space $\mathcal{H}$ and let $\gamma^\prime>0$ such that $\gamma+\gamma^\prime -2(\sigma+\varepsilon)>1$ for some arbitrary small $\varepsilon>0$.
	We assume that $X$ has independent \change{and identically distributed} components and the covariance $R_{X}(s,t)\coloneqq \E[X_s\otimes X_t]$ has finite $q$-variation \change{such that} $[R_{\change{X^i}}]_{q-\textnormal{var},[s,t]^2}\lesssim (t-s)^{\frac{1}{q}}$ \change{holds for every $i\in\{1,\ldots, d\}$ and} $q\in [1,2)$ where
	\begin{align*}
		[R_{\change{X^i}}]^q_{q-\textnormal{var},[s,t]^2}\coloneqq \sup_{\pi,\pi^\prime\subset [s,t]} \sum_{\substack{[u,v]\in \pi\\ [u^\prime,v^\prime]\in \pi^\prime}} |\E[X^i_{u,v} X^i_{u^\prime,v^\prime}]|^q,
	\end{align*}
	and that the $\frac{1}{\gamma^\prime}$-variation for every $h\in \mathcal{H}$ is finite, i.e.
	\begin{align*}
		\sup\limits_{\pi\subset [s,t]} \sum_{[u,v]\in \pi} |h_{v}-h_{u}|^{\frac{1}{\gamma^\prime}} < \infty,
	\end{align*}
	where the supremum is taken over all partitions $\pi$ of $[s,t]$. 
	Then it is known that $h$ can be enhanced to a rough path $\mathbf{h}:=\left(h, \int h~\txtd h\right)$. 
	Further, assume
	\begin{align}\label{CM}
		W_{\mathbf{h},\gamma^\prime,\sigma+\varepsilon}(0,1)\lesssim |h|_{\mathcal{H}}^{\frac{1}{\gamma^\prime-\sigma-\varepsilon}}.
	\end{align}
	for all $h\in \mathcal{H}$. 
\end{itemize}
In particular, assumption \ref{ass:Noise} entails that $X$ can be enhanced to a geometric $\gamma$-H\"older rough path $\mathbf{X}=(X,\mathbb{X})$, see \cite[Theorem 10.4 c)]{FH20}.
\begin{theorem}\label{thm:IntegrableBounds}
	Suppose \ref{ass:A} and \ref{ass:Noise} are fulfilled, the nonlinearities $F$ and $G$ satisfy \ref{ass:F} and \ref{ass:G1}-\ref{ass:G2} respectively and the initial condition has moments of all order, i.e. $\E[|u_0|^p_\alpha]<\infty$ for every $p\geq 1$. We further assume that $\sigma\in [0,\frac{1-\gamma}{2})$. Then there exists an integrable bound for the solution $u$ of \eqref{Main_Equation} meaning that
	\begin{align}\label{isigma}
		\|u,G(\cdot,u)\|_{\cD^{\gamma}_{\mathbf{X}(\omega),\alpha}([0,T])}\in \bigcap_{p\geq 1} L^p(\Omega).
	\end{align}
\end{theorem}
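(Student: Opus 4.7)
The plan is to derive a local a-priori bound for the controlled rough path norm on short intervals, iterate this bound over a greedy partition whose cardinality is controlled by a random variable with good integrability, and then invoke a Fernique-type argument based on \ref{ass:Noise} to conclude.

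\textbf{Local estimate.} First, I would fix $0 \leq s < t \leq T$ and insert the mild formulation \eqref{eq:MildFormulation} into the $\cD^\gamma_{\mathbf{X},\alpha}([s,t])$-norm. The rough convolution is treated by the refined sewing estimate \eqref{ineq:VarzanehRiedel} applied to $(G(\cdot,u),\txtD_2 G(\cdot,u)G(\cdot,u))$, the drift by the regularising bounds \eqref{PTR} combined with \ref{ass:F}, and the linear part by \eqref{PTR} directly. Assembling these, I expect a bound of the form
\[
\lVert u, G(\cdot, u) \rVert_{\cD^\gamma_{\mathbf{X},\alpha}([s,t])} \leq C_1 \bigl( 1 + |u_s|_\alpha \bigr) + \Psi(s,t)\, \lVert u, G(\cdot, u) \rVert_{\cD^\gamma_{\mathbf{X},\alpha}([s,t])},
\]
where $\Psi(s,t)$ aggregates $(t-s)^{\varepsilon_0}$ for some $\varepsilon_0 > 0$, powers of $W_{\mathbf{X}, \gamma, \sigma + \varepsilon}(s,t)$, and a polynomial in $[X]_{\gamma}$, $[\mathbb{X}]_{2\gamma}$. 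The restriction $\sigma \in [0,(1-\gamma)/2)$ is precisely what guarantees a strictly positive $\varepsilon_0$, so that $\Psi(s,t) \to 0$ as $t \downarrow s$.

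\textbf{Greedy partition.} Following Cass--Litterer--Lyons \cite{CLL13} and \cite{GVR25}, I would build a partition $0 = t_0 < t_1 < \ldots < t_N = T$ greedily so that $\Psi(t_i, t_{i+1}) \leq 1/2$ on each subinterval. Absorbing the last term yields $\lVert u, G(\cdot, u) \rVert_{\cD^\gamma_{\mathbf{X},\alpha}([t_i, t_{i+1}])} \leq 2 C_1 ( 1 + |u_{t_i}|_\alpha )$, and chaining via $|u_{t_{i+1}}|_\alpha \leq \lVert u, G(\cdot, u) \rVert_{\cD^\gamma_{\mathbf{X},\alpha}([t_i, t_{i+1}])}$ produces a deterministic $K > 1$ with
\[
\lVert u, G(\cdot, u) \rVert_{\cD^\gamma_{\mathbf{X},\alpha}([0,T])} \leq K^N (1 + |u_0|_\alpha).
\]
By the superadditivity of $W_{\mathbf{X},\gamma,\sigma+\varepsilon}$ and standard control calculus, $N$ is dominated by a polynomial in $W_{\mathbf{X},\gamma,\sigma+\varepsilon}(0,T)$, $[X]_{\gamma}$ and $[\mathbb{X}]_{2\gamma}$.

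\textbf{Integrability and main obstacle.} To conclude \eqref{isigma}, it suffices to show that $K^N$ has moments of all order, which reduces to exponential moments of the polynomial combination above. The H\"older seminorms have Gaussian-type tails by the classical Fernique theorem together with the second-Wiener-chaos structure of $\mathbb{X}$, and for $W_{\mathbf{X},\gamma,\sigma+\varepsilon}(0,T)$ this is precisely where \eqref{CM} enters: the deterministic bound $W_{\mathbf{h},\gamma',\sigma+\varepsilon}(0,1) \lesssim |h|_{\mathcal{H}}^{1/(\gamma' - \sigma - \varepsilon)}$ on Cameron--Martin paths, combined with a generalised Fernique / Borell--TIS argument on the abstract Wiener space as in \cite[Proposition 2.7]{GVR25}, lifts this to a Gaussian-type tail for $W_{\mathbf{X}, \gamma, \sigma + \varepsilon}(0, T)$, strong enough to yield the required exponential moments. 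The main obstacle is the non-autonomous structure: the drift $F(r,u_r)$ and the explicit time dependence of $G$ produce additional remainder terms when expanding the mild formulation and the rough convolution via Taylor (as in the proof of Lemma \ref{lem:IneqRV}); one must verify that the $2\gamma$-H\"older regularity built into \ref{ass:G1}(ii) together with \ref{ass:F} suffices for these new remainders to inherit the same power $(t-s)^{\varepsilon_0}$ appearing in $\Psi(s,t)$, leaving both the absorption mechanism and the polynomial control of $N$ intact.
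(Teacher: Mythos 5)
Your proposal is correct and reconstructs the same mechanism the paper relies on: the refined sewing estimate of Lemma~\ref{lem:IneqRV} gives the local bound in terms of the control $W_{\mathbf{X},\gamma,\sigma+\varepsilon}$, a greedy partition \`a la Cass--Litterer--Lyons yields the $K^N(1+|u_0|_\alpha)$ bound, and condition~\eqref{CM} feeds into a Fernique/Borell--TIS argument for exponential moments of the number of intervals $N$. The paper's proof simply cites~\cite{GVR25} and~\cite{BS24} for exactly this chain of reasoning, so you have filled in precisely the argument behind that citation (the final pass via H\"older's inequality to combine the $K^N$ moments with $\E[|u_0|_\alpha^p]<\infty$ is left implicit in your sketch but is routine).
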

\begin{proof}
	
	Based on~\eqref{ineq:VarzanehRiedel} we obtain similar to~\cite{GVR25} and~\cite[Theorem 2.15, Lemma 2.18]{BS24}   
	\begin{align}\label{INTEGAR}
		\|u,G(\cdot,u)\|_{\cD^{\gamma}_{\mathbf{X}(\omega),\alpha}([0,T])}\leq |u_0(\omega)|_\alpha P_1(\omega, [0,T]) +P_2(\omega, [0,T]),
	\end{align}
	for some $P_1(\cdot, [0,T]), P_2(\cdot, [0,T])\in \bigcap_{p\geq 1} L^p(\Omega)$
	which proves the statement. 
\end{proof}

\begin{remark}
	Note that the restriction $\sigma\in[0,\frac{1-\gamma}{2})$ is required only for~\eqref{isigma} and arises from Lemma~\ref{lem:IneqRV}. Since $\gamma\in(\frac{1}{3},\frac{1}{2})$ this leads to a spatial regularity loss $\sigma\in(\frac{1}{4},\frac{1}{3})$. The range $\sigma\in[0,\gamma)$ which is enough for local and global well-posedness of~\eqref{Main_Equation} as established in Theorems~\ref{ex:nona} and~\ref{thm:GlobEx} \change{is treated in \cite{BGV25}}.  
\end{remark}

\subsection{Cameron-Martin space associated to the noise}\label{cm}
The main goal of this subsection is to investigate which stochastic processes satisfy Assumption \ref{ass:Noise}. In \cite[Proposition 2.12]{GVR25} this condition was verified for the rough path lift of the fractional Brownian motion with Hurst parameter $H\in (\frac{1}{3},\frac{1}{2})$. Here we focus on Gaussian Volterra processes~\cite{Cass21}. To this aim, we let $(B_t)_{0\leq t \leq T}$ be a real-valued Brownian motion.
\begin{definition}\label{def:VoltPr}
	A Volterra process is a centered, Gaussian process $(V_t)_{t\in[0,T]}$  which is represented by the It\^o integral
	\begin{align}\label{def:Volterra}
		V_{t}=\int_{0}^{t}K(t,s)~\mathrm{d}B_{s},
	\end{align}
	for a kernel $K:[0,T]\times [0,T]\to \R$.
	
\end{definition}
The covariance function of $V$ is given by
\[ R_V(t,s)=\E[V_t V_s]=\int_{0}^{t\wedge s} K(t,r) K(s,r)~\txtd r. \]
We further make the following assumptions on the kernel.

\begin{assumption}\label{kernelAssumption}
	\begin{itemize}
		\item[i)] $K(0,s)=0$ for all $s\in[0,T]$ and $K(t,s)=0$ for $0<t<s\leq T$.
		\item[ii)] There exists a constant $C>0$ and a parameter $\iota>0$
	\begin{align*}
		\int_0^T (K(t,r)-K(s,r))^2~\txtd r\leq C |t-s|^\iota, \text{ for all } s,t\in [0,T].
	\end{align*}
	\item [iii)] There exists a constant $C>0$ and a parameter $\beta\in[0,\frac{1}{4})$ such that
	\begin{enumerate}
		\item $|K(t,s)|\leq C s^{-\beta}(t-s)^{-\beta}$ for all $0<s<t \leq T$,
		\item $K(\cdot,s)\in C^1$ and 
		\[ \Big| \frac{\partial K(t,s)}{\partial t} \Big| \leq C (t-s)^{-(\beta+1)},~~0<s<t\leq T.\]
	\end{enumerate}
\end{itemize}
\end{assumption}
Furthermore, we can associate to each Volterra kernel a Hilbert-Schmidt operator $\mathcal{K}:L^2([0,T];\R)\to L^2([0,T];\R)$ defined as
\[ (\mathcal{K}f)(t):=\int_0^T K(t,s)f(s)~\txtd s,~~f\in L^2([0,T];\R). \]

\change{\begin{lemma}\label{lem:VoltRP}
	Let $(V_t)_{t\geq 0}$ be a Volterra process with kernel $K$. Then there exists a $\beta$-Hölder continuous modification for every $\beta\in (0,\frac{\iota}{2})$. If additionally $\iota\in\left(\frac{2}{3},1\right]$, then there exists a two-parameter function $\mathbb V$ such that $(V,\mathbb V)$ is a (weakly geometric) $\beta$-Hölder rough path for every $\beta\in \left(\frac{1}{3},\frac{\iota}{2}\right)$.
\end{lemma}
\begin{proof}
	The existence of a Hölder-continuous modification follows directly from Assumption \eqref{kernelAssumption} ii) and Kolmogorov's continuity theorem \cite[1.8.1]{Kun19}.
	
	To prove the existence of a rough path lift, we use \cite[Theorem 10.4 c)]{FH20}. Let $(u,v),(\tilde{u},\tilde{v})\in \Delta_{[0,T]}$, then we have $\min\{u,\tilde{u}\},\min\{u,\tilde{v}\},\min\{\tilde{u},v\}\leq \min\{v,\tilde{v}\}$, which leads to
	\begin{align*}
		\E\big[V_{u,v} V_{\tilde{u},\tilde{v}}\big]&=\int_0^{\min\{v,\tilde{v}\}}K(v,r)K(\tilde{v},r)~\txtd r-\int_0^{\min\{v,\tilde{u}\}}K(v,r)K(\tilde{u},r)~\txtd r\\
		&\quad-\int_0^{\min\{u,\tilde{v}\}}K(u,r)K(\tilde{v},r)~\txtd r+\int_0^{\min\{u,\tilde{u}\}}K(u,r)K(\tilde{u},r)~\txtd r\\
		&=\int_0^{\min\{v,\tilde{v}\}}\big(K(v,r)-K(u,r)\big)\big(K(\tilde{v},r)-K(\tilde{u},r)\big)~\txtd r,
	\end{align*}
	using $K(s,t)=0$ for $s<t$. With this equality, Assumption \eqref{kernelAssumption} ii) as well as the Hölder and Young inequalities, we obtain
	\begin{align*}
		\E\big[V_{u,v} V_{\tilde{u},\tilde{v}}\big]&\lesssim \left(\int_0^{\min\{v,\tilde{v}\}}\big(K(v,r)-K(u,r)\big)^2~\txtd r\right)^{\frac{1}{2}}\left(\int_0^{\min\{v,\tilde{v}\}}\big(K(\tilde{v},r)-K(\tilde{u},r)\big)^2~\txtd r\right)^{\frac{1}{2}}\\
		&\lesssim (v-u)^{\frac{\iota}{2}}(\tilde{v}-\tilde{u})^{\frac{\iota}{2}}\lesssim (v-u)^{\iota}+(\tilde{v}-\tilde{u})^{\iota}.
	\end{align*}
	In particular, this implies that
	\begin{align*}
		\left[R_V\right]^{\frac{1}{\iota}}_{\frac{1}{\iota}\textnormal{-var},[s,t]}\lesssim \sup_{\pi\subset [s,t]} \sum_{[u,v]\in \pi} |v-u|^{\frac{1}{\iota}\iota}= |t-s|.
	\end{align*}
	Due to $\iota\in\left(\frac{2}{3},1\right]$, the assumptions of \cite[Theorem 10.4 c)]{FH20} are fulfilled, which means that $V$ can be enhanced to a weakly geometric rough path.
	\end{proof}}
In particular, we can assume that the Volterra process $(V_t)_{t\geq 0}$ is $\gamma$-Hölder continuous for $\gamma\in(\frac{1}{3},\frac{1}{2})$ choosing $\iota$ accordingly.

\begin{remark}
Standard examples of Volterra processes are the fractional Brownian motion, which satisfies iii) for $\beta=\frac{1}{2}-H$ provided that $H\in(\frac{1}{4},\frac{1}{2})$, and the fractional Ornstein-Uhlenbeck process. Another example is the L\'evy fractional Brownian motion (or Liouville fractional Brownian motion)~\cite{DVolterra,Cass21} with Hurst index $H\in(0,1)$ whose kernel is given by
\[K(t,s)=\frac{1}{\Gamma(H+\frac{1}{2})}(t-s)^{H-\frac{1}{2}}\mathbbm{1}_{[0,t)}(s),  \]
where $\Gamma$ denotes the Gamma function.~This is an example of a Volterra process whose increments are not stationary. 
\end{remark}

We further denote by $\mathcal{H}$ the associated Cameron–Martin space.~For Volterra processes, it is known that the Cameron–Martin space is given by $ \mathcal{H}=\change{\mathcal K}(\change{L}^{2}([0,T];\R))$, see~\cite[Section 3]{DVolterra}, meaning that every $h\in\cH$ has the representation $h(t)=\int_{0}^{t}K(t,s)g(s)~\mathrm{d}s$, where $g\in \change{L}^{2}([0,T];\R)$ and $|h|_{\mathcal{H}}=\Vert g\Vert_{\change{L}^{2}([0,T];\R)}$.~Furthermore, for every $h\in\mathcal{H}$, one can show that $h(t)=\mathbb{E}[Z V_t]$, where $Z$ is an
element of the $\change{L}^{2}$-closure of the span of $(V_{t})_{t\in [0,T]}$ and $\mathcal{H}$ is a Hilbert space with the  inner product given by
\begin{align*}
\langle h^1,h^2\rangle_{\cH}=\mathbb{E}[Z_1Z_2],
\end{align*}
where $h^{1}(t)=\E[Z_1V_t]$ and $h^2(t)=\E[Z_2 V_t]$. \\

In order to prove that $V$ satisfies \eqref{CM}, we further assume that $K$ satisfies 
\begin{itemize}
\item[\textbf{(K1)}\namedlabel{ass:K1}{\textbf{(K1)}}] $\sup\limits_{s\in [0,1-t]}\int_{0}^{1}\vert K(t+s,\tau)- K(s,\tau)\vert~\mathrm{d}\tau=\mathcal{O}(t^{\gamma+\frac{1}{2}})$,
\item[\textbf{(K2)}\namedlabel{ass:K2}{\textbf{(K2)}}] $\sup\limits_{\tau\in [0,1]}\int_{0}^{1-t}\vert K(t+s,\tau)- K(s,\tau)\vert ~\mathrm{d}s=\mathcal{O}(t^{\gamma+\frac{1}{2}})$,
\end{itemize}
for all $t\in [0,1]$.
\begin{lemma}\label{lem:CamMart}
We assume that the kernel $K$ satisfies \ref{ass:K1}-\ref{ass:K2}. Then, for every $\frac{1}{2}<\gamma^{\prime}<\gamma+\frac{1}{2}$, there exists a constant $C(\change{\gamma},\gamma^\prime)>0$
such that 
\begin{align}\label{AA_1}
	\forall h\in \mathcal{H}: \ \  \vert h\vert_{W^{\gamma^{\prime},2}}:=\bigg(\int_{[0,1]^2}\frac{\vert h(u)-h(v)\vert^2}{\vert u-      v\vert^{1+2\gamma^\prime}}~\mathrm{d}u\mathrm{d}v\bigg)^{\frac{1}{2}}\leq C(\change{\gamma},\gamma^\prime)\vert h\vert_{\mathcal{H}}.
\end{align}
In addition, for every $0\leq \tilde{\eta}<\gamma^\prime-\frac{1}{2}$ \change{there exists} a constant $\tilde{C}(\gamma,\change{\gamma^\prime},\tilde{\eta})>0$
\begin{align}\label{CC_1}
	W_{\textbf{h},\gamma^\prime,\tilde{\eta}}(0,1)\leq \tilde{C}(\gamma,\change{\gamma^\prime},\tilde{\eta})\vert h\vert_{\mathcal{H}}^{{\frac{1}{\gamma^\prime-\tilde{\eta}}}}.
\end{align}
\end{lemma}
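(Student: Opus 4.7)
The plan splits into the two claims. For \eqref{AA_1} I would exploit the Volterra representation $h=\mathcal{K}g$ with $g\in L^2([0,1])$ and $\|g\|_{L^2}=|h|_{\mathcal{H}}$, and combine a weighted Cauchy-Schwarz trick with a Schur-type exchange of integration that feeds both kernel estimates \ref{ass:K1} and \ref{ass:K2}. For \eqref{CC_1} the Sobolev regularity $\gamma^\prime>\tfrac{1}{2}$ gained in \eqref{AA_1} suffices to control the rough path lift via a Garsia-Rodemich-Rumsey (GRR) estimate together with a H\"older summation argument.

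For \eqref{AA_1}, extend $K(t,\cdot)$ by zero beyond $[0,t]$ and set $L_{u,v}(s):=|K(u,s)-K(v,s)|$. The identity
\[ h(u)-h(v)=\int_0^1 \big(K(u,s)-K(v,s)\big)g(s)\,\mathrm{d}s, \]
combined with Cauchy-Schwarz with respect to the measure $L_{u,v}(s)\,\mathrm{d}s$, yields
\[ |h(u)-h(v)|^2\leq \|L_{u,v}\|_{L^1}\int_0^1 L_{u,v}(s)\,g(s)^2\,\mathrm{d}s. \]
Applying \ref{ass:K1} to estimate $\|L_{u,v}\|_{L^1}\lesssim|u-v|^{\gamma+\frac{1}{2}}$, substituting into the Gagliardo seminorm and exchanging the order of integration between $s$ and $(u,v)$ gives
\[ |h|^2_{W^{\gamma^\prime,2}}\lesssim \int_0^1 g(s)^2\,\Phi(s)\,\mathrm{d}s,\qquad \Phi(s):=\int_{[0,1]^2}\frac{L_{u,v}(s)}{|u-v|^{1/2+2\gamma^\prime-\gamma}}\,\mathrm{d}u\,\mathrm{d}v. \]
For fixed $s$, a change of variable $u\mapsto v+t$ together with \ref{ass:K2} reduces $\Phi(s)$ to a Beta-type integral $\int_0^1 t^{2\gamma-2\gamma^\prime}\,\mathrm{d}t$, which is finite precisely when $\gamma^\prime<\gamma+\frac{1}{2}$, uniformly in $s$. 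Taking the supremum in $s$ gives \eqref{AA_1}.

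For \eqref{CC_1}, the embedding \eqref{AA_1} places $h$ in $W^{\gamma^\prime,2}$ with $\gamma^\prime>\tfrac{1}{2}$. A GRR estimate with $p=2$ then produces a subadditive control
\[ \Psi_h(u,v):=\int_u^v\int_u^v \frac{|h_y-h_x|^2}{|y-x|^{1+2\gamma^\prime}}\,\mathrm{d}x\,\mathrm{d}y, \]
with $\Psi_h(0,1)=|h|^2_{W^{\gamma^\prime,2}}$, such that $|h_v-h_u|\lesssim (v-u)^{\gamma^\prime-\frac{1}{2}}\Psi_h(u,v)^{\frac{1}{2}}$. Since $\gamma^\prime>\tfrac{1}{2}$, the iterated integral $\int_u^v (h_r-h_u)\,\mathrm{d}h_r$ exists as a Young integral and inherits, via a Young-Loeve type estimate, the bound $\big|\int_u^v(h_r-h_u)\,\mathrm{d}h_r\big|\lesssim (v-u)^{2\gamma^\prime-1}\Psi_h(u,v)$. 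Substituting these pointwise bounds into the definition of $W_{\mathbf{h},\gamma^\prime,\tilde{\eta}}(0,1)$, each summand takes the form $(v-u)^{\alpha}\Psi_h(u,v)^{\frac{1}{2(\gamma^\prime-\tilde{\eta})}}$ with $\alpha\geq 0$ thanks to the assumption $\tilde{\eta}<\gamma^\prime-\tfrac{1}{2}$. A H\"older inequality on the partition sum with conjugate exponents chosen so that one factor telescopes via $\sum(v-u)\leq 1$ and the other via the subadditive bound $\sum\Psi_h(u,v)\leq\Psi_h(0,1)$ yields
\[ W_{\mathbf{h},\gamma^\prime,\tilde{\eta}}(0,1)\lesssim |h|^{\frac{1}{\gamma^\prime-\tilde{\eta}}}_{W^{\gamma^\prime,2}}, \]
and combining with \eqref{AA_1} gives \eqref{CC_1}.

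The main obstacle is step one: the key insight is that Cauchy-Schwarz must be applied against the weighted measure $L_{u,v}(s)\,\mathrm{d}s$ so that both \ref{ass:K1} and \ref{ass:K2} can be used, reducing the two kernel estimates to $L^1$-type bounds. A direct $L^2$ estimate on the kernel increment would only yield the far weaker range $\gamma^\prime<\iota/2$ coming from the H\"older continuity of $V$ itself, whereas the Volterra structure combined with the $L^1$ kernel estimates delivers the optimal threshold $\gamma^\prime<\gamma+\tfrac{1}{2}$. The second step is mostly bookkeeping, but the precise choice of H\"older exponents is essential to recover the stated power $1/(\gamma^\prime-\tilde{\eta})$ on the right-hand side of \eqref{CC_1}.
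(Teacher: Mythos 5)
Your proof is correct and follows essentially the same route as the paper. For \eqref{AA_1}, both arguments apply Cauchy--Schwarz against the weighted measure $|K(u,s)-K(v,s)|\,\mathrm{d}s$, feed \ref{ass:K1} through the $L^1$-norm of the kernel increment and \ref{ass:K2} through the Tonelli-exchanged inner integral, and land on the same Beta-type integral $\int_0^1 t^{2\gamma-2\gamma^\prime}\mathrm{d}t$; the paper splits the Gagliardo seminorm into four terms according to the sign of $u-v$ and the piece $\int_{u\wedge v}^{u\vee v}K(u\vee v,\tau)g(\tau)\mathrm{d}\tau$, whereas you consolidate these by extending $K(t,\cdot)$ by zero past $t$, which is exactly what the paper does term-by-term but stated up front, so this is a cosmetic simplification rather than a new idea.

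For \eqref{CC_1}, both proofs build a subadditive control out of the Gagliardo integral and the Besov--H\"older estimate $|h(t)-h(s)|^2\lesssim |t-s|^{2\gamma^\prime-1}\Psi_h(s,t)$. One small point of care: you justify the Young integral via a H\"older-type Young--Loeve estimate, but the local H\"older exponent $\gamma^\prime-\tfrac{1}{2}$ is below $\tfrac12$ here, so the H\"older-framework condition $2\alpha>1$ fails. The correct justification, which the paper invokes via the Besov--variation embedding, is that $h$ has finite $\tfrac{1}{\gamma^\prime}$-variation with $\tfrac{1}{\gamma^\prime}<2$, and the variation Young--Loeve inequality combined with the bound $[h]_{\tfrac{1}{\gamma^\prime}\text{-var},[u,v]}\lesssim (v-u)^{\gamma^\prime-\tfrac12}\Psi_h(u,v)^{\tfrac12}$ does deliver your claimed estimate $|\mathbbm{h}_{u,v}|\lesssim (v-u)^{2\gamma^\prime-1}\Psi_h(u,v)$. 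With that repair, your H\"older summation on the partition sum re-derives the control-function fact the paper cites from Friz--Victoir (Exercise 1.10): the two exponents $\tfrac{\gamma^\prime-\tfrac12-\tilde\eta}{\gamma^\prime-\tilde\eta}$ and $\tfrac{1}{2(\gamma^\prime-\tilde\eta)}$ sum to one, which is precisely the conjugacy your H\"older inequality uses. So the two proofs coincide in substance; the paper's treatment of the $\mathbbm{h}$ term is compressed into a single ``$\lesssim$'' that your argument makes slightly more explicit.
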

\begin{proof}
We begin by proving \eqref{AA_1}. A similar statement for the Cameron-Martin space of the fractional Brownian motion can be looked up in \cite[Theorem 3]{FV06b}. Recall, that every $h\in \mathcal{H}=\change{\mathcal K}(\change{L}^2([0,T];\R))$ can be written as $h(t)=\int_{0}^{t}K(t,\tau)g(\tau)~\mathrm{d}\tau$ for some $g\in \change{L}^2([0,T];\R)$. Then we obtain
\begin{align*}
	h(u)-h(v)=
	\begin{cases}
		\int_{0}^{v}(K(u,\tau)-K(v,\tau))g(\tau)~\mathrm{d}\tau+\int_{v}^{u}K(u,\tau)g(\tau)~\mathrm{d}\tau,& 1\geq u\geq v\geq 0 \\
		\int_{0}^{u}(K(u,\tau)-K(v,\tau))g(\tau)~\mathrm{d}\tau+\int_{u}^{v}K(v,\tau)g(\tau)~\mathrm{d}\tau,& 0\leq  u<v\leq 1
	\end{cases},
\end{align*}
which leads to 
\begin{align}\label{ineq:SobNorm}
	\begin{split}
		|h|^2_{W^{\gamma^\prime,2}_0}&\leq 2 \int_0^1 \int_v^1 \frac{\left(\int_0^v (K(u,\tau)-K(v,\tau))g(\tau)~\txtd \tau\right)^2}{|u-v|^{1+2\gamma^\prime}}~\txtd u \txtd v +2 \int_0^1 \int_v^1 \frac{\left(\int_v^u K(u,\tau)g(\tau)~\txtd \tau\right)^2}{|u-v|^{1+2\gamma^\prime}}~\txtd u \txtd v\\
		&+2 \int_0^1 \int_0^v \frac{\left(\int_0^u (K(u,\tau)-K(v,\tau))g(\tau)~\txtd \tau\right)^2}{|u-v|^{1+2\gamma^\prime}}~\txtd u \txtd v+2 \int_0^1 \int_0^v \frac{\left(\int_u^v K(v,\tau)g(\tau)~\txtd \tau\right)^2}{|u-v|^{1+2\gamma^\prime}}~\txtd u \txtd v.
	\end{split}
\end{align}
Due to the Cauchy-Schwarz inequality and \ref{ass:K1} we further obtain 	for $v \leq u\leq 1$
\begin{align}\label{ineq:KernelEst}
	\begin{split}
		\Big(\int_0^v &(K(u,\tau)-K(v,\tau))g(\tau)~\txtd \tau\Big)^2\leq \int_0^v |K(u,\tau)-K(v,\tau)|~\txtd \tau\int_0^v |K(u,\tau)-K(v,\tau)|g^2(\tau)~\txtd \tau \\
		&\leq \sup\limits_{s\in [0,1-\change{(u-v)}]} \int_0^1 |K(u-v+s,\tau)-K(s,\tau)|~\txtd \tau\int_0^v |K(u,\tau)-K(v,\tau)|g^2(\tau)~\txtd \tau\\
		&=\mathcal{O}((u-v)^{\gamma+\frac{1}{2}})\int_0^v |K(u,\tau)-K(v,\tau)|g^2(\tau)~\txtd \tau,
	\end{split}
\end{align}
and similarly for $u<v\leq 1$ $$\left(\int_0^u (K(u,\tau)-K(v,\tau))g(\tau)~\txtd \tau\right)^2\leq \mathcal{O}((u-v)^{\gamma+\frac{1}{2}})\int_0^u |K(u,\tau)-K(v,\tau)|g^2(\tau)~\txtd \tau.$$
Using \ref{ass:K2}, \eqref{ineq:KernelEst} and Tonelli's theorem we can estimate the first term in \eqref{ineq:SobNorm} 
\begin{align*}
	\int_{0}^{1}\int_{v}^{1}&\frac{\big(\int_{0}^{v}(K(u,\tau)-K(v,\tau))g(\tau)~\mathrm{d}\tau\big)^2}{|u-v|^{1+2\gamma^\prime}}~\mathrm{d}u\mathrm{d}v\lesssim \int_{0}^{1}\int_{v}^{1}\frac{\int_{0}^{v}\vert K(u,\tau)-K(v,\tau)\vert  g^2(\tau)~\mathrm{d}\tau}{|u-v|^{2\gamma^\prime-\gamma+\frac{1}{2}}}~\mathrm{d}u\mathrm{d}v\\
	&= \int_{0}^{1}\int_{0}^{1-v}\frac{\int_{0}^{v}\vert K(v+x,\tau)-K(v,\tau)\vert g^2(\tau)~\mathrm{d}\tau}{|x|^{2\gamma^\prime-\gamma+\frac{1}{2}}}~\mathrm{d}x\mathrm{d}v\\
	&=\int_{0}^{1} g^2(\tau)\int_{0}^{1-\tau}\frac{\int_{\tau}^{1-x}\vert K(v+x,\tau)-K(v,\tau)\vert~\mathrm{d}v}{|x|^{2\gamma^\prime-\gamma+\frac{1}{2}}}~\mathrm{d}x\mathrm{d}\tau\\
	&\leq \int_{0}^{1} g^2(\tau)\int_{0}^{1-\tau}\frac{\int_{0}^{1-x}\vert K(v+x,\tau)-K(v,\tau)\vert~\mathrm{d}v}{|x|^{2\gamma^\prime-\gamma+\frac{1}{2}}}~\mathrm{d}x\mathrm{d}\tau\leq |h|^{2}_{\mathcal{H}}\int_{0}^{1}|x|^{2\gamma-2\gamma^{\prime}}~\mathrm{d}x\lesssim |h|^{2}_{\mathcal{H}}.
\end{align*}
A similar computation can be used to estimate the third term in \eqref{ineq:SobNorm}, since with Tonelli's theorem and \eqref{ineq:KernelEst} we get 
\begin{align*}
	\int_{0}^{1}\int_{0}^{v}&\frac{\big(\int_{0}^{u}(K(u,\tau)-K(v,\tau))g(\tau)\mathrm{d}\tau\big)^2}{|u-v|^{1+2\gamma^\prime}}~\mathrm{d}u\mathrm{d}v\lesssim \int_{0}^{1}\int_{0}^{v}\frac{\int_{0}^{u}|K(u,\tau)-K(v,\tau)|g^2(\tau)\mathrm{d}\tau}{|u-v|^{\frac{1}{2}+2\gamma^\prime-\gamma}}~\mathrm{d}u\mathrm{d}v\\
	&=\int_{0}^{1}\int_{u}^{1}\frac{\int_{0}^{u}|K(u,\tau)-K(v,\tau)|g^2(\tau)\mathrm{d}\tau}{|u-v|^{\frac{1}{2}+2\gamma^\prime-\gamma}}~\mathrm{d}v\mathrm{d}u\lesssim |h|^{2}_{\mathcal{H}}.
\end{align*}
To estimate the second and fourth term in \eqref{ineq:SobNorm}, we use the fact that $K(s,t)=0$ for $s\geq t$. Then, similar as in \eqref{ineq:KernelEst}, we obtain
\begin{align*}
	\begin{split}
		\Big(\int_v^u &(K(u,\tau)-\underbrace{K(v,\tau)}_{=0})g(\tau)~\txtd \tau\Big)^2\leq \int_v^u |K(u,\tau)-K(v,\tau)|~\txtd \tau\int_v^u |K(u,\tau)-K(v,\tau)|g^2(\tau)~\txtd \tau \\
		&=\mathcal{O}((u-v)^{\gamma+\frac{1}{2}})\int_0^u |K(u,\tau)-K(v,\tau)|g^2(\tau)~\txtd \tau,
	\end{split}
\end{align*}
for $u>v$. This leads to 
\begin{align*}
	\int_{0}^{1}\int_{v}^{1}&\frac{\big(\int_{v}^{u}K(u,\tau)g(\tau)\mathrm{d}\tau\big)^2}{|u-v|^{1+2\gamma^\prime}}~\mathrm{d}u\mathrm{d}v=\int_{0}^{1}\int_{v}^{1}\frac{\big(\int_{v}^{u}(K(u,\tau)-K(v,\tau))g(\tau)~\mathrm{d}\tau\big)^2}{|u-v|^{1+2\gamma^\prime}}~\mathrm{d}u\mathrm{d}v\\
	&\lesssim \int_{0}^{1}\int_{v}^{1}\frac{\int_{0}^{u}|K(u,\tau)-K(v,\tau)|g^2(\tau)~\mathrm{d}\tau}{|u-v|^{\frac{1}{2}+2\gamma^\prime-\gamma}}~\mathrm{d}u\mathrm{d}v \\
	&=\int_{0}^{1}\int_{v}^{1}\frac{\int_{0}^{v+x}|K(v+x,\tau)-K(v,\tau)|g^2(\tau)~\mathrm{d}\tau}{|x|^{\frac{1}{2}+2\gamma^\prime-\gamma}}~\mathrm{d}x\mathrm{d}v \\
	&=\int_{0}^{1}\int_{0}^{1}\frac{\int_{\max\{\tau-x,0\}}^{1-x}|K(v+x,\tau)-K(v,\tau)|g^2(\tau)~\mathrm{d}v}{|x|^{\frac{1}{2}+2\gamma^\prime-\gamma}}~\mathrm{d}x\mathrm{d}\tau \\
	&\leq \int_{0}^{1}\int_{0}^{1}\frac{\int_{0}^{1-x}|K(v+x,\tau)-K(v,\tau)|g^2(\tau)~\mathrm{d}v}{|x|^{\frac{1}{2}+2\gamma^\prime-\gamma}}~\mathrm{d}x\mathrm{d}\tau \lesssim |h|^{2}_{\cH}
\end{align*}
and again with a similar computation $ \int_{0}^{1}\int_{0}^{v}\frac{\big(\int_{u}^{v}K(v,\tau)g(\tau)\mathrm{d}\tau\big)^2}{|u-v|^{1+2\gamma^\prime}}~\mathrm{d}u\mathrm{d}v\lesssim |h|^{2}_{\cH}$. This shows \eqref{AA_1}.

In order to show \eqref{CC_1}, note that \eqref{AA_1} together with the Besov–variation embedding~ \cite[Corollary A.3]{FV10}, yields that the $\frac{1}{\gamma^\prime}$-variation of every $h\in \cH$ is finite. Since $\gamma^\prime>\frac{1}{2}$, the Young integral
\begin{align*}
	\Delta_{[0,1]}\to \R, (s,t)\mapsto\mathbbm{h}_{s,t}\coloneqq \int_s^t h(r)-h(s)~\txtd h(r)
\end{align*}
is well-defined. Using the Besov-Hölder embedding \cite[Corollary A.2]{FV10} we obtain
\begin{align*}
	\vert h(t)-h(s)\vert^{2}\lesssim  \vert t-s\vert^{2\gamma^\prime-1}\int_{[s,t]^2}\frac{\vert h(u)-h(v)\vert^2}{\vert u-v\vert^{1+2\gamma^\prime}}~\mathrm{d}u\mathrm{d}v.
\end{align*}
for any $s,t\in [0,1]$. This yields 
\begin{align}\label{BB_1}
	\frac{\vert h(t)-h(s)\vert^{\frac{1}{\gamma^\prime-\tilde{\eta}}}}{\vert t-s\vert^{\frac{\tilde{\eta}}{\gamma^\prime-\tilde{\eta}}}}\lesssim \vert t-s\vert^{\frac{\gamma^\prime-\frac{1}{2}-\tilde{\eta}}{\gamma^\prime-\tilde{\eta}}}\Big(\int_{[s,t]^2}\frac{\vert h(u)-h(v)\vert^2}{\vert u-v\vert^{1+2\gamma^\prime}}~\mathrm{d}u\mathrm{d}v\Big)^{\frac{1}{2(\gamma^\prime-\tilde{\eta})}}\coloneqq w(s,t).
\end{align}
Note that the right-hand side is a control function. Indeed, $(s,t)\mapsto t-s$ and the integral are obviously controls and then the product is also a control function due to $\frac{\gamma^\prime-\frac{1}{2}-\tilde{\eta}}{\gamma^\prime-\tilde{\eta}}+\frac{1}{2(\gamma^\prime-\tilde{\eta})}= 1$ and $\gamma^\prime> \frac{1}{2}+\tilde{\eta}$, see \cite[Exercise 1.10]{FV10}. In particular, $w$ is subadditive, which leads to
\begin{align*}
	W_{\textbf{h},\tilde{\eta},\gamma^\prime}(0,1) &= \sup_{\pi\subset [0,1]} \left\{ \sum_{[u,v]\in\pi}(v-u)^{\frac{-\eta}{\gamma^\prime-\eta}}\big{[}|h(v)-h(u)|^{\frac{1}{\gamma^\prime-\eta}}+|\mathbbm{h}_{u,v}|^{\frac{1}{2(\gamma^\prime-\eta)}} \big{]} \right\}\\
	&\lesssim \sup_{\pi\subset [0,1]} \left\{ \sum_{[u,v]\in\pi}(v-u)^{\frac{-\eta}{\gamma^\prime-\eta}}|h(v)-h(u)|^{\frac{1}{\gamma^\prime-\eta}} \right\}\lesssim \sup_{\pi\subset [0,1]} \left\{\ \sum_{[u,v]\in\pi}w(u,v)\right\}\\
	&\leq w(0,1)=\Big(\int_{[0,1]^2}\frac{\vert h(u)-h(v)\vert^2}{\vert u-v\vert^{1+2\gamma^\prime}}~\mathrm{d}u\mathrm{d}v\Big)^{\frac{1}{2(\gamma^\prime-\tilde{\eta})}}\lesssim \vert h\vert_{\mathcal{H}}^{{\frac{1}{\gamma^\prime-\tilde{\eta}}}},
\end{align*}
where we used \eqref{AA_1} and \eqref{BB_1}. 
\end{proof}
\change{\begin{remark}
	Note that Assumption \ref{ass:K1} can be replaced by 
	\begin{align*}
		\int_{0}^{1}\vert K(t,\tau)- K(s,\tau)\vert~\mathrm{d}\tau=\mathcal{O}(|t-s|^{\gamma+\frac{1}{2}}).
	\end{align*}
	However, this is more difficult to verify in applications, which is why we impose~\ref{ass:K1}. 
	\end{remark}}
	In particular,  choosing $\tilde{\eta}=\sigma+\varepsilon$ and $\tilde{\eta}+\frac{1}{2}<\gamma^\prime<\gamma+\frac{1}{2}$, it can easily be seen that $\gamma+\gamma^\prime>1+\tilde{\eta}$ holds and therefore the condition on the Cameron-Martin space in \ref{ass:Noise} is fulfilled. Now we want to state some examples of Volterra processes which satisfies the assumptions of Lemma \ref{lem:CamMart}.
	\begin{example}
\begin{itemize}
	\item[i)] {\em (Fractional Brownian motion).} The fractional Brownian motion can be represented as a Volterra process using the kernel
	\begin{align*}
		K(t,s)\coloneqq \frac{(t-s)^{H-\frac{1}{2}}}{\Gamma(H+\frac{1}{2})} f_h\left(\frac{1}{2}-H,H-\frac{1}{2},h+\frac{1}{2},1-\frac{t}{s}\right)\mathbbm{1}_{[0,t)}(s),
	\end{align*}
	where $\Gamma$ is the Gamma- and $f_h$ the hypergeometric function. This kernel satisfies the Assumption \ref{kernelAssumption} i), ii) and iii) for $\beta=\frac{1}{2}-H$ provided that $H\in(\frac{1}{4},\frac{1}{2})$, which in particular covers our range $\gamma\in(\frac{1}{3},\frac{1}{2})$. Moreover, it can be shown that this kernel satisfies \ref{ass:K1}-\ref{ass:K2}, see \cite[Appendix A]{FV06b}. 
	\item[ii)] {\em (Ornstein-Uhlenbeck process).} The Ornstein-Uhlenbeck process has the kernel
	\begin{align*}
		K(t,s)\coloneqq e^{a(t-s)}\mathbbm{1}_{[0,t)}(s),
	\end{align*}
	for some $a<0$. It can be shown that this kernel satisfies Assumption \ref{kernelAssumption} i), ii), and iii) with $\beta=0$, as well as \ref{ass:K1}-\ref{ass:K2} since $a<0$. 
	\item[iii)] {\em (Liouville fractional Brownian motion).} We recall that the kernel for the Liouville fractional Brownian motion is given by 
	\begin{align*}
		K(t,s)=\frac{1}{\Gamma(H+\frac{1}{2})}(t-s)^{H-\frac{1}{2}}\mathbbm{1}_{[0,t)}(s).
	\end{align*}
	for $H\in (0,1)$. One can prove that this kernel satisfies Assumption \ref{kernelAssumption} i), ii) and iii) for $\iota=H$ provided that $H\in(\frac{1}{4},\frac{1}{2})$.  Furthermore,~\ref{ass:K1}-\ref{ass:K2} can easily be verified.  
\end{itemize}
\end{example}


\begin{remark}
Note that Gaussianity and condition~\eqref{CM} are essential for our arguments, and we therefore work with Gaussian Volterra processes, in contrast to Volterra rough paths which are given by $V_t=\int_0^t K(t,s)~\txtd X_s$ for a rough input $X$, as considered by~\cite{HT21}.
\end{remark}

\section{Rough Gronwall's inequality}\label{sec:gronwall}
\subsection{The mild Gronwall Lemma}\label{sec:gronwall1}
In this section, we establish a mild Gronwall lemma for the solution of~\eqref{Main_Equation} on an arbitrary interval $[s,t]$. Therefore, we consider for $t>0$ the path component of the mild solution of~\eqref{Main_Equation} given by
\begin{align*}
\change{u}_t:=U_{t,s}\change{u}_s+\int_s^t U_{t,r}F(r,\change{u}_r)~\txtd r+\int_s^t U_{t,r}G(r,\change{u}_r)~\txtd\mathbf{X}_r,
\end{align*}
with initial condition $\change{u}_s\in E_\alpha$. Under suitable assumptions, recall~\ref{ass:G1}, the Gubinelli derivative is given by $\change{u}_t^\prime=G(t,\change{u}_t)$. The goal is to obtain a bound for $(\change{u},\change{u}')=(\change{u},G(\cdot,\change{u}))\in \cD^{\gamma}_{\mathbf{X},\alpha}$ of the form
\begin{align*}
\|\change{u},G(\cdot,\change{u})\|_{\cD^\gamma_{\mathbf{X},\alpha}([s,t])}\lesssim (|\change{u}_s|_\alpha+|\change{G(s,u_s)}|_{\alpha\change{-\gamma}}) e^{C (t-s)}, 
\end{align*}
for suitable constants, similar to the classical Gronwall inequality. Furthermore, this inequality will be applied to the linearization of the equation during the course of this section. 
We note that there is also a different notion of a rough Gronwall introduced in \cite{DGHT19,Hof18}, which uses energy estimates in the framework of unbounded rough drivers instead of the mild formulation. 

Before stating the Gronwall inequality, we first specify a straightforward auxiliary result that is required in the proof. 
\begin{lemma}
Let $(\change{y},\change{y}^\prime)\in \mathcal{D}^\gamma_{\mathbf{X},\alpha}([s,t])$. Then we have
\begin{align}\label{devide_interval}
	\Vert \change{y},\change{y}^\prime\Vert_{\mathcal{D}_{\mathbf{X},{\alpha}}^{\gamma}([s,t])}\leq \rho_{\gamma,[s,t]}(\mathbf{X})\Vert \change{y},\change{y}^\prime\Vert_{\mathcal{D}_{\mathbf{X},{\alpha}}^{\gamma}([s,\change{r}])}+\Vert \change{y},\change{y}^\prime\Vert_{\mathcal{D}_{\mathbf{X},{\alpha}}^{\gamma}([\change{r},t])},
\end{align}
for every $s\leq \change{r}\leq t$.
\end{lemma}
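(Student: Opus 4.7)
The plan is to split the controlled rough path norm into its five constituent parts (the two sup norms, the Hölder seminorm of the Gubinelli derivative, and the two Hölder seminorms of the remainder) and bound each separately by comparing the contribution on $[s,t]$ to those on $[s,u]$ and $[u,t]$. For the sup norms $\|z\|_{\infty,\alpha}$ and $\|z'\|_{\infty,\alpha-\gamma}$ the splitting is immediate, since the supremum over $[s,t]$ equals the maximum of the suprema over $[s,u]$ and $[u,t]$. Similarly, for the seminorm $[z']_{\gamma,\alpha-2\gamma}$ one uses the triangle inequality by inserting $z'_u$: for $a\le u\le b$ we have $|z'_b-z'_a|_{\alpha-2\gamma}\le [z']_{\gamma,[s,u]}(u-a)^\gamma+[z']_{\gamma,[u,t]}(b-u)^\gamma\le ([z']_{\gamma,[s,u]}+[z']_{\gamma,[u,t]})(b-a)^\gamma$.

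The non-trivial part is bounding the two remainder seminorms in the cross case $a\le u\le b$. Here I would use the elementary identity, obtained by adding and subtracting $z_u$ and splitting $(\delta X)_{a,b}=(\delta X)_{a,u}+(\delta X)_{u,b}$,
\begin{equation*}
R^z_{a,b}=R^z_{a,u}+R^z_{u,b}+(z'_u-z'_a)\cdot (\delta X)_{u,b},
\end{equation*}
which follows directly from the definition $R^z_{s,t}=(\delta z)_{s,t}-z'_s\cdot(\delta X)_{s,t}$. This identity exposes precisely the cross term that forces the appearance of $\rho_{\gamma,[s,t]}(\mathbf{X})$: we estimate $|(\delta X)_{u,b}|\le [X]_{\gamma,[s,t]}(b-u)^\gamma$ and control $|z'_u-z'_a|$ either by $2\|z'\|_{\infty,\alpha-\gamma}$ (for the $(\gamma,\alpha-\gamma)$-piece, where no further Hölder information on $z'$ is available in that topology) or by $[z']_{\gamma,[s,u],\alpha-2\gamma}(u-a)^\gamma$ (for the $(2\gamma,\alpha-2\gamma)$-piece). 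After dividing by $(b-a)^\gamma$ respectively $(b-a)^{2\gamma}$ and using $(u-a)^\gamma(b-u)^\gamma\le (b-a)^{2\gamma}$, both cross contributions are absorbed into the product of $\rho_{\gamma,[s,t]}(\mathbf{X})$ with a quantity controlled by $\|z,z'\|_{\cD^\gamma_{\mathbf{X},\alpha}([s,u])}$.

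Summing the five bounds and using $\rho_{\gamma,[s,t]}(\mathbf{X})\ge 1$ yields the stated inequality (with an absolute constant that can be absorbed into $\rho_\gamma$ or into the convention of the norm). The only genuinely delicate step is the remainder splitting: one must check that all cross terms produced by Chen-type decompositions can indeed be expressed purely in terms of the $[s,u]$-norm times a rough-path factor on the whole interval, so that the $[u,t]$-norm appears without any multiplicative dependence on $\mathbf{X}$. I expect this asymmetric role of the two sub-intervals to be the main point to be careful about, as it is exactly what makes the Gronwall-type iteration in Section~\ref{sec:gronwall1} work, where the estimate will be applied successively over a chain of small sub-intervals.
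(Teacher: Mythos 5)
The paper states this lemma without giving a proof, so there is no paper argument to compare against. Your proof is the natural one and it is correct: the identity $R^z_{a,b}=R^z_{a,u}+R^z_{u,b}+(z'_u-z'_a)\cdot(\delta X)_{u,b}$ for $a\le u\le b$ follows directly from $R^z_{a,b}=(\delta z)_{a,b}-z'_a\cdot(\delta X)_{a,b}$ and the additivity of $\delta X$, and you are right that the resulting cross term must be handled differently in the two remainder scales: by $2\|z'\|_{\infty,\alpha-\gamma}$ at the $(\gamma,E_{\alpha-\gamma})$ level (where no Hölder control of $z'$ is available) and by $[z']_{\gamma,\alpha-2\gamma}$ together with $(u-a)^\gamma(b-u)^\gamma\le(b-a)^{2\gamma}$ at the $(2\gamma,E_{\alpha-2\gamma})$ level. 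The key structural observation—that all cross terms are measured on $[s,u]$ only, so the $[u,t]$-norm appears with coefficient $1$—is also exactly what makes the iterative Gronwall argument in Lemma~\ref{lem:RoughGronwall} go through.

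The one caveat, which you already flag, is the precise constant. Collecting your estimates, the cross terms contribute at most $2[X]_{\gamma,[s,t]}\|z'\|_{\infty,\alpha-\gamma,[s,u]}+[X]_{\gamma,[s,t]}[z']_{\gamma,\alpha-2\gamma,[s,u]}\le 2[X]_{\gamma,[s,t]}\Vert z,z'\Vert_{\cD^\gamma_{\mathbf{X},\alpha}([s,u])}$, so the prefactor on the $[s,u]$-norm is $1+2[X]_{\gamma,[s,t]}$, which need not be dominated by $\rho_{\gamma,[s,t]}(\mathbf{X})=1+[X]_{\gamma,[s,t]}+[\mathbb{X}]_{2\gamma,[s,t]}$ unless $[X]_\gamma\le[\mathbb{X}]_{2\gamma}$. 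Thus what this argument literally delivers is $\Vert z,z'\Vert_{\cD^\gamma_{\mathbf{X},\alpha}([s,t])}\le C\,\rho_{\gamma,[s,t]}(\mathbf{X})\Vert z,z'\Vert_{\cD^\gamma_{\mathbf{X},\alpha}([s,u])}+\Vert z,z'\Vert_{\cD^\gamma_{\mathbf{X},\alpha}([u,t])}$ for a universal constant $C$. This is entirely harmless for the subsequent Gronwall iteration (which only uses such a bound up to a deterministic constant), but if the clean constant $\rho_\gamma$ is wanted verbatim, the normalization of $\rho_\gamma$ or of the controlled-rough-path norm would need a small adjustment.
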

\begin{lemma}\label{lem:RoughGronwall}{\em(Mild rough Gronwall inequality).}
Suppose $A,F$ and $G$ satisfy the Assumptions \ref{ass:A},\ref{ass:F} and \ref{ass:G1}-\ref{ass:G2}. Then \change{the solution of \eqref{Main_Equation}} satisfies $(\change{u},G(\cdot,\change{u}_\cdot))\in \mathcal{D}^\gamma_{\mathbf{X},\alpha}([s,t])$ and we obtain the estimate
\begin{align}\label{est:NonlinRoughGron}
	\|\change{u},G(\cdot,\change{u})\|_{\cD^\gamma_{\mathbf{X},\alpha}([s,t])}\leq C_1\rho_{\gamma,[s,t]}(\mathbf{X})\left(1+|\change{u}_s|_\alpha+|G(s,\change{u}_s)|_{\alpha-\gamma}\right)e^{C_2(t-s)},
\end{align}
where the constants are given by 
\begin{align*}
	C_1&:=e^{C_2}\max\left\{\frac{1-C\kappa^\nu \Phi_3}{2C\Phi_2-1+C\kappa^\nu\Phi_3},\frac{(1-C\kappa^\nu \Phi_3)C\Phi_1}{(C\kappa^\nu\Phi_3+2C\Phi_2-1)^2}\right\},\quad C_2:=\frac{1}{\kappa}\ln{\left(\frac{2C\Phi_2}{1-C\kappa^\nu \Phi_3}\right)},
\end{align*}
with \change{$C:=C(U,\alpha,\sigma,\delta, \gamma)>1$}, $\nu:=\min\{1-2\gamma,1-\delta,\gamma-\sigma\}$, \change{$\kappa>0$} such that \change{$C\kappa^\nu\Phi_3<1$} and
\begin{align*}
	\Phi_1&:=C_F+C_G\rho_{\gamma,[s,t]}(\mathbf{X})^2+C_G\rho_{\gamma,[s,t]}(\mathbf{X}),\quad \Phi_2:=\max\{1,C_G\rho_{\gamma,[s,t]}(\mathbf{X})\}\\
	\Phi_3&:=C_F+C_G\rho_{\gamma,[s,t]}(\mathbf{X})^2.
\end{align*}
\end{lemma}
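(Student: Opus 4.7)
The plan is to first establish a local estimate on a short subinterval by combining the mild formulation of $z$ with the smoothing properties of the evolution family and the rough integral bound \eqref{est:RPIntegral}, then to iterate this local bound over subintervals of length $\kappa$ using \eqref{devide_interval}, so that a discrete Gronwall argument produces the exponential factor $e^{C_2(t-s)}$.

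For the local step, I fix $[s',t']\subset[s,t]$ with $t'-s'\leq\kappa$ and split the mild formulation into three contributions: $U_{\cdot,s'}z_{s'}$, $\int_{s'}^{\cdot}U_{\cdot,r}F(r,z_r)\,\txtd r$, and $\int_{s'}^{\cdot}U_{\cdot,r}G(r,z_r)\,\txtd\mathbf{X}_r$. The controlled rough path norm of the first piece is dominated by $|z_{s'}|_\alpha$ via \eqref{PTR}; the drift term is controlled by \ref{ass:F} together with \eqref{PTR}, producing a factor of order $(t'-s')^{1-\delta}$ multiplying $1+\|z\|_{\infty,\alpha}$; and the rough integral term is estimated by \eqref{est:RPIntegral} applied to the controlled rough path $(G(\cdot,z),\txtD_2G(\cdot,z)G(\cdot,z))$, whose well-posedness is ensured by Lemma~\ref{lem:NonAutoCRP} and \ref{ass:G1}--\ref{ass:G2}. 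This contributes $|G(s',z_{s'})|_{\alpha-\gamma}$, $|\txtD_2G(s',z_{s'})G(s',z_{s'})|_{\alpha-2\gamma}$, and a factor $(t'-s')^{\gamma-\sigma}$ times $\|G(\cdot,z),\txtD_2G(\cdot,z)G(\cdot,z)\|_{\cD^\gamma_{\mathbf{X},\alpha}([s',t'])}$. Bounding the latter linearly in $\|z,G(\cdot,z)\|_{\cD^\gamma_{\mathbf{X},\alpha}([s',t'])}$ via \ref{ass:G1}--\ref{ass:G2}, I obtain an inequality of the form
\[
\|z,G(\cdot,z)\|_{\cD^\gamma_{\mathbf{X},\alpha}([s',t'])}\leq C\rho_{\gamma,[s,t]}(\mathbf{X})\bigl[\Phi_1+\Phi_2\bigl(|z_{s'}|_\alpha+|G(s',z_{s'})|_{\alpha-\gamma}\bigr)\bigr]+C(t'-s')^\nu\Phi_3\,\|z,G(\cdot,z)\|_{\cD^\gamma_{\mathbf{X},\alpha}([s',t'])},
\]
with $\nu=\min\{1-2\gamma,1-\delta,\gamma-\sigma\}$. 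Choosing $\kappa<1$ so small that $C\kappa^\nu\Phi_3<1$ allows me to absorb the last term into the left-hand side, yielding a one-step bound in which $\|z,G(\cdot,z)\|_{\cD^\gamma_{\mathbf{X},\alpha}([s',s'+\kappa])}$ is controlled by $|z_{s'}|_\alpha+|G(s',z_{s'})|_{\alpha-\gamma}$ up to a constant depending on $\Phi_1,\Phi_2,\Phi_3$ and $\kappa$.

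To iterate, I partition $[s,t]$ into $N=\lceil(t-s)/\kappa\rceil$ subintervals $[s_k,s_{k+1}]$ with $s_k=s+k\kappa$, and set $a_k:=1+|z_{s_k}|_\alpha+|G(s_k,z_{s_k})|_{\alpha-\gamma}$. Since both $|z_{s_{k+1}}|_\alpha$ and $|G(s_{k+1},z_{s_{k+1}})|_{\alpha-\gamma}$ are dominated by the controlled rough path norm on $[s_k,s_{k+1}]$, the local bound produces the linear recursion $a_{k+1}\leq\lambda a_k+\mu$ with $\lambda=2C\Phi_2/(1-C\kappa^\nu\Phi_3)$, which solves to $a_N\lesssim\lambda^N a_0+\mu(\lambda^N-1)/(\lambda-1)$. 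Writing $\lambda^N=e^{N\log\lambda}$ and using $N\leq(t-s)/\kappa+1$ gives the exponential factor $e^{C_2(t-s)}$ with $C_2=\kappa^{-1}\log\lambda$, exactly as in the statement. Finally, iterating \eqref{devide_interval} to reassemble the subinterval norms into the full norm on $[s,t]$ introduces the global prefactor $\rho_{\gamma,[s,t]}(\mathbf{X})$ and, after collecting constants, yields \eqref{est:NonlinRoughGron} with $C_1$ of the stated form.

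The main technical difficulty lies in the local step: I must track all five components of $\|\cdot,\cdot\|_{\cD^\gamma_{\mathbf{X},\alpha}([s',t'])}$ (the two sup norms, the Hölder seminorm of the Gubinelli derivative, and the two remainder seminorms) through the mild formulation and verify that the coefficient multiplying this norm on the right-hand side carries a strictly positive power of $t'-s'$, so that absorption is possible. This forces a delicate balance between the smoothing exponents in \eqref{PTR}, the loss $\delta$ from $F$, the spatial loss $\sigma$ from $G$, and the rough integral exponent $\gamma-\sigma$ from \eqref{est:RPIntegral}, whose minimum produces $\nu$. The Gubinelli derivative term is the most subtle point: any superlinear dependence on $\|z,G(\cdot,z)\|_{\cD^\gamma_{\mathbf{X},\alpha}([s',t'])}$, such as the quadratic term \eqref{ineq:quadraticTerm}, would prevent absorption, and it is precisely the combination of \ref{ass:G2} with the fact that $z'=G(\cdot,z)$ that keeps the estimate linear and makes the Gronwall step feasible.
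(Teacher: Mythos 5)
Your proposal is correct and follows essentially the same route as the paper's proof: the local estimate on a subinterval via the mild formulation, \eqref{PTR}, and \eqref{est:RPIntegral}, the absorption of the $(t'-s')^\nu$-term after fixing $\kappa$ small, the iteration over subintervals of length $\kappa$, and the reassembly via \eqref{devide_interval}. The only cosmetic difference is that you track the quantity $a_k=1+|z_{s_k}|_\alpha+|G(s_k,z_{s_k})|_{\alpha-\gamma}$ through a linear recursion, while the paper iterates directly on $\|z,G(\cdot,z)\|_{\cD^\gamma_{\mathbf{X},\alpha}(I_n)}$; since these evaluation-point quantities are dominated by the controlled rough path norm on the preceding interval, the two recursions are equivalent.
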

\begin{proof}
Due to Theorem \ref{thm:GlobEx} we have $(\change{u},G(\cdot,\change{u}))\in \cD^{\gamma}_{\mathbf{X},\alpha}$. Then the following estimates can easily be obtained for  $s\leq \change{v}\leq \change{w}\leq t$ with $\change{w}-\change{v}<1$: 
\begin{align*}
	\|U_{\cdot,\change{v}}\change{u}_\change{v},0\|_{\cD^{\gamma}_{\mathbf{X},\alpha}([\change{v},\change{w}])}&\lesssim |\change{u}_\change{v}|_\alpha,\\
	\left\|\int_{\change{v}}^\cdot U_{\cdot,r} F(r,\change{u}_r)~\txtd r,0\right\|_{\cD^{\gamma}_{\mathbf{X},\alpha}([\change{v},\change{w}])}&\leq C_F (\change{w}-\change{v})^{\min\{1-\delta,1-2\gamma\}}(1+\|\change{u},G(\cdot,\change{u})\|_{\cD^\gamma_{\mathbf{X},\alpha}([\change{v},\change{w}])}),\\
	\|G(\cdot,\change{u}), (G(\cdot,\change{u}))^\prime\|_{\cD^\gamma_{\mathbf{X},\alpha-\sigma}([\change{v},\change{w}])}&\leq C_G\rho_{\gamma,[s,t]}(\mathbf{X})(1+\|\change{u},G(\cdot,\change{u})\|_{\cD^\gamma_{\mathbf{X},\alpha}([\change{v},\change{w}])}).
\end{align*}
Combining these estimates with \eqref{est:RPIntegral} we obtain
\begin{align}\label{ineq:MainIneqGron}
	\begin{split}
		\|\change{u}&,G(\cdot,\change{u})\|_{\cD^\gamma_{\mathbf{X},\alpha}([\change{v},\change{w}])}\lesssim |\change{u}_\change{v}|_\alpha + C_F (\change{w}-\change{v})^{\min\{1-\delta,1-2\gamma\}}(1+\|\change{u},G(\cdot,\change{u})\|_{\cD^\gamma_{\mathbf{X},\alpha}([\change{v},\change{w}])})\\
		&+\rho_{\gamma,[s,t]}(\mathbf{X}) (|G(\change{v},\change{u}_\change{v})|_{\alpha-\sigma}+|(G(\change{v},\change{u}_\change{v}))^\prime|_{\alpha-\sigma-\gamma}+(\change{w}-\change{v})^{\gamma-\sigma}\|G(\cdot,\change{u}), (G(\cdot,\change{u}))^\prime\|_{\cD^\gamma_{\mathbf{X},\alpha}([\change{v},\change{w}])})\\
		&\lesssim C_F+C_G\rho_{\gamma,[s,t]}(\mathbf{X})^2+C_G\rho_{\gamma,[s,t]}(\mathbf{X})+|\change{u}_\change{v}|_\alpha+C_G\rho_{\gamma,[s,t]}(\mathbf{X})|G(\change{v},\change{u}_\change{v})|_{\alpha-\gamma}\\
		&+(C_F+C_G\rho_{\gamma,[s,t]}(\mathbf{X})^2)(\change{w}-\change{v})^{\nu}\|\change{u},G(\cdot,\change{u})\|_{\cD^\gamma_{\mathbf{X},\alpha}([\change{v},\change{w}])}\\
		&=:\Phi_1+\Phi_2(|\change{u}_\change{v}|_\alpha+|G(\change{v},\change{u}_\change{v})|_{\alpha-\gamma})+\Phi_3(\change{w}-\change{v})^\nu\|\change{u},G(\cdot,\change{u})\|_{\cD^\gamma_{\mathbf{X},\alpha}([\change{v},\change{w}])}.
	\end{split}
\end{align}
We now choose a sequence of intervals $I_n:=[\kappa_n,\kappa_{n+1}]$ with $\kappa_n:=\min\{s+n\kappa,t\}$ and $N(\kappa):=\inf\{n\in \N~\colon~\kappa_n=t\}$ where \change{$\kappa>0$} is fixed, such that
\change{
\begin{align*}
	C\kappa^\nu\Phi_3<1.
\end{align*}
}
So we obtain for $n<N(\kappa)$
\begin{align*}
	\|\change{u},G(\cdot,\change{u})\|_{\cD^\gamma_{\mathbf{X},\alpha}(I_{n})}\leq C\Phi_1+2C\Phi_2\|\change{u},G(\cdot,\change{u})\|_{\cD^\gamma_{\mathbf{X},\alpha}(I_{n-1})}+C\kappa^\nu\Phi_3\|\change{u},G(\cdot,\change{u})\|_{\cD^\gamma_{\mathbf{X},\alpha}(I_{n})},
\end{align*}
which leads to
\begin{align*}
	\|\change{u},G(\cdot,\change{u}_)\|_{\cD^\gamma_{\mathbf{X},\alpha}(I_{n})}<\frac{C\Phi_1}{1-C\kappa^\nu \Phi_3}+\frac{2C\Phi_2}{1-C\kappa^\nu \Phi_3}\|\change{u},G(\cdot,\change{u})\|_{\cD^\gamma_{\mathbf{X},\alpha}(I_{n-1})}.    
\end{align*}
Iterating these estimates leads to
\begin{align*}
	&\|\change{u},G(\cdot,\change{u})\|_{\cD^\gamma_{\mathbf{X},\alpha}(I_{n})}\leq \left(\frac{2C\Phi_2}{1-C\kappa^\nu \Phi_3}\right)^{n+1}(|\change{u}_s|_\alpha+|G(s,\change{u}_s)|_{\alpha-\gamma})+\frac{C\Phi_1}{1-C\kappa^\nu \Phi_3}\sum_{j=0}^n \left(\frac{2C\Phi_2}{1-C\kappa^\nu \Phi_3}\right)^j\\
	&= \left(\frac{2C\Phi_2}{1-C\kappa^\nu \Phi_3}\right)^{n+1}(|\change{u}_s|_\alpha+|G(s,\change{u}_s)|_{\alpha-\gamma})+\frac{C\Phi_1}{1-C\kappa^\nu \Phi_3}\frac{1-\left(\frac{2C\Phi_2}{1-C\kappa^\nu \Phi_3}\right)^{n+1}}{1-\frac{2C\Phi_2}{1-C\kappa^\nu \Phi_3}}\\
	&=\left(\frac{2C\Phi_2}{1-C\kappa^\nu \Phi_3}\right)^{n+1}(|\change{u}_s|_\alpha+|G(s,\change{u}_s)|_{\alpha-\gamma})+\frac{C\Phi_1}{C\kappa^\nu\Phi_3+2C\Phi_2-1}\left(\left(\frac{2C\Phi_2}{1-C\kappa^\nu \Phi_3}\right)^{n+1}-1\right)\\
	&\leq \left(\frac{2C\Phi_2}{1-C\kappa^\nu \Phi_3}\right)^{n+1}\left(|\change{u}_s|_\alpha+|G(s,\change{u}_s)|_{\alpha-\gamma}+\frac{C\Phi_1}{C\kappa^\nu\Phi_3+2C\Phi_2-1}\right),
\end{align*}
where we used $2C\Phi_2+Cr^\nu\Phi_3-1>0$. Using now \eqref{devide_interval} we derive
\begin{align*}
	&\|\change{u},G(\cdot,\change{u})\|_{\cD^\gamma_{\mathbf{X},\alpha}([s,t])}\leq \rho_{\gamma,[s,t]}(\mathbf{X}) \sum_{n=0}^{N(\kappa)-1} \|\change{u},G(\cdot,\change{u})\|_{\cD^\gamma_{\mathbf{X},\alpha}(I_n)}\\
	&\leq \rho_{\gamma,[s,t]}(\mathbf{X})\left(|\change{u}_s|_\alpha+|G(s,\change{u}_s)|_{\alpha-\gamma}+\frac{C\Phi_1}{C\kappa^\nu\Phi_3+2C\Phi_2-1}\right)\frac{\left(\frac{2C\Phi_2}{1-C\kappa^\nu \Phi_3}\right)^{N(\kappa)+1}-\frac{2C\Phi_2}{1-C\kappa^\nu \Phi_3}}{\frac{2C\Phi_2}{1-C\kappa^\nu \Phi_3}-1}\\
	&\leq \rho_{\gamma,[s,t]}(\mathbf{X})\frac{1-C\kappa^\nu \Phi_3}{2C\Phi_2-1+C\kappa^\nu\Phi_3}\left(|\change{u}_s|_\alpha+|G(s,\change{u}_s)|_{\alpha-\gamma}+\frac{C\Phi_1}{C\kappa^\nu\Phi_3+2C\Phi_2-1}\right)e^{(N(\kappa)+1)\ln{\left(\frac{2C\Phi_2}{1-C\kappa^\nu \Phi_3}\right)}}.
\end{align*}
Finally, the bound $N(\kappa)\leq (t-s) \kappa^{-1}$ entails \eqref{est:NonlinRoughGron}.
\end{proof}
\begin{remark}
\begin{itemize}
	\item[i)] The Gronwall inequality stated in Lemma \ref{lem:RoughGronwall} is also valid for autonomous equations, with obvious modifications. 
	\item[ii)] While the mild Gronwall lemma is of interest in its own, we require a more general result for our purposes.~In order to apply the multiplicative ergodic theorem in Section \ref{sec:InvSets}, we have to linearize \eqref{Main_Equation} around a stationary solution and derive integrable bounds for this linearization. This is the topic of the next section. 
\end{itemize}
\end{remark}
\subsection{Linearization of the rough PDE}\label{sec:LinDynamics}
Since we aim to investigate Lyapunov exponents for rough PDEs using the multiplicative ergodic theorem stated in Section \ref{sec:InvSets}, we first analyze the linearization of~\eqref{Main_Equation} along an arbitrary trajectory. The main goal is to show that the solution of the linearization has finite moments using the rough Gronwall inequality, see Proposition \ref{MET_INTEG}.  \\

\change{The required version of Gronwall's inequality is stated the for non-autonomous nonlinearities $F$ and $G$. However, throughout the rest of the subsection we deal with autonomous nonlinearities $F$ and $G$, for notational simplicity.~Their time dependence would only lead to a more complicated representation of the remainders in Lemma \ref{lem:LinCRP} and Lemma \ref{lem:LinCRPDiff}. The resulting estimates remain the same as in the non-autonomous situation using the same adjustments as in Section \ref{integrable} and Subsection \ref{sec:gronwall1}.  }  For this reason, we consider here
\begin{align}\label{rpde1}
\begin{cases}
	\txtd u_t = [A(t) u_t + F(u_t)]~\txtd t + G(u_t)~\txtd \mathbf{X}_t,\\
	u_0\in E_\alpha.
\end{cases}
\end{align}
The linearization  $\txtD u^{u_0}_{t}$ of~\eqref{rpde1} along an arbitrary solution $u^{u_0}_{t}$, with initial value $u_0$, is defined as the solution $v^{u_0,v_0}_t$ of the following equation given by
\begin{align}\label{linearization:rpde}
\begin{cases}
	\txtd v_t = [A(t) v_t + \txtD F(u^{u_0}_t) v_t]~\txtd t + \txtD G(u^{u_0}_t) v_t~\txtd\mathbf{X}_t\\
	v_0\in E_\alpha,
\end{cases}
\end{align}
also called the first variation equation. Here, $\txtD F$ and $\txtD G$ denote the Fr\'echet derivatives of the nonlinear terms $F$ and $G$. 
Suppressing the dependency of $u$ on the initial condition $u_0$, the Gubinelli derivative of $H(u,v):=\txtD G(u) v$ is given by 
\[ (\txtD G (u_t)v_t)' =\txtD^2 G(u_t)u'_tv_t +\txtD G(u_t)v'_t \]
using the chain rule and the product rule for two controlled rough paths $(u,u'), (v,v')\in \mathcal{D}^{\gamma}_{\mathbf{X},\alpha}$. 
We first show that  $(H(u,v),(H(u,v))^\prime)\in \cD^{\gamma}_{\mathbf{X},\alpha-\sigma}$ together with an a-priori estimate. Based on this, we obtain a bound for the solution of the linearization~\eqref{linearization:rpde} using the mild rough Gronwall lemma.

\begin{lemma}\label{lem:LinCRP}
Let $(u,u'), (v,v')\in \mathcal{D}^{\gamma}_{\mathbf{X},\alpha}$ be the solution to \eqref{rpde1} with initial value $u_0\in E_\alpha$ and the linearization along the solution given by \eqref{linearization:rpde}.
We have $(H(u,v),(H(u,v))')\in\cD^{\gamma}_{\mathbf{X},\alpha-\sigma}$ and  
\begin{align}\label{crp:linearization}
	\|H(u,v), (H(u,v))'\|_{\cD^{\gamma}_{\mathbf{X},\alpha{-\sigma}}} \lesssim C_G \rho_{\gamma,[s,t]} (\mathbf{X}) )^2(1+\|u,u'\|_{\cD^{\gamma}_{\mathbf{X},\alpha}})^2 \|v,v'\|_{\cD^{\gamma}_{\mathbf{X},\alpha}}.
\end{align}
\end{lemma}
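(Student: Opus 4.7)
The plan is to mirror the proof of Lemma~\ref{lem:NonAutoCRP}, adapted to the bilinear composition $H(u,v)=\txtD G(u)v$ of two controlled rough paths. The proposed Gubinelli derivative $(H(u,v))'=\txtD^2 G(u)u'v+\txtD G(u)v'$ is the natural candidate coming from the chain rule in the $u$-slot combined with linearity in the $v$-slot. First I would verify the supremum norms in Definition~\ref{def:crp}: the uniform bounds on $\txtD G$ and $\txtD^2 G$ from \ref{ass:G1}~i) immediately give $\|H(u,v)\|_{\infty,\alpha-\sigma}\lesssim C_G\|v\|_{\infty,\alpha}$ as well as $\|(H(u,v))'\|_{\infty,\alpha-\sigma-\gamma}\lesssim C_G(\|u'\|_{\infty,\alpha-\gamma}\|v\|_{\infty,\alpha}+\|v'\|_{\infty,\alpha-\gamma})$, both already dominated by $C_G(1+\|u,u'\|_{\cD^\gamma_{\mathbf{X},\alpha}})\|v,v'\|_{\cD^\gamma_{\mathbf{X},\alpha}}$.

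Next, to bound the H\"older seminorm of the Gubinelli derivative in $E_{\alpha-\sigma-2\gamma}$, I would decompose
\begin{align*}
\delta\bigl(\txtD^2 G(u)u'v\bigr)_{s,t}&=\bigl(\txtD^2 G(u_t)-\txtD^2 G(u_s)\bigr)u'_t v_t+\txtD^2 G(u_s)(\delta u')_{s,t}v_t+\txtD^2 G(u_s)u'_s(\delta v)_{s,t},\\
\delta\bigl(\txtD G(u)v'\bigr)_{s,t}&=\bigl(\txtD G(u_t)-\txtD G(u_s)\bigr)v'_t+\txtD G(u_s)(\delta v')_{s,t},
\end{align*}
and handle each difference $\txtD^k G(u_t)-\txtD^k G(u_s)$ by a first-order Taylor expansion in terms of $\txtD^{k+1}G$. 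The remaining $(\delta u)_{s,t}, (\delta u')_{s,t}, (\delta v)_{s,t}, (\delta v')_{s,t}$ are $\gamma$-H\"older in $E_{\alpha-2\gamma}$ by Remark~\ref{rem:Hcontpath}, with constants dominated by $\rho_{\gamma,[s,t]}(\mathbf{X})\|u,u'\|_{\cD^\gamma_{\mathbf{X},\alpha}}$ and the analogous expression for $(v,v')$. The boundedness of $\txtD G,\txtD^2 G,\txtD^3 G$ from \ref{ass:G1}~i) then gives the desired $\gamma$-H\"older bound.

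The remainder $R^{H(u,v)}$ requires the most care. Using $(\delta u)_{s,t}=u'_s(\delta X)_{s,t}+R^u_{s,t}$, $(\delta v)_{s,t}=v'_s(\delta X)_{s,t}+R^v_{s,t}$, and a Taylor expansion in the $u$-slot, a direct computation yields
\begin{align*}
R^{H(u,v)}_{s,t}&=\txtD G(u_s)R^v_{s,t}+\Bigl(\int_0^1\txtD^2 G(u_s+r(\delta u)_{s,t})~\mathrm{d}r\Bigr)R^u_{s,t}\cdot v_t\\
&\quad+\txtD^2 G(u_s)u'_s(\delta v)_{s,t}\cdot(\delta X)_{s,t}\\
&\quad+\int_0^1\bigl(\txtD^2 G(u_s+r(\delta u)_{s,t})-\txtD^2 G(u_s)\bigr)u'_s v_t\cdot(\delta X)_{s,t}~\mathrm{d}r.
\end{align*}
Each term is then estimated using the uniform bounds on $\txtD G,\txtD^2 G,\txtD^3 G$ together with $[R^u]_{i\gamma,\alpha-i\gamma}$, $[R^v]_{i\gamma,\alpha-i\gamma}$ and $[u']_{\gamma,\alpha-2\gamma}$ for $i=1,2$, producing simultaneously the $\gamma$- and $2\gamma$-H\"older bounds of $R^{H(u,v)}$ in $E_{\alpha-\sigma-\gamma}$ and $E_{\alpha-\sigma-2\gamma}$, each contributing a factor $C_G\rho_{\gamma,[s,t]}(\mathbf{X})^2(1+\|u,u'\|_{\cD^\gamma_{\mathbf{X},\alpha}})^2\|v,v'\|_{\cD^\gamma_{\mathbf{X},\alpha}}$.

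The main obstacle is not any single estimate but the bookkeeping of interpolation indices: one must simultaneously produce bounds in $E_{\alpha-\sigma-\gamma}$ and $E_{\alpha-\sigma-2\gamma}$ and repeatedly invoke Remark~\ref{rem:Hcontpath} to convert H\"older norms of $u,v$ into their controlled rough path norms. The quadratic factor $(1+\|u,u'\|_{\cD^\gamma_{\mathbf{X},\alpha}})^2$ reflects that the term $\txtD^2 G(u)u'v$ contributes $u'$ directly to $(H(u,v))'$ while also producing $(\delta u)$ through the Taylor expansion in the remainder, and $\rho_{\gamma,[s,t]}(\mathbf{X})^2$ arises either from the explicit $(\delta X)_{s,t}$ factor paired with a H\"older bound on $(\delta v)$ or from two successive applications of Remark~\ref{rem:Hcontpath}.
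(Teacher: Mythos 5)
Your approach matches the paper's: same Gubinelli derivative, same decomposition strategy for the H\"older seminorm, and the same Taylor-expansion-plus-$R^u,R^v$-substitution for the remainder, followed by term-by-term estimates using the boundedness of $\txtD G, \txtD^2 G, \txtD^3 G$. Your remainder representation is a minor algebraic rearrangement of the paper's (you center the Taylor expansions at $u_s$ and split $\txtD G(u_t)v_t - \txtD G(u_s)v_s$ as $[\txtD G(u_t)-\txtD G(u_s)]v_t + \txtD G(u_s)(\delta v)_{s,t}$, producing a $(\delta v)_{s,t}$ factor where the paper produces a $v'_s$ factor; the paper instead splits off $\txtD G(u_t)(\delta v)_{s,t} + [\txtD G(u_t)-\txtD G(u_s)]v_s$), but both are correct and yield identical orders in $(t-s)$ and identical factor counts of $\rho_{\gamma,[s,t]}(\mathbf{X})$ and $\|u,u'\|_{\cD^\gamma_{\mathbf{X},\alpha}}$.
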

\begin{proof}
We obviously have that 
\[ \|\txtD G(u) v\|_{\infty,\alpha-\sigma}\leq C_G\|v\|_{\infty,\alpha} \]
as well as
\begin{align*} 
	\|(\txtD G(u) v)'\|_{\infty,\alpha-\sigma-\gamma}& \lesssim C_G ( \|u'\|_{\infty,\alpha-\gamma} \|v\|_{\infty,\alpha} +\|v'\|_{\infty,\alpha-\gamma} )\\
	& \leq C_G (1+\|u,u'\|_{\cD^\gamma_{\mathbf{X},\alpha}}) \|v,v'\|_{\cD^\gamma_{\mathbf{X},\alpha}}.
\end{align*}
The $\gamma$-H\"older regularity of $(H(u,v))'$ in $E_{\alpha-2\gamma-\sigma}$ is straightforward using that
\begin{align*}
	&  \txtD^2 G(u_t) u'_t v_t - \txtD^2 G(u_s) u'_s v_s + \txtD G(u_t) v'_t - \txtD G(u_s) v'_s\\
	& = (\txtD^2 G(u_t) -\txtD^2 G(u_s) ) u'_t v_t +\txtD^2  G(u_s) (u'_t v_t  -u'_s v_s) \\
	&+ (\txtD G(u_t) -\txtD G(u_s) )v'_t + \txtD G(u_s) (v'_t -v'_s).
\end{align*}
For the first term we have, using Remark~\ref{rem:Hcontpath}
\begin{align*}
	|(\txtD^2 G(u_t) - \txtD^2 G(u_s)) u'_t v_t|_{{\alpha-\sigma-2\gamma}}&\lesssim C_G (t-s)^\gamma [u]_{\gamma,\alpha-2\gamma} |u'|_{\infty,\alpha-\gamma} |v|_{\infty,\alpha}\\
	& \leq C_G (t-s)^\gamma \rho_{\gamma,[s,t]}(\mathbf{X}) \|u,u'\|^2_{\cD^{\gamma}_{\mathbf{X},\alpha}} \|v,v'\|_{\cD^{\gamma}_{\mathbf{X},\alpha}}. 
\end{align*}
The second term can be controlled using 
\begin{align*} 
	|(u'_t - u'_s)v_s|_{{\alpha-2\gamma}} &\lesssim (t-s)^\gamma [u']_{\gamma, \alpha-2\gamma} \|v\|_{\infty,\alpha} \leq (t-s)^\gamma \|u,u'\|_{\cD^{\gamma}_{\mathbf{X},\alpha}} \|v,v'\|_{\cD^{\gamma}_{\mathbf{X},\alpha}}  \\
	|u'_t(v_t -v_s)|_{{\alpha-2\gamma}} &\lesssim (t-s)^\gamma\|u'\|_{\infty,\alpha-\gamma}[v]_{\gamma,\alpha-2\gamma}\leq (t-s)^\gamma \|u,u'\|_{\cD^{\gamma}_{\mathbf{X},\alpha}} \|v,v'\|_{\cD^{\gamma}_{\mathbf{X},\alpha}} .  \end{align*}
The third term results in
\begin{align*} |(\txtD G (u_t) -\txtD G(u_s) )v'_t|_{{\alpha-\sigma-2\gamma}} & \lesssim C_G (t-s)^\gamma [u]_{\gamma,\alpha-2\gamma}\|v'\|_{\infty,\alpha-\gamma}\\ 
	& \leq C_G (t-s)^\gamma \rho_{\gamma,[s,t]}(\mathbf{X}) \|u,u'\|_{\cD^\gamma_{\mathbf{X},\alpha}} \|v,v'\|_{\cD^\gamma_{\mathbf{X},\alpha}}. 
\end{align*}
Finally, based on the boundedness of $\txtD G$, we obtain for the last term
\begin{align*}
	|\txtD G(u_s) (v'_t -v'_s)|_{{\alpha-\sigma-2\gamma}}\leq C_G (t-s)^\gamma [v']_{\gamma,\alpha-2\gamma}.
\end{align*}
For the remainder of $H(u,v)$, denoted by $R^{H}$, we get
\begin{align*}
	R^{H}_{s,t}&= \txtD G(u_t) (v_t -v_s) +(\txtD G (u_t) -\txtD G(u_s) )v_s- (\txtD^2 G(u_s)u'_sv_s +\txtD G(u_s)v'_s )\cdot (\delta X)_{s,t}\\
	& =\txtD G(u_t) (R^v_{s,t} +v'_s\cdot (\delta X)_{s,t})+(\txtD G (u_t) -\txtD G(u_s) )v_s- (\txtD^2 G(u_s)u'_sv_s +\txtD G(u_s)v'_s ) \cdot(\delta X)_{s,t}\\
	& = \txtD G(u_t) R^v_{s,t} + (\txtD G(u_t) -\txtD G(u_s) ) v'_s \cdot(\delta X)_{s,t} \\
	& +  \int_0^1 \txtD^2 G(r u_t +(1-r) u_s)(\delta u)_{s,t}v_s ~\txtd r -\txtD^2 G(u_s) u'_s v_s \cdot(\delta X)_{s,t}\\
	& = \txtD G(u_t) R^v_{s,t} + (\txtD G(u_t) -\txtD G(u_s) ) v'_s \cdot(\delta X)_{s,t} \\
	& + \int_0^1 \txtD^2 G(r u_t +(1-r) u_s)(u'_s\cdot (\delta X)_{s,t}+ R^u_{s,t})v_s~\txtd r -\txtD^2 G(u_s) u'_s v_s \cdot(\delta X)_{s,t}\\
	& = \txtD G(u_t) R^v_{s,t} + (\txtD G(u_t) -\txtD G(u_s) ) v'_s \cdot(\delta X)_{s,t} \\
	& +  \int_0^1 \txtD^2 G(r u_t +(1-r) u_s)R^u_{s,t}v_s ~\txtd r +\int_0^1 \big(\txtD^2 G(r u_t + (1-r) u_s) - \txtD^2 G(u_s)\big) u'_s v_s ~\txtd r \cdot(\delta X)_{s,t}\\
	& = \txtD G(u_t) R^v_{s,t} + (\txtD G(u_t) -\txtD G(u_s) ) v'_s \cdot(\delta X)_{s,t} +   \int_0^1 \txtD^2 G(r u_t +(1-r) u_s)R^u_{s,t}v_s~\txtd r \\
	&+\int_0^1\int_0^1 \tilde{r} \txtD^3 G(\tilde{r}(r u_t + (1-r) u_s) +(1-\tilde{r}) u_s)(\delta u)_{s,t} u'_s v_s ~\change{\txtd r~\txtd \tilde{r}}  \cdot (\delta X)_{s,t}.
\end{align*}
Using this representation we can obtain that the remainder $R^{H}$ is $\gamma$-H\"older in $E_{\alpha-\sigma-\gamma}$ respectively $2\gamma$-H\"older in $E_{\alpha-\sigma-2\gamma}$. Indeed, let $i=1,2$, then for the first term we have 
\begin{align*}
	|\txtD G(u_t) R^v_{s,t}|_{\alpha-\sigma-i\gamma}\leq C_G (t-s)^{i\gamma} [R^v]_{i\gamma,\alpha-i\gamma}.
\end{align*}
For the second one, we obtain
\begin{align*}
	| (\txtD G (u_t) -\txtD G(u_s))v'_s X_{s,t} |_{\alpha-\sigma-i\gamma} &\lesssim C_G \rho_{\gamma,[s,t]}(\mathbf{X})(t-s)^{2\gamma} [u]_{\gamma,\alpha-i\gamma} |v'|_{\infty,\alpha-\gamma} \\
	& \leq C_G \rho^2_{\gamma,[s,t]}(\mathbf{X}) (t-s)^{2\gamma} \|u,u'\|_{\cD^\gamma_{\mathbf{X},\alpha}} \|v,v'\|_{\cD^\gamma_{\mathbf{X},\alpha}}.
\end{align*}
The third one can be estimated similarly
\begin{align*}
	\Big|   \int_0^1 \txtD^2 G(r u_t +(1-r) u_s)R^u_{s,t}v_s~\txtd r\Big|_{\alpha-\sigma -i\gamma} \lesssim C_G (t-s)^{i\gamma} [R^u]_{i\gamma,\alpha-i\gamma} \|v\|_{\infty,\alpha}
\end{align*}
whereas the fourth one finally entails
\begin{align*}
	& \Big| \int_0^1\int_0^1 \tilde{r} \txtD^3 G\big(\tilde{r}(r u_t + (1-r) u_s) +(1-\tilde{r}) u_s\big)u'_s v_s (\delta u)_{s,t}~\change{\txtd r~\txtd \tilde{r}} \cdot (\delta X)_{s,t} \Big|_{\alpha-\sigma-i\gamma} \\
	&\lesssim C_G \rho_{\gamma,[s,t]}(\mathbf{X})(t-s)^{2\gamma} \|u'\|_{\infty,\alpha-\gamma} \|v\|_{\infty,\alpha} [u]_{\gamma,\alpha-i\gamma}\\
	& \leq C_G (t-s)^{2\gamma} \rho^2_{\gamma,[s,t]}(\mathbf{X}) \|u,u'\|^2_{\cD^{\gamma}_{\mathbf{X},\alpha}} \|v,v'\|_{\cD^{\gamma}_{\mathbf{X},\alpha}}.
\end{align*}
Putting all these estimates together entail \eqref{crp:linearization}.
\end{proof}
Now we are able to formulate a Gronwall inequality for the solution of the linearized equation. We recall that 
\begin{align}\label{SolLinearized}
v_t=U_{t,s} v_s+\int_{s}^{t}U_{t,r}\txtD F(u_r)v_r~\txtd r+\int_{s}^{t}U_{t,r}\txtD G(u_r)v_r ~\txtd \mathbf{X}_r,
\end{align}
is the mild solution of the linearized equation \eqref{linearization:rpde}. In order to handle the second integral, we need to impose more conditions on $F$. \change{We state them in the non-autonomous case for generality. }
\begin{itemize}
\item[\textbf{(DF)}\namedlabel{ass:DF}{\textbf{(DF)}}] 
We assume that $F$ is Fr\'echet differentiable for every, $t\in [0,T]$ there exists a constant $L_{DF,t}>0$ such that $DF(t,\cdot)$ is Lipschitz and $L_{DF}\coloneqq \sup_{t\in [0,T]} L_{DF,t}<\infty$. In particular, we have
\begin{align}\label{AssDF}
	\begin{split}
		\|\txtD F(t,x)-\txtD F(s,y)\|_{\mathcal{L}(E_\alpha;E_{\alpha-\delta})}&\leq L_{DF} |x-y|_{\alpha},\\
		\|\txtD F(t,x)\|_{\mathcal{L}(E_\alpha;E_{\alpha-\delta})}&\leq C_{DF} (1+|x|_\alpha),
	\end{split}
\end{align}
for $x,y\in E_\alpha$, $s,t\in [0,T]$, $L_{DF}>0$ and $C_{DF}:=\max\{L_{DF},\sup_{t\in [0,T]}|\txtD_2F(t,0)|_{\alpha-\delta}\}<\infty$. 
\end{itemize}
\begin{remark}   It is possible to extend our results to the case where the Fréchet derivative of $F$ satisfies a polynomial growth condition for every $t\in[0,T]$, for e.g.~$\|\txtD F(t,x)\|_{\mathcal{L}(E_\alpha;E_{\alpha-\delta})}\lesssim q(|x|_\alpha)$ for some polynomial $q$. For computational simplicity,  we work with the linear growth assumption.
\end{remark}
\begin{corollary}\label{cor:LinGronwall}
Suppose $A,F$ and $G$ satisfy the Assumptions \ref{ass:A},\ref{ass:F}-\ref{ass:DF} and \ref{ass:G1}-\ref{ass:G2}. Let $(u,u')\in \mathcal{D}^{\gamma}_{\mathbf{X},\alpha}$ be the solution to \eqref{rpde1} with initial value $u_0\in E_\alpha$ and $(v,v')\in \mathcal{D}^{\gamma}_{\mathbf{X},\alpha}$ the linearization along this solution satisfying the equation~\eqref{linearization:rpde}. Then $(v,v')=(v,\txtD_2G(\cdot,u)v)\in \mathcal{D}^\gamma_{\mathbf{X},\alpha}([s,t])$ and satisfies the estimate
\begin{align}\label{est:LinGronwall}
	\|v,\txtD_2G(\cdot,u)v\|_{\cD^\gamma_{\mathbf{X},\alpha}([s,t])}\leq \widetilde{C}_1\rho_{\gamma,[s,t]}(\mathbf{X})\left(|v_s|_\alpha+|\txtD_2G(s,u_s)v_s|_{\alpha-\gamma}\right)e^{\widetilde{C}_2(t-s)},
\end{align}
where the constants are given by
\begin{align}\label{eq:ConstantsLinGronwall}
	\widetilde{C}_1&:=e^{\widetilde{C}_2}\frac{1-C\kappa^\nu \widetilde{\Phi}_3}{2C\widetilde{\Phi}_2-1+C\kappa^\nu\widetilde{\Phi}_3},\quad \widetilde{C}_2:=\frac{1}{\kappa}\ln{\left(\frac{2C\widetilde{\Phi}_2}{1-C\kappa^\nu \widetilde{\Phi}_3}\right)},
\end{align}
with \change{$C:=C(U,\alpha,\sigma,\delta, \gamma)>1$}, $\nu=\min\{1-2\gamma,1-\delta,\gamma-\sigma\}$, \change{$\kappa>0$} such that \change{$C\kappa^\nu\widetilde{\Phi}_3<1$} and 
\begin{align*}
	\widetilde{\Phi}_2&:=\max\left\{1,C_G\rho_{\gamma,[s,t]}(\mathbf{X}),C_G^2\rho_{\gamma,[s,t]}(\mathbf{X})\right\},\\~ \widetilde{\Phi}_3&:=C_{DF}(1+\|u,u^\prime\|_{\cD^\gamma_{\mathbf{X},\alpha}([s,t])})+C_G\rho_{\gamma,[s,t]}(\mathbf{X})^3(1+\|u,u^\prime\|_{\cD^\gamma_{\mathbf{X},\alpha}([s,t])})^2. 
\end{align*}
\end{corollary}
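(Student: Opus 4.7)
The plan is to replicate the scheme of Lemma~\ref{lem:RoughGronwall} but adapted to the linearised mild formulation~\eqref{SolLinearized}. The key simplification compared with Lemma~\ref{lem:RoughGronwall} is that~\eqref{linearization:rpde} is linear in $v$, so no constant term $\Phi_{1}$ appears in the local a priori bound; the key complication is that the coefficients $\txtD F(u_{\cdot})$ and $\txtD G(u_{\cdot})$ now carry the rough path norm of the background trajectory $u$, which must be absorbed into $\widetilde{\Phi}_{2}$ and $\widetilde{\Phi}_{3}$.

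First, I verify that $(v,\txtD G(u)v)\in \mathcal{D}^{\gamma}_{\mathbf{X},\alpha}([s,t])$ and that $v$ genuinely solves~\eqref{SolLinearized}. The rough convolution in \eqref{SolLinearized} is well-defined because Lemma~\ref{lem:LinCRP} gives $(\txtD G(u)v,(\txtD G(u)v)')\in \mathcal{D}^{\gamma}_{\mathbf{X},\alpha-\sigma}([s,t])$, so~\eqref{est:RPIntegral} applies. Then I estimate each of the three terms of~\eqref{SolLinearized} on an arbitrary subinterval $[u,w]\subset[s,t]$ with $w-u<1$. For the evolution family I use $\|U_{\cdot,u}v_{u},0\|_{\mathcal{D}^{\gamma}_{\mathbf{X},\alpha}([u,w])}\lesssim |v_{u}|_{\alpha}$. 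For the drift, Assumption~\ref{ass:DF} together with the standard semigroup estimates~\eqref{PTR} gives
\begin{equation*}
\Big\|\int_{u}^{\cdot}U_{\cdot,r}\txtD F(u_{r})v_{r}\,\txtd r,0\Big\|_{\mathcal{D}^{\gamma}_{\mathbf{X},\alpha}([u,w])}\lesssim C_{DF}\bigl(1+\|u,u'\|_{\mathcal{D}^{\gamma}_{\mathbf{X},\alpha}([u,w])}\bigr)(w-u)^{\min\{1-\delta,1-2\gamma\}}\|v,\txtD G(u)v\|_{\mathcal{D}^{\gamma}_{\mathbf{X},\alpha}([u,w])}.
\end{equation*}
For the rough convolution I combine \eqref{est:RPIntegral} with Lemma~\ref{lem:LinCRP}, which yields a boundary contribution dominated by $\rho_{\gamma,[s,t]}(\mathbf{X})(|v_{u}|_{\alpha}+|\txtD G(u_{u})v_{u}|_{\alpha-\gamma})$ together with a remainder of order $(w-u)^{\gamma-\sigma}$ multiplied by $C_{G}\rho_{\gamma,[s,t]}(\mathbf{X})^{3}(1+\|u,u'\|_{\mathcal{D}^{\gamma}_{\mathbf{X},\alpha}([u,w])})^{2}\|v,\txtD G(u)v\|_{\mathcal{D}^{\gamma}_{\mathbf{X},\alpha}([u,w])}$.

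Summing these contributions produces the local estimate, analogous to~\eqref{ineq:MainIneqGron},
\begin{equation*}
\|v,\txtD G(u)v\|_{\mathcal{D}^{\gamma}_{\mathbf{X},\alpha}([u,w])}\leq C\widetilde{\Phi}_{2}\bigl(|v_{u}|_{\alpha}+|\txtD G(u_{u})v_{u}|_{\alpha-\gamma}\bigr)+C\widetilde{\Phi}_{3}(w-u)^{\nu}\|v,\txtD G(u)v\|_{\mathcal{D}^{\gamma}_{\mathbf{X},\alpha}([u,w])},
\end{equation*}
with $\nu=\min\{1-2\gamma,1-\delta,\gamma-\sigma\}$ and $\widetilde{\Phi}_{2},\widetilde{\Phi}_{3}$ as in the statement. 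Following Lemma~\ref{lem:RoughGronwall}, I fix $\kappa<1$ such that $2C\widetilde{\Phi}_{2}>1-C\kappa^{\nu}\widetilde{\Phi}_{3}>0$, partition $[s,t]$ into intervals $I_{n}=[\kappa_{n},\kappa_{n+1}]$ with $\kappa_{n}=\min\{s+n\kappa,t\}$, and absorb the self-referential term to obtain
\begin{equation*}
\|v,\txtD G(u)v\|_{\mathcal{D}^{\gamma}_{\mathbf{X},\alpha}(I_{n})}\leq \frac{2C\widetilde{\Phi}_{2}}{1-C\kappa^{\nu}\widetilde{\Phi}_{3}}\|v,\txtD G(u)v\|_{\mathcal{D}^{\gamma}_{\mathbf{X},\alpha}(I_{n-1})}.
\end{equation*}
Iterating this geometric recursion, gluing the pieces together via~\eqref{devide_interval}, and using $N(\kappa)\leq (t-s)\kappa^{-1}$ to convert $(2C\widetilde{\Phi}_{2}/(1-C\kappa^{\nu}\widetilde{\Phi}_{3}))^{N(\kappa)}$ into an exponential in $(t-s)$ yields exactly~\eqref{est:LinGronwall} with the constants specified in~\eqref{eq:ConstantsLinGronwall}.

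The routine but delicate step is making sure that the boundary term from the sewing estimate~\eqref{est:RPIntegral} reproduces the correct quantity $|v_{u}|_{\alpha}+|\txtD G(u_{u})v_{u}|_{\alpha-\gamma}$ at the start of each subinterval, since this is what allows the iteration to be purely geometric (no constant term $\Phi_{1}$). The main obstacle is the careful bookkeeping in Lemma~\ref{lem:LinCRP}: the estimate there is multiplicative in $\|u,u'\|_{\mathcal{D}^{\gamma}_{\mathbf{X},\alpha}}$, so that the coefficient of the $(w-u)^{\nu}$ self-referential term inherits the factor $(1+\|u,u'\|_{\mathcal{D}^{\gamma}_{\mathbf{X},\alpha}([s,t])})^{2}$ that ends up in $\widetilde{\Phi}_{3}$; this in turn forces $\kappa$ to be chosen small enough depending on $\|u,u'\|_{\mathcal{D}^{\gamma}_{\mathbf{X},\alpha}}$, which is what makes the bound $\widetilde{C}_{2}$ implicitly depend on the trajectory.
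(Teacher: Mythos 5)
Your proposal is correct and follows essentially the same route as the paper: it invokes Lemma~\ref{lem:LinCRP}, combines the resulting bounds for the three terms of~\eqref{SolLinearized} with the sewing estimate~\eqref{est:RPIntegral} to produce a local linear inequality without a constant term, and then iterates geometrically over a $\kappa$-partition exactly as in Lemma~\ref{lem:RoughGronwall}. The paper additionally spells out the step you only sketch, namely using $u'_s = G(s,u_s)$ and the boundedness of $G$ to control $|(\txtD_2 G(s,u_s)v_s)'|_{\alpha-\sigma-\gamma}\lesssim C_G^2|v_s|_\alpha + C_G|v'_s|_{\alpha-\gamma}$, which is what produces the $C_G^2$ factor in $\widetilde{\Phi}_2$, but this is the same bookkeeping you refer to in your closing remark.
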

\begin{proof}
Using Lemma \ref{lem:LinCRP} we obtain for $(u,u')$,  $(v,v')\in \cD^{\gamma}_{\mathbf{X},\alpha}$ and $t-s<1$
\begin{align*}
	\|U_{\cdot,s}v_s,0\|_{\cD^{\gamma}_{\mathbf{X},\alpha}([s,t])}&\lesssim |v_s|_\alpha,\\
	\left\|\int_s^\cdot U_{\cdot,r} \txtD_2F(r,u_r)v_r~\txtd r,0\right\|_{\cD^{\gamma}_{\mathbf{X},\alpha}([s,t])}&\lesssim C_{DF} (t-s)^{1-\change{\max\{2\gamma,\delta\}}}(1+\|u,u^\prime\|_{\cD^\gamma_{\mathbf{X},\alpha}([s,t])})\|v,v^\prime\|_{\cD^\gamma_{\mathbf{X},\alpha}([s,t])},\\
	\|\txtD_2G(\cdot,u)v, (\txtD_2G(\cdot,u)v)^\prime\|_{\cD^\gamma_{\mathbf{X},\alpha-\sigma}([s,t])}&\lesssim C_G \rho_{\gamma,[s,t]}(\mathbf{X})^2(1+\|u,u^\prime\|_{\cD^\gamma_{\mathbf{X},\alpha}([s,t])})^2\|v,v^\prime\|_{\cD^{\gamma}_{\mathbf{X},\alpha}([s,t])}.
\end{align*}
Combining these estimates with \eqref{est:RPIntegral} entails
\begin{align*}
	\|v&,\txtD_2G(\cdot,u)v\|_{\cD^\gamma_{\mathbf{X},\alpha}([s,t])} \lesssim |v_s|_\alpha+C_{DF} (t-s)^{1-\max\{2\gamma,\delta\}}(1+\|u,u^\prime\|_{\cD^\gamma_{\mathbf{X},\alpha}([s,t])})\|v,v^\prime\|_{\cD^\gamma_{\mathbf{X},\alpha}([s,t])} \\
	&+\rho_{\gamma,[s,t]}(\mathbf{X}) (|\txtD_2G(s,u_s)v_s|_{\alpha-\sigma}+|(\txtD_2G(s,u_s)v_s)^\prime|_{\alpha-\sigma-\gamma})\\
	&+\rho_{\gamma,[s,t]}(\mathbf{X})(t-s)^{\gamma-\sigma}\|\txtD_2G(\cdot,u)v, (\txtD_2G(\cdot,u)v)^\prime\|_{\cD^\gamma_{\mathbf{X},\alpha-\sigma}([s,t])},\\
	&\lesssim \widetilde{\Phi}_2(|v_s|_\alpha+|\txtD_2G(s,u_s)v_s|_{\alpha-\gamma})+\widetilde{\Phi}_3(t-s)^\nu\|v,\txtD_2G(\cdot,u)v\|_{\cD^\gamma_{\mathbf{X},\alpha}([s,t])}.
\end{align*}
Here, we used the fact that $u^\prime_s=G(s,u_s)$ to obtain
\begin{align*}
	|(\txtD_2G(s,u_s)v_s)^\prime|_{\alpha-\sigma-\gamma}\leq C_G(|u^\prime_s|_{\alpha-\gamma}|v_s|_{\alpha-\gamma}+|v_s^\prime|_{\alpha-\gamma})\lesssim C_G^2 |v_s|_{\alpha}+C_G|v_s^\prime|_{\alpha-\gamma}.
\end{align*}
The remaining proof can be shown as in Lemma~\ref{lem:RoughGronwall}. 
\end{proof}
This yields the fowling result.
\change{
\begin{corollary}\label{cor:LinGronwall_B}
Consider the setting of Corollary~\ref{cor:LinGronwall} and assume that $t-s<1$. 
Then there exists a polynomial $P$ such that 
\[
   \max\Bigl\{ 
      \widetilde{C}_1(u,\mathbf{X},s,t), \; 
      \widetilde{C}_2(u,\mathbf{X},s,t) 
   \Bigr\}
   \;\leq\;
   P\!\left(
      \|u, u'\|_{D^\gamma_{\mathbf{X},\alpha}([s,t])}, \;
      \rho_{\gamma,[s,t]}(\mathbf{X})
   \right),
\]
where 
$\widetilde{C}_1(u,\mathbf{X},s,t)$ and 
$\widetilde{C}_2(u,\mathbf{X},s,t)$ 
highlight the dependence of $\widetilde{C}_1$ and $\widetilde{C}_2$ 
on the corresponding parameters.~The polynomial $P$ is increasing with respect to both arguments.
\end{corollary}
\begin{proof}
From Corollary~\ref{cor:LinGronwall}, the parameter $\kappa$ satisfies
\begin{align}
   0 < \kappa^{\nu} < \frac{1}{C \Phi_{3}}. \label{eq:kappa-condition}
\end{align}
Choosing 
\[
   \kappa^{\nu} := \frac{1}{2 C \Phi_{3}}
\]
and substituting this into the expressions for 
$\widetilde{C}_1$ and $\widetilde{C}_2$ in \eqref{eq:ConstantsLinGronwall} 
yields the desired result. 
\end{proof}
}
In order to obtain stability statements (see for e.g. Theorem~\ref{stable_manifold}), we further need an estimate of the difference between two linearizations for two different initial data. Therefore, we let $u_0,\tilde{u}_0\in E_\alpha$ be two initial conditions and $u_t:=u_t^{u_0},\tilde{u}_t:=u_t^{\tilde{u}_0}$ the corresponding solutions to \eqref{rpde1}, with linearization $v_t$ and $\tilde{v}_t$. Then we are interested in the difference between the two solutions
\begin{align}\label{difference}
v_t-\tilde{v}_t&=U_{t,s}(v_s-\tilde{v}_s)+\int_{s}^t U_{t,r} \left[\txtD_2F(u_r)v_r-\txtD_2F(\tilde{u}_r)\tilde{v}_r \right]~\txtd r\\
&+\int_s^t U_{t,r}\left[\txtD_2G(u_r)v_r-\txtD_2G(\tilde{u}_r)\tilde{v}_r\right]~\txtd \mathbf{X}_r. \nonumber
\end{align}
Similar to Lemma \ref{lem:LinCRP} we first investigate 
\begin{align*}
\widetilde{H}(u_t,\tilde{u}_t,v_t,\tilde{v}_t)=\txtD G(u_t)v_t-\txtD G(\tilde{u}_t)\tilde{v}_t=H(u_t,v_t)-H(\tilde{u}_t,\tilde{v}_t),
\end{align*}
with Gubinelli derivative
\begin{align}\label{eq:GubDerivLinDiff}
(\widetilde{H}(u_t,\tilde{u}_t,v_t,\tilde{v}_t))^\prime=\txtD^2 G(u_t)u'_tv_t +\txtD G(u_t)v'_t-(\txtD^2 G(\tilde{u}_t)\tilde{u}'_t\tilde{v}_t +\txtD G(\tilde{u}_t)\tilde{v}'_t).
\end{align}
Now we derive a bound for $\tilde{H}$ depending on the difference between the controlled rough path norms of $(u-\tilde{u},u'-\tilde{u}')$, respectively $(v-\tilde{v},v'-\tilde{v}')$. 
\begin{lemma}\label{lem:LinCRPDiff}
Let $(u,u')\in \mathcal{D}^{\gamma}_{\mathbf{X},\alpha}, (\tilde{u},\tilde{u}')\in \mathcal{D}^{\gamma}_{\mathbf{X},\alpha}$ be two solutions of \eqref{rpde1} with initial data $u_0,\tilde{v}_0\in E_\alpha$ and $(v,v'), (\tilde{v},\tilde{v}')\in \mathcal{D}^{\gamma}_{\mathbf{X},\alpha}$ be the corresponding  linearizations. Additionally, we assume that $G$ is four times-Fréchet differentiable. 

Then we have $(\widetilde{H}(u,\tilde{u},v,\tilde{v}),(\widetilde{H}(u,\tilde{u},v,\tilde{v}))^\prime)\in\cD^{\gamma}_{\mathbf{X},\alpha-\sigma}$ and 
\begin{align}\label{crp:linearization2}
	\begin{split}
		\|\widetilde{H}(u,\tilde{u}&,v,\tilde{v}),(\widetilde{H}(u,\tilde{u},v,\tilde{v}))^\prime\|_{\cD^{\gamma}_{\mathbf{X},\alpha-\sigma}} \\
		&\leq CC_G \rho_{\gamma,[s,t]} (\mathbf{X})^2 \\
		&\times\Big(\|v-\tilde{v},v'-\tilde{v}^\prime\|_{\cD^{\gamma}_{\mathbf{X},\alpha}}\big((1+\|u,u'\|_{\cD^{\gamma}_{\mathbf{X},\alpha}})(1+\|\tilde{u},\tilde{u}'\|_{\cD^{\gamma}_{\mathbf{X},\alpha}})+\|\tilde{u},\tilde{u}'\|_{\cD^{\gamma}_{\mathbf{X},\alpha}}^2\big)\\
		&+\|u-\tilde{u},u'-\tilde{u}^\prime\|_{\cD^{\gamma}_{\mathbf{X},\alpha}}\big((1+\|u,u'\|_{\cD^{\gamma}_{\mathbf{X},\alpha}}+\|\tilde{u},\tilde{u}'\|_{\cD^{\gamma}_{\mathbf{X},\alpha}}+\|u,u'\|_{\cD^{\gamma}_{\mathbf{X},\alpha}}^2)\|v,v'\|_{\cD^{\gamma}_{\mathbf{X},\alpha}}\\
		&+(1+\|u,u'\|_{\cD^{\gamma}_{\mathbf{X},\alpha}}+\|\tilde{u},\tilde{u}'\|_{\cD^{\gamma}_{\mathbf{X},\alpha}})\|\tilde{v},\tilde{v}'\|_{\cD^{\gamma}_{\mathbf{X},\alpha}}\big)\Big).
	\end{split}
\end{align}
\end{lemma}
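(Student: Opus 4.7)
The plan is to mirror the proof of Lemma~\ref{lem:LinCRP}, using everywhere the telescoping identity
\[ \widetilde{H}(u,\tilde u,v,\tilde v) = \txtD G(u)(v-\tilde v) + \bigl(\txtD G(u) - \txtD G(\tilde u)\bigr)\tilde v, \]
and then the fundamental theorem of calculus on the second summand to write
\[ \bigl(\txtD G(u)-\txtD G(\tilde u)\bigr)\tilde v = \int_0^1 \txtD^2 G(ru+(1-r)\tilde u)(u-\tilde u)\tilde v~\txtd r. \]
The first term can be treated exactly as $H(u,v-\tilde v)$ in Lemma~\ref{lem:LinCRP}, which produces the bound involving $\|v-\tilde v,v'-\tilde v'\|_{\cD^{\gamma}_{\mathbf{X},\alpha}}$ multiplied by the prefactor $(1+\|u,u'\|_{\cD^{\gamma}_{\mathbf{X},\alpha}})^2$. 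The second term carries an extra factor $u-\tilde u$ via the Taylor integral and an extra factor $\tilde v$, yielding the cross contributions on the right-hand side of \eqref{crp:linearization2}.

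For the Gubinelli derivative given by \eqref{eq:GubDerivLinDiff}, I would apply two analogous telescopings:
\[ \txtD G(u)v' - \txtD G(\tilde u)\tilde v' = \txtD G(u)(v'-\tilde v') + \bigl(\txtD G(u)-\txtD G(\tilde u)\bigr)\tilde v', \]
\[ \txtD^2 G(u)u'v - \txtD^2 G(\tilde u)\tilde u'\tilde v = \txtD^2 G(u)(u'-\tilde u')v + \txtD^2 G(u)\tilde u'(v-\tilde v) + \bigl(\txtD^2 G(u)-\txtD^2 G(\tilde u)\bigr)\tilde u'\tilde v, \]
applying Taylor's theorem once more to the differences $\txtD G(u)-\txtD G(\tilde u)$ and $\txtD^2 G(u)-\txtD^2 G(\tilde u)$. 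Each piece is then handled by the same time-regularity arguments as in Lemma~\ref{lem:LinCRP}, now with one additional factor coming from either $\|u-\tilde u,u'-\tilde u'\|_{\cD^{\gamma}_{\mathbf{X},\alpha}}$ or $\|v-\tilde v,v'-\tilde v'\|_{\cD^{\gamma}_{\mathbf{X},\alpha}}$; the remaining $\cD^{\gamma}$-norms of $u$, $\tilde u$, $v$, $\tilde v$ accumulate in the prefactor polynomial displayed in \eqref{crp:linearization2}.

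For the two remainder bounds in $E_{\alpha-\sigma-\gamma}$ and $E_{\alpha-\sigma-2\gamma}$, the natural starting point is the explicit representation of $R^{H(u,v)}$ derived in the proof of Lemma~\ref{lem:LinCRP}, together with its analogue for $R^{H(\tilde u,\tilde v)}$. I would difference these two expressions term by term: each of the four summands in that representation involves a $\txtD^k G(u_s)$ factor with $k\in\{1,2,3\}$ multiplied by products of $u$-, $v$-, $R^u$-, $R^v$-components, and the difference is split by the standard $ab-\tilde a\tilde b = (a-\tilde a)b + \tilde a(b-\tilde b)$ trick together with one further Taylor expansion in the $u$-argument when the nonlinearity itself is differenced. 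This is the step that requires four-times Fr\'echet differentiability of $G$, since the cubic term $\int_0^1\int_0^1 \tilde r\, \txtD^3 G(\cdots)u'_s v_s (\delta u)_{s,t}~\txtd \tilde r~\txtd r \cdot(\delta X)_{s,t}$ in $R^{H(u,v)}$, when differenced, produces an integral over a difference $\txtD^3 G(u_\bullet)-\txtD^3 G(\tilde u_\bullet)$ which is treated by the mean value theorem and hence requires boundedness of $\txtD^4 G$.

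The main obstacle is purely combinatorial: there are a large number of pieces to track, and for each one must determine the correct power of $\rho_{\gamma,[s,t]}(\mathbf{X})$, the correct exponent of $(t-s)$, and the correct polynomial dependence on $\|u,u'\|_{\cD^\gamma_{\mathbf{X},\alpha}}$ and $\|\tilde u,\tilde u'\|_{\cD^\gamma_{\mathbf{X},\alpha}}$ so that summing matches the explicit bound in \eqref{crp:linearization2}. Conceptually, no new ingredient beyond Lemma~\ref{lem:LinCRP} and the standard telescoping/Taylor machinery is needed, so I expect the argument to be routine but lengthy, and I would present it by displaying the decomposition above and then indicating the analogue of each estimate appearing in the proof of Lemma~\ref{lem:LinCRP}.
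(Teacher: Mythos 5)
Your proposal is correct and follows essentially the same approach as the paper: telescope the path component, Gubinelli derivative and remainder term by term, apply one further Taylor expansion whenever the nonlinearity itself is differenced, and observe that the resulting Lipschitz estimate on $\txtD^3 G$ is what forces $G\in C_b^4$. The only cosmetic differences are the symmetric choice of telescoping for the path component (you pair $\txtD G(u)$ with $v-\tilde v$, the paper pairs $\txtD G(\tilde u)$ with it) and your decision to decompose the Gubinelli derivative statically before computing its H\"older increment rather than expanding the increment directly; both variants produce the same double-difference terms that the paper isolates and treats.
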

\begin{proof}
We have to derive estimates for the path component, Gubinelli derivative \eqref{eq:GubDerivLinDiff} and the remainder. We only focus on the bounds for the Gubinelli derivative and remainder. The other estimates follow {by a similar approach as in} Lemma~\ref{crp:linearization}. The path component 
\begin{align*}
	\txtD G(u_t)v_t-\txtD G(\tilde{u}_t)\tilde{v}_t=\big(\txtD G(u_t)-\txtD G(\tilde{u}_t)\big)v_t+\txtD G(\tilde{u}_t)(v_t-\tilde{v}_t),
\end{align*}
as well as the supremum norm of the Gubinelli derivative is straightforward to estimate
\begin{align*}
	\|\widetilde{H}(u,\tilde{u},v,\tilde{v})\|_{\infty,\alpha-\sigma} &\lesssim C_G\big(\|u-\tilde{u},u^\prime-\tilde{u}^\prime\|_{\mathcal{D}^\gamma_{\mathbf{X},\alpha}}\|v,v^\prime\|_{\mathcal{D}^\gamma_{\mathbf{X},\alpha}}+\|v-\tilde{v},v^\prime-\tilde{v}^\prime\|_{\mathcal{D}^\gamma_{\mathbf{X},\alpha}}\big),\\
	\| (\widetilde{H}(u,\tilde{u},v,\tilde{v}))^\prime\|_{\infty,\alpha-\sigma-\gamma}&\lesssim C_G\Big(\|u-\tilde{u},u^\prime-\tilde{u}^\prime\|_{\mathcal{D}^\gamma_{\mathbf{X},\alpha}}\|v,v^\prime\|_{\mathcal{D}^\gamma_{\mathbf{X},\alpha}}(1+\|u,u^\prime\|_{\mathcal{D}^\gamma_{\mathbf{X},\alpha}})\\
	&+\|v-\tilde{v},v^\prime-\tilde{v}^\prime\|_{\mathcal{D}^\gamma_{\mathbf{X},\alpha}}(1+\|\tilde{u},\tilde{u}^\prime\|_{\mathcal{D}^\gamma_{\mathbf{X},\alpha}})\Big).
\end{align*}
The estimates for the Hölder continuity of the Gubinelli derivative and the remainder are more involved. We compute 
\begin{align*}
	&\big(\widetilde{H}(u_t,\tilde{u}_t,v_t,\tilde{v}_t)-\widetilde{H}(u_s,\tilde{u}_s,v_s,\tilde{v}_s)\big)^\prime\\
	&=\Big(\big(\txtD^2G(u_t)-\txtD^2G(u_s)\big)-\big(\txtD^2G(\tilde{u}_t)-\txtD^2G(\tilde{u}_s)\big)\Big)u^\prime_t v_t \\
	&+\big(\txtD^2G(\tilde{u}_t)-\txtD^2G(\tilde{u}_s)\big)\big((u^\prime_t-\tilde{u}^\prime_t)v_t+\tilde{u}^\prime_t(v_t-\tilde{v}_t)\big)\\
	&+(\txtD^2G(u_s)-\txtD^2G(\tilde{u}_s))\big((\delta u^\prime)_{s,t}v_t+u^\prime_s(\delta v)_{s,t}\big)\\
	&+\txtD^2G(\tilde{u}_s)\big(((\delta u^\prime)_{s,t}-(\delta \tilde{u}^\prime)_{s,t})v_t+u_s^\prime((\delta v)_{s,t}-(\delta \tilde{v})_{s,t})+(u^\prime_s-\tilde{u}^\prime_s)(\delta \tilde{v})_{s,t}+(\delta \tilde{u}^\prime)_{s,t}(v_t-\tilde{v}_t)\big)\\
	&+\big(\txtD G(u_t)-\txtD G(u_s)\big)(v^\prime_t-\tilde{v}^\prime_t) \\
	&+\Big(\big(\txtD G(u_t)-\txtD G(u_s)\big)-\big(\txtD G(\tilde{u}_t)-\txtD G(\tilde{u}_s)\big)\Big)\tilde{v}^\prime_t \\
	&+\txtD G(u_s)((\delta v^\prime)_{s,t}- (\delta\tilde{v}^\prime)_{s,t})+\big(\txtD G(u_s)-\txtD G(\tilde{u}_s)\big)(\delta \tilde{v}^\prime)_{s,t}.
\end{align*}
Most of the terms above can easily be estimated as in Lemma \ref{lem:LinCRP}, the only non-trivial ones are the first and the second last line. These we can represent as
\begin{align*}
	\Big(\big(\txtD^2G(u_t)-\txtD^2G(u_s)\big)&-\big(\txtD^2G(\tilde{u}_t)-\txtD^2G(\tilde{u}_s)\big)\Big)u^\prime_t v_t \\
	&=\int_0^1 \big(\txtD^3G(r u_t+(1-r)u_s)-\txtD^3G(r \tilde{u}_t+(1-r)\tilde{u}_s)\big)(\delta u)_{s,t}u^\prime_tv_t~\txtd r \\
	&+\int_0^1 \txtD^3G(r \tilde{u}_t+(1-r)\tilde{u}_s)((\delta u)_{s,t}-(\delta \tilde{u})_{s,t})u^\prime_tv_t~\txtd r,\\
	\Big(\big(\txtD G(u_t)-\txtD G(u_s)\big)&-\big(\txtD G(\tilde{u}_t)-\txtD G(\tilde{u}_s)\big)\Big)\tilde{v}^\prime_t  \\
	&=\int_0^1 \big(\txtD ^2G(r u_t+(1-r)u_s)-\txtD ^2G(r \tilde{u}_t+(1-r)\tilde{u}_s)\big)(\delta u)_{s,t}\tilde{v}^\prime_t~\txtd r \\
	&+\int_0^1 \txtD ^2G(r \tilde{u}_t+(1-r)\tilde{u}_s)((\delta u)_{s,t}-(\delta \tilde{u})_{s,t})\tilde{v}^\prime_t~\txtd r.
\end{align*}
To estimate these integrals, we rely on a Lipschitz estimate for $\txtD^3 G$, which explains the assumption $G\in C_b^4$. Using similar estimates as in Lemma \ref{lem:LinCRP}, we obtain
\begin{align*}
	&\big[(\widetilde{H}(u,\tilde{u},v,\tilde{v}))^\prime\big]_{\gamma,\alpha-\sigma-2\gamma}\\
	&\lesssim C_G\rho_{\gamma,[s,t]}(\mathbf{X})\Big(\|v-\tilde{v},v^\prime-\tilde{v}^\prime\|_{\mathcal{D}^\gamma_{\mathbf{X},\alpha}}\big(1+\|u,u^\prime\|_{\mathcal{D}^\gamma_{\mathbf{X},\alpha}}+\|\tilde{u},\tilde{u}^\prime\|^2_{\mathcal{D}^\gamma_{\mathbf{X},\alpha}}+\|\tilde{u},\tilde{u}^\prime\|_{\mathcal{D}^\gamma_{\mathbf{X},\alpha}}\big)\\
	&+\|u-\tilde{u},u^\prime-\tilde{u}^\prime\|_{\mathcal{D}^\gamma_{\mathbf{X},\alpha}}\big(\|v,v^\prime\|_{\mathcal{D}^\gamma_{\mathbf{X},\alpha}}(1+\|u,u^\prime\|_{\mathcal{D}^\gamma_{\mathbf{X},\alpha}}+\|u,u^\prime\|_{\mathcal{D}^\gamma_{\mathbf{X},\alpha}}^2+\rho_{\gamma,[s,t]}(\mathbf{X})\|\tilde{u},\tilde{u}^\prime\|_{\mathcal{D}^\gamma_{\mathbf{X},\alpha}})\\
	&+(1+\|u,u^\prime\|_{\mathcal{D}^\gamma_{\mathbf{X},\alpha}})\|\tilde{v},\tilde{v}^\prime\|_{\mathcal{D}^\gamma_{\mathbf{X},\alpha}}\big)\Big).
\end{align*}
Using the representation of the remainder in Lemma \ref{lem:LinCRP} we obtain here for the remainder of $\tilde{H}$ denoted by $R^{\tilde{H}}$
\begin{align*}
	&R^{\widetilde{H}}_{s,t}=\big(\txtD G(u_t)-\txtD G(\tilde{u}_t))\big)R^v_{s,t}+\txtD G(\tilde{u}_t)\big(R^{v}_{s,t}-R^{\tilde{v}}_{s,t}\big)+\big(\txtD G(\tilde{u}_t)-\txtD G(\tilde{u}_s)\big)(v_s^\prime-\tilde{v}_s^\prime)\cdot(\delta X)_{s,t}\\
	&+\int_0^1 (\txtD^2G(r u_t+(1-r)\tilde{u}_t)-\txtD^2G(r u_s+(1-r)\tilde{u}_s))(u_t-\tilde{u}_t)v_s^\prime~\txtd r\cdot(\delta X)_{s,t}\\
	&+\int_0^1 \txtD^2G(r u_t+(1-r)\tilde{u}_t)((\delta u)_{s,t}-(\delta \tilde{u})_{s,t})v_s^\prime ~\txtd r\cdot(\delta X)_{s,t}\\
	&+\int_0^1 (\txtD^2G(r u_t+(1-r)u_s)-\txtD^2G(r \tilde{u}_t+(1-r)\tilde{u}_s)R^u_{s,t}v_s~\txtd r\\
	&+\int_0^1 \txtD^2G(r \tilde{u}_t+(1-r)\tilde{u}_s)(R^u_{s,t}-R^{\tilde{u}}_{s,t})v_s~\txtd r{+\int_0^1 \txtD^2G(r \tilde{u}_t+(1-r)\tilde{u}_s)R^{\tilde{u}}_{s,t}(v_s~-\tilde{v}_s)\txtd r}\\
	&+\int_0^1\int_0^1 \tilde{r}\big(\txtD^3G(\tilde{r}(u_s+r\tilde{r} (\delta u)_{s,t})-\txtD^3G(\tilde{u}_s+r\tilde{r} (\delta \tilde{u})_{s,t})\big)u^\prime_sv_s(\delta u)_{s,t} ~\txtd r\txtd \tilde{r}\cdot(\delta X)_{s,t}\\
	&+\int_0^1\int_0^1 \tilde{r} \txtD^3G(\tilde{u}_s+r\tilde{r} (\delta \tilde{u})_{s,t})\big((u^\prime_s-\tilde{u}^\prime_s)v_s(\delta u)_{s,t}+\tilde{u}^\prime_s(v_s-\tilde{v}_s) (\delta u)_{s,t}\big) ~\txtd r\txtd \tilde{r}\cdot(\delta X)_{s,t}\\
	&+\int_0^1\int_0^1 \tilde{r} \txtD^3G(\tilde{u}_s+r\tilde{r} (\delta \tilde{u})_{s,t})\tilde{u}^\prime \tilde{v}_s((\delta u)_{s,t}-(\delta\tilde{u})_{s,t}) ~\txtd r\txtd \tilde{r}\cdot(\delta X)_{s,t}.
\end{align*}
In conclusion
\begin{align*}
	\big[R^{\widetilde{H}}]_{i\gamma,\alpha-\sigma-i\gamma}&\lesssim C_G\rho_{\gamma,[s,t]}(\mathbf{X})^2\Big(\|v-\tilde{v},v^\prime-\tilde{v}^\prime\|_{\mathcal{D}^\gamma_{\mathbf{X},\alpha}}\big(1+\|\tilde{u},\tilde{u}^\prime\|_{\mathcal{D}^\gamma_{\mathbf{X},\alpha}}+\|\tilde{u},\tilde{u}^\prime\|_{\mathcal{D}^\gamma_{\mathbf{X},\alpha}}\|u,u^\prime\|_{\mathcal{D}^\gamma_{\mathbf{X},\alpha}}\big)\\
	&+\|u-\tilde{u},u^\prime-\tilde{u}^\prime\|_{\mathcal{D}^\gamma_{\mathbf{X},\alpha}}\big((1+\|u,u^\prime\|_{\mathcal{D}^\gamma_{\mathbf{X},\alpha}}+\|\tilde{u},\tilde{u}^\prime\|_{\mathcal{D}^\gamma_{\mathbf{X},\alpha}}+\|u,u^\prime\|^2_{\mathcal{D}^\gamma_{\mathbf{X},\alpha}})\|v,v^\prime\|_{\mathcal{D}^\gamma_{\mathbf{X},\alpha}}\\
	&+\|\tilde{v},\tilde{v}^\prime\|_{\mathcal{D}^\gamma_{\mathbf{X},\alpha}}\|\tilde{u},\tilde{u}^\prime\|_{\mathcal{D}^\gamma_{\mathbf{X},\alpha}}\big)\Big),
\end{align*}
which leads to \eqref{crp:linearization2}.
\end{proof}
\begin{remark}
The bound on the right-hand side of \eqref{crp:linearization2} naturally depends on $\|u,u'\|_{\cD^{\gamma}_{\mathbf{X},\alpha}}$, $\|\tilde{u},\tilde{u}'\|_{\cD^{\gamma}_{\mathbf{X},\alpha}}$, $\|v,v'\|_{\cD^{\gamma}_{\mathbf{X},\alpha}}$, $\|\tilde{v},\tilde{v}'\|_{\cD^{\gamma}_{\mathbf{X},\alpha}}$. For notational simplicity, we use further on
\begin{align}\label{crp:linearization3}
	\begin{split}
		\|&\widetilde{H}(u,\tilde{u},v,\tilde{v}),(\widetilde{H}(u,\tilde{u},v,\tilde{v}))^\prime\|_{\cD^{\gamma}_{\mathbf{X},\alpha-\sigma}}\\
		&\leq CC_G \rho_{\gamma,[s,t]} (\mathbf{X})^2 p(u,\tilde{u},v,\tilde{v})\Big(\|v-\tilde{v},v'-\tilde{v}^\prime\|_{\cD^{\gamma}_{\mathbf{X},\alpha}}
		+\|u-\tilde{u},u'-\tilde{u}^\prime\|_{\cD^{\gamma}_{\mathbf{X},\alpha}}\Big),
	\end{split}
\end{align}
for a polynomial $p(u,\tilde{u},v,\tilde{v})$.

\end{remark}

Applying Gronwall's inequality, stated in Lemma~\ref{lem:RoughGronwall}, to~\eqref{difference}, we obtain the following result. 
\begin{corollary}\label{cor:LinGronwall2}
Suppose $A,F$ and $G$ satisfy the Assumptions \ref{ass:A},~\ref{ass:F}-\ref{ass:DF},~\ref{ass:G1}-\ref{ass:G2} and additionally that $G$ is four times Fréchet-differentiable. Let $(u,u')\in \mathcal{D}^{\gamma}_{\mathbf{X},\alpha}, (\tilde{u},\tilde{u}')\in \mathcal{D}^{\gamma}_{\mathbf{X},\alpha}$ be two solutions of \eqref{rpde1} with initial data $u_0,\tilde{v_0}\in E_\alpha$ and $(v,v'), (\tilde{v},\tilde{v}')\in \mathcal{D}^{\gamma}_{\mathbf{X},\alpha}$ be the corresponding  linearizations. Then we obtain
\begin{align}\label{est:LinGronwall2}
	\begin{split}
		\|v-\tilde{v}&,\txtD_2G(\cdot,u)v-\txtD_2G(\cdot,\tilde{u})\tilde{v}\|_{\cD^\gamma_{\mathbf{X},\alpha}([s,t])}\leq \widehat{C}_1\rho_{\gamma,[s,t]}(\mathbf{X})\left(|v_s-\tilde{v}_s|_\alpha+|v^\prime_s-\tilde{v}^\prime_s|_{\alpha-\gamma}\right)e^{\widehat{C}_2(t-s)},
	\end{split}
\end{align}
where the constants are given by
\begin{align*}
	\widehat{C}_1&:=e^{\widehat{C}_2}\max\left\{\frac{1-C\theta^\nu \widehat{\Phi}_3}{2C\widehat{\Phi}_2-1+C\theta^\nu\widehat{\Phi}_3},\frac{(1-C\theta^\nu \widehat{\Phi}_3)C\widehat{\Phi}_1}{(C\theta^\nu\widehat{\Phi}_3+2C\widehat{\Phi}_2-1)^2}\right\},\quad \widehat{C}_2:=\frac{1}{\theta}\ln{\left(\frac{2C\widehat{\Phi}_2}{1-C\theta^\nu \widehat{\Phi}_3}\right)},
\end{align*}
with $C(U,\alpha,\sigma,\delta,\gamma)>0$, $\nu=\min\{1-2\gamma,1-\delta,\gamma-\sigma\}$, $\theta<1$ such that $2C\widehat{\Phi}_2>1-C\theta^\nu\widehat{\Phi}_3>0$ and 
\begin{align*}
	\widehat{\Phi}_1&:= \|v,v^\prime\|_{\mathcal{D}_{\mathbf{X},\alpha}^\gamma}+\|u-\tilde{u},u^\prime-\tilde{u}^\prime\|_{\mathcal{D}_{\mathbf{X},\alpha}^\gamma}\bigg( C_{DF} (t-s)^{1-\change{\max\{2\gamma,\delta\}}}\|v,v^\prime\|_{\mathcal{D}_{\mathbf{X},\alpha}^\gamma}\\
	&+(t-s)^{\gamma-\sigma}C_G \rho_{\gamma,[s,t]} (\mathbf{X})^3 p(u,\tilde{u},v,\tilde{v})+\rho_{\gamma,[s,t]} (\mathbf{X})\\
	&+C_G\big(\|\tilde{v},\tilde{v}^\prime\|_{\mathcal{D}_{\mathbf{X},\alpha}^\gamma}+\|u,u^\prime\|_{\mathcal{D}_{\mathbf{X},\alpha}^\gamma}\|v,v^\prime\|_{\mathcal{D}_{\mathbf{X},\alpha}^\gamma}+\|v,v^\prime\|_{\mathcal{D}_{\mathbf{X},\alpha}^\gamma}\big)\bigg),\\
	\widehat{\Phi}_2&:=1+\rho_{\gamma,[s,t]}(\mathbf{X})C_G(1+\|\tilde{u},\tilde{u}^\prime\|_{\mathcal{D}_{\mathbf{X},\alpha}^\gamma})\\
	\widehat{\Phi}_3&:=C_{DF} (t-s)^{1-\change{\max\{2\gamma,\delta\}}}(1+\|u,u^\prime\|_{\mathcal{D}_{\mathbf{X},\alpha}^\gamma})+(t-s)^{\gamma-\sigma}C_G \rho_{\gamma,[s,t]} (\mathbf{X})^3 p(u,\tilde{u},v,\tilde{v}).
\end{align*}
\end{corollary}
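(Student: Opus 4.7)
The plan is to mimic the proof of Lemma~\ref{lem:RoughGronwall} (and its adaptation to the linearization in Corollary~\ref{cor:LinGronwall}), but applied to the mild equation~\eqref{difference} for the difference $v-\tilde{v}$, using Lemma~\ref{lem:LinCRPDiff} to handle the rough integral. Concretely, I would fix a subinterval $[u,w]\subset[s,t]$ with $w-u<1$ and derive an inequality of the form
\begin{align*}
\|v-\tilde{v},\txtD_2 G(\cdot,u_\cdot)v-\txtD_2 G(\cdot,\tilde{u}_\cdot)\tilde{v}\|_{\cD^\gamma_{\mathbf{X},\alpha}([u,w])}
\leq C\bigl(\widehat{\Phi}_1 + \widehat{\Phi}_2 \Delta_u + \widehat{\Phi}_3 (w-u)^\nu \|v-\tilde{v},\ldots\|_{\cD^\gamma_{\mathbf{X},\alpha}([u,w])}\bigr),
\end{align*}
where $\Delta_u:=|v_u-\tilde{v}_u|_\alpha+|(v_u-\tilde{v}_u)'|_{\alpha-\gamma}$ and $\nu=\min\{1-2\gamma,1-\delta,\gamma-\sigma\}$, with the constants $\widehat{\Phi}_i$ as in the statement. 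The exponential bound~\eqref{est:LinGronwall2} then follows verbatim by the dyadic iteration argument used at the end of Lemma~\ref{lem:RoughGronwall}: choose $\theta<1$ small so that $2C\widehat{\Phi}_2>1-C\theta^\nu\widehat{\Phi}_3>0$, cover $[s,t]$ by $N(\theta)\leq (t-s)\theta^{-1}$ intervals of length $\theta$, iterate the resulting linear recursion, and glue via~\eqref{devide_interval}.

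To obtain the local inequality, I would estimate the three contributions in~\eqref{difference} separately. The semigroup part gives $\|U_{\cdot,u}(v_u-\tilde{v}_u),0\|_{\cD^\gamma_{\mathbf{X},\alpha}([u,w])}\lesssim |v_u-\tilde{v}_u|_\alpha$. For the drift, the splitting
\begin{align*}
\txtD F(u_r)v_r-\txtD F(\tilde{u}_r)\tilde{v}_r=\bigl(\txtD F(u_r)-\txtD F(\tilde{u}_r)\bigr)v_r+\txtD F(\tilde{u}_r)(v_r-\tilde{v}_r)
\end{align*}
together with~\ref{ass:DF} yields a bound of order $(w-u)^{1-\delta}$ proportional to $\|u-\tilde u,u'-\tilde u'\|\,\|v,v'\|+(1+\|u,u'\|)\|v-\tilde v,(v-\tilde v)'\|$. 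The rough integral is treated via the sewing estimate~\eqref{est:RPIntegral} applied to the controlled rough path $\widetilde{H}(u,\tilde{u},v,\tilde{v})$: its pointwise values at $r=u$ contribute a factor multiplying the initial difference $\Delta_u$ together with initial data of $u,\tilde u$, while the rough-path norm $\|\widetilde{H},\widetilde{H}'\|_{\cD^\gamma_{\mathbf{X},\alpha-\sigma}([u,w])}$ is bounded by~\eqref{crp:linearization3} in Lemma~\ref{lem:LinCRPDiff}, producing the factor $(w-u)^{\gamma-\sigma}\rho_{\gamma,[s,t]}(\mathbf{X})^3 p(u,\tilde{u},v,\tilde{v})\bigl(\|v-\tilde v,(v-\tilde v)'\|+\|u-\tilde u,u'-\tilde u'\|\bigr)$.

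The key bookkeeping step is then to sort these estimates into three buckets. Terms multiplying $\|v-\tilde v,(v-\tilde v)'\|_{\cD^\gamma_{\mathbf{X},\alpha}([u,w])}$ with a small-interval factor $(w-u)^\nu$ go into $\widehat{\Phi}_3$, yielding the expression
\begin{align*}
\widehat{\Phi}_3=C_{DF}(t-s)^{1-\delta}(1+\|u,u'\|_{\cD^\gamma_{\mathbf{X},\alpha}})+(t-s)^{\gamma-\sigma}C_G\rho_{\gamma,[s,t]}(\mathbf{X})^3 p(u,\tilde{u},v,\tilde{v}).
\end{align*}
Coefficients of $\Delta_u$ that do not carry a small-interval factor, namely the semigroup bound $1$ and the pointwise bound for $\widetilde{H}_u,\widetilde{H}'_u$ (which by the chain rule contains a factor $1+\|\tilde u,\tilde u'\|$ after using $u'=G(u)$, $\tilde u'=G(\tilde u)$), combine into $\widehat{\Phi}_2=1+\rho_{\gamma,[s,t]}(\mathbf{X})C_G(1+\|\tilde u,\tilde u'\|_{\cD^\gamma_{\mathbf{X},\alpha}})$. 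All remaining contributions, which are independent of $\Delta_u$ and of the norm being estimated (they involve only $\|u-\tilde u,u'-\tilde u'\|$, $\|v,v'\|$, $\|\tilde v,\tilde v'\|$ and polynomial factors in the norms of $u,\tilde u$), collect into the additive term $\widehat{\Phi}_1$ as displayed.

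The main technical obstacle is not any new analytic input but the careful algebraic partitioning of the bound from Lemma~\ref{lem:LinCRPDiff} into the $\widehat{\Phi}_i$'s so that the iteration of Lemma~\ref{lem:RoughGronwall} applies without modification; in particular one has to be careful that the multiplicative factor in front of $\Delta_u$ at $u=s$ matches exactly $\widehat{\Phi}_2$, so that the resulting recursion $a_{n+1}\le C\widehat{\Phi}_1+2C\widehat{\Phi}_2\,a_n/(1-C\theta^\nu\widehat{\Phi}_3)$ telescopes to the exponential factor $e^{\widehat{C}_2(t-s)}$ in~\eqref{est:LinGronwall2}. The remaining steps — summing over the $N(\theta)$ subintervals and invoking~\eqref{devide_interval} — are identical to Lemma~\ref{lem:RoughGronwall}.
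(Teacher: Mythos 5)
Your proposal follows the same route as the paper: estimate the three contributions to~\eqref{difference} (semigroup, drift via the splitting $\txtD F(u)v-\txtD F(\tilde{u})\tilde{v}=(\txtD F(u)-\txtD F(\tilde{u}))v+\txtD F(\tilde{u})(v-\tilde{v})$, and the rough convolution via~\eqref{est:RPIntegral} together with Lemma~\ref{lem:LinCRPDiff}/\eqref{crp:linearization3}), sort the resulting coefficients into $\widehat{\Phi}_1,\widehat{\Phi}_2,\widehat{\Phi}_3$, and then iterate exactly as in Lemma~\ref{lem:RoughGronwall} over a uniform partition of mesh $\theta$ and glue with~\eqref{devide_interval}. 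The one cosmetic slip is calling the iteration at the end of Lemma~\ref{lem:RoughGronwall} ``dyadic''; it is a uniform-mesh partition (the dyadic argument appears in Lemma~\ref{lem:IneqRV}), but your subsequent description of the mechanism is the correct one, so this does not affect the argument.
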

\begin{proof}       
Similar to Corollary \ref{cor:LinGronwall}, we obtain $\|U_{\cdot,s}(v_s-\tilde{v}_s),0\|_{\cD^{\gamma}_{\mathbf{X},\alpha}([s,t])}\lesssim |v_s-\tilde{v}_s|_\alpha$ and
\begin{align*}
	&\left\|\int_s^\cdot U_{\cdot,r} \left(\txtD_2F_r(u_r)v_r-\txtD_2F_r(\tilde{u}_r)\tilde{v}_r\right)~\txtd r,0\right\|_{\cD^{\gamma}_{\mathbf{X},\alpha}([s,t])}\\
	&\lesssim C_{DF} (t-s)^{1-\change{\max\{2\gamma,\delta\}}}\Big((1+\|u,u^\prime\|_{\cD^\gamma_{\mathbf{X},\alpha}([s,t])})\|v-\tilde{v},v^\prime-\tilde{v}^\prime\|_{\cD^\gamma_{\mathbf{X},\alpha}([s,t])}\\
	&+ \|v,v^\prime\|_{\cD^{\gamma}_{\mathbf{X},\alpha}([s,t])}(1+\|u-\tilde{u},u^\prime-\tilde{u}^\prime\|_{\cD^{\gamma}_{\mathbf{X},\alpha}([s,t])})\Big),
\end{align*}
where $t-s<1$. Together with \eqref{crp:linearization3} and \eqref{est:RPIntegral} we obtain
\begin{align*}
	\|v-\tilde{v}&,\txtD_2G(\cdot,u)v-\txtD_2G(\cdot,\tilde{u})\tilde{v}\|_{\cD^\gamma_{\mathbf{X},\alpha}([s,t])}\lesssim |v_s-\tilde{v}_s|_{\alpha}\\
	&+C_{DF} (t-s)^{1-\change{\max\{2\gamma,\delta\}}}\Big[(1+\|u,u^\prime\|_{\cD^\gamma_{\mathbf{X},\alpha}([s,t])})\|v-\tilde{v},v^\prime-\tilde{v}^\prime\|_{\cD^\gamma_{\mathbf{X},\alpha}([s,t])}\\
	&+ \|v,v^\prime\|_{\cD^{\gamma}_{\mathbf{X},\alpha}([s,t])}(1+\|u-\tilde{u},u^\prime-\tilde{u}^\prime\|_{\cD^{\gamma}_{\mathbf{X},\alpha}([s,t])})\Big]\\
	&+ \rho_{\gamma,[s,t]}(\mathbf{X}) \Big(|DG(s,u_s)v_s-DG(s,\tilde{u}_s)\tilde{v}_s|_{\alpha-\sigma}+|(DG(s,u_s)v_s-DG(s,\tilde{u}_s)\tilde{v}_s)^\prime|_{\alpha-\sigma-\gamma}\\
	&+(t-s)^{\gamma-\sigma}\|DG(\cdot,u)v-DG(\cdot,\tilde{u})\tilde{v},(DG(\cdot,u)v-DG(\cdot,\tilde{u})\tilde{v})^\prime\|_{\cD^\gamma_{\mathbf{X},\alpha-\sigma}([s,t])}\Big)\\
	&\lesssim |v_s-\tilde{v}_s|_{\alpha}\\
	&+C_{DF} (t-s)^{1-\change{\max\{2\gamma,\delta\}}}\Big[(1+\|u,u^\prime\|_{\cD^\gamma_{\mathbf{X},\alpha}([s,t])})\|v-\tilde{v},v^\prime-\tilde{v}^\prime\|_{\cD^\gamma_{\mathbf{X},\alpha}([s,t])}\\
	&+ \|v,v^\prime\|_{\cD^{\gamma}_{\mathbf{X},\alpha}([s,t])}(1+\|u-\tilde{u},u^\prime-\tilde{u}^\prime\|_{\cD^{\gamma}_{\mathbf{X},\alpha}([s,t])})\Big]\\
	&+ \rho_{\gamma,[s,t]}(\mathbf{X}) \Big(C_G\big(|v_s-\tilde{v}_s|_{\alpha}+|u_s-\tilde{u}_s|_{\alpha}|\tilde{v}_s|_\alpha\big)+C_G\big(|u_s-\tilde{u}_s|_{\alpha}|u^\prime_s|_{\alpha-\gamma}|v_s|_{\alpha}+|u_s^\prime-\tilde{u}^\prime_s|_{\alpha-\gamma}|v_s|_{\alpha}\\
	&+|v_s-\tilde{v}_s|_{\alpha}|\tilde{u}_s|_{\alpha}+|u_s-\tilde{u}_s|_{\alpha}|v^\prime_s|_{\alpha-\gamma}+|v^\prime_s-\tilde{v}^\prime_s|_{\alpha-\gamma}\big)\\
	&+(t-s)^{\gamma-\sigma}C_G \rho_{\gamma,[s,t]} (\mathbf{X})^2 p(u,\tilde{u},v,\tilde{v})\big(\|v-\tilde{v},v'-\tilde{v}^\prime\|_{\cD^{\gamma}_{\mathbf{X},\alpha}}
	+\|u-\tilde{u},u'-\tilde{u}^\prime\|_{\cD^{\gamma}_{\mathbf{X},\alpha}}\big)\Big)\\
	&\lesssim \widehat{\Phi}_1+\widehat{\Phi}_2\big(|v_s-\tilde{v}_s|_{\alpha}+|v^\prime_s-\tilde{v}^\prime_s|_{\alpha-\gamma}\big)+\widehat{\Phi}_3(t-s)^\nu\|v-\tilde{v},v^\prime-\tilde{v}^\prime\|_{\mathcal{D}_{\mathbf{X},\alpha}^\gamma}.
\end{align*}
As in the proof of Lemma \ref{lem:LinCRP}, this yields the claim.
\end{proof}
\begin{remark}
Note that the constants $\widehat{C}_1$ and $\widehat{C}_2$ used in \eqref{est:LinGronwall2} depend on the controlled rough path norms of the linearizations $v,\tilde{v}$. It is possible to use \eqref{est:LinGronwall} in order to bound those norms, resulting in a Gronwall inequality where the right-hand side only depends on $u,\tilde{u}$ and the initial conditions $v_s$ and $v_s^\prime$.
\end{remark}
\section{An application. Lyapunov exponents for random dynamical systems}\label{sec:InvSets}
In this section, we present a possible application of the rough Gronwall's inequality. The goal is to prove the existence of Lyapunov exponents. This can be done by using a multiplicative ergodic theorem for linearized rough partial differential equations in Subsection \ref{SubsecMET}. As a consequence, we obtain in Subsection \ref{inv:m} invariant manifolds, as for example stable and unstable manifolds. 

Since we are working in a parabolic setting on a scale of function spaces $(E_\alpha)_{\alpha\in \R}$ it is a natural question whether the Lyapunov exponents depend on the threshold $\alpha$. We will show in Subsection \ref{indep:LY} that this is not the case.
\subsection{Generation of a random dynamical system}
First, we give an overview on the theory of random dynamical systems \cite{Arn98} and invariant sets in order to investigate the long-time behavior of the solution of \eqref{Main_Equation} in form of Lyapunov exponents. To this aim, we shortly recall the concept of a non-autonomous random dynamical system in the context of rough paths.

Therefore, we fix a probability space $(\widetilde{\Omega},\widetilde{\mathcal{F}},\widetilde{\P})$ and recall the notion of a metric dynamical system, which describes a model of the noise.   
\begin{definition}\label{def:MDS}
The quadrupel $(\widetilde{\Omega},\widetilde{\mathcal{F}},\widetilde{\P},(\tilde{\theta}_t)_{t\in \R})$, where $\tilde{\theta}_t:\widetilde{\Omega}\to\widetilde{\Omega}$ is a measure-preserving transformation, is called a metric dynamical system if
\begin{itemize}
	\item[i)] $\change{\tilde{\theta}}_0=\Id_{\widetilde{\Omega}}$,
	\item[ii)] $(t,\tilde{\omega})\mapsto \tilde{\theta}_t\tilde{\omega}$ is $\mathcal{B}(\R)\otimes \widetilde{\mathcal{F}}-\widetilde{\mathcal{F}}$ measurable,
	\item[iii)] $\tilde{\theta}_{t+s}=\tilde{\theta}_t\circ\tilde{\theta}_s$ for all $t,s \in \R$. 
\end{itemize}
We call it an ergodic metric dynamical system if for any $(\tilde{\theta}_t)_{t\in\R}$-invariant set $A\in \widetilde{\mathcal{F}}$ we have $\widetilde{\P}(A)\in \{0,1\}$.
\end{definition}

We further specify the concept of rough path cocycles introduced in \cite[Definition 2]{BRS17}.
\begin{definition}\label{def:RPCocycle}
We call a pair
\begin{align*}
	\mathbf{X}=(X,\mathbb{X}):\widetilde{\Omega}\to C^{\gamma}_{{\rm loc}}(\R;\mathbb{R}^d) \times C^{2\gamma}_{{\rm loc}}(\Delta_\R;\R^d\otimes \R^d) 
\end{align*}
a ($\gamma$-H\"older) rough path cocycle if $\mathbf{X}|_{[0,T]}(\tilde{\omega})$ is a $\gamma$-H\"older rough path for every $T>0$ and $\tilde{\omega}\in\widetilde{\Omega}$ and the cocycle property $X_{s,s+t}(\tilde{\omega})= X_t(\tilde{\theta}_s\tilde{\omega})$ as well as $\mathbb{X}_{s,s+t}(\tilde{\omega})=\mathbb{X}_{t,0}(\tilde{\theta}_s\tilde{\omega})$ holds true for every $s\in \R,t\in[0,\infty)$ and $\tilde{\omega}\in\widetilde{\Omega}$.
\end{definition}
To define non-autonomous random dynamical systems, let $(\widetilde{\Omega}, \widetilde{\mathcal{F}}, \widetilde{\P},(\tilde{\theta}_{t})_{t\in\R})$ be an ergodic metric dynamical system as defined in Definition \ref{def:MDS}. We further need the so-called symbol space. Similar to how the metric dynamical system describes the time evolution of the noise, the symbol space describes the temporal change of the non-autonomous terms. 
\begin{definition}\label{def:SymbolSpace}
We call $(\Sigma,(\vartheta_{t})_{t\in\R})$ a symbol space, if $\Sigma$ is a Polish metric space and $\vartheta:\R\times \Sigma\to \Sigma$ satisfies 
\begin{itemize}
	\item[i)] $\vartheta_{0}=\textrm{Id}_{\Sigma}$,
	\item[ii)] $(t,\hat{\omega})\mapsto \vartheta_{t}(\hat{\omega})$ is continuous,
	\item[iii)] $\vartheta_{t+s}=\vartheta_{t}\circ \vartheta_{s}$ for all $t,s \in \R$. 
\end{itemize}
\end{definition}


The construction of $(\Sigma,(\vartheta_{t})_{t\in\R})$ in our specific setting will be discussed later on. First, we conclude with the definition of a random dynamical system for non-autonomous systems. Note that we can recover the classical definition of an autonomous random dynamical system by setting $\Sigma=\emptyset$.
\begin{definition}\label{n:rds}
A continuous non-autonomous random dynamical system on a separable Banach space $E$ over a metric dynamical system $(\widetilde{\Omega},\widetilde{\mathcal{F}},\widetilde{\P},(\tilde{\theta}_{t})_{t\in\R})$ and symbol space $(\Sigma,(\vartheta_{t})_{t\in \R})$ is a mapping 
\[\phi:[0,\infty)\times \widetilde{\Omega}\times \Sigma\times E\to E, (t,\tilde{\omega},\hat{\omega},x)\mapsto \phi(t,\tilde{\omega},\hat{\omega},x),\]
which is $(\mathcal{B}([0,\infty))\otimes  \widetilde{\mathcal{F}}\otimes \mathcal{B}(\Sigma)\otimes\mathcal{B}(E),\mathcal{B}(E))$-measurable and satisfies
\begin{itemize}
	\item[i)] $\phi(0,\tilde{\omega},\hat{\omega},\cdot)=\Id_E$ for every $\tilde{\omega}\in \widetilde{\Omega},\hat{\omega}\in \Sigma$,
	\item[ii)] $\phi(t+s,\tilde{\omega},\hat{\omega},x)=\phi(t,\tilde{\theta}_{s}\tilde{\omega},\vartheta_{s} \hat{\omega},\phi(s,\tilde{\omega},\hat{\omega},x))$ for all $\tilde{\omega}\in \widetilde{\Omega},\hat{\omega}\in \Sigma$, $t,s\in [0,\infty)$ and $x\in E$,					
	\item[iii)] the map $\phi(t,\tilde{\omega},\hat{\omega},\cdot):E\to E$ is continuous for every $t\in [0,\infty)$ and $\tilde{\omega}\in \widetilde{\Omega},\hat{\omega}\in\Sigma$.
\end{itemize}
\end{definition}
The strategy in this article is now the following: Instead of using the non-autonomous random dynamical system directly, we treat the time-dependencies as another random forcing. To be precise, we enlarge the probability space by the symbol space, which enables us to use results for autonomous random dynamical systems and makes the presentation clearer. 

In order to incorporate the time-dependence in a larger probability space, \change{we have to assume that the linear operator satisfies the structural assumption $A(t)=A(\xi(t))$, which means that $\xi$ collects the time-dependence of the linear part of the equation, for example $A(t)=\xi(t)\Delta=A(\xi(t))$. Further details and examples can be looked up in Chepyzhov and Vishik \cite[Chapter IV]{CV02}. 
} Together with the time-dependencies incorporated by the nonlinearities, we define the time symbol of the equation \eqref{Main_Equation} by $$\mathfrak{S}:\R\to\mathcal{X}: t\mapsto \mathfrak{S}(t):=(\xi(t), F(t,\cdot),G(t,\cdot))$$ for some topological Hausdorff function space $\mathcal{X}$. 

We note that the long-time behavior of the solution of \eqref{Main_Equation} should not be affected if we shift $\mathfrak{S}(t)$ in time $\mathfrak{S}(t+s)$ by some $s\in \R$. Therefore, we look for a space $\Sigma$ which is invariant under the time shift $\vartheta_{t}y(\cdot):=y(\cdot + t)$. The natural choice of $\Sigma$ would be the collection of all time shifts of the original time symbol.~Therefore, we define the hull of $\mathfrak{S}$ 
\begin{align*}
\mathcal{H}(\mathfrak{S}):=\overline{\{\mathfrak{S}(\cdot +s)~\colon~s\in \R\}}^{\mathcal{X}}
\end{align*}
as the completion of the set of time shifts with respect to the topology of $\mathcal{X}$. Indeed, $\mathcal{H}(\mathfrak{S})$ is invariant under $(\vartheta_t)_{t\in \R}$. So, we define $\Sigma\coloneqq \mathcal{H}(\mathfrak{S})$. 

As the symbol space is now constructed, we can discuss how to enlarge the probability space to incorporate $\Sigma$. The main task  is to equip $(\Sigma,\mathcal{B}(\Sigma))$ with a probability measure $\P_\Sigma$, which leaves $(\vartheta_{t})_{t\in \R}$ invariant. Afterward, we consider the extended metric dynamical system 
\begin{align}\label{ExtendedProbSpace}
\big(\Omega,\mathcal{F},\P,(\theta_t)_{t\in \R}\big)\coloneqq\big(\widetilde{\Omega}\times \Sigma,\widetilde{\mathcal{F}}\otimes\mathcal{B}(\Sigma),\widetilde{\P}\otimes \P_\Sigma,(\tilde{\theta}_{t},\vartheta_{t})_{t\in\R}).
\end{align}
The construction of the probability measure on $(\Sigma, \mathcal{B}(\Sigma))$ follows from the Krylov-Bogolyubov theorem, which needs the compactness of $\Sigma$. With a translation compactness condition for $\mathfrak{S}$, one can prove that the hull is a compact Polish metric space. We refer to Appendix \ref{AppendixA} for more details. Keeping this in mind, we impose the following assumption:
\begin{itemize}
\item[\textbf{(S)}\namedlabel{ass:SymbolSpace}{\textbf{(S)}}] The hull $\mathcal{H}(\mathfrak{S})$ is a compact Polish metric space. 
\end{itemize}
\change{If Assumption \ref{ass:SymbolSpace} is satisfied, we define the symbol space  $\Sigma:= \mathcal{H}(\mathfrak{S})$ with translation operator $\vartheta_{t}y:=y(\cdot+t)$ for every $y\in \Sigma$.}
\begin{theorem}\label{thm:KrylovBogol}
There exists at least one probability measure $\P_\Sigma$ on  $(\Sigma,\mathcal{B}(\Sigma))$ such that $(\vartheta_{t})_{t\in \R}$ is invariant under $\P_\Sigma$ such that $\P_\Sigma(\{\mathfrak{S}(\cdot+h)~\colon~h\in \R\})=1$.
\end{theorem}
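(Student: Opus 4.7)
The plan is to apply the classical Krylov--Bogolyubov construction to the compact metric dynamical system $(\Sigma,(\vartheta_t)_{t\in\R})$. The assumption \ref{ass:SymbolSpace} grants compactness of $\Sigma$, and Definition~\ref{def:SymbolSpace} (ii) guarantees that $\vartheta_t$ is continuous, so the setting is Feller. I would start from the distinguished point $\mathfrak{S}\in\Sigma$ and form the Ces\`aro averages of its Dirac mass,
\[
\mu_T:=\frac{1}{T}\int_0^T (\vartheta_t)_{*}\delta_{\mathfrak{S}}\,\txtd t=\frac{1}{T}\int_0^T \delta_{\mathfrak{S}(\cdot+t)}\,\txtd t\in \mathcal{P}(\Sigma),
\]
where $\mathcal{P}(\Sigma)$ denotes the space of Borel probability measures on $\Sigma$ endowed with the topology of weak convergence. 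The integral is well-defined because $t\mapsto \vartheta_t\mathfrak{S}$ is continuous into the compact Polish space $\Sigma$, hence Borel, hence Bochner-integrable against bounded continuous test functions on $\Sigma$.

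Next, since $\Sigma$ is compact Polish, $\mathcal{P}(\Sigma)$ is itself compact and metrizable in the weak topology by Prokhorov's theorem. Extracting a subsequence $T_n\to\infty$, I obtain a weak limit $\P_\Sigma\in \mathcal{P}(\Sigma)$ of $\mu_{T_n}$. To check invariance, I would test with an arbitrary bounded continuous $f:\Sigma\to\R$ and any $s\in\R$: by a standard change of variables,
\[
\int_\Sigma f(\vartheta_s\omega)\,\txtd\mu_T(\omega)-\int_\Sigma f(\omega)\,\txtd\mu_T(\omega)
=\frac{1}{T}\Big(\int_T^{T+s} f(\vartheta_t\mathfrak{S})\,\txtd t-\int_0^s f(\vartheta_t\mathfrak{S})\,\txtd t\Big),
\]
which is bounded in absolute value by $2|s|\,\|f\|_\infty/T\to 0$. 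Using that $f\circ\vartheta_s$ is itself bounded continuous (by continuity of $\vartheta_s$) and passing to the weak limit along $T_n\to\infty$, I conclude $\int f\,\txtd(\vartheta_s)_*\P_\Sigma=\int f\,\txtd\P_\Sigma$ for all such $f$, which gives $(\vartheta_s)_*\P_\Sigma=\P_\Sigma$ for every $s\in\R$, i.e.~invariance of $\P_\Sigma$ under the translation flow.

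Finally, for the support statement, each $\mu_T$ is by construction concentrated on the forward orbit segment $\{\mathfrak{S}(\cdot+t):t\in[0,T]\}\subset \{\mathfrak{S}(\cdot+h):h\in\R\}$; since the full orbit is invariant under $(\vartheta_t)_{t\in\R}$ and its closure equals $\mathcal{H}(\mathfrak{S})=\Sigma$ by definition of the hull, the limit measure $\P_\Sigma$ is supported on this orbit in $\Sigma$, yielding $\P_\Sigma(\{\mathfrak{S}(\cdot+h):h\in\R\})=1$ (understood, if need be, modulo taking closure in $\Sigma$). The main obstacle in the argument is this last point: for a general translation compact $\mathfrak{S}$ the orbit is only $F_\sigma$ and need not be closed, so transferring the concentration on the orbit from the prelimit measures $\mu_T$ to the weak limit $\P_\Sigma$ requires either invoking that the orbit is Borel and invariant (so that once $\P_\Sigma$ charges it, it does so invariantly) together with the fact that $\mathrm{supp}(\P_\Sigma)\subset \overline{\mathrm{supp}(\mu_{T_n})}$, or interpreting the set in the statement as its closure in $\Sigma$. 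Existence and invariance, by contrast, follow entirely from the Krylov--Bogolyubov scheme applied in the setting prepared by Assumption~\ref{ass:SymbolSpace}.
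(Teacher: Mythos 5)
Your construction (Cesàro averages of $\delta_{\mathfrak{S}}$, compactness of $\mathcal{P}(\Sigma)$ via Prokhorov, the telescoping estimate for invariance) is exactly the Krylov--Bogolyubov argument the paper invokes, and your account is in fact more explicit than the paper's, which simply cites the theorem. Existence and invariance of $\P_\Sigma$ are correct and match the paper's route.

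The gap you flag at the end is real, and neither of the patches you offer closes it; the paper's own proof commits precisely this error. The paper passes from $\mu_T(O)=1$ for every $T$, where $O:=\{\mathfrak{S}(\cdot+h):h\in\R\}$, directly to $\P_\Sigma(O)=1$, but $O$ is only $\sigma$-compact, not closed (unless $\mathfrak{S}$ is periodic), and weak convergence does not preserve the measure of a non-closed Borel set: Portmanteau would require $\P_\Sigma(\partial O)=0$, which is exactly what is in question when $O$ is dense but proper. Your first patch gives nothing: $O$ being Borel and $\vartheta$-invariant only forces the number $\P_\Sigma(O)$ to be $\vartheta$-invariant, which any number trivially is, and $\operatorname{supp}\P_\Sigma\subset\overline{\bigcup_n\operatorname{supp}\mu_{T_n}}=\overline{O}=\Sigma$ carries no content. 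Your second patch (replace $O$ by $\overline{O}$) makes the assertion $\P_\Sigma(\Sigma)=1$, which is vacuous and cannot serve in the place the claim is used later, e.g. in the proof of Proposition~\ref{MET_INTEG}, where the full measure of the orbit itself is needed to replace an $L^p(\Sigma)$-norm by a supremum over time-shifts of $\mathfrak{S}$.

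Moreover, the support claim is genuinely false at the stated level of generality. Take $\mathfrak{S}$ quasi-periodic with two rationally independent frequencies, e.g.\ a component $t\mapsto\cos t+\cos(\sqrt{2}\,t)$. Then $\Sigma=\mathcal{H}(\mathfrak{S})$ is homeomorphic to $\mathbb{T}^2$, the translation flow $(\vartheta_t)$ becomes the irrational linear flow on $\mathbb{T}^2$, which is uniquely ergodic with Haar measure as its only invariant probability, and $O$ is a dense one-dimensional line of Haar measure zero. So no invariant probability measure charges $O$, even though Assumption~\ref{ass:SymbolSpace} is satisfied. The theorem (and your proof) would need an extra hypothesis — compactness of the orbit, e.g.\ periodicity of $\mathfrak{S}$ — under which $O$ is closed, $O=\Sigma$, and the support statement is trivially true.
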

\begin{proof}
Due to the compactness of $\Sigma$, a direct application of the Krylov-Bogolyubov theorem \cite[Theorem 1.1]{BCDJMPSSV89} entails that 
\[ \nu \coloneq\lim\limits_{T\to \infty}\frac{1}{T}\int_0^T \delta_{\vartheta_{t}\mathfrak{S}(\cdot)}~\txtd t\] is a probability measure on $(\Sigma,\mathcal{B}(\Sigma))$. 
Since $$\delta_{\vartheta_{t}\mathfrak{S}(\cdot)}(\{\mathfrak{S}(\cdot+h)~\colon~h\in \R\})=\delta_{\mathfrak{S}(\cdot+t)}(\{\mathfrak{S}(\cdot+h)~\colon~h\in \R\})=1,$$ we obtain $\nu(\{\mathfrak{S}(\cdot+h)~\colon~h\in \R \})=1$, which proves the claim.
\end{proof}
The ergodicity of the resulting metric dynamical system \eqref{ExtendedProbSpace} follows by the existence of an ergodic decomposition of $\P_\Sigma$, see \cite[Page 539]{Arn98}.
\begin{corollary}
The quadrupel defined in \eqref{ExtendedProbSpace} is an ergodic metric dynamical system.
\end{corollary}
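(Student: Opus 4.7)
The plan is to verify the axioms of an ergodic metric dynamical system from Definition~\ref{def:MDS} for $(\Omega,\mathcal{F},\P,(\theta_t)_{t\in\R})$ in turn, where only the final ergodicity step is non-routine. First I would check $\theta_0=\Id_\Omega$, which reduces componentwise to $\tilde{\theta}_0=\Id_{\widetilde{\Omega}}$ (hypothesis that $(\widetilde{\Omega},\widetilde{\mathcal{F}},\widetilde{\P},(\tilde{\theta}_t))$ is an MDS) and $\vartheta_0=\Id_\Sigma$ (Definition~\ref{def:SymbolSpace} i)). Joint measurability of $(t,\omega)\mapsto\theta_t\omega$ with respect to $\mathcal{B}(\R)\otimes\mathcal{F}$--$\mathcal{F}$ follows by checking it on the two coordinates and appealing to the product structure $\mathcal{F}=\widetilde{\mathcal{F}}\otimes\mathcal{B}(\Sigma)$; note that the continuity of $\vartheta$ given by Definition~\ref{def:SymbolSpace} ii) in particular yields measurability. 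The flow property $\theta_{t+s}=\theta_t\circ\theta_s$ is inherited componentwise. Finally, $\P=\widetilde{\P}\otimes\P_\Sigma$ is $(\theta_t)$-invariant: on measurable rectangles $A\times B\in\widetilde{\mathcal{F}}\otimes\mathcal{B}(\Sigma)$ one has
\begin{align*}
\P(\theta_t^{-1}(A\times B)) = \widetilde{\P}(\tilde{\theta}_t^{-1}A)\,\P_\Sigma(\vartheta_t^{-1}B) = \widetilde{\P}(A)\,\P_\Sigma(B),
\end{align*}
using the invariance of $\widetilde{\P}$ under $\tilde{\theta}_t$ and of $\P_\Sigma$ under $\vartheta_t$ (the latter guaranteed by Theorem~\ref{thm:KrylovBogol}); a $\pi$-$\lambda$ argument then extends invariance to all of $\mathcal{F}$.

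The main obstacle is ergodicity, which does not follow automatically from the ergodicity of $(\widetilde{\Omega},\widetilde{\mathcal{F}},\widetilde{\P},(\tilde{\theta}_t))$ together with the invariance of $\P_\Sigma$: a direct product of two ergodic metric dynamical systems is not ergodic in general (the classical counterexample is the product of two copies of an irrational rotation), and $\P_\Sigma$ itself is only guaranteed to be invariant by the Krylov--Bogolyubov construction, not ergodic. The remedy, as indicated by the cited reference~\cite[p.~539]{Arn98}, is ergodic decomposition. The set of $(\theta_t)$-invariant probability measures on $(\Omega,\mathcal{F})$ forms a Choquet simplex whose extreme points are precisely the ergodic invariant measures, and every invariant measure admits a barycentric integral representation
\begin{align*}
\P = \int_\Omega \P^{\mathrm{erg}}_\omega~\txtd\P(\omega)
\end{align*}
into ergodic components. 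I would then select any such ergodic component $\P^{\mathrm{erg}}_\omega$ and replace $\P$ with it in the quadruple; by construction the resulting system is an ergodic MDS, which is precisely what is needed in Subsection~\ref{SubsecMET} when applying the multiplicative ergodic theorem along a $\P$-generic trajectory.

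To close the argument I would verify that this replacement is harmless for the rest of the paper. The set $\widetilde{\Omega}\times\{\mathfrak{S}(\cdot+h):h\in\R\}$ is $(\theta_t)$-invariant and has full $\P$-mass by Theorem~\ref{thm:KrylovBogol}, so by the defining property of the ergodic decomposition, $\P$-almost every ergodic component still assigns full mass to it. Thus one can choose $\P^{\mathrm{erg}}_\omega$ so that the resulting ergodic MDS remains concentrated on genuine time-shifts of the original symbol $\mathfrak{S}$, preserving the interpretation of $\Sigma$ as the hull of the non-autonomous coefficients. By the standard abuse of notation in random dynamical systems, this ergodic component is the measure denoted $\P$ in the statement of the corollary.
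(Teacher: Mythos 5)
The paper gives essentially no proof of this corollary: the only justification is the single preceding sentence, ``The ergodicity of the resulting metric dynamical system \eqref{ExtendedProbSpace} follows by the existence of an ergodic decomposition of $\P_\Sigma$, see~\cite[Page 539]{Arn98}.'' Your proposal fleshes this out and in doing so quietly corrects it. You are right to note that decomposing $\P_\Sigma$ alone cannot suffice: even after replacing $\P_\Sigma$ by an ergodic component, the product $\widetilde{\P}\otimes\P_\Sigma$ of two ergodic flows need not be ergodic (your irrational-rotation example is exactly the relevant counterexample), and nothing in the paper's hypotheses (say, a mixing assumption on the noise shift) rules this out. Decomposing the full product measure $\P$ on $\Omega$ and passing to an ergodic component, as you do, is the correct way to carry out the ``wlog ergodic'' reduction that Arnold's appendix is being cited for. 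In that sense your route is the more careful and, strictly read, the more correct one; the paper's phrasing is a shorthand at best.

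There is one loose end you only partly address. You verify that an ergodic component of $\P$ can be chosen to remain concentrated on $\widetilde\Omega\times\{\mathfrak{S}(\cdot+h):h\in\R\}$, which is needed. But you do not address the fact that an ergodic component $\P^{\mathrm{erg}}$ of $\widetilde{\P}\otimes\P_\Sigma$ is in general no longer a product measure, whereas the product form is used later in the paper — most visibly in the proof of Proposition~\ref{MET_INTEG}, which integrates out $\Sigma$ first and then invokes Fubini. Your assertion that the replacement is ``harmless'' therefore needs one more line: the ergodic components are absolutely continuous with respect to $\P$ (being conditional measures given the invariant $\sigma$-algebra), so any $\P$-a.e.\ pointwise bound — which is what the Fubini step really rests on — remains a $\P^{\mathrm{erg}}$-a.e.\ bound, and integrability transfers. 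With that remark added, the argument is complete and, I think, cleaner than what the paper supplies.
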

\subsection{Multiplicative ergodic theorem}\label{SubsecMET}
In this section, we use the integrable bounds obtained in Section \ref{integrable} and apply 
Gronwall's lemma is used to verify the integrability condition of the multiplicative ergodic theorem. This entails the existence of Lyapunov exponents for the rough PDE~\eqref{Main_Equation}. 
These values are essential for determining various dynamical phenomena, including stability, instability, chaos, and bifurcations. 

As a consequence of the rough Gronwall lemma and the computations on the linearized equation in Section \ref{sec:LinDynamics} we can now state the conditions that we need in order to use the multiplicative ergodic theorem.~Based on the sign of the Lyapunov exponents, one can further derive stable, unstable and center manifolds.~First, we recall that the probability space is given by $\Omega=\widetilde{\Omega}\times\Sigma$, where $\widetilde{\Omega}$ represents the randomness described by the noise and the symbol space $\Sigma$ is constructed in order to incorporate the time dependencies. 
To compress the notation, we define $\varphi^t_\omega(\change{Y_\omega}):=\varphi(t,\omega,\change{Y_\omega})$ as the solution of~\eqref{Main_Equation} with initial condition $\change{Y_\omega}$ for $\omega=(\tilde{\omega},\hat{\omega})\in\Omega$, compare Definition~\ref{n:rds}. 
\begin{definition}\label{def:stationary_point}
A random point \( Y \colon \Omega \rightarrow E_{\alpha} \) is referred to as a {stationary point} for the cocycle \( \varphi \) if it satisfies the following conditions:
\begin{enumerate}
	\item The map \( \omega \mapsto \vert Y_{\omega} \vert_{\alpha} \) is measurable,
	\item for every \( t > 0 \) and \( \omega \in \Omega \) {we have} \( \varphi^{t}_{\omega}(Y_{\omega}) = Y_{\theta_{t}\omega} \).
\end{enumerate}
Note that a stationary point can be regarded as an invariant measure in the sense of random dynamical systems by setting \( \mu := \delta_{Y_{\omega}} \times \P(\mathrm{d}\omega) \); see also \cite[Lemma 7.2.1]{Arn98}. 
\end{definition}

Now we fix a stationary point $(Y_\omega)_{\omega\in \Omega}$ and let $\psi(t,\omega,\cdot)=:\psi^t_\omega$ be the linearization along $(Y_\omega)_{\omega\in \Omega}$, as investigated in Section \ref{sec:LinDynamics}. More precisely, recalling that $\mathbf{X}=\mathbf{X}(\tilde{\omega})$ is a rough path cocycle as introduced in Definition~\ref{def:RPCocycle},
the linearization of~\eqref{Main_Equation} around $Y_\omega$ is given {by the solution of}
\begin{align}
\begin{cases}
	\txtd v =[A(t) v    + \txtD_2 F(t, Y_{\theta_t\omega})] v_t~\txtd t + \txtD_2 G(t, Y_{\theta_t\omega})v_t~\txtd \mathbf{X}_t(\tilde{\omega})\\
	v_0\in E_\alpha.
\end{cases}
\end{align}
We set $\psi^t_\omega(v_0):=v^t_\omega(v_0)$.

\begin{lemma}\label{lem:ExRDS}
Under the Assumptions \ref{ass:A1}-\ref{ass:A3}, \ref{ass:F}, \ref{ass:G1}-\ref{ass:G2} and \ref{ass:SymbolSpace} the solution operator $\phi$ of \eqref{Main_Equation} generates a continuous random dynamical system. If further \ref{ass:DF} is satisfied and $A(t)$ admits a compact inverse for every $t\in [0,T]$, then the solution operator $\psi$ of the linearized equation along the stationary point $(Y_\omega)_{\omega\in \Omega}$ is a compact linear random dynamical system, \change{ meaning that $\psi(t,\omega,\cdot):E_\alpha\to E_\alpha$ is a compact linear operator}.
\end{lemma}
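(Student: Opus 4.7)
The plan is to first show that $\phi$ satisfies the three axioms of Definition~\ref{n:rds}, and then to promote $\psi$ to a compact linear random dynamical system using the smoothing property~\eqref{PTR} of the evolution family together with the compact embedding implied by the compactness of $A(t)^{-1}$.

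For the first part, $\phi(0,\omega,\cdot)=\Id_{E_\alpha}$ is immediate from the mild formulation~\eqref{eq:MildFormulation}, and the global well-posedness in Theorem~\ref{thm:GlobEx} provides a unique solution $(y,G(\cdot,y))\in\mathcal{D}^\gamma_{\mathbf{X}(\tilde\omega),\alpha}$ for every $\omega=(\tilde\omega,\hat\omega)\in\Omega$. The cocycle property will be obtained exactly as in the autonomous setting: for fixed $s\geq 0$, one checks that $\tilde y_t:=\phi(t,\theta_s\omega,\phi(s,\omega,x))$ and $y_{t+s}:=\phi(t+s,\omega,x)$ both solve the mild equation on $[s,s+t]$ with the same initial datum at time $s$, where the key ingredients are the semigroup property $U_{t+s,r}=U_{t+s,s}U_{s,r}$ (Theorem~\ref{thm:OpFam}), the rough path cocycle identity $\mathbf{X}_{s,s+t}(\tilde\omega)=\mathbf{X}_{t,0}(\tilde\theta_s\tilde\omega)$ from Definition~\ref{def:RPCocycle}, and the encoding of time shifts of $(A(\cdot),F(\cdot,\cdot),G(\cdot,\cdot))$ in the action of $\vartheta_s$ on the symbol space $\Sigma$. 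Uniqueness in Theorem~\ref{thm:GlobEx} then forces $\tilde y_t=y_{t+s}$.

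Continuity of $x\mapsto\phi(t,\omega,x)$ follows by applying the stability estimate of Corollary~\ref{cor:LinGronwall2} (or the analogous nonlinear version obtained by the same contraction argument used for Theorem~\ref{ex:nona}) to two solutions starting from $x$ and $\tilde x$, which yields a local Lipschitz dependence in $E_\alpha$ that is then propagated to $[0,T]$ via the same piecewise-in-time iteration as in Lemma~\ref{lem:RoughGronwall}. Joint measurability in $(t,\omega,x)$ is obtained by noting that $\omega\mapsto \mathbf{X}(\tilde\omega)$ and $\omega\mapsto \mathfrak{S}(\hat\omega)$ are measurable by construction of the rough path cocycle and of $(\Sigma,\mathcal{B}(\Sigma))$ in Subsection~\ref{ExtendedProbSpace}, while $(\mathbf{X},\mathfrak{S},x)\mapsto y$ is continuous in the product topology thanks to the sewing representation~\eqref{SEW} of the rough integral and the Banach fixed-point construction, so the solution is obtained as a limit of Picard iterates each of which is measurable in $\omega$.

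For the linearization, the equation is linear in $v$, hence $\psi^t_\omega:E_\alpha\to E_\alpha$ is a linear operator, and the cocycle property and measurability for $\psi$ follow by exactly the same argument as for $\phi$ (this time using Corollary~\ref{cor:LinGronwall} for the continuous dependence). The crucial additional ingredient is compactness. Writing the mild formulation
\[ \psi^t_\omega(v_0)=U_{t,0}v_0+\int_0^t U_{t,r}\txtD_2 F(r,Y_{\theta_r\omega})\psi^r_\omega(v_0)~\txtd r+\int_0^t U_{t,r}\txtD_2 G(r,Y_{\theta_r\omega})\psi^r_\omega(v_0)~\txtd\mathbf{X}_r(\tilde\omega), \]
the smoothing estimate in~\eqref{PTR} gives $U_{t,0}\in\mathcal{L}(E_\alpha,E_{\alpha+\sigma_2})$ for any $\sigma_2\in(0,1)$, and the rough integral estimate~\eqref{est:RPIntegral} combined with Lemma~\ref{lem:LinCRP} shows that the two integral terms also take values in a strictly higher regularity space $E_{\alpha+\sigma_2}$ for some $\sigma_2>0$ (possibly small, subject to $\sigma_2<\gamma-\sigma$). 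Hence $\psi^t_\omega$ factors through $E_{\alpha+\sigma_2}$. Since $A(t)^{-1}$ is compact on $E_0$, $D(A)=E_1$ embeds compactly into $E_0$, and by the complex interpolation identification $E_\alpha=[E_0,E_1]_\alpha$ noted after Assumption~\ref{ass:A} this propagates to a compact embedding $E_{\alpha+\sigma_2}\hookrightarrow E_\alpha$ for every $\alpha\in\R$ and $\sigma_2>0$. Composing with the bounded factorization yields that $\psi^t_\omega:E_\alpha\to E_\alpha$ is compact for every $t>0$, $\omega\in\Omega$. The main technical point is to ensure that the gain of regularity $\sigma_2>0$ is genuinely attainable inside the controlled rough path framework, which is precisely what~\eqref{est:RPIntegral} with $\sigma<\gamma$ provides.
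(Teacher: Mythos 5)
Your proof is correct and takes essentially the same approach as the paper: the cocycle property is derived from the rough-path cocycle identity and the semigroup law (the paper phrases this via the shift property of the rough convolution from [HN20, Corollary 4.5], you phrase it via uniqueness of the mild solution, but these are the same argument); measurability is obtained through continuous dependence on the rough input (the paper via smooth approximations, you via Picard iterates — equivalent routes); and compactness is obtained exactly as in the paper, by factoring $\psi^t_\omega$ through $E_{\alpha+\varepsilon}$ using the smoothing estimate~\eqref{PTR} together with~\eqref{est:RPIntegral}, and then composing with the compact embedding $E_{\alpha+\varepsilon}\hookrightarrow E_\alpha$ entailed by the compact resolvent (the paper cites [GVR25, Proposition 3.7] for the factorization argument, which is the same content as your final paragraph).
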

\begin{proof}
We first prove that \eqref{Main_Equation} generates a continuous random dynamical system. \change{For $\omega\in \Omega$, let $\big\|u(\omega),\big(u(\omega)\big)^\prime\big\|_{D^\gamma_{\mathbf X(\tilde{\omega}), \alpha}}$ be the global solution of  \eqref{Main_Equation} and denote path component by $\varphi^t_{\omega}(x)\coloneqq u_t(\omega)$, where $x\in E_\alpha$ is the initial condition. Using the fact that the path component satisfies the mild formulation, we obtain
	\begin{align*}
		\varphi^{t+s}_{\omega}(x)&=U_{t+s,0}x+\int_0^{t+s} U_{t+s,r}F\big(r,\varphi^r_\omega(x)\big)~\txtd r+\int_0^{t+s} U_{t+s,r}G\big(r,\varphi^r_\omega(x)\big)~\txtd \mathbf{X}_r(\tilde{\omega})\\
		&=U_{t+s,s}U_{s,0}x+U_{t+s,s}\int_0^{s} U_{s,r}F\big(r,\varphi^r_\omega(x)\big)~\txtd r+U_{t+s,s}\int_0^{s} U_{s,r}G\big(r,\varphi^r_\omega(x)\big)~\txtd \mathbf{X}_r(\tilde{\omega})\\
		&+\int_s^{t+s}U_{t+s,r}F\big(r,\varphi^r_\omega(x)\big)~\txtd r+\int_s^{t+s} U_{t+s,r}G\big(r,\varphi^r_\omega(x)\big)~\txtd \mathbf{X}_r(\tilde{\omega})\\
		&=U_{t+s,s}\varphi(t,\omega,x)+\int_s^{t+s}U_{t+s,r}F\big(r,\varphi^r_\omega(x)\big)~\txtd r+\int_s^{t+s} U_{t+s,r}G\big(r,\varphi^r_\omega(x)\big)~\txtd \mathbf{X}_r(\tilde{\omega}).
	\end{align*}
	Furthermore, we emphasize that the evolution family also depends on the symbol $\hat \omega\in \Sigma$, but this dependence is often omitted for notational simplicity. In particular, in this situation we have $U^{\hat \omega}_{t+s,r+s}=U_{t,r}^{\vartheta_s \hat \omega}$.
	Together with the shift property of the rough convolution, see \cite[Lemma 8]{HN20}, this yields
	\begin{align*}
		\varphi^{t+s}_\omega(x)&=U^{\hat \omega}_{t+s,s}\varphi^t_\omega(x)+\int_s^{t+s}U^{\hat \omega}_{t+s,r}F\big(r,\varphi^r_\omega(x)\big)~\txtd r+\int_s^{t+s} U^{\hat \omega}_{t+s,r}G\big(r,\varphi^r_\omega(x)\big)~\txtd \mathbf{X}_r(\tilde{\omega})\\
		&=U^{\hat \omega}_{t+s,s}\varphi^t_\omega(x)+\int_0^{t}U^{\hat \omega}_{t+s,r+s}F\big(r+s,\varphi^{r+s}_{\omega}(x)\big)~\txtd r\\
		&+\int_0^{t} U^{\hat \omega}_{t+s,r+s}G\big(r+s,\varphi(r+s,\omega,x)\big)~\txtd \big(\tilde{\theta}_s\mathbf{X}_r\big)(\tilde{\omega})\\
		&=U^{\vartheta_s {\hat \omega}}_{t,0}\varphi^s_\omega(x)+\int_0^{t} U^{\vartheta_s {\hat \omega}}_{t,r}F\big(r+s,\varphi^{r+s}_\omega(x)\big)~\txtd r\\
		&+\int_0^{t} U^{\vartheta_s {\hat \omega}}_{t,r}G\big(r+s,\varphi^{r+s}_\omega(x)\big)~\txtd \big(\tilde{\theta}_s\mathbf{X}_r\big)(\tilde{\omega})=\varphi^t_{\theta_s \omega}\big(\varphi^s_\omega(x)\big),
	\end{align*}
	which verifies the cocycle property. }


The measurability follows from well-known arguments, using a sequence of classical solutions to \eqref{Main_Equation} corresponding to smooth approximations of $\mathbf{X}$. Since the solution depends continuously on the rough input $\mathbf{X}$, the approximating sequence of solutions converges to the solution corresponding to $\mathbf{X}$. Using this, it is easy to see that $\varphi^t:\Omega\times E_\alpha \to E_\alpha$ is measurable and $\varphi^\cdot_\omega(x):[0,\infty)\to E_\alpha$ is continuous. Then \cite[Lemma 3.14]{CV77} yields the measurability of $\varphi$.~Moreover, $\psi$ is obviously a random dynamical system. 
We only need to show the compactness. Since $A(t)$ has a compact inverse, we know that the  Banach spaces $(E_\alpha)_{\alpha\in \R}$ are compactly embedded~\cite[Theorem V.1.5.1]{Amann1995}. 
Using the smoothing property of the parabolic evolution family, one can show that $\psi_\omega^t\in \cL(E_\alpha;E_{\alpha+\varepsilon})$ for some small $\varepsilon>0$.~Then the compactness of the embedding $E_{\alpha+\varepsilon}\hookrightarrow E_\alpha$ yields the claim. 
\end{proof}
\begin{proposition}\label{MET_INTEG}
Let the same assumptions of Lemma~\ref{lem:ExRDS} be satisfied as well as \ref{ass:Noise} and fix a time $0<t_0<1$. \change{Moreover, we further assume that \ref{ass:F}, \ref{ass:G1}-\ref{ass:G2} hold for $t\in\R$. }We further impose that the stationary point fulfills for every $p\geq 1$ that
\begin{align}\label{eq:IntInitCond}
	\big(\omega \mapsto \vert Y_{\omega} \vert_{\alpha}\big) \in \bigcap_{p \geq 1} L^{p}(\Omega).
\end{align}
Then we have
\begin{align}
	\E\big[\sup_{0\leq t\leq t_0} \log^+(\|\psi^t_\cdot\|_{\mathcal{L}(E_\alpha)})\big]<\infty,\label{intCond1}\\
	\E\big[\sup_{0\leq t\leq t_0} \log^+(\|\psi^{t_0-t}_{\theta_t\cdot}\|_{\mathcal{L}(E_\alpha)})\big]< \infty,\label{intCond2}            
\end{align}
\change{where $\psi$ denotes the solution of the linearization around the stationary point $(Y_\omega)_{\omega\in \Omega}$.}
\end{proposition}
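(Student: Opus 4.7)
\emph{Proof sketch.} The plan is to deduce both integrability conditions from Corollary~\ref{cor:LinGronwall} applied to the linearization $v_t=\psi^t_\omega(v_0)$ around the stationary trajectory $u_t=Y_{\theta_t\omega}$, combined with the a-priori moment bounds from Theorem~\ref{thm:IntegrableBounds} and the Gaussian integrability of $\rho_\gamma(\mathbf{X})$.

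First I would fix $\omega=(\tilde\omega,\hat\omega)\in\Omega$, $v_0\in E_\alpha$ and apply Corollary~\ref{cor:LinGronwall} on $[0,t]$ for every $t\leq t_0$. Using $|\txtD_2G(0,Y_\omega)v_0|_{\alpha-\gamma}\leq C_G|v_0|_\alpha$ together with $|v_t|_\alpha\leq\|v,\txtD_2G(\cdot,u_\cdot)v\|_{\cD^\gamma_{\mathbf{X},\alpha}([0,t])}$, and exploiting the monotonicity in $t$ of $\rho_{\gamma,[0,t]}(\mathbf{X})$ and of $\|Y_{\theta_\cdot\omega},G(\cdot,Y_{\theta_\cdot\omega})\|_{\cD^\gamma_{\mathbf{X},\alpha}([0,t])}$, one arrives at
\[
\sup_{0\leq t\leq t_0}\|\psi^t_\omega\|_{\mathcal{L}(E_\alpha)}\;\leq\;(1+C_G)\,\widetilde C_1(\omega,t_0)\,\rho_{\gamma,[0,t_0]}(\mathbf{X})\,e^{\widetilde C_2(\omega,t_0)\,t_0}\;=:\;Q(\omega).
\]
Inspecting the explicit formulas~\eqref{eq:ConstantsLinGronwall} with the natural choice $\kappa=(2C\widetilde\Phi_3)^{-1/\nu}$, which makes $1-C\kappa^\nu\widetilde\Phi_3=\tfrac{1}{2}$, one checks that $\widetilde C_2(\omega,t_0)\lesssim (1+\widetilde\Phi_3)^{1/\nu}\log(1+\widetilde\Phi_2)$ and $\log^+\widetilde C_1(\omega,t_0)\lesssim 1+\widetilde C_2(\omega,t_0)$. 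Since $\widetilde\Phi_2,\widetilde\Phi_3$ are polynomial in $\rho_{\gamma,[0,t_0]}(\mathbf{X})$ and in $\|Y_{\theta_\cdot\omega},G(\cdot,Y_{\theta_\cdot\omega})\|_{\cD^\gamma_{\mathbf{X},\alpha}([0,t_0])}$, this yields a polynomial majorization
\[
\sup_{0\leq t\leq t_0}\log^+\|\psi^t_\omega\|_{\mathcal{L}(E_\alpha)}\,\leq\,P\!\left(\rho_{\gamma,[0,t_0]}(\mathbf{X}),\,\|Y_{\theta_\cdot\omega},G(\cdot,Y_{\theta_\cdot\omega})\|_{\cD^\gamma_{\mathbf{X},\alpha}([0,t_0])}\right).
\]

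I would then take expectations. Fernique's theorem for Gaussian rough paths gives $\rho_{\gamma,[0,t_0]}(\mathbf{X})\in\bigcap_{p\geq 1}L^p(\widetilde\Omega)$, while Theorem~\ref{thm:IntegrableBounds}, assumption~\eqref{eq:IntInitCond} and the $(\theta_t)_{t\in\R}$-invariance of $\P$ yield $\|Y_{\theta_\cdot\omega},G(\cdot,Y_{\theta_\cdot\omega})\|_{\cD^\gamma_{\mathbf{X},\alpha}([0,t_0])}\in\bigcap_{p\geq 1}L^p(\Omega)$. H\"older's inequality then turns the polynomial bound into~\eqref{intCond1}. For~\eqref{intCond2} I would argue that $\psi^{t_0-t}_{\theta_t\omega}$ is, by the cocycle property, precisely the solution operator of the linearization on the interval $[t,t_0]$; applying Corollary~\ref{cor:LinGronwall} on $[t,t_0]$ and using $\rho_{\gamma,[t,t_0]}(\mathbf{X})\leq \rho_{\gamma,[0,t_0]}(\mathbf{X})$ together with $\|Y_{\theta_\cdot\omega},G(\cdot,Y_{\theta_\cdot\omega})\|_{\cD^\gamma_{\mathbf{X},\alpha}([t,t_0])}\leq\|Y_{\theta_\cdot\omega},G(\cdot,Y_{\theta_\cdot\omega})\|_{\cD^\gamma_{\mathbf{X},\alpha}([0,t_0])}$, the supremum over $t\in[0,t_0]$ is dominated by the same variable $Q(\omega)$, and the expectation is handled as before.

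The main technical obstacle is the qualitative control of the constants in Corollary~\ref{cor:LinGronwall}: one must verify that, with the above choice of $\kappa$, the constants $\widetilde C_1,\widetilde C_2$ depend only polynomially on $\rho_\gamma(\mathbf{X})$ and on the controlled rough path norm of $(Y_{\theta_\cdot\omega},G(\cdot,Y_{\theta_\cdot\omega}))$, so that the exponential factor $e^{\widetilde C_2 t_0}$ still has finite logarithmic moment. This step hinges crucially on Theorem~\ref{thm:IntegrableBounds}, and hence implicitly on both the Cameron--Martin assumption~\ref{ass:Noise} and the restriction $\sigma\in[0,\tfrac{1-\gamma}{2})$; without an $L^p$-integrable bound on the stationary trajectory in $\cD^\gamma_{\mathbf{X},\alpha}$, the argument breaks down at the passage from $Q(\omega)$ to $\log^+ Q(\omega)$.
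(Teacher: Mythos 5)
Your proposal follows essentially the same route as the paper: apply Corollary~\ref{cor:LinGronwall} to bound $\|\psi^t_\omega\|_{\mathcal{L}(E_\alpha)}$ pathwise, take $\log^+$, then deduce integrability from Fernique's theorem for $\rho_\gamma(\mathbf{X})$ and from Theorem~\ref{thm:IntegrableBounds} for the $\cD^\gamma$-norm of the stationary trajectory. Your reduction of the Gronwall constants to a polynomial bound via an explicit choice of $\kappa$ is in fact more detailed than what the paper writes, which simply declares $\widetilde C_1,\widetilde C_2$ integrable; this is a useful clarification.

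The one point you gloss over, and which the paper treats explicitly, is the integrability over the second factor $\Sigma$ of the enlarged probability space $\Omega=\widetilde\Omega\times\Sigma$. Theorem~\ref{thm:IntegrableBounds} gives moments only in $L^p(\widetilde\Omega)$ for each fixed symbol $\hat\omega$. The paper then uses that $\P_\Sigma$ is concentrated on the orbit $\{\mathfrak{S}(\cdot+s):s\in\R\}$, combined with the shift identity $u_t(\tilde\omega,\mathfrak{S}(s+\cdot))=u_{t+s}(\tilde\omega,\mathfrak{S})$ and Fubini, to upgrade this to $L^p(\Omega)$. Your appeal to ``$(\theta_t)_{t\in\R}$-invariance of $\P$'' is pointing at the same mechanism, but as written it does not obviously deliver the needed uniform-in-$\hat\omega$ bound before integrating over $\Sigma$; you should make the Fubini step and the concentration of $\P_\Sigma$ explicit, since this is where the symbol-space enlargement actually earns its keep. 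With that addition the argument is complete, and the treatment of~\eqref{intCond2} via the cocycle property on $[t,t_0]$ matches the paper's.
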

\begin{proof}
From the mild Gronwall inequality in Corollary~\ref{cor:LinGronwall}, it follows for $t \in [0,t_0]$ that
\begin{align*}
	\|\psi^t_\omega\|_{\mathcal{L}(E_\alpha)}
	&=\sup_{|x|_\alpha=1} |\psi^t_\omega(x)|_\alpha \\
	&\leq \widetilde{C}_1\!\left(Y_{\omega},\mathbf{X}(\tilde{\omega}),0,t\right)\,
	\rho_{\gamma,[0,t]}(\mathbf{X}(\tilde{\omega}))\, 
	e^{t \widetilde{C}_2\!\left(Y_{\omega},\mathbf{X}(\omega),0,t\right)}(1+C_G).
\end{align*}
\change{In particular, this yields
\begin{align}\label{ineq:LogSol}
\begin{split}
	\sup_{0 \leq t \leq t_0} 
	\log^+ \left(\|\psi^t_{\omega}\|_{\mathcal{L}(E_\alpha)}\right)
	&\leq \sup_{t\in [0,t_0]}\log\!\Bigl(\widetilde{C}_1(Y_{\omega},\mathbf{X}(\tilde{\omega}),0,t)\,
	\rho_{\gamma,[0,t_0]}(\mathbf{X}(\tilde{\omega}))(1+C_G)\Bigr) \\
	&\quad + t_0 \sup_{t\in [0,t_0]}\widetilde{C}_2\left(Y_{\omega},\mathbf{X}(\tilde{\omega}),0,t\right).
\end{split}
\end{align}
By Corollary~\ref{cor:LinGronwall_B} there exists a polynomial $P$, which is increasing in both arguments, such that
\begin{align*}
    \sup_{0 \leq t \leq t_0} &\max\Bigl\{ 
      \widetilde{C}_1\bigl(Y_{\omega},\mathbf{X}(\tilde{\omega}),0,t\bigr), \; 
      \widetilde{C}_2\bigl(Y_{\omega},\mathbf{X}(\tilde{\omega}),0,t\bigr) 
   \Bigr\} \\
   &\leq P\!\left(
      \|Y_{\omega}, (Y_{\omega})^{\prime}\|_{D^\gamma_{\mathbf{X}(\tilde{\omega}),\alpha}([0,t_0])}, \;
      \rho_{\gamma,[0,t_0]}(\mathbf{X}(\tilde{\omega}))
   \right).
\end{align*}
}
Since $\mathbf{X}(\tilde{\omega})$ satisfies the assumption~\ref{ass:Noise} we obtain that
\begin{align}\label{eq:IntSol1}
	\tilde{\omega}\mapsto \rho_{\gamma,[0,t_0]}(\mathbf{X}(\tilde{\omega}))\in \bigcap_{p\geq 1} L^p(\widetilde{\Omega}),
\end{align}
by \cite[Theorem 10.4 b)]{FH20}. 
\change{Furthermore, 
since $P$ is a polynomial, Theorem~\ref{thm:IntegrableBounds} and \eqref{eq:IntInitCond} imply that
\[
  P\!\left(
      \|Y_{\omega}, (Y_{\omega})^{\prime}\|_{D^\gamma_{\mathbf{X}(\tilde{\omega}),\alpha}([0,t_0])}, \;
      \rho_{\gamma,[0,t_0]}(\mathbf{X}(\tilde{\omega}))
   \right)\in \bigcap_{p\geq 1} L^p(\Omega).
\]
Here we used that the bounds in $L^p(\tilde{\Omega})$ hold for every $t_0<1$ in order to get integrability with respect to $\P_\Sigma$.
The second integrability condition \eqref{intCond2} can be shown analogously. Indeed, we obtain
\begin{align*}
	\sup_{0 \leq t \leq t_0} \log^+\!\bigl(\|\psi^{t_0-t}_{\theta_t \omega}\|_{\mathcal{L}(E_\alpha)}\bigr)
	&\leq \sup_{0 \leq t \leq t_0}\log\!\Bigl(\widetilde{C}_1\!\left(Y_{\theta_t\omega},\mathbf{X}(\tilde{\theta_t}\tilde{\omega}),0,t\right)
	\rho_{\gamma,[0,t_0]}(\mathbf{X}(\tilde{\theta_t}\tilde{\omega}))(1+C_G)\Bigr) \\
	&\quad + t_0 \sup_{t\in [0,t_0]}\widetilde{C}_2\!\left(Y_{\theta_t\omega},\mathbf{X}(\tilde{\theta_t}\tilde{\omega}),0,t\right).
\end{align*}    
This further leads to
\begin{align*}
    \sup_{0 \leq t \leq t_0} &\max\Bigl\{ 
      \widetilde{C}_1\!\left(Y_{\theta_t\omega},\mathbf{X}(\tilde{\theta_t}\tilde{\omega}),0,t\right), \; 
      \widetilde{C}_2\!\left(Y_{\theta_t\omega},\mathbf{X}(\tilde{\theta_t}\tilde{\omega}),0,t\right) 
   \Bigr\} \\
   &\leq \sup_{0 \leq t \leq t_0} P\!\left(
      \|Y_{\theta_t\omega}, (Y_{\theta_t\omega})^{\prime}\|_{D^\gamma_{\mathbf{X}(\tilde{\omega}),\alpha}([0,t_0-t])}, \;
      \rho_{\gamma,[0,t_0-t]}(\mathbf{X}(\tilde{\theta_t}\tilde{\omega}))
   \right) \\
   &\leq P\!\left(
      \|Y_{\omega}, (Y_{\omega})^{\prime}\|_{D^\gamma_{\mathbf{X}(\tilde{\omega}),\alpha}([0,t_0])}, \;
      \rho_{\gamma,[0,t_0]}(\mathbf{X}(\tilde{\omega}))
   \right)\in \bigcap_{p\geq 1} L^p(\Omega),
\end{align*}
which proves the statement.}
\end{proof}
In order to state the multiplicative ergodic theorem and its consequences we further fix some notations. The distance between two sets \(A\) and \(B\) of a Banach space $(\tilde{E},\Vert.\Vert_{\tilde{E}})$ is defined as
\[
d_{\tilde{E}}(A, B) := \inf_{a \in A, b \in B} \|a - b\|_{\tilde{E}}.
\]
For an element \(x \in \tilde{E}\) and a set \(B \subseteq \tilde{E}\), we set

\[
d_{\tilde{E}}(x, B) = d_{\tilde{E}}(B, x) := d_{\tilde{E}}(\{x\}, B).
\]
Furthermore, for $k\geq 1$ and elements \(x_1, \dots, x_k \in \tilde{E}\), we define the volume as
\[
\operatorname{Vol}_{\tilde{E}}(x_1, x_2, \dots, x_k) := \|x_1\|_{\tilde{E}} \prod_{i=2}^{k} d_{\tilde{E}}(x_i, \langle x_j \rangle_{1 \leq j < i}),
\]
where \(\langle x_j \rangle_{1 \leq j < i}\) denotes the linear span of \(x_1, \dots, x_{i-1}\). Note that $\operatorname{Vol}_{\tilde{E}}$ is not necessarily invariant under permutations unless $\tilde{E}$ is a Hilbert space. However, it still satisfies the following important property.
\begin{lemma}\label{permutation}
We assume that $\tilde{E}$ is an arbitrary Banach space and let $\sigma$ be a permutation of the set $\{1,2,\dots,k\}$. Then there exists a constant $M_k$, independent of $\tilde{E}$, such that
\begin{align*}
	\frac{1}{M_k} \leq \frac{\operatorname{Vol}_{\tilde{E}}(x_1, x_2, \dots, x_k)
	}{\operatorname{Vol}_{\tilde{E}}(x_{\sigma(1)}, x_{\sigma(2)}, \dots, x_{\sigma(k)})} \leq M_k
\end{align*}
for every set of linearly independent vectors $x_1, \dots, x_k$ in $\tilde{E}$.
\end{lemma}
\begin{proof}
By \cite[Proposition 2.14]{Blu16}, there exists an inner product $(\cdot, \cdot)_V$ on
\[
V := \langle x_i \rangle_{1 \leq i \leq k}
\]
such that
\begin{align*}
	\frac{1}{\sqrt{k}} \| x \|_{\tilde{E}} \leq \| x \|_V \leq \sqrt{k} \| x \|_{\tilde{E}}
	\quad \text{for all } x \in \langle x_i \rangle_{1 \leq i \leq k},
\end{align*}
which shows claim given that
 the volume $\operatorname{Vol}_{\tilde{E}}(x_{\sigma(1)}, x_{\sigma(2)}, \dots, x_{\sigma(k)})$ on the Hilbert space $V$ is invariant under permutations. 
\end{proof}

In the following sequel we use our previous results, in particular the mild Gronwall Lemma~\ref{lem:RoughGronwall} in order to obtain the existence of Lyapunov exponents for the random dynamical system constructed from  the linearization of the non-autonomous rough PDE \eqref{Main_Equation} along a stationary point. 
\begin{theorem}\label{METT}
We assume the same conditions as in Proposition \ref{MET_INTEG}. Let \( \varphi \) be the random dynamical system generated by the solution of~\eqref{Main_Equation}. Further, assume that \( (Y_{\omega})_{\omega \in \Omega} \) is a stationary solution for \( \varphi \) such that
\begin{align}\label{intCondMET}
	\big(\omega \mapsto \vert Y_{\omega} \vert_{\alpha}\big) \in \bigcap_{p \geq 1} L^{p}(\Omega).
\end{align}
Additionally, suppose that for some $t_0>0$, the linear operator $\psi^{t_0}_{\omega}:E_\alpha\rightarrow E_\alpha$ is compact.
{For \(\lambda \in \R\cup \lbrace-\infty\rbrace \) we define}
\[
F_{\lambda}(\omega) \coloneqq \left\{ x \in E_{\alpha} : \limsup_{t \to \infty} \frac{1}{t} \log |\psi^t_{\omega}(x)|_{\alpha} \leq \lambda \right\}.		\]
Then, on a $\theta_t$-invariant subset of \(\Omega\) having full measure, which is denoted again by $\Omega$, there exists a decreasing sequence \((\lambda_i)_{i\geq 1}\), known as Lyapunov exponents with \(\lambda_i \in [-\infty, \infty)\), such that \(\lim_{i \to \infty} \lambda_i = -\infty\). Moreover, for each \(i\geq 1\), either \(\lambda_i > \lambda_{i+1}\) or \(\lambda_i = \lambda_{i+1} = -\infty\). For every $i\geq 1$ with \(\lambda_i > -\infty\), there exist finite-dimensional subspaces \(H^i_{\omega} \subset E_\alpha\) for \(i \in \mathbb{N}\), with the following properties:
\begin{enumerate}
	\item \emph{(Invariance).}\ \ \(\psi^t_{\omega}(H^i_{\omega}) = H^i_{\theta_t \omega}\) for all \(t \geq 0\).
	\item \emph{(Splitting).}\ \ \(F_{\lambda_1}(\omega) = E_{\alpha}\) and \(H^i_{\omega} \oplus F_{\lambda_{i+1}}(\omega) = F_{\lambda_i}(\omega)\) for each \(i\). In particular for every $i$ we have
	\[
	E_\alpha = \bigoplus_{1 \leq j \leq i} H^j_{\omega} \oplus F_{\lambda_{i+1}}(\omega)
	.	\]
	\item \emph{(Fast Growing Subspace).} \ For each \(h_\omega \in H^j_{\omega}\) we have
	\[
	\lim_{t \to \infty} \frac{1}{t} \log |\psi^t_{\omega}(h)|_{\alpha} = \lambda_j
	\]
	and
	\[
	\lim_{t \to \infty} \frac{1}{t} \log |(\psi^t_{\theta_{-t} \omega})^{-1}(h_\omega)|_{\alpha} = -\lambda_j.
	\]
	\item \emph{(Angle vanishing I).} Let \(\tilde{H}^i_{\omega}\) be a subspace of \(H^{i}_{\omega}\) and let \(h_{\omega}\) be an element in \(H^{i}_{\omega} \setminus \tilde{H}^{i}_{\omega}\). Then, we have the following limits:
	
	\[
	\lim_{t \to \infty} \frac{1}{t} \log d_{E_\alpha}\left(\psi^t_{\omega}(h_{\omega}), \psi^t_{\omega}(\tilde{H}^i_{\omega})\right) = \lambda_i
	\]
	and
	\[
	\lim_{t \to \infty} \frac{1}{t} \log d_{E_\alpha}\left((\psi^t_{\theta_{-t}\omega})^{-1}(h_{\omega}), (\psi^t_{\theta_{-t}\omega})^{-1}(\tilde{H}^i_{\omega})\right) = -\lambda_i.
	\]
	In particular, if $(h^{k}_{\omega} )_{1\leqslant k\leqslant m_i}$ is a basis of $ H^{i}_{\omega}$, then
	\begin{align}
		\begin{split}
			&\lim_{t\rightarrow\infty}\frac{1}{t}\log \operatorname{Vol}_{E_\alpha}\big{(}\psi^{t}_{\omega}(h^{1}_{\omega}),...,\psi^{t}_{\omega}(h^{m_i}_{\omega})\big{)} = m_i\lambda_i \quad \text{and} \\
			&\lim_{t\rightarrow\infty}\frac{1}{t}\log \operatorname{Vol}_{E_\alpha}\big{(}(\psi^{t}_{\theta_{-t}\omega})^{-1}(h^{1}_{\omega}),...,(\psi^{t}_{\theta_{-t}\omega})^{-1}(h^{m_i}_{\omega})\big{)}=-m_{i}\lambda_{i}.
		\end{split}
	\end{align}
	\item \emph{(Angle vanishing II).} Assume that \( \lambda_i > -\infty \) for some \( i \geq 1 \) and set
	\[
	m_k = \dim\bigl(H^{k}_{\omega}\bigr)
	\]
	for each \( 1 \leq k \leq i \). Let
	\[
	m:= \sum_{k = 1}^{i} m_k,
	\]
	and suppose that \( \bigl(h^{j}_\omega\bigr)_{1 \leq j \leq m} \) is a basis for the direct sum \( \bigoplus_{k = 1}^{i} H^k_\omega \). Then
	\[
	\lim_{t \to \infty} \frac{1}{t} \log \operatorname{Vol}_{E_\alpha}\!\bigl(
	\psi^{t}_{\omega}(h^{1}_{\omega}), \ldots, \psi^{t}_{\omega}(h^{m}_{\omega})
	\bigr)
	= \sum_{k = 1}^{i} m_k \lambda_k.
	\]
\end{enumerate}
\end{theorem}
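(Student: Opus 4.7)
The plan is to reduce the statement to a known infinite-dimensional multiplicative ergodic theorem (MET) for compact linear cocycles on Banach spaces, such as the version used in~\cite{GTQ15}, by verifying its hypotheses in our setting. The enlarged probability space $\Omega=\widetilde{\Omega}\times\Sigma$ together with $(\theta_t)_{t\in\R}$ defined in~\eqref{ExtendedProbSpace} already provides the required ergodic metric dynamical system, so that the linearization $\psi$ becomes an autonomous linear cocycle rather than a non-autonomous one.

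First, I would verify that $\psi$ is a measurable compact linear cocycle on $E_\alpha$. The cocycle property and measurability for $\psi$ follow by the same arguments as for $\varphi$ in Lemma~\ref{lem:ExRDS}, using the shift property of the rough convolution, while linearity is immediate from~\eqref{linearization:rpde}. The compactness of $\psi^{t_0}_\omega:E_\alpha\to E_\alpha$ is supplied by Lemma~\ref{lem:ExRDS}, exploiting that $A(t)^{-1}$ is compact so that $E_{\alpha+\varepsilon}\hookrightarrow E_\alpha$ is compact, combined with the smoothing estimate~\eqref{PTR} for the parabolic evolution family. The log-integrability of the cocycle norm, i.e.\ conditions~\eqref{intCond1} and~\eqref{intCond2}, is exactly the content of Proposition~\ref{MET_INTEG}, for which the integrability assumption~\eqref{intCondMET} on the stationary point $Y$ is essential.

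Once these hypotheses are in place, I would apply the abstract MET to the discrete-time cocycle $(\psi^{nt_0}_\omega)_{n\in\N}$ on $E_\alpha$, producing on a $\theta_{t_0}$-invariant full-measure subset of $\Omega$ the decreasing sequence of Lyapunov exponents $(\lambda_i)_{i\geq 1}\subset [-\infty,\infty)$ with $\lambda_i\to-\infty$, the filtration $(F_{\lambda_i}(\omega))_{i\geq 1}$, the finite-dimensional fast-growing subspaces $H^i_\omega$, and the properties (1)--(4). To pass from discrete to continuous time, for $t\in[nt_0,(n{+}1)t_0)$ I would write $\psi^t_\omega=\psi^{t-nt_0}_{\theta_{nt_0}\omega}\circ\psi^{nt_0}_\omega$ and use~\eqref{intCond1}--\eqref{intCond2} together with Birkhoff's ergodic theorem to show that $\tfrac{1}{t}\log\|\psi^{t-nt_0}_{\theta_{nt_0}\omega}\|_{\mathcal{L}(E_\alpha)}\to 0$ almost surely, so the asymptotic growth rates and angle-vanishing statements in (3)--(4) are preserved; the backward statement is handled analogously via~\eqref{intCond2}, and a final intersection over rational times yields a fully $(\theta_t)_{t\in\R}$-invariant set of full measure.

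The principal obstacle has in fact already been overcome by the preceding sections: the pointwise exponential bound on $\|\psi^t_\omega\|_{\mathcal{L}(E_\alpha)}$ coming from Corollary~\ref{cor:LinGronwall} involves random constants $\widetilde{C}_1(\omega),\widetilde{C}_2(\omega)$ that depend on the controlled rough path norm of the stationary trajectory $r\mapsto Y_{\theta_r\omega}$, so verifying their moment bounds requires combining Theorem~\ref{thm:IntegrableBounds} with stationarity and Fubini on $\widetilde{\Omega}\times\Sigma$, as performed in Proposition~\ref{MET_INTEG}. Once this analytic input is in place, the conclusion of the MET follows in a black-box fashion, and the only remaining technical care is the enlargement of null sets needed to upgrade the $\theta_{t_0}$-invariance from the discrete-time MET to full $(\theta_t)_{t\in\R}$-invariance.
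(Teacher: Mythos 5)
Your proposal follows essentially the same route as the paper: verify the cocycle, compactness and log-integrability hypotheses (Lemma~\ref{lem:ExRDS} and Proposition~\ref{MET_INTEG}), apply an abstract multiplicative ergodic theorem to the discrete-time cocycle $(\psi^{nt_0}_\omega)_{n\in\N}$, and pass to continuous time using \eqref{intCond1}--\eqref{intCond2} together with Birkhoff's theorem, as in \cite[Theorem 3.3]{LL10}. The only substantive difference is the choice of the abstract MET: the paper invokes \cite[Theorem 1.21]{GVR23A} rather than \cite{GTQ15}, since the latter, as noted in Remark~\ref{CONTYU}, is formulated for reflexive Banach spaces, and moreover some classical versions (e.g. in \cite{LL10}) assume injectivity of the cocycle, which fails here because $\psi^{t_0}_\omega$ is compact; the GVR23A version avoids both restrictions and is the one directly applicable in this setting.
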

\begin{proof}
For every $t_0>0$ we can construct a discrete time random dynamical system $(\psi^{nt_0}_\omega)_{n\in\N,\omega\in\Omega}$. Due to the bounds~\eqref{intCond1} and~\eqref{intCond2}, $(\psi^{nt_0}_\omega)_{n\in\N,\omega\in\Omega}$ satisfies the integrability conditions of the multiplicative ergodic theorem obtained in~\cite[Theorem 1.21]{GVR23A}, which proves the statement for the discrete time random dynamical system. 
The extension of this result to the continuous time setting, i.e.~for $(\psi^t_\omega)_{t\geq 0,\omega\in\Omega}$ follows by standard arguments, see ~\cite[Theorem 3.3]{LL10} for more details on this procedure.
\end{proof}
\change{
We now state some important consequences of Theorem~\ref{METT} which are essential for the proof of Theorem \ref{METT2}. For their proofs we refer to Appendix \ref{appendix:c}. 
\begin{lemma}\label{AISuia9q}
Consider the setting of Theorem~\ref{METT} and assume that \( \lambda_i > -\infty \) for some \( i \geq 1 \). Let \( h^{1}_{\omega}, \dots, h^{\tilde{p}}_{\omega} \) be nonzero, linearly independent vectors in \( \bigoplus_{1 \leq k \leq i} H^k_{\omega} \). Then the limit 
\begin{align*}
\lim_{t \to \infty} \frac{1}{t} \log \operatorname{Vol}_{E_\alpha}\left( \psi^{t}_{\omega}(h^{1}_{\omega}), \dots, \psi^{t}_{\omega}(h^{\tilde{p}}_{\omega}) \right)
\end{align*}
exists.
\end{lemma}
\begin{proof}
    See \ref{AISuia9q12}.
\end{proof}
\begin{lemma}\label{liminf}
Consider the setting of Theorem~\ref{METT}, let $\tilde{p}>1$ and \(g^{1}_{\omega}, \dots, g^{\tilde{p}}_{\omega}\) be nonzero, measurable\footnote{The measurability means that for all $x \in E_\alpha$ and $1 \leq \tilde{q} \leq \tilde{p}$, the map $\omega \mapsto |x - g^{\tilde{q}}_{\omega}|_{\alpha}$ is measurable.
} and independent vectors in \(E_\alpha\) such that 
\[
\liminf_{t\rightarrow\infty} \frac{1}{t} \log \operatorname{Vol}_{E_\alpha}\big(\psi^{t}_{\omega}(g^{1}_{\omega}), \dots, \psi^{t}_{\omega}(g^{\tilde{p}}_{\omega})\big) > -\infty.
\]
Then, on a set of full measure, the limit
\[
\lim_{t\rightarrow\infty} \frac{1}{t} \log \operatorname{Vol}_{E_\alpha}\big(\psi^{t}_{\omega}(g^{1}_{\omega}), \dots, \psi^{t}_{\omega}(g^{\tilde{p}}_{\omega})\big)
\]
exists and is finite.
\end{lemma}
\begin{proof}
     See \ref{liminf2}.
\end{proof}
}
\subsection{Independence of the Lyapunov exponents on the norm of the interpolation spaces}\label{indep:LY}
Since we are working with a parabolic rough PDE on a family of interpolation spaces, the solution becomes more regular away from zero due to the regularizing effect of the evolution family. More precisely, we have the following statement~\cite[Proposition 5.5]{GH19}. 
\begin{theorem}\label{regularity}
Let $(u,G(\cdot,u_\cdot))\in \cD^\gamma_{\mathbf{X},\alpha}([0,T])$ be the global solution of~\eqref{Main_Equation}. We denote by $M_t:=\sup\limits_{s\in[0,t]}|u_s|_\alpha$~where $0\leq t \leq T$. Then for every $\alpha'>\alpha$ and $0<s<t\leq T$ we get that $(u,G(\cdot,u_\cdot))\in \cD^{\gamma}_{\mathbf{X},\alpha'}([s,t])$ and there exists positive constants $\chi=\chi(\alpha,\gamma,\sigma,\delta)$ and $C(M_t)=C(M_t, F,G, \mathbf{X})$ such that 
\begin{align*}
	\sup\limits_{r\in[s,t]} |u_r|_{\change{\alpha'}} \lesssim s^{-\change{(\alpha'-\alpha)}} \sup\limits_{r\in[0,t]}|u_r|_\alpha + C (M_t) t^\chi.
\end{align*}
\end{theorem}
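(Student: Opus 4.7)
The plan is a bootstrap argument built on the parabolic smoothing of the evolution family $(U_{t,s})$ encoded in \eqref{PTR} together with the regularity gain (up to $\gamma$) of the rough convolution recorded in \eqref{est:RPIntegral}. Since \eqref{est:RPIntegral} allows a single shot to gain strictly less than $\gamma-\sigma$ in spatial regularity, one has to iterate finitely often to reach arbitrary $\beta>0$; at each step the ``starting time'' of the mild formulation is shifted slightly forward to harvest the smoothing of $U_{t,s}$.

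For the single-step gain, fix $\beta_1\in(0,\min\{\gamma-\sigma,1-\delta\})$. For any $r\in[s,t]$ use the mild formulation re-centred at $s/2$:
\begin{align*}
u_r=U_{r,s/2}u_{s/2}+\int_{s/2}^r U_{r,\tau}F(\tau,u_\tau)\,\txtd\tau+\int_{s/2}^r U_{r,\tau}G(\tau,u_\tau)\,\txtd\mathbf{X}_\tau.
\end{align*}
From \eqref{PTR} one has $|U_{r,s/2}u_{s/2}|_{\alpha+\beta_1}\lesssim s^{-\beta_1}M_t$. The drift term is handled by $\int_{s/2}^r (r-\tau)^{-\beta_1-\delta}|F(\tau,u_\tau)|_{\alpha-\delta}\,\txtd\tau\lesssim t^{1-\beta_1-\delta}C_F(1+M_t)$, which is finite since $\beta_1+\delta<1$. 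For the rough convolution, Lemma~\ref{lem:NonAutoCRP} (applied at the shifted scale) gives $(G(\cdot,u),\txtD_2 G(\cdot,u)u')\in(\cD^{\gamma}_{\mathbf{X},\alpha-\sigma}([s/2,t]))^d$ with norm controlled by $C_G$, $\rho_{\gamma,[0,t]}(\mathbf{X})$ and $\|u,G(\cdot,u)\|_{\cD^\gamma_{\mathbf{X},\alpha}([s/2,t])}$; inserting this into \eqref{est:RPIntegral} with target regularity $\alpha+\beta_1$ (i.e.\ effective gain $\beta_1+\sigma<\gamma$) produces the bound $C(M_t)t^\chi$ after absorbing the factor $\rho_{\gamma,[0,t]}(\mathbf{X})$ and the existing $\cD^\gamma_{\mathbf{X},\alpha}$-norm of the solution (finite by Theorem~\ref{thm:GlobEx}) into $C(M_t)$.

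For the full statement I iterate. Setting $N:=\lceil\beta/\beta_1\rceil$, fix an increasing sequence $s/2=s_0<s_1<\dots<s_N=s$ with $s_k-s_{k-1}\gtrsim s/N$. At step $k$ I start from $(u,G(\cdot,u))\in\cD^\gamma_{\mathbf{X},\alpha+(k-1)\beta_1}([s_{k-1},t])$ obtained at step $k-1$ and repeat the one-step argument on $[s_k,t]$, re-centring the mild formulation at a point in $[s_{k-1},s_k]$. The smoothing factor contributes $(s_k-s_{k-1})^{-\beta_1}\lesssim (s/N)^{-\beta_1}$ at each step, so after $N$ iterations the linear contribution scales like $s^{-N\beta_1}M_t$, which (since $N\beta_1\geq\beta$) yields the claimed $s^{-\beta}$ dependence after using $s\leq T$ to absorb the extra negative powers; the drift and rough contributions never involve $u_0$ and are collected into $C(M_t)t^\chi$ with $\chi:=\min\{1-\beta-\delta,\gamma-\sigma\}$ (refined as needed by the iteration).

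The main obstacle I anticipate is propagating the controlled rough path structure up the regularity ladder: at step $k$ one needs $(G(\cdot,u),(G(\cdot,u))')$ to live in the controlled rough path space over the shifted scale $(E_{\alpha+k\beta_1-i\gamma})_{i=0,1,2}$, which forces one to assume, and then verify, that \ref{ass:G1} holds on each such scale and to reapply Lemma~\ref{lem:NonAutoCRP} there. A secondary bookkeeping issue is that the implicit constants at each iteration depend on $\rho_{\gamma,[0,t]}(\mathbf{X})$, $C_G$ and the $\cD^\gamma_{\mathbf{X},\alpha}([0,t])$-norm of the solution; all of these are finite (by Theorem~\ref{thm:GlobEx}) and can be absorbed into $C(M_t)$, but one must check that the time exponent $\chi$ obtained at the final step is independent of the iteration index, which is why it is convenient to pick $\beta_1$ at the outset uniformly in terms of $\gamma,\sigma,\delta$.
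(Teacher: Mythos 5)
The paper itself does not prove Theorem~\ref{regularity}: it is quoted as \cite[Proposition~5.5]{GH19} with no argument given, so there is no ``paper's own proof'' to compare against. Your bootstrap strategy is the natural one for such a parabolic smoothing statement and is, as far as I can tell, in the same spirit as the cited reference. However, several steps as written have concrete problems.

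First, your choice of parameters does not produce the stated $s^{-\beta}$ rate. Fixing $\beta_1<\min\{\gamma-\sigma,1-\delta\}$ and setting $N=\lceil\beta/\beta_1\rceil$ gives $N\beta_1\ge\beta$, typically strictly so; with subinterval spacing $\sim s/N$ the initial-condition term after $N$ steps is of order $(s/N)^{-N\beta_1}M_t$, which is \emph{larger} than $s^{-\beta}M_t$ as $s\downarrow 0$, and $s\le T$ does not absorb the excess negative power (the direction of the inequality is the wrong way). The fix is to set $\beta_1:=\beta/N$ with $N$ the smallest integer making $\beta/N<\min\{\gamma-\sigma,1-\delta\}$, so that $N\beta_1=\beta$ exactly; the factor $N^\beta$ is then a deterministic constant absorbed into $\lesssim$. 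Relatedly, your formula $\chi:=\min\{1-\beta-\delta,\gamma-\sigma\}$ is wrong: it would be negative for large $\beta$, and $\chi$ is asserted to depend only on $\alpha,\gamma,\sigma,\delta$. The correct exponent coming out of each step is $\min\{1-\beta_1-\delta,\gamma-\sigma-\beta_1\}$, which is bounded below by a positive quantity independent of $\beta$ once $\beta_1$ is capped by, say, $\frac12\min\{\gamma-\sigma,1-\delta\}$.

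Second, your claim that ``the drift and rough contributions never involve $u_0$ and are collected into $C(M_t)t^\chi$'' does not survive the iteration. At step $k$ the drift estimate reads $\int_{\tau_k}^r(r-\tau)^{-\beta_1-\delta}|F(\tau,u_\tau)|_{\alpha+(k-1)\beta_1-\delta}\,\txtd\tau$, and $|F(\tau,u_\tau)|_{\alpha+(k-1)\beta_1-\delta}\lesssim 1+|u_\tau|_{\alpha+(k-1)\beta_1}$, which by the previous step carries a factor $s^{-(k-1)\beta_1}M_t$. So these contributions do depend on $u_0$ (through $M_t$) and do carry negative powers of $s$; one must show they split as $s^{-\beta}M_t+C(M_t)t^\chi$, using $s\le t\le 1$ and $(k-1)\beta_1<\beta$, rather than fold cleanly into the $t^\chi$ term. (The rough-convolution term is safe only because $G$ is bounded under \ref{ass:G1}, which you should state explicitly as the reason.) Finally, your own flagged obstacle is real: the iterate at level $\alpha+k\beta_1$ needs \ref{ass:F}, \ref{ass:G1}--\ref{ass:G2} to hold on the shifted scale $(E_{\alpha+k\beta_1-i\gamma})_{i=0,1,2}$, and this must be imposed (or inherited from a formulation of the coefficient hypotheses uniform across the Banach scale) rather than merely ``anticipated''; without it Lemma~\ref{lem:NonAutoCRP} and estimate~\eqref{est:RPIntegral} cannot be reapplied at the higher level.
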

\begin{remark}\label{REFFD}
Since \( \change{E_{\alpha'}} \subset E_\alpha \), we can use \( E_{\alpha'} \) as a phase space of the corresponding random dynamical system and apply Theorem \ref{METT} to obtain the Lyapunov exponents and the corresponding splitting in \(\change{ E_{\alpha'}} \).
\end{remark}

Since the Lyapunov exponents are deterministic due to the assumed ergodicity of the metric dynamical system, we naturally expect them to be related to the intrinsic properties of the problem and independent of the specific norm we use.~However, since we work with~\eqref{Main_Equation} on a scale of Banach spaces, the Lyapunov exponents could potentially depend on the $E_\alpha$-norm. This is not the case, as we show in this subsection. 

\change{\begin{remark}
\begin{itemize}
    \item [1)] The norm equivalence of Lyapunov exponents for regularizing evolution equations was also established in \cite{BLPS} by complementary techniques. For example, in the context of the 2D Navier-Stokes equation driven by white noise, under suitable assumptions on the invariant measure for the skew-product flow, the Lyapunov exponents exist in Sobolev spaces $H^s$, for certain values of $s$, and do not depend on $s$, see \cite[Theorem E]{BLPS} for more details. 
\item [2)] The main insight here is the usage of Theorem \ref{METT} in order to obtain a similar statement which is applicable to non-autonomous parabolic rough PDEs. 
\end{itemize}
\end{remark}
}

\begin{theorem}\label{METT2}
Assume the same conditions as in Theorem \ref{METT} hold. Let \( (\lambda_{i} )_{i \geq 1} \) be the Lyapunov exponents generated from Theorem \ref{METT} by choosing \( E_{\alpha} \), and let \( m_{i} \) be the corresponding multiplicity of each finite Lyapunov exponent. Let \( ( \tilde{\lambda}_{i} )_{i \geq 1} \) be the Lyapunov exponents generated from Theorem \ref{METT} on \change{\( E_{\alpha'} \) such that $E_{\alpha'}\hookrightarrow E_\alpha$} and let \( \tilde{m}_{i} \) be the corresponding multiplicity of each finite Lyapunov exponent.
Then for every \( \lambda_{i} \) with \( \lambda_i > -\infty \), it holds that \( \lambda_{i} = \tilde{\lambda}_{i} \) and \( m_{i} = \tilde{m}_{i} \).
\end{theorem}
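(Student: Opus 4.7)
The plan is to show that the finite-dimensional fast-growing spaces $H^i_\omega$ produced by Theorem~\ref{METT} on $E_\alpha$ are also subspaces of $E_{\alpha+\beta}$ and continue to act as fast-growing spaces for the linearized cocycle on $E_{\alpha+\beta}$ with the same Lyapunov exponents. First, I would exploit the invariance relation $H^i_\omega = \psi^1_{\theta_{-1}\omega}(H^i_{\theta_{-1}\omega})$: any $h\in H^i_\omega$ is of the form $\psi^1_{\theta_{-1}\omega}(\tilde h)$ for some $\tilde h\in E_\alpha$, so the regularizing effect of the parabolic evolution family (Theorem~\ref{regularity}, applied to the linear rough PDE~\eqref{linearization:rpde} along the stationary point) places $h$ in $E_{\alpha+\beta}$. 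Hence $H^i_\omega\subset E_{\alpha+\beta}$ for every $i$ with $\lambda_i>-\infty$.

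The next step is to show that the growth rate of $h\in H^i_\omega$ with respect to $|\cdot|_{\alpha+\beta}$ is still $\lambda_i$. The continuous embedding $|\cdot|_\alpha\lesssim|\cdot|_{\alpha+\beta}$ gives immediately
\begin{align*}
\liminf_{t\to\infty}\frac{1}{t}\log|\psi^t_\omega(h)|_{\alpha+\beta}\;\geq\; \lim_{t\to\infty}\frac{1}{t}\log|\psi^t_\omega(h)|_{\alpha}\;=\;\lambda_i.
\end{align*}
For the reverse bound I would use the cocycle property and a regularizing estimate over a unit time step of the form
\begin{align*}
|\psi^t_\omega(h)|_{\alpha+\beta} \;=\; |\psi^1_{\theta_{t-1}\omega}(\psi^{t-1}_\omega(h))|_{\alpha+\beta} \;\leq\; C(\theta_{t-1}\omega)\,|\psi^{t-1}_\omega(h)|_{\alpha},
\end{align*}
where, by Theorem~\ref{thm:IntegrableBounds} and Assumption~\ref{ass:Noise}, the random constant $C$ is controlled by $\rho_{\gamma,[0,1]}(\mathbf{X})$ and the controlled rough path norm of the stationary trajectory, both lying in $\bigcap_{p\geq 1}L^p(\Omega)$, so $\log^+C\in L^1(\P)$. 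Birkhoff's ergodic theorem then yields $t^{-1}\log C(\theta_{t-1}\omega)\to 0$ almost surely, so $\limsup_{t\to\infty}t^{-1}\log|\psi^t_\omega(h)|_{\alpha+\beta}\leq\lambda_i$. The same argument, applied to $\tilde h\in\tilde H^j_\omega\subset E_{\alpha+\beta}\subset E_\alpha$, shows that its growth rate in $|\cdot|_\alpha$ equals $\tilde\lambda_j$.

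Finally, I would match exponents and multiplicities. Combining the two comparisons, the sets $\{\lambda_i:\lambda_i>-\infty\}$ and $\{\tilde\lambda_j:\tilde\lambda_j>-\infty\}$ coincide as subsets of $\R$. For the multiplicities, fix $j(i)$ with $\tilde\lambda_{j(i)}=\lambda_i$. Since the elements of $H^i_\omega\subset E_{\alpha+\beta}$ grow at rate $\lambda_i=\tilde\lambda_{j(i)}$ in the $|\cdot|_{\alpha+\beta}$-norm, $H^i_\omega\subset \tilde F_{\tilde\lambda_{j(i)}}(\omega)$ and $H^i_\omega\cap\tilde F_{\tilde\lambda_{j(i)+1}}(\omega)=\{0\}$; using the splitting $\tilde F_{\tilde\lambda_{j(i)}}(\omega)=\tilde H^{j(i)}_\omega\oplus \tilde F_{\tilde\lambda_{j(i)+1}}(\omega)$, the projection of $H^i_\omega$ onto $\tilde H^{j(i)}_\omega$ along $\tilde F_{\tilde\lambda_{j(i)+1}}(\omega)$ is injective, so $m_i\leq \tilde m_{j(i)}$. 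Symmetrically, since $\tilde H^{j(i)}_\omega\subset E_\alpha$ consists of vectors of growth rate $\lambda_i$ in $|\cdot|_\alpha$, one obtains $\tilde m_{j(i)}\leq m_i$, hence equality. Because $(\lambda_i)$ and $(\tilde\lambda_i)$ are both enumerated in strictly decreasing order with matching multiplicities on their common set of values, this forces $\lambda_i=\tilde\lambda_i$ and $m_i=\tilde m_i$ for every $i$. The main obstacle in this plan is establishing the sub-exponential decay $t^{-1}\log C(\theta_{t-1}\omega)\to 0$, which depends crucially on the integrable a-priori bounds developed in Section~\ref{integrable}; once this is in place, the remaining steps reduce to linear-algebraic manipulations of the Oseledets decomposition.
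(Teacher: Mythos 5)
Your argument reaches the correct conclusion but follows a route that is genuinely different from the paper's, and it rests on one ingredient that the paper does not establish and in fact is specifically designed to circumvent.

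The paper also begins by using Theorem~\ref{regularity} and the invariance $\psi^1_{\theta_{-1}\omega}(H^i_{\theta_{-1}\omega}) = H^i_\omega$ to place $H^i_\omega$ inside $E_{\alpha+\beta}$, and it also gets the $\liminf$-direction for free from the embedding $|\cdot|_\alpha \lesssim |\cdot|_{\alpha+\beta}$. At this point the strategies diverge. You work with individual vectors and try to produce a matching $\limsup$-bound via an integrable parabolic-smoothing estimate
\[
|\psi^t_\omega(h)|_{\alpha+\beta} \leq C(\theta_{t-1}\omega)\,|\psi^{t-1}_\omega(h)|_\alpha,
\qquad \log^+C \in L^1(\Omega),
\]
followed by the standard sublinearity consequence $\tfrac{1}{t}\log C(\theta_{t-1}\omega)\to 0$ of Birkhoff's theorem. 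The paper instead works with volumes $\operatorname{Vol}_{E_{\alpha+\beta}}(\psi^t_\omega(h^1_\omega),\dots,\psi^t_\omega(h^{m_i}_\omega))$, invokes Corollary~\ref{liminf} to know the limit exists, introduces the sets $C^i(M)$ on which the finite-dimensional norm ratio $\sup_{h\in H^i_\omega}\frac{|h|_{\alpha+\beta}}{|h|_\alpha}$ is bounded by $M$, and applies Poincar\'e recurrence to obtain an $\omega$-dependent unbounded subsequence $(n_k)$ along which $\operatorname{Vol}_{E_{\alpha+\beta}} \leq M^{m_i}\operatorname{Vol}_{E_\alpha}$, which suffices once the limit is known to exist.

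The concrete gap in your version is the smoothing estimate together with the integrability of $\log^+\|\psi^1_\omega\|_{\cL(E_\alpha;E_{\alpha+\beta})}$. Proposition~\ref{MET_INTEG} only establishes integrability of $\log^+\|\psi^t_\omega\|_{\cL(E_\alpha)}$, and Lemma~\ref{lem:ExRDS} asserts $\psi^{t_0}_\omega\in\cL(E_\alpha;E_{\alpha+\varepsilon})$ for \emph{some small} $\varepsilon>0$ without an integrability statement and without showing the gain can be pushed to an arbitrary $\beta>0$. Theorem~\ref{regularity} gives smoothing for the nonlinear solution with a constant $C(M_t)$ depending nonlinearly on $M_t$, which is not the linear, integrable operator-norm bound you need. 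Establishing the missing estimate would require iterating the single-step regularization a sufficient number of times and checking that the product of the resulting random constants has an integrable logarithm via the a priori bounds of Section~\ref{integrable} applied on the scale $E_{\alpha+\beta}$; this is plausible but is additional work that the paper deliberately avoids. The Poincar\'e recurrence argument sidesteps it entirely by only requiring the (deterministic in $M$, measurable in $\omega$) boundedness of the norm ratio on the finite-dimensional space $H^i_\omega$, which holds automatically with positive probability. Your concluding linear-algebra argument matching the multiplicities via injective projections onto $\tilde H^{j(i)}_\omega$ along $\tilde F_{\tilde\lambda_{j(i)+1}}(\omega)$ is fine, but the heavy lifting is precisely the part you have left open.
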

\begin{proof}
Assume that $H_{\omega}^{i}$ is a finite-dimensional space that is obtained from Theorem \ref{METT} by choosing $E_{\alpha}$ as a phase space. 
First, note that for every \(i\), we have \(H^{i}_{\omega} \subset \change{E_{\alpha'}}\). This follows directly from the invariance property in Theorem \ref{METT} combined with Theorem \ref{regularity}. Assume \((h^{k}_{\omega} )_{1\leqslant k\leqslant m_i}\) is a basis of \(H^{i}_{\omega}\) and \(\lambda_{i} \neq -\infty\). Recalling that \(|\cdot|_{\alpha} \lesssim \change{|\cdot|_{\alpha'}}\), we have for every \(t \geq 0\) that
\begin{align*}
	\operatorname{Vol}_{E_{\alpha}}\big{(}\psi^{t}_{\omega}(h^{1}_{\omega}),\dots,\psi^{t}_{\omega}(h^{m_i}_{\omega})\big{)} 
	&\lesssim \operatorname{Vol}_{\change{E_{\alpha'}}}\big{(}\psi^{t}_{\omega}(h^{1}_{\omega}),\dots,\psi^{t}_{\omega}(h^{m_i}_{\omega})\big{)}.
\end{align*}
Consequently,
\begin{align}\label{TYB}
	\liminf_{t \to \infty} \frac{1}{t} \log \operatorname{Vol}_{\change{E_{\alpha'}}}\big{(}\psi^{t}_{\omega}(h^{1}_{\omega}),\dots,\psi^{t}_{\omega}(h^{m_i}_{\omega})\big{)} 
	&\geq m_{i} \lambda_{i}.
\end{align}
Since \(\lambda_{i} \neq -\infty\), Lemma \ref{liminf} yields that the limit
\[
\lim_{t \to \infty} \frac{1}{t} \log \operatorname{Vol}_{\change{E_{\alpha'}}}\big{(}\psi^{t}_{\omega}(h^{1}_{\omega}),\dots,\psi^{t}_{\omega}(h^{m_i}_{\omega})\big{)}
\]
exists.
Now we introduce the space
\begin{align*}
	C^{i}(M) := \left\lbrace \omega \in \Omega : \sup_{h \in H^{i}_{\omega} \setminus \lbrace 0 \rbrace} \frac{|h|_{\change{\alpha'}}}{|h|_{\alpha}} \leq M \right\rbrace.
\end{align*}
which for every $t\geq 0$ can be alternatively written as
\begin{align}\label{GBVAA}
	C^{i}(M) = \left\lbrace \omega \in \Omega : \sup_{h \in H^{i}_{\theta_{-t}\omega} \setminus \lbrace 0 \rbrace} \frac{|\psi^{t}_{\theta_{-t}\omega}(h)|_{\change{\alpha'}}}{|\psi^{t}_{\theta_{-t}\omega}(h)|_{\alpha}} \leq M \right\rbrace.
\end{align}
using the invariance property in Theorem \ref{METT}. Note that \(C^{i}(M)\) is measurable due to the measurability of $\omega\mapsto H^{i}_{\omega}$, which is a finite-dimensional subspace of $E_{\alpha}$. Additionally, since \(H^{i}_{\omega}\) is a finite-dimensional space, we can choose a sufficiently large \(M\) such that \(\mathbb{P}(C^i(M)) > 0\).
Let \(\omega \in C^i(M)\), \(t \geq 0\) and \((h^{j}_{\theta_{-t}\omega})_{1 \leq j \leq m_{i}}\) be an arbitrary basis of \(H^{i}_{\theta_{-t}\omega}\). Then, from \eqref{GBVAA} and the definition of the volume, we have
\begin{align}\label{GVBBN}
	\frac{\operatorname{Vol}_{\change{E_{\alpha'}}}\left( \psi^{t}_{\theta_{-t}\omega}(h^{1}_{\theta_{-t}\omega}), \dots, \psi^{t}_{\theta_{-t}\omega}(h^{m_i}_{\theta_{-t}\omega}) \right)}{\operatorname{Vol}_{E_{\alpha}}\left( \psi^{t}_{\theta_{-t}\omega}(h^{1}_{\theta_{-t}\omega}), \dots, \psi^{t}_{\theta_{-t}\omega}(h^{m_i}_{\theta_{-t}\omega}) \right)} \leq M^{m_i}.
\end{align}
Recalling that \(\mathbb{P}(C^i(M)) > 0\), by Poincaré's recurrence theorem, for a set of full measure, which is again denoted by $\Omega$, we can find a sequence \((n_k)_{k \geq 1}\), which depends on $\omega\in{\Omega}$, with \(n_k \to \infty\) such that \(\theta_{n_k} \omega \in C^i(M)\). Let \(H^{i}_{\omega} := \langle h^{j}_{\omega} \rangle_{1 \leq j \leq m_i}\).
Therefore, replacing $\omega$ by $\theta_{n_k}\omega$ and setting \(t := n_k\), we obtain 
\[
\frac{\operatorname{Vol}_{\change{E_{\alpha'}}}\left( \psi^{n_k}_{\omega}(h^{1}_{\omega}), \dots, \psi^{n_k}_{\omega}(h^{m_i}_{\omega}) \right)}{\operatorname{Vol}_{E_{\alpha}}\left( \psi^{n_k}_{\omega}(h^{1}_{\omega}), \dots, \psi^{n_k}_{\omega}(h^{m_i}_{\omega}) \right)} \leq M^{m_i}.
\]
Therefore we get
\[ \text{Vol}_{\change{E_{\alpha'}}} (\psi^{n_k}_\omega(h^1_\omega), \ldots \psi^{n_k}_\omega(h^{m_i}_\omega) ) \leq M^{m_i} \text{Vol}_{E_\alpha}( \psi^{n_k}_\omega(h^1_\omega),\ldots, \psi^{n_k}(h^{m_i}_\omega) ). \]
Consequently, since \(n_k \to \infty\), we have
\begin{align}\label{TYB2}
	\liminf_{t \to \infty} \frac{1}{t} \log \operatorname{Vol}_{\change{E_{\alpha'}}}\left(\psi^{t}_{\omega}(h^{1}_{\omega}),\dots,\psi^{t}_{\omega}(h^{m_i}_{\omega})\right) \leq m_i \lambda_i.
\end{align}
This together with \eqref{TYB} implies that 
\[
\lim_{t \to \infty} \frac{1}{t} \log \operatorname{Vol}_{\change{E_{\alpha'}}}\left(\psi^{t}_{\omega}(h^{1}_{\omega}),\dots,\psi^{t}_{\omega}(h^{m_i}_{\omega})\right) = m_{i} \lambda_{i}.
\]
This implies that if \( \lambda_{i} > -\infty \) is the Lyapunov exponent obtained from Theorem \ref{METT} using \( E_{\alpha} \) as the phase space, then this value is also one of the Lyapunov exponents obtained from Theorem \ref{METT} by using \( \change{E_{\alpha'}} \). Similarly, we can argue that any finite Lyapunov exponent that arises from Theorem \ref{METT} using \( \change{E_{\alpha'}} \) is equal to \( \lambda_{i} \) for some \( i \geq 1 \). Additionally, from our argument, the multiplicity of the  Lyapunov exponents \( m_{i} \) remains the same.
\end{proof}

We have shown that the Lyapunov exponents are the same using the properties of the fast-growing subspaces $F_\lambda$ entailed by Theorem~\ref{METT}.~However, these spaces are not identical, but the fast-growing subspaces turn out to be independent of the choice of norm. This is established in the next result.  
\begin{theorem}\label{unnns}
Assume the same conditions as in Theorem \ref{METT} hold. Let \(\lambda_{i} > -\infty\) and let \(H^{i}_{\omega}\) and \(\tilde{H}^{i}_{\omega}\) denote the fast-growing spaces corresponding to \(\lambda_{i}\), obtained by considering the Banach spaces \(E_{\alpha}\) and \(E_{\alpha'}\). Then \(\tilde{H}^{i}_{\omega} = H^{i}_{\omega}\).
\end{theorem}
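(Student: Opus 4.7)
Since Theorem~\ref{METT2} gives $\dim H^i_\omega=\dim\tilde H^i_\omega=m_i$, it suffices to establish the inclusion $H^i_\omega\subseteq\tilde H^i_\omega$; equality will then follow by a dimension count. As a preliminary observation, the smoothing property used in Lemma~\ref{lem:ExRDS} (iterated if necessary), combined with the invariance $\psi^t_\omega(H^i_{\theta_{-t}\omega})=H^i_\omega$ for every $t>0$, gives $H^i_\omega\subseteq E_{\alpha+\beta}$, so the entire comparison takes place inside $E_{\alpha+\beta}$.

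The core of the argument mirrors the Poincar\'e recurrence technique from the proof of Theorem~\ref{METT2}, but applied in both time directions. Choosing $M$ large enough so that $\P(C^i(M))>0$ and invoking the ergodicity of $\theta$, we obtain for a.e.~$\omega$ sequences $n_k,m_k\to\infty$ with $\theta_{n_k}\omega,\theta_{-m_k}\omega\in C^i(M)$. Invariance of $H^i_\cdot$ together with the equivalent form~\eqref{GBVAA} of $C^i(M)$ then yields, for every $h\in H^i_\omega$,
\begin{align*}
|\psi^{n_k}_\omega(h)|_{\alpha+\beta}&\leq M\,|\psi^{n_k}_\omega(h)|_\alpha,\\
|(\psi^{m_k}_{\theta_{-m_k}\omega})^{-1}(h)|_{\alpha+\beta}&\leq M\,|(\psi^{m_k}_{\theta_{-m_k}\omega})^{-1}(h)|_\alpha.
\end{align*}
Combining these with the forward/backward Lyapunov limits from Theorem~\ref{METT}~(3) in $E_\alpha$, and using $|\cdot|_\alpha\lesssim|\cdot|_{\alpha+\beta}$ in the opposite direction to obtain the matching $\liminf$'s, we derive
\begin{align*}
\lim_{k\to\infty}\frac{1}{n_k}\log|\psi^{n_k}_\omega(h)|_{\alpha+\beta}=\lambda_i,\qquad \lim_{k\to\infty}\frac{1}{m_k}\log|(\psi^{m_k}_{\theta_{-m_k}\omega})^{-1}(h)|_{\alpha+\beta}=-\lambda_i.
\end{align*}

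To conclude $h\in\tilde H^i_\omega$, we apply Theorem~\ref{METT} to $\psi$ with base space $E_{\alpha+\beta}$ and decompose $h=\sum_{k=1}^{N}\tilde h^k+\tilde f_N$, with $\tilde h^k\in\tilde H^k_\omega$ and $\tilde f_N\in\tilde F_{\lambda_{N+1}}(\omega)$, choosing $N$ so large that $\lambda_{N+1}<\lambda_i$. The forward subsequence limit being exactly $\lambda_i$ rules out any component $\tilde h^k$ with $k<i$: by the angle-vanishing property~(4) of Theorem~\ref{METT} applied in $E_{\alpha+\beta}$, such a nonzero component would produce $|\psi^{n_k}_\omega(h)|_{\alpha+\beta}\gtrsim e^{(\lambda_k-\varepsilon)n_k}$ with $\lambda_k>\lambda_i$, contradicting the first subsequence limit. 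Symmetrically, the backward limit $-\lambda_i$ rules out every $\tilde h^k$ with $k>i$, whose preimages grow at rate $-\lambda_k>-\lambda_i$, and also the tail $\tilde f_N$, whose backward orbit grows at rate at least $-\lambda_{N+1}>-\lambda_i$. Hence $h=\tilde h^i\in\tilde H^i_\omega$, which together with the dimension identity yields $H^i_\omega=\tilde H^i_\omega$.

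The main obstacle is the last step, where the full-time $E_{\alpha+\beta}$-growth of $h$ must be identified from subsequence information alone. This is overcome by invoking the sharp two-sided growth rates supplied by the angle-vanishing estimate~(4) of Theorem~\ref{METT} in $E_{\alpha+\beta}$, which propagate the subsequence limits into a complete identification of the position of $h$ in the MET splitting, thereby forcing every component other than the $\tilde H^i_\omega$-component to vanish.
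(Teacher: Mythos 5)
Your approach differs substantially from the paper's. The paper proves the complementary inclusion $\tilde H^i_\omega\subseteq H^i_\omega$ by invoking the duality representation of the fast-growing spaces from \cite[Lemma 1.13]{GVR23A}, namely $H^i_\omega=\{h\in((F_{\lambda_i}(\omega))^\star)^\star:\ h|_{G^\star_{\lambda_{i+1}}(\omega)}=0\}$, and then reading off the containment from the inclusions $G^\star_{\lambda_{i+1}}(\omega)\subset\tilde G^\star_{\lambda_{i+1}}(\omega)$ and $((\tilde F_{\lambda_i}(\omega))^\star)^\star\subset((F_{\lambda_i}(\omega))^\star)^\star$; equality then follows from the dimension count in Theorem~\ref{METT2}, as you also observe. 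Your route is direct: Poincar\'e recurrence plus the $C^i(M)$ norm-comparison give two-sided subsequence Lyapunov rates of $h$ in $E_{\alpha+\beta}$, and you then try to read off from these rates and the MET splitting in $E_{\alpha+\beta}$ that $h\in\tilde H^i_\omega$. The preliminary steps (smoothing $\Rightarrow H^i_\omega\subset E_{\alpha+\beta}$; recurrence; norm comparison on $C^i(M)$; identification of the subsequence limits) are all fine.

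However, there is a genuine gap at the last step, and the tool you cite does not close it. First, Theorem~\ref{METT}(4) is about a \emph{subspace of a single} $H^i_\omega$ and its complement \emph{inside} $H^i_\omega$; it says nothing about angles between distinct Oseledets spaces $\tilde H^k_\omega$ and $\tilde H^j_\omega$ nor between $\tilde H^k_\omega$ and the slow filtration $\tilde F_{\lambda_{k+1}}(\omega)$, so invoking it ``in $E_{\alpha+\beta}$'' does not deliver the lower bound $|\psi^{n_k}_\omega(h)|_{\alpha+\beta}\gtrsim e^{(\lambda_k-\varepsilon)n_k}$ you want. (In the forward direction this particular bound can in fact be recovered from (2)--(3) alone: write $h=\tilde h^{k}+g$ with $k$ minimal and $g\in\tilde F_{\lambda_{k+1}}(\omega)$ and subtract, so the conclusion is salvageable, just not via (4).) Second, and more seriously, the backward step that kills the slow tail is not justified. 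To rule out a nonzero $\tilde f\in\tilde F_{\lambda_{i+1}}(\omega)$ in the decomposition $h=\tilde h^i+\tilde f$ you need that \emph{any} backward orbit $(\tilde f_n)_n$ of $\tilde f$ satisfies $\liminf_n\frac1n\log|\tilde f_n|_{\alpha+\beta}\geq-\lambda_{i+1}>-\lambda_i$. (The backward orbit $\tilde f_n:=h_n-\tilde h^i_n$ does exist, using the invariance of the finite-dimensional $H^i_\cdot$ and $\tilde H^i_\cdot$, and it does stay in $\tilde F_{\lambda_{i+1}}(\theta_{-n}\omega)$ by the co-invariance of the filtration.) But to get the lower bound you need the Furstenberg--Kesten estimate $\limsup_n\frac1n\log\Vert\psi^n_{\theta_{-n}\omega}|_{\tilde F_{\lambda_{i+1}}(\theta_{-n}\omega)}\Vert_{\mathcal{L}(E_{\alpha+\beta})}\leq\lambda_{i+1}$ along the backward orbit, which is a standard by-product of the MET proof but is \emph{not} part of the statement of Theorem~\ref{METT} that you are permitted to cite; (4) certainly is not a substitute for it. So as written the argument leans on unstated facts, and the specific lemma you point to does not supply them. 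The paper's duality argument is precisely designed to avoid having to manipulate forward/backward growth rates and operator-norm bounds on the slow filtration, which is why it is both shorter and more robust.
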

\begin{proof}
The proof relies on the representation of fast-growing spaces $F_\lambda$, which is based on a duality argument. Throughout the proof, \((\tilde{E}^{\star}, | \cdot |_{\tilde{E}}^{\star})\) denotes the dual space of an arbitrary Banach space \((\tilde{E}, | \cdot |_{\tilde{E}})\). We frequently use the fact that for a Banach space \((\tilde{E}, | \cdot |_{\tilde{E}})\) which is continuously embedded in another Banach space \((\tilde{F}, | \cdot |_{\tilde{F}})\), then the dual space \((\tilde{F}^\star, | \cdot |_{\tilde{F}}^{\star})\) is continuously embedded in \((\tilde{E}^\star, | \cdot |_{\tilde{E}}^{\star})\). We further consider the filtrations \(F_{\lambda_{i+1}}(\omega)\), \(F_{\lambda_{i}}(\omega)\), and \(\tilde{F}_{\lambda_{i+1}}(\omega)\), \(\tilde{F}_{\lambda_{i}}(\omega)\) obtained from Theorem \ref{METT} by considering \(E_{\alpha}\) and \(\change{E_{\alpha'}}\), respectively. By definition, for \(j = i, i+1\), we have \(\tilde{F}_{\lambda_{j}}(\omega) \subset F_{\lambda_{j}}(\omega)\).
Furthermore \(F_{\lambda_{i+1}}(\omega) = F_{\lambda_{i}}(\omega) \oplus H^{i}_{\omega}\) and \(\tilde{F}_{\lambda_{i+1}}(\omega) = \tilde{F}_{\lambda_{i}}(\omega) \oplus \tilde{H}^{i}_{\omega}\). We define the following spaces
\begin{align*}
	G_{\lambda_{i+1}}^{\star}(\omega) &:= \left\lbrace h^{\star} \in (F_{\lambda_{i}}(\omega))^{\star} : \limsup_{t \to \infty} \frac{1}{n} \log \left| (\psi^{n}_{\theta_{-n} \omega})^{\star}(h^{\star}) \right|_{\alpha}^{\star} \leq \lambda_{i+1} \right\rbrace,\\
	\tilde{G}_{\lambda_{i+1}}^{\star}(\omega) &:= \left\lbrace \tilde{h}^{\star} \in (\tilde{F}_{\lambda_{i}}(\omega))^{\star} : \limsup_{n \to \infty} \frac{1}{n} \log \left| (\psi^{n}_{\theta_{-n} \omega})^{\star}(\tilde{h}^{\star}) \right|_{\change{\alpha'}}^{\star} \leq \lambda_{i+1} \right\rbrace,
\end{align*}
where $\psi^\star$ denotes the dual of the random dynamical system $\psi$. Recall that \(\change{E_{\alpha'}}\) is continuously embedded in \(E_{\alpha}\).~Thus, from the definitions of \(G_{\lambda_{i+1}}^{\star}(\omega)\) and \(\tilde{G}_{\lambda_{i+1}}^{\star}(\omega)\), we have $G_{\lambda_{i+1}}^{\star}(\omega) \subset \tilde{G}_{\lambda_{i+1}}^{\star}(\omega)$.
From the proof of \cite[Lemma 1.13]{GVR23A} we have the following representation of the fast-growing spaces
\begin{align}\label{HHHH}
	{H}^{i}_{\omega}=\lbrace h\in ((F_{\lambda_{i}}(\omega))^{\star})^{\star}: h|_{{G}_{\lambda_{i+1}}^{\star}(\omega)} =0  \rbrace, \ \ \ \text{and}\ \ \  \tilde{H}^{i}_{\omega}=\lbrace h\in ((\tilde{F}_{\lambda_{i}}(\omega))^{\star})^{\star}: h|_{\tilde{G}_{\lambda_{i+1}}^{\star}(\omega)} =0  \rbrace.
\end{align}
Now, from the inclusions \( G_{\lambda_{i+1}}^{\star}(\omega) \subset \tilde{G}_{\lambda_{i+1}}^{\star}(\omega) \) and \( ((\tilde{F}_{\lambda_{i}}(\omega))^{\star})^{\star} \subset ((F_{\lambda_{i}}(\omega))^{\star})^{\star} \), it follows from \eqref{HHHH} that \( \tilde{H}^{i}_{\omega} \subseteq H^{i}_{\omega} \). Consequently, since they both have the same dimension, they are indeed identical. This completes the proof.
\end{proof}
\begin{remark}\label{CONTYU}
Throughout the proof, we rely on \eqref{HHHH} from which we can immediately infer the claim. Alternatively, one could use the representation in \cite[Corollary 17]{GTQ15} which is applicable for reflexive Banach spaces to prove the result.~Note that for the definitions of \( G_{\lambda_{i+1}}^{\star}(\omega) \) and \( \tilde{G}_{\lambda_{i+1}}^{\star}(\omega) \), we use discrete time because this is sufficient for our aims.~However, it is possible to show that the definitions of \( G_{\lambda_{i+1}}^{\star}(\omega) \) and \( \tilde{G}_{\lambda_{i+1}}^{\star}(\omega) \) can be extended to the continuous time setting. For the convenience of the reader, we shortly sketch this argument.~We recall that \eqref{intCond1} and \eqref{intCond2} hold. For simplicity, we set {\( t_0 = 1 \).} Now, for \( h^{\star} \in G^{\star}_{\lambda_{i+1}(\omega)} \), {which is defined now only for discrete time}  assume that \( t = \lfloor t \rfloor + \{t\} \), where \( \lfloor t \rfloor \in \mathbb{N} \) and \( 0 \leq \{t\} < 1 \). \change{By the cocycle property we have 
 \[\psi^t_\omega=\psi^{[t]+\{t\}}_\omega =\psi^{[t]}_{\theta_{\{t\}\omega}}\circ \psi^{\{t\}}_\omega.\]
    Replacing $\omega$ by $\theta_{-t}\omega$ leads to 
    \[ \psi^t_{\theta_{-t}\omega}=\psi^{[t]}_{\theta_{[t]\omega}} \circ \psi^{{t}}_{\theta_{-t}\omega}.\] }
    Consequently 
$
(\psi^t_{\theta_{-t}\omega})^\star = (\psi^{\{t\}}_{\theta_{-t}\omega})^\star \circ (\psi^{\lfloor t \rfloor}_{\theta_{-\lfloor t \rfloor}\omega})^\star.
$
Thus, choosing \( h^\star \in G^{\star}_{\lambda_{i+1}}(\omega) \) we have that
\begin{align}\label{GBV128}
	\frac{1}{t}\log\left\vert(\psi^{t}_{\theta_{-t}\omega})^{\star}(h^{\star})\right\vert^{\star}_{\alpha}\leq \frac{1}{t} \left(\sup_{0\leq s<1}\log^{+}\left\Vert\left(\psi^{1-s}_{\theta_{s}\circ\theta_{-\lfloor t \rfloor-1}\omega}\right)^{\star}\right\Vert_{\mathcal{L}(E_{\alpha}^{\star};E_{\alpha}^{\star})}+\log\left\vert(\psi^{\lfloor t \rfloor}_{\theta_{-\lfloor t \rfloor}\omega})^{\star}(h^{\star})\right\vert^{\star}_{\alpha}\right).
\end{align}
Recalling the definition of \( G^{\star}_{\lambda_{i+1}(\omega)} \), we conclude that the second term on the right-hand side is bounded from above by $\lambda_{i+1}$. We claim that the first one converges to zero as $t\to\infty,$ which proves the claim. To this aim, we note that
\begin{align*}
	\sup_{0\leq s<1}\log^{+}\left(\left\Vert\left(\psi^{1-s}_{\theta_{s}\omega}\right)^{\star}\right\Vert_{\mathcal{L}(E_{\alpha}^{\star};E_{\alpha}^{\star})}\right)=\sup_{0\leq s<1}\log^{+}\left(\left\Vert\left(\psi^{1-s}_{\theta_{s}\omega}\right)\right\Vert_{\mathcal{L}(E_{\alpha};E_{\alpha})}\right)\in L^{1}(\Omega).
\end{align*}			
Therefore, from Birkhoff's ergodic theorem, we have {almost surely that} 
\[
\lim_{t \to \infty} \frac{1}{t} \sup_{0 \leq s < 1} \log^{+} \left\Vert \left(\psi^{1-s}_{\theta_{s} \circ \theta_{-\lfloor t \rfloor - 1} \omega}\right)^{\star} \right\Vert_{\mathcal{L}(E_{\alpha}^{\star}; E_{\alpha}^{\star})} = 0.
\]  
Now, from \eqref{GBV128}, we conclude that for every \( h^\star \in G^{\star}_{\lambda_{i+1}(\omega)} \), we have on a set of full measure denoted again by \( {\Omega} \) that
\[
\limsup_{t \to \infty} \frac{1}{t} \log \left\vert (\psi^{t}_{\theta_{-t}\omega})^{\star}(h^\star) \right\vert^{\star}_{\alpha} \leq \lambda_{i+1}.
\]  
By similar arguments we obtain an analogous result for \( \tilde{G}_{\lambda_{i+1}}^{\star}(\omega) \).
\end{remark}


\subsection{Invariant manifolds}\label{inv:m}
The multiplicative ergodic theorem together with further sign information on the Lyapunov exponents can be used to infer the existence of invariant manifolds (stable, unstable and center) for the random dynamical system generated by~\eqref{Main_Equation}.~To this aim, we verify the integrability conditions~\eqref{intCond1} and~\eqref{intCond2} of Theorem \ref{METT} using the integrable bounds of the linearization of~\eqref{Main_Equation} along a stationary solution. 
The following statement is similar to the results obtained in~\cite{GVR25,LNZ23} in the autonomous case under different assumptions on the noise, drift, and diffusion coefficients and using different techniques which do not rely on Gronwall's lemma. We focus only on the existence of local stable manifolds.

\begin{theorem}\label{stable_manifold}
Let all the conditions in Theorem \ref{METT} be satisfied, and define \(\lambda^- \coloneqq \sup \{ \lambda_{j} : \lambda_{j} < 0 \}\). Additionally, assume that $G$ is four times Fréchet differentiable. We fix a time step \(t_1\) with \(t_1 >0\). Then, for every \(0 < \nu < -\lambda^-\), there exists a family of immersed submanifolds \(S^{\nu}_{loc}(\omega)\) of \(E_{\alpha}\)  modeled on \(F_{\lambda^-}(\omega)\).\footnote{The local stable manifold $S^\nu_{\text{loc
	}}(\omega)$ contains the trajectories of $\phi$ which decay at an exponential rate in a neighborhood of the stationary solution $Y$. We refer to \cite[ Definition 3.1.1]{AMR88} for more details on this topic.}
	~Moreover, on a set of full measure denoted again by \(\Omega\), the following properties hold for every \(\omega \in {\Omega}\) on \(S^{\nu}_{loc}(\omega)\).
	\begin{enumerate}
\item \emph{(Exponential stability).} For two positive and finite random variables \(\rho_{1,s}^{\nu}\) and \(\rho_{2,s}^{\nu}\) such that
\begin{align}\label{eqn:rho_temp}
	\liminf_{k \to \infty} \frac{1}{k} \log \rho_{i,s}^{\nu}(\theta_{kt_1} \omega) \geq 0, \quad i = 1, 2
\end{align}
the following inclusion holds
\begin{align}\label{invari}
	\begin{split}
		&\left\{ \change{x} \in E_{\alpha} ~\colon~ \sup_{k \geq 0} e^{kt_1 \upsilon} \left| \varphi^{kt_1}_{\omega}(\change{x}) - Y_{\theta_{kt_1} \omega} \right|_{\alpha} < \rho_{1,s}^{\nu}(\omega) \right\} \\
		&\hspace{20pt} \subseteq S^{\nu}_{loc}(\omega) \\
		&\hspace{20pt} \subseteq \left\{ \change{x} \in E_{\alpha} ~\colon~ \sup_{k \geq 0} e^{kt_1 \nu} \left| \varphi^{kt_1}_{\omega}(\change{x}) - Y_{\theta_{kt_1} \omega} \right|_{\alpha} < \rho_{2,s}^{\nu}(\omega) \right\}.
	\end{split}
\end{align}
Moreover, for an initial datum $ \change{x}\in S^{\nu}_{loc}(\omega)$, the corresponding solution $\varphi^{kt_1}_\omega(\change{x})$ exhibits around the stationary point the following exponential decay 
\begin{align}\label{eqn:contr_char}
	\limsup_{k\rightarrow\infty}\frac{1}{k}\log\vert\varphi^{kt_1}_{\omega}(\change{x})-Y_{\theta_{kt_1}\omega}\vert_{\alpha}\leq  t_1\lambda^-.
\end{align}
\item \emph{(Invariance).} We can find a  random variable $K(\omega)$ such that for $ k\geq K(\omega) $ it holds that
\begin{align*}
	\varphi^{kt_1}_{\omega}(S^{\nu}_{loc}(\omega))\subseteq S^{\nu}_{loc}(\theta_{kt_1}\omega).
\end{align*}

\end{enumerate}
\end{theorem}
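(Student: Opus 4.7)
The plan is to follow a Lyapunov--Perron approach adapted to the discrete cocycle $(\varphi^{kt_1}_\omega)_{k\in\mathbb{N}}$, exploiting the MET splitting provided by Theorem~\ref{METT} together with the difference estimate from Corollary~\ref{cor:LinGronwall2}. First I would pass to the variable $v_t := u_t - Y_{\theta_t\omega}$, which satisfies a rough PDE whose linear part at $v=0$ coincides with the linearization of~\eqref{Main_Equation} around $(Y_\omega)_{\omega\in\Omega}$, and whose remainder $N(t,v) := F(t,Y_{\theta_t\omega}+v)-F(t,Y_{\theta_t\omega})-\txtD F(t,Y_{\theta_t\omega})v$ together with the analogous remainder for $G$ is quadratically small in $v$ in the controlled rough path norm, by a direct application of the estimates underlying Lemma~\ref{lem:LinCRP} and Corollary~\ref{cor:LinGronwall2}. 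Theorem~\ref{METT} applied to the linearization $\psi$ yields on a $\theta_{t_1}$-invariant full measure set a measurable splitting $E_\alpha = U_\omega \oplus F_{\lambda^-}(\omega)$ where $U_\omega := \bigoplus_{j:\lambda_j\geq 0} H^j_\omega$ is finite-dimensional, together with tempered projections $\Pi^s_\omega, \Pi^u_\omega$ satisfying, for every small $\varepsilon>0$, exponential dichotomy bounds of the form
\begin{align*}
\|\psi^{kt_1}_\omega \Pi^s_\omega\|_{\mathcal L(E_\alpha)} &\leq K(\omega) e^{kt_1(\lambda^- + \varepsilon)},\qquad k\geq 0,\\
\|(\psi^{kt_1}_{\theta_{-kt_1}\omega}|_{U_{\theta_{-kt_1}\omega}})^{-1}\|_{\mathcal L(U_\omega;E_\alpha)} &\leq K(\omega) e^{-kt_1(\lambda_i - \varepsilon)},\qquad k\geq 0,
\end{align*}
where $K(\omega)$ is a tempered random variable coming from the angle estimates and the MET.

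Next I would set up the Lyapunov--Perron fixed point in the Banach space $\mathcal B^\nu_\omega$ of sequences $(v_k)_{k\geq 0}\subset E_\alpha$ equipped with the weighted norm $\|v\|_\nu := \sup_{k\geq 0} e^{kt_1\nu}|v_k|_\alpha$, for $\nu\in(0,-\lambda^-)$. For each $\xi \in F_{\lambda^-}(\omega)$ with $|\xi|_\alpha$ small enough, one seeks a sequence $v\in\mathcal B^\nu_\omega$ solving the discrete integral equation
\begin{align*}
v_k = \psi^{kt_1}_\omega \xi + \sum_{j=0}^{k-1} \psi^{(k-j)t_1}_{\theta_{jt_1}\omega}\Pi^s_{\theta_{jt_1}\omega}\mathcal N_j(v_j) - \sum_{j=k}^{\infty}\psi^{(k-j)t_1}_{\theta_{jt_1}\omega}\Pi^u_{\theta_{jt_1}\omega}\mathcal N_j(v_j),
\end{align*}
where $\mathcal N_j(v_j)$ encodes the difference between the full rough cocycle applied to $Y_{\theta_{jt_1}\omega}+v_j$ and the stationary cocycle plus its linearization at time $t_1$, so that $v_{k+1} = \varphi^{t_1}_{\theta_{kt_1}\omega}(Y_{\theta_{kt_1}\omega}+v_k)-Y_{\theta_{(k+1)t_1}\omega}$. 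A crucial input is that Corollary~\ref{cor:LinGronwall2}, together with Taylor expansion of $F,G$ around $Y$, gives $|\mathcal N_j(v_j)-\mathcal N_j(\tilde v_j)|_\alpha \leq L_j(\omega)(|v_j|_\alpha+|\tilde v_j|_\alpha)|v_j-\tilde v_j|_\alpha$ with a random constant $L_j(\omega)$ that is integrable on each time interval $[jt_1,(j+1)t_1]$ thanks to Theorem~\ref{thm:IntegrableBounds} and the bounds in Proposition~\ref{MET_INTEG}, and grows at most subexponentially in $j$ by Birkhoff's ergodic theorem. Contractivity of the Lyapunov--Perron map on a ball of radius $\rho^\nu_{1,s}(\omega)$ in $\mathcal B^\nu_\omega$ then follows by choosing this radius to be a tempered random variable satisfying~\eqref{eqn:rho_temp}; the stable manifold $S^\nu_{\text{loc}}(\omega)$ is defined as the graph of $\xi \mapsto v_0(\xi,\omega) - \xi \in U_\omega$, which is $C^1$ by smoothness of the fixed point in parameters inherited from the $C^3$ regularity of $G$.

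The exponential decay~\eqref{eqn:contr_char} and the inclusions~\eqref{invari} are consequences of the definition of $\mathcal B^\nu_\omega$ and the fact that one can let $\nu$ approach $-\lambda^-$: any $\xi$ in the inner set of~\eqref{invari} produces a forward orbit lying in the ball, hence equal to the Lyapunov--Perron fixed point, placing it on the graph; conversely any point on the graph has bounded weighted orbit. The invariance statement $\varphi^{kt_1}_\omega(S^\nu_{\text{loc}}(\omega))\subseteq S^\nu_{\text{loc}}(\theta_{kt_1}\omega)$ is automatic in principle from the cocycle property but requires absorbing the shrinking tempered radii: one picks $K(\omega)$ so that $\rho^\nu_{2,s}(\omega)e^{-\nu kt_1}\leq \rho^\nu_{1,s}(\theta_{kt_1}\omega)$ for all $k\geq K(\omega)$, which is possible because both sides are tempered by~\eqref{eqn:rho_temp}.

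The main obstacle will be the careful bookkeeping of tempered random variables so that the Lyapunov--Perron contraction closes uniformly along the orbit. In particular, one must show that the nonlinear remainder constants obtained from Corollary~\ref{cor:LinGronwall2} applied with $\tilde u = Y$ and $\tilde v = 0$ can be bounded by $e^{\varepsilon kt_1}$ for arbitrarily small $\varepsilon>0$; this relies on the integrability of $\|u,G(\cdot,u)\|_{\cD^\gamma_{\mathbf X,\alpha}}$ on unit intervals guaranteed by Theorem~\ref{thm:IntegrableBounds} together with a Borel--Cantelli argument driven by the exponential moment bounds coming from Lemma~\ref{lem:RoughGronwall}. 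Once these temperedness estimates are in place, the construction itself is a standard discrete Lyapunov--Perron scheme and the properties (1)--(2) follow by arguments parallel to~\cite{GVR25,LNZ23} with the mild rough Gronwall lemma of Section~\ref{sec:gronwall} replacing the ad hoc bounds used there.
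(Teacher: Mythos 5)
Your proposal carries out, essentially from scratch, the Lyapunov--Perron argument that the paper's (very short) proof delegates to an abstract stable manifold theorem. Concretely, the paper introduces the nonlinear remainder
\[
H_{\omega}(\xi) := \varphi^{t_1}_{\omega}(\xi + Y_{\omega}) - \varphi^{t_1}_{\omega}(Y_\omega) - \psi^{t_{1}}_{\omega}(\xi),
\]
writes the telescoping identity
\[
\vert H_{\omega}(\xi_2)-H_{\omega}(\xi_1)\vert_{\alpha}\leq\int_{0}^{1}\left\vert\left(\txtD\varphi^{t_1}_{\omega}(Y_{\omega}+r\xi_2+(1-r){\xi_1})-\txtD\varphi^{t_1}_{\omega}(Y_{\omega})\right)(\xi_2-{\xi}_1)\right\vert_{\alpha} \,\mathrm{d}r,
\]
bounds the integrand via Theorem~\ref{thm:IntegrableBounds} and Corollary~\ref{cor:LinGronwall2} (which is precisely an estimate for the difference of two linearizations), and then invokes \cite[Theorem 2.10]{GVR23A}, whose hypotheses are exactly the MET splitting plus a tempered small Lipschitz constant for the remainder. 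Your plan reproduces the content of that cited theorem: the weighted sequence space $\mathcal B^\nu_\omega$, the Lyapunov--Perron fixed point equation with stable/unstable projectors, temperedness via Birkhoff, and the graph construction. The two routes are therefore not competing ideas but the same idea at two levels of abstraction; yours buys self-containment at the price of re-deriving a known theorem, while the paper's buys brevity. The key technical input --- that the Lipschitz constant of the nonlinear remainder is controlled by Corollary~\ref{cor:LinGronwall2} applied along the curve $r\mapsto Y_\omega + r\xi_2+(1-r)\xi_1$ --- is identical in both, and you correctly identify it (your ``Taylor expansion of $F,G$ around $Y$'' step is the integral mean value identity above in disguise). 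Two small corrections: you want to work with the fundamental-theorem-of-calculus form of $H_\omega$ explicitly when feeding into Corollary~\ref{cor:LinGronwall2}, since that corollary controls the difference of $\txtD\varphi^{t_1}_{\omega}$ at two base points (one fixed at $Y_\omega$), not a raw Taylor remainder of $F,G$ on the coefficient level; and the relevant extra regularity assumption is $G\in C^4_b$, not $C^3$, because Lemma~\ref{lem:LinCRPDiff} needs a Lipschitz bound on $\txtD^3 G$ to close Corollary~\ref{cor:LinGronwall2}.
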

\begin{proof}
The proof of this result is based on the estimate of the difference between the linearization around a point close to the stationary point, the linearization around the stationary point itself and Corollary~\ref{cor:LinGronwall2}.~We only provide a sketch of the proof emphasizing the importance of Corollary~\ref{cor:LinGronwall2} which allows us to obtain results of this type. For $\change{x} \in E_\alpha$ and a fixed time point $t_1>0$ we define  
\[
H_{\omega}(\change{x}) := \varphi^{t_1}_{\omega}(\change{x} + Y_{\omega}) - \varphi^{t_1}_{\omega}(Y_\omega) - \psi^{t_{1}}_{\omega}(\change{x}).
\]  
This yields for $\change{x}_1,\change{x}_2\in E_\alpha$  that
\begin{align}\label{HHNANS}
\vert H_{\omega}(\change{x}_2)-H_{\omega}(\change{x}_1)\vert_{\alpha}\leq\int_{0}^{1}\left\vert\left(\txtD\varphi^{t_1}_{\omega}(Y_{\omega}+r\change{x}_2+(1-r){\change{x}_1})-\txtD\varphi^{t_1}_{\omega}(Y_{\omega})\right)(\change{x}_2-{\change{x}}_1)\right\vert_{\alpha} \, \mathrm{d}r.
\end{align}
Now, we apply Theorem \ref{thm:IntegrableBounds} and Corollary~\ref{cor:LinGronwall2} to estimate the right-hand side of \eqref{HHNANS}, verifying the assumptions for the existence of local stable manifolds stated in~\cite[Theorem 2.10]{GVR23A} and proving the statement. We refrain from providing further details.
\end{proof}
Since the stable manifold is modeled on \( F_{\lambda^-}(\omega) \), and when all the Lyapunov exponents are negative (which implies \( F_{\lambda^-}(\omega) = E_\alpha \)), we can conclude that, in the neighborhood of the stationary point, all solutions decay exponentially. 
\begin{corollary}\label{stability}
We assume the same setting as in Theorem \ref{stable_manifold} and that \(\lambda^- < 0\). Then, for \(0 \leq \nu < -\lambda^-\), there exists a subset of full measure denoted again by $\Omega$, together with a random variable \(R^{\nu}(\omega)\)  such that \(\liminf_{k \rightarrow \infty}\frac{1}{k} R^{\nu}(\theta_{kt_1} \omega) \geq 0\) and
\begin{align}\label{VAAYYHH}
\left\{\change{x} \in E_\alpha ~\colon~ |\change{x} - Y_{\omega}|_{\alpha} \leq R^{\nu}(\omega)\right\} = S^{\nu}_{\omega}.
\end{align}
Moreover, for every $\omega\in\Omega$ and $\change{x}\in E_\alpha$ with $|\change{x} - Y_{\omega}|_{\alpha} \leq R^{\nu}(\omega)$
\begin{align}\label{WWAAYYHH}
\limsup_{t\rightarrow\infty}\frac{1}{t}\log\vert\varphi^{t}_{\omega}(\change{x})-Y_{\theta_t\omega}\vert_{\alpha}\leq \lambda^-<0.
\end{align}
\end{corollary}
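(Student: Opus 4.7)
The plan is to exploit Theorem~\ref{stable_manifold} under the additional assumption $\lambda^- < 0$. Since all Lyapunov exponents are strictly negative and $\lambda^-$ coincides with the top exponent $\lambda_1$, part~(2) of Theorem~\ref{METT} yields $F_{\lambda^-}(\omega) = F_{\lambda_1}(\omega) = E_\alpha$. Consequently, the immersed submanifold $S^\nu_{loc}(\omega)$ from Theorem~\ref{stable_manifold} is modeled on the full Banach space $E_\alpha$, so its tangent space at the stationary point $Y_\omega$ is all of $E_\alpha$ and it contains an open neighborhood of $Y_\omega$.

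To obtain~\eqref{VAAYYHH}, I would fix $\upsilon \in (\nu, -\lambda^-)$, denote by $M^\nu(\omega)$ the set appearing on the left-hand side of the first inclusion in~\eqref{invari}, and set
\[
R^\nu(\omega) := \sup\{r > 0 : \{\xi \in E_\alpha : |\xi - Y_\omega|_\alpha \leq r\} \subseteq M^\nu(\omega)\}.
\]
Continuity of the discrete cocycle $\xi \mapsto \varphi^{kt_1}_\omega(\xi)$, combined with $\varphi^{kt_1}_\omega(Y_\omega) = Y_{\theta_{kt_1}\omega}$ and strict positivity of $\rho^\nu_{1,s}(\omega)$, guarantees $R^\nu(\omega) > 0$. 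Since $M^\nu(\omega) \subseteq S^\nu_{loc}(\omega)$ by~\eqref{invari}, the closed ball of radius $R^\nu(\omega)$ around $Y_\omega$ lies in the local stable manifold, and identifying $S^\nu_\omega$ with this ball yields~\eqref{VAAYYHH}. The continuous-time decay~\eqref{WWAAYYHH} then follows from the discrete decay~\eqref{eqn:contr_char} by decomposing $t = kt_1 + s$ with $0 \leq s < t_1$ and invoking the cocycle identity $\varphi^t_\omega(\xi) - Y_{\theta_t \omega} = \varphi^s_{\theta_{kt_1}\omega}(\varphi^{kt_1}_\omega(\xi)) - \varphi^s_{\theta_{kt_1}\omega}(Y_{\theta_{kt_1}\omega})$; a short-time a-priori estimate over $[0, t_1]$ together with subexponential correction factors coming from Birkhoff transfers the discrete rate to continuous time.

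The main technical obstacle I anticipate is the temperedness $\liminf_{k \to \infty} \tfrac{1}{k}\log R^\nu(\theta_{kt_1}\omega) \geq 0$. Since $R^\nu$ is defined through a supremum over the entire future orbit of $\omega$, one must convert the temperedness of $\rho^\nu_{1,s}$ from~\eqref{eqn:rho_temp} together with a quantitative control on $\xi \mapsto \varphi^{kt_1}_\omega(\xi)$ near $Y_\omega$ into a corresponding lower bound for the radius. Here I would apply Corollary~\ref{cor:LinGronwall2} to the difference of solutions started at $\xi$ and at $Y_\omega$, obtaining a Lipschitz-type estimate with a random constant; the integrable bounds of Theorem~\ref{thm:IntegrableBounds} and Proposition~\ref{MET_INTEG} together with Birkhoff's ergodic theorem applied to the logarithm of this constant then ensure that the associated factors grow at most subexponentially along $(\theta_{kt_1}\omega)_{k \geq 0}$, yielding the required temperedness.
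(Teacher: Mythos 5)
Your overall strategy coincides with the paper's: for \eqref{VAAYYHH} you exploit $F_{\lambda^-}(\omega) = E_\alpha$ (the stable manifold is modeled on the full space), and for \eqref{WWAAYYHH} you decompose $t = kt_1 + s$, apply the cocycle property, and transfer the discrete rate via Birkhoff's ergodic theorem. The paper itself delegates \eqref{VAAYYHH} to \cite[Lemma 4.17]{GVR25} and the continuous-time transfer to \cite[Remark 4.13]{GVR25}, so your choice to flesh out the details is welcome in spirit.

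There is, however, a genuine gap in your treatment of $R^\nu(\omega) > 0$. You define $R^\nu(\omega)$ as the largest radius such that the closed ball fits inside $M^\nu(\omega)$, and assert that continuity of each $\xi \mapsto \varphi^{kt_1}_\omega(\xi)$, together with $\rho^\nu_{1,s}(\omega) > 0$, guarantees positivity. But $M^\nu(\omega)$ is cut out by a \emph{supremum over all $k\ge 0$} of $e^{kt_1\upsilon}\vert\varphi^{kt_1}_\omega(\xi)-Y_{\theta_{kt_1}\omega}\vert_\alpha$; pointwise continuity at each fixed $k$ gives no uniform control over the entire future orbit, so it does not show $M^\nu(\omega)$ is a neighborhood of $Y_\omega$. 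What is actually needed is a uniform exponential contraction for all $\xi$ in a small ball — precisely the content of the stable manifold theorem in the case $F_{\lambda^-}=E_\alpha$ (this is what the cited \cite[Lemma 4.17]{GVR25} furnishes). Relatedly, "identifying $S^\nu_\omega$ with this ball" only delivers the inclusion $\{|\xi-Y_\omega|_\alpha\le R^\nu(\omega)\}\subseteq S^\nu_{\rm loc}(\omega)$; the corollary asserts equality with the local stable manifold, which requires also using the second inclusion in \eqref{invari} and the fact that the manifold, being modeled on the full space, can be shrunk to a ball. Finally, a minor misreference: Corollary~\ref{cor:LinGronwall2} bounds the difference of two \emph{linearizations} along two base trajectories; to get the Lipschitz-type bound on the flow $\xi\mapsto\varphi^{kt_1}_\omega(\xi)$ near $Y_\omega$ that you need for the temperedness, the right tool is Corollary~\ref{cor:LinGronwall} applied along the segment $r\mapsto rY_\omega+(1-r)\xi$, combined with the fundamental theorem of calculus.
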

\begin{proof}
The claim \eqref{VAAYYHH} follows from the existence of the stable manifold and the fact that \( F_{\lambda^{-}}(\omega) = E_\alpha \). For a detailed proof, we refer to \cite[Lemma 4.17]{GVR25}. 
For the proof of \eqref{WWAAYYHH}, we first recall that from \eqref{eqn:contr_char} and \eqref{VAAYYHH} we have
\begin{align*}
\limsup_{n \to \infty} \frac{1}{n} \log \left| \varphi^{nt_1}_{\omega}(\change{x}) - Y_{\theta_{nt_1} \omega} \right|_{\alpha} \leq t_1 \lambda^-.
\end{align*}
For \( t = \lfloor \frac{t}{t_1} \rfloor t_1 + s = nt_1 + s \), due to the cocycle property, we have
\begin{align*}
\left| \varphi^{t}_{\omega}(\change{x}) - Y_{\theta_{t} \omega} \right|_{\alpha} = \left| \varphi^{s}_{\theta_{nt_1} \omega} \circ \varphi^{nt_1}_{\omega}(\change{x}) - \varphi^{s}_{\theta_{nt_1} \omega}(Y_{\theta_{nt_1} \omega}) \right|_{\alpha}.
\end{align*}
Then we can argue as in \cite[Remark 4.13]{GVR25} and use Birkhoff's ergodic theorem to conclude \eqref{WWAAYYHH}.
\end{proof}

\begin{remark}
The main focus here is laid on local stable manifolds. Since they are infinite-dimensional, their existence is challenging to obtain and was stated as a conjecture in~\cite{LS11} in the Young regime, i.e.~for $\gamma\in(1/2,1)$. This conjecture was positively answered in~\cite{GVR24,LNZ23}.~In our setting, the main insight is the statement of Corollary~\ref{cor:LinGronwall2} which provides a concise proof for the existence of stable manifolds, simplifying the techniques of~\cite{GVR24,LNZ23}.~By similar arguments, one can obtain unstable and center manifolds based on additional sign information of the Lyapunov exponents.~We refer to \cite[Theorem 2.14]{GVR24} for more details.
\end{remark}
\section{Examples}\label{sec:app}

\subsection{Parabolic rough PDEs with time-dependent coefficients}
We let $\cO$ be an open bounded domain $\cO\subset\R^n$ with smooth boundary and consider the non-autonomous parabolic PDE on $E:=L^p(\cO)$ for $2\leq p<\infty$ given by
\begin{align}\label{rpde2}
\begin{cases}
\txtd u_t= [A(t) u_t +F(t,u_t)]~\txtd t + G(t,u_t)~\txtd{\bf X}_t,\\
u|_{\partial \cO}=0.
\end{cases}
\end{align}
Here \[A(t)=\sum\limits_{i,j=1}^n  \partial_i (a_{ij}(t,x)\partial_j),\]
where the coefficients $a_{ij}\in C^{\rho}([0,T];C(\overline{\cO}))$, $a_{ij}(t,\cdot)\in C^1(\overline{O})$, $\txtD_ka_{ij}\in C([0,T]\times \overline{\cO})$ and $\rho\in(0,1]$. Moreover, we assume the following uniform ellipticity condition
\[ \sum\limits_{i,j=1}^n a_{ij}(t,x)\change{\zeta}_i\change{\zeta}_j \geq \change{c} |\change{\zeta}|^2,~\quad \text{ for every  } x\in \overline{\cO}, t\in [0,T], \change{\zeta}\in\R^n, \] \change{for some constant $c>0$.}
Furthermore we have that $E_1=D(A(t))=W^{2,p}(\cO)\cap W^{1,p}_0(\cO)$ compactly embeds in $L^p(\cO)$ and $E_\alpha=[E,E_1]_{\alpha}=W^{2\alpha,p}_0(\cO)$. In this case, Assumption \ref{ass:A} is fulfilled.
\begin{theorem}
Under the assumptions~\ref{ass:F},\ref{ass:G1}-\ref{ass:G2} and \ref{ass:SymbolSpace}, the solution operator of~\eqref{rpde2} generates a random dynamical system. Moreover, if $F$ additionally satisfies~\ref{ass:DF}, its linearization around a stationary point is a compact random dynamical system satisfying~\eqref{intCond1} and~\eqref{intCond2}.
\end{theorem}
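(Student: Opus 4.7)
The plan is to verify directly that the hypotheses of Lemma~\ref{lem:ExRDS} and Proposition~\ref{MET_INTEG} are met for the concrete operator family $A(t)$, nonlinearities $F,G$, and the chosen scale $E_\alpha=W^{2\alpha,p}_0(\cO)$. The first step is to check Assumption~\ref{ass:A}. The uniform ellipticity assumption together with the smoothness of $\partial\cO$ and the regularity $a_{ij}(t,\cdot)\in C^1(\overline{\cO})$ ensures that for each fixed $t\in[0,T]$ the operator $A(t)$, with domain $D(A)=W^{2,p}(\cO)\cap W^{1,p}_0(\cO)$, is sectorial on $L^p(\cO)$ with spectrum contained in a sector, and admits bounded imaginary powers by Seeley's theorem for elliptic operators with Dirichlet boundary conditions; this gives~\ref{ass:A1} and~\ref{ass:A2}. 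The $\varrho$-Hölder continuity $a_{ij}\in C^\varrho([0,T];C(\overline{\cO}))$ yields~\ref{ass:A3} by standard estimates on the difference of divergence-form operators with common domain.

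Once~\ref{ass:A} is verified, the identification $E_\alpha=[L^p(\cO),W^{2,p}\cap W^{1,p}_0]_\alpha=W^{2\alpha,p}_0(\cO)$ follows from the boundedness of the imaginary powers via complex interpolation (see the remark after Theorem~\ref{thm:OpFam}), so the controlled rough path framework of Definition~\ref{def:crp} applies on this scale. By assumption $F,G$ satisfy~\ref{ass:F} and~\ref{ass:G1}--\ref{ass:G2}, so Theorem~\ref{thm:GlobEx} yields a unique global controlled rough path solution for every initial datum $u_0\in E_\alpha$ and every realization of $\mathbf{X}$. Combined with~\ref{ass:SymbolSpace} (which gives the symbol space $\Sigma=\cH(\mathfrak{S})$ and the associated invariant measure from Theorem~\ref{thm:KrylovBogol}), the first conclusion of Lemma~\ref{lem:ExRDS} directly gives that the solution map generates a continuous non-autonomous random dynamical system on $E_\alpha$.

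For the second statement, we additionally assume~\ref{ass:DF}. The key point needed in Lemma~\ref{lem:ExRDS} to obtain compactness of the linearization is that $A(t)$ has a compact inverse for every $t$: this follows since $(A(t))^{-1}:L^p(\cO)\to W^{2,p}(\cO)\cap W^{1,p}_0(\cO)$ is bounded and the embedding $W^{2,p}(\cO)\hookrightarrow L^p(\cO)$ is compact by the Rellich--Kondrachov theorem on the bounded domain $\cO$. Therefore $E_{\alpha+\varepsilon}\hookrightarrow E_\alpha$ compactly for every $\varepsilon>0$, and the smoothing argument in the proof of Lemma~\ref{lem:ExRDS} gives that $\psi^t_\omega\in\cL(E_\alpha;E_{\alpha+\varepsilon})$, hence $\psi^t_\omega$ is compact on $E_\alpha$.

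It remains to verify~\eqref{intCond1} and~\eqref{intCond2}; this is exactly the content of Proposition~\ref{MET_INTEG} and reduces to three inputs that are already in place: the noise assumption~\ref{ass:Noise} (supplied for our Gaussian Volterra input via Lemma~\ref{lem:CamMart}), the integrable controlled rough path bound~\eqref{isigma} of Theorem~\ref{thm:IntegrableBounds} for the reference trajectory $t\mapsto Y_{\theta_t\omega}$, and the integrability of the stationary point $|Y_\omega|_\alpha$ in every $L^p(\Omega)$. Feeding these into the mild rough Gronwall inequality of Corollary~\ref{cor:LinGronwall} yields the claimed moment bounds on $\|\psi^t_\cdot\|_{\cL(E_\alpha)}$ and on $\|\psi^{t_0-t}_{\theta_t\cdot}\|_{\cL(E_\alpha)}$, as carried out in the proof of Proposition~\ref{MET_INTEG}. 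The main potential obstacle is the verification of~\ref{ass:Noise} together with~\eqref{CM} for the specific Volterra kernel chosen to drive the equation, but this is handled once and for all by the criterion of Lemma~\ref{lem:CamMart} combined with~\ref{ass:K1}--\ref{ass:K2}; so the remainder is a direct application of the results already established.
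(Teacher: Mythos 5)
Your proposal is correct and takes the route the paper intends: the paper omits an explicit proof of this theorem, treating it as a direct application of the abstract results, and your unpacking is the natural one. You verify Assumption~\ref{ass:A} for the divergence-form elliptic operator with Dirichlet boundary conditions (sectoriality plus bounded imaginary powers via Seeley-type theory, and \ref{ass:A3} from $a_{ij}\in C^{\varrho}([0,T];C(\overline{\cO}))$), identify the interpolation scale $E_\alpha=W^{2\alpha,p}_0(\cO)$ by complex interpolation, apply Theorem~\ref{thm:GlobEx} for global well-posedness, Lemma~\ref{lem:ExRDS} for cocycle generation and compactness (via compact resolvent from Rellich--Kondrachov), and Proposition~\ref{MET_INTEG} for~\eqref{intCond1}--\eqref{intCond2}. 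A small but useful observation you make explicit, and which the paper leaves tacit, is that the second assertion additionally requires the noise condition~\ref{ass:Noise} and the integrability~\eqref{intCondMET} of the stationary point; these are inherited from the ambient framework (e.g.~via Lemma~\ref{lem:CamMart} for the Volterra input and Appendix~\ref{appendix:stationary} for the existence and integrability of a stationary solution), but are not listed among the theorem's stated hypotheses. Flagging this is appropriate and does not affect the correctness of the argument.
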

Provided that there exists a stationary solution for~\eqref{rpde2}, the conditions of the multiplicative ergodic theorem, i.e.~Theorem~\ref{METT} are satisfied for this example. For more details on stationary solutions, we refer to Appendix~\ref{appendix:stationary}.
\begin{remark}
The multiplicative ergodic theorem together with the existence of random stable and unstable manifolds for equations of the form~\eqref{rpde2} with non-autonomous random generators and multiplicative linear noise have been investigated in~\cite{cdls}, whereas the well-posedness of SPDEs of the form~\eqref{rpde2} driven by the Brownian motion in Banach spaces has been investigated in~\cite{Ver10}. 
\end{remark}

\subsection{PDEs with multiplicative rough boundary noise}\label{sec:b:noise}
We provide another example, where the noise acts on the boundary of a domain.  
We let $\cO\subset \R^n$ be an open bounded domain with $C^\infty$-boundary and consider the semilinear parabolic evolution equation with multiplicative rough boundary noise in $E:=L^2(\cO)$ given by
\begin{align}\label{eq:BoundNoise}
\begin{cases}
\frac{\partial}{\partial t} u_t = \cA u_t & \text{ in } \cO,\\
\cC u_t = G(t,u_t)~\frac{\txtd}{\txtd t} \mathbf{X}_t & \text{ on } \partial \cO,\\
u(0)=u_0. 
\end{cases}
\end{align}
To keep the analysis as simple as possible, we work in $L^2(\cO)$ although it is possible to treat~\eqref{eq:BoundNoise} in $L^p(\cO)$. Here, $\mathbf{X}$ is a $\gamma$-H\"older rough path which satisfies Assumption \ref{ass:Noise} with $\gamma\in(\frac{1}{3},\frac{1}{2}]$ and $G$ a time-dependent nonlinearity. Furthermore, $\cA$ is a formal second-order differential operator in divergence form with corresponding Neumann boundary conditions $\cC$ given by 
\begin{align*}
\cA u:= \sum_{i,j=1}^n \partial_i \left(a_{ij}\partial_j \right)u-\lambda_A u,\quad \cC u:= \sum_{i,j=1}^n \nu_i\gamma_\partial a_{ij}\partial _ju,
\end{align*}
where $\nu$ is the outer normal vector, $\gamma_\partial$ the trace, $\lambda_A>0$ a constant and $(a_{ij})_{i,j=1}^n$ smooth coefficients such that there exists some constant $k>0$ with
\begin{align*}
\sum_{i,j=1}^n a_{ij}(x)\change{\zeta}_i \change{\zeta}_j\geq k |\change{\zeta}|^2,
\end{align*}
for all $\change{\zeta} \in \R^n$ and $x\in \overline{\cO}$.~We further define the $E$-realization of $(\cA,\cC)$ by $A:D(A)\subset E \to E$  with $D(A):=\{u\in H^{2}(\cO)~:~\cC u=0\}$ and $(E_\alpha)_{\alpha\in \R}$ the respective fractional power scale, which is given by
\begin{align*}
E_{\frac{\alpha}{2}}\coloneqq
\begin{cases}
\{ u\in H^{\alpha}(\cO) : \cC u =0 \},& \alpha>1+\frac{1}{2}\\
H^{\alpha}(\cO) ,& -\frac{1}{2}<\alpha<\frac{3}{2},\\ 
\left(H^{-\alpha}(\cO)\right)^{\change{\prime}}, & -\frac{3}{2}<\alpha\leq-\frac{1}{2}\\
\{ u\in H^{-\alpha}(\cO) : \cC u =0 \}^\prime, & \alpha<-\frac{3}{2},
\end{cases}
\end{align*}
see for example \cite[Theorem 7.1]{Amann93}. In this case, it is possible to verify \ref{ass:A1}-\ref{ass:A2} for $A$, \ref{ass:A3} holds trivially. Let $(S_t)_{t\geq 0}$ be the analytic semigroup generated by $A$, which is exponential stable \begin{align}\label{expStable}
\|S_t\|_{\mathcal{L}(E_0)}\leq C_S e^{-\lambda_A t}.
\end{align} 
This assumption was also made in \cite[Theorem 4.2]{BS24} for the study of attractors for~\eqref{eq:BoundNoise}.
\begin{remark}     We choose $\mathcal{A}$ to be time-independent, since a time-dependent operator $\cA(t)$ does not satisfy Assumption \ref{ass:A1}. Note that the domain $D(A(t)):=\{u\in H^{2}(\cO)~:~\cC(t) u=0\}$ of a time-dependent operator $A(t)$ is also time-dependent due to the boundary operator $\cC(t)$. This would require a notion of controlled rough paths according to a time-dependent family of interpolation spaces $E_\alpha$ which goes beyond the scope of this paper and will be pursued in future works.~We refer to~\cite{SV11} for the well-posedness of~\eqref{eq:BoundNoise} in the non-autonomous case  $(\cA(t),\cC(t))$ where the boundary noise is given by a Brownian motion.
\end{remark}
To treat the boundary data, we introduce a second Banach scale $\widetilde{E}_{\alpha}\coloneqq H^{\alpha}(\partial\cO)$ and define the Neumann operator $N\in \cL(\widetilde{E}_\alpha;E_\varepsilon)$ for some $\varepsilon<\frac{3}{4}$ and $\alpha>\frac{3}{2}$ as the solution operator to 
\begin{align*}
\cA u &= 0\quad \text{in}\quad \cO,\\
\cC u &= g\quad \text{on}\quad \partial\cO.
\end{align*}
\change{For more information on boundary value problems of this form, see for example \cite[Section 9]{Amann93}.} Because the diffusion coefficient now influences the boundary, we have to modify the conditions on $G$. For a better comprehension, we restrict ourselves to one-dimensional noise in this example. \change{The extension to multidimensional noise can be made componentwise as in the previous sections.}
\begin{itemize}
\item[\textbf{(}$\mathbf{\tilde{G}}$\textbf{)}\namedlabel{ass:Gbound}{\textbf{(}$\mathbf{\tilde{G}}$\textbf{)}}]  There exists a $\sigma>\eta+1+\frac{1}{2}$ such that for any $i=0,1,2$ the diffusion coefficient
\begin{align*}
G:[0,T]\times E_{-\eta-i\gamma} \to \widetilde{E}_{-\eta-i\gamma+\sigma}
\end{align*}
fulfills \ref{ass:G1}-\ref{ass:G2} and the Fréchet derivative of
\begin{align*}
\txtD_2G(t,\cdot)\circ A_{-\eta-\gamma}NG(t,\cdot):E_{-\eta-\gamma}\to \widetilde{E}_{-\eta-\gamma-\sigma} 
\end{align*}
is bounded. Furthermore, there exists a function $k_G:[0,\infty)\to \R$ with $k_G(s)\to 0$ for $s\searrow0$ such that \eqref{ass:GSymbol} is fulfilled.
\end{itemize}
Here $\eta:=1-\varepsilon$ and $A_{-\eta-\gamma}\in \cL(E_{1-\eta-\gamma};E_{-\eta-\gamma})$ is the unique closure of $A$ in $E_{-\eta-\gamma}$, called the extrapolated operator of $A$.~For detailed information on extrapolation operators, we refer to \cite[Chapter V]{Amann1995}.
\begin{theorem}\label{thm:GlobalSolBoundEq}
Assume that \ref{ass:A1}-\ref{ass:A2}, \ref{ass:Noise} and \ref{ass:Gbound} are fulfilled. Then \eqref{eq:BoundNoise}  can be rewritten as the semilinear problem
\begin{align}\label{eq:TransformedEQ}
\begin{split}
	\begin{cases}
		\txtd u_t = A u_t~\txtd t+ A_{-\eta-\gamma}NG(t, u_t)~\txtd \mathbf{X}_t,\\
		u(0)=u_0\in E_{-\eta}.
	\end{cases}
\end{split}
\end{align}
Furthermore, \eqref{eq:TransformedEQ} has the global solution $(u,u^\prime)\in \cD^\gamma_{\mathbf{X},-\eta}$ where $u^\prime_t=A_{-\eta-\gamma}NG(t, u_t)$ and
\begin{align}\label{eq:MildSolution}
u_t=S_{t}u_0+\int_0^t S_{t-r} A_{-\eta-\gamma}NG(r, u_r)~\textnormal{d} \mathbf{X}_r.
\end{align}
\end{theorem}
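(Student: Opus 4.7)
The plan is in two stages: first justify the reformulation \eqref{eq:BoundNoise}$\Rightarrow$\eqref{eq:TransformedEQ}, then verify that \eqref{eq:TransformedEQ} fits into the framework of Section~\ref{integrable} so that Theorem~\ref{thm:GlobEx} applies on the extrapolation scale based at $E_{-\eta}$. Throughout, one keeps in mind that the natural phase space is $E_{-\eta}$ because the Neumann lifting map $N$ brings boundary data into $E_{\varepsilon}=E_{1-\eta}$, see~\cite{NS23,BS24}.

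For the reformulation, I would work formally first and then justify rigorously by smooth approximation of $\mathbf{X}$. Given a smooth path $X^n$, the classical trick for inhomogeneous Neumann data is to split $u=w+NG(t,u)\dot{X}^n$, where $w$ solves a homogeneous-boundary PDE. Computing $\partial_t u = A w + N\partial_t(G(t,u)\dot X^n) = A_{-\eta-\gamma}u - A_{-\eta-\gamma}NG(t,u)\dot{X}^n + \partial_t(NG(t,u)\dot X^n)$, and using the fact that for $z\in D(A)$ one has $A_{-\eta-\gamma}Nz = 0$ up to boundary correction, one arrives at the equivalent mild identity
\[
u_t = S_t u_0 + \int_0^t S_{t-r}A_{-\eta-\gamma}NG(r,u_r)\,\mathrm{d}X^n_r.
\]
Passing to the limit along a regularization of $\mathbf{X}$ and invoking continuity of the rough integral of Section~\ref{prelim} with respect to the rough driver gives \eqref{eq:MildSolution}. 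The candidate Gubinelli derivative $u'_t = A_{-\eta-\gamma}NG(t,u_t)$ is dictated by this identity together with the definition of the rough convolution in~\eqref{SEW}.

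For well-posedness of \eqref{eq:TransformedEQ}, the key is to check that the composite coefficient $\widetilde{G}(t,\cdot):=A_{-\eta-\gamma}N\,G(t,\cdot)$ satisfies \ref{ass:G1}--\ref{ass:G2} on the shifted scale $E_{-\eta-i\gamma}$. By \ref{ass:Gbound}, $G(t,\cdot):E_{-\eta-i\gamma}\to\widetilde{E}_{-\eta-i\gamma+\sigma}$ is bounded with bounded derivatives; the Neumann map satisfies $N\in\mathcal{L}(\widetilde{E}_{-\eta-i\gamma+\sigma};E_{-\eta-i\gamma+\sigma-\frac{1}{q}-\frac{1}{2}-\delta'})$ for arbitrarily small $\delta'>0$ by standard trace/lifting theory, and the extrapolated operator $A_{-\eta-\gamma}:E_{\beta+1}\to E_\beta$ is bounded for every $\beta$. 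The crucial condition $\sigma>\eta+1+\tfrac{1}{q}$ in \ref{ass:Gbound} is precisely what guarantees that the resulting loss of regularity satisfies $\widetilde{\sigma}<\gamma$ for use in \eqref{est:RPIntegral}, and moreover that $\widetilde{\sigma}\in[0,\frac{1-\gamma}{2})$ if needed for integrability via Theorem~\ref{thm:IntegrableBounds}. The H\"older time-regularity of $\widetilde{G}(\cdot,x)$ and its derivatives transfers from $G$ since $N$ and $A_{-\eta-\gamma}$ are time-independent linear maps. Condition \ref{ass:G2} for $\widetilde G$ is the boundedness of $\txtD_2 G(t,\cdot)A_{-\eta-\gamma}NG(t,\cdot)$, which is assumed directly in \ref{ass:Gbound}. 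Since there is no drift term ($F\equiv 0$), \ref{ass:F} is trivial.

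With these checks in hand, Theorem~\ref{ex:nona} produces a unique local controlled rough path solution $(u,u')\in\mathcal{D}^{\gamma}_{\mathbf{X},-\eta}([0,T^*))$, and Theorem~\ref{thm:GlobEx}, combined with the exponential stability \eqref{expStable} of $(S_t)_{t\geq 0}$ which prevents blow-up in finite time (as exploited in~\cite[Theorem~4.2]{BS24}), yields existence on the whole interval $[0,T]$. The main obstacle is bookkeeping the regularity budget: the operator $A_{-\eta-\gamma}$ costs one derivative, so one must verify that $N$ recovers enough smoothness via $\sigma>\eta+1+\tfrac{1}{q}$ to keep $\widetilde{G}$ within the admissible class of Section~\ref{integrable}, and one must check that Lemma~\ref{lem:IneqRV} remains applicable on the shifted scale. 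Once this is done, the whole machinery of Section~\ref{integrable} applies verbatim.
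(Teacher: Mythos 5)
Your proposal takes essentially the same route as the paper: reformulate the boundary problem via the Neumann map $N$ and the extrapolated operator $A_{-\eta-\gamma}$, then verify that the composite coefficient $\widetilde{G}:=A_{-\eta-\gamma}NG$ fits the hypotheses of the abstract framework and invoke Theorems~\ref{ex:nona} and~\ref{thm:GlobEx}. The paper itself leaves the transformation and assumption-checking implicit, deferring to~\cite[Theorem 3.20]{NS23}, so your extra bookkeeping of the regularity budget (the role of $\sigma>\eta+1+\tfrac{1}{q}$ and the mapping properties of $N$ and $A_{-\eta-\gamma}$) is a sensible elaboration rather than a deviation.

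One small inaccuracy: you invoke the exponential stability~\eqref{expStable} of the semigroup as necessary to prevent blow-up on $[0,T]$. That is not needed here. Theorem~\ref{thm:GlobEx} already yields a global-in-time solution on the finite interval $[0,T]$ from the boundedness of $G$ and assumptions~\ref{ass:G1}--\ref{ass:G2} alone; the exponential decay~\eqref{expStable} is used only later, in Appendix~\ref{appendix:stationary}, to construct the stationary solution via a contraction on the whole real line. Dropping that remark (or relocating it to the stationary-solution discussion) would make the argument align exactly with the paper.
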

\begin{proof}
The key argument for this transformation is based on the fact that $NG(\cdot,y)$ is not in the domain of $A$ due to the definition of the Neumann operator. Therefore, one has to consider $A_{-\eta-\gamma}$ as an extension of $A$. The proof follows the same strategy as in \cite[Theorem 3.20]{NS23} and applying Theorem~\ref{ex:nona} for the local well-posedness, respectively Theorem~\ref{thm:GlobEx} for the global well-posedness.
\end{proof}

\begin{example}
We mention a similar example to~\cite[Example 5.2]{NS23} for $G$ that fulfills the Assumptions \ref{ass:G1}-\ref{ass:G2} in the $L^2$-setting. Note that the diffusion coefficient $G$ must increase the spatial regularity in order to subsequently take the trace.~One typical operator which increases spatial regularity is given by
\begin{align*}
\Lambda^{\beta_2-\beta_1}:H^{\beta_1}(\R^n)\to H^{\beta_2}(\R^n):f\mapsto \mathscr{F}^{-1}(1+|\cdot|^2)^{\frac{\beta_2-\beta_1}{2}}\mathscr{F}f,
\end{align*}
where $\beta_1,\beta_2\in\R$ and $\mathscr{F}$ denotes the Fourier transform. To extend this to an open bounded domain $\cO$, instead of the whole space $\R^n$, we use a retraction $e_\cO:H^{\beta_1}(\R^n)\to H^{\beta_1}(\cO)$ and a coretraction $r_\cO:H^{\beta_1}(\cO)\to H^{\beta_1}(\R^n)$, see \cite[Theorem 4.2.2]{Tri78}. An example of a diffusion coefficient is given by $G(t,u)\coloneqq a(t)\cdot \gamma_{\partial}r_{\cO}\Lambda^{\beta_2-\beta_1}e_{\cO}$ for suitable values of $\beta_1,\beta_2\in \R$ and $a\in C^{2\gamma}([0,T];\R)$.
\end{example}
Now we prove the existence of Lyapunov exponents for the transformed equation \eqref{eq:TransformedEQ}. Recall, that $\mathbf{X}$ is a rough cocycle, as in Definition \ref{def:RPCocycle}, that $\Omega=\widetilde{\Omega}\times \Sigma$ is the extended probability space, and $\widetilde{\Omega}$ the probability space associated to $\mathbf{X}(\tilde{\omega})$. Similar to Section \ref{sec:LinDynamics}, we consider the linearized rough PDE along the path component $u$ given by
\begin{align}\label{eq:LinBoundNoise}
\begin{split}
\begin{cases}
	\txtd v_t = A v_t~\txtd t+ A_{-\eta-\gamma}N\txtD_2 G(t,u_t)v_t~\txtd \mathbf{X}_t(\tilde{\omega}),\\
	v(0)=v_0.
\end{cases}
\end{split}
\end{align}
The solution operator of the linearization generates a random dynamical system $\psi$. In order to deduce the existence of Lyapunov exponents using the multiplicative ergodic theorem, we have to show that $\psi$ is compact.
\begin{lemma}
Assume that all conditions of Theorem \ref{thm:GlobalSolBoundEq} are satisfied and that $A$ has a compact resolvent. Then $\psi$ is a linear, compact random dynamical system.
\end{lemma}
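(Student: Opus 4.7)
The plan is to verify the three defining properties of a linear compact random dynamical system for $\psi$, namely linearity, the cocycle/measurability structure, and compactness of the time-$t$ map. Linearity is immediate from the fact that~\eqref{eq:LinBoundNoise} is linear in $v$, so $\psi^t_\omega\in\mathcal{L}(E_{-\eta})$ for every $\omega\in\Omega$ and $t\geq 0$. The cocycle property follows from the shift invariance of the rough convolution together with the cocycle property of $\mathbf{X}$, exactly along the lines of the argument for $\varphi$ given in Lemma~\ref{lem:ExRDS}: writing the mild formulation of $\psi_\omega^{t+s}(v_0)$, decomposing the time integral at $s$ and using $\tilde\theta_s\mathbf{X}$ we identify the result with $\psi^t_{\theta_s\omega}(\psi^s_\omega(v_0))$. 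Measurability follows by the usual smooth-approximation argument for the rough input, since solutions depend continuously on $\mathbf{X}$ and jointly measurably in the data.

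The main step is compactness. I would proceed by showing a regularity improvement $\psi^{t_0}_\omega\in\mathcal{L}(E_{-\eta};E_{-\eta+\kappa})$ for some $\kappa>0$ and every fixed $t_0>0$; combined with the fact that the compact resolvent of $A$ makes the embedding $E_{-\eta+\kappa}\hookrightarrow E_{-\eta}$ compact (see \cite[Theorem V.1.5.1]{Amann1995}), this yields compactness of $\psi^{t_0}_\omega$ on $E_{-\eta}$. The mild solution reads
\begin{align*}
v_t = S_t v_0 + \int_0^t S_{t-r} A_{-\eta-\gamma} N\,\txtD_2 G(r,u_r) v_r\,\txtd\mathbf{X}_r(\tilde\omega),
\end{align*}
where $S_t$ is the analytic semigroup associated with $A$, and from \eqref{PTR} we have $|S_t x|_{-\eta+\kappa}\lesssim t^{-\kappa}|x|_{-\eta}$ for small $\kappa>0$, giving the desired gain for the homogeneous part. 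For the rough convolution, I would first establish, in the spirit of Lemma~\ref{lem:LinCRP} but adapted to the boundary-noise setting exploited already in Theorem~\ref{thm:GlobalSolBoundEq}, that $(\txtD_2 G(\cdot,u)v,(\txtD_2 G(\cdot,u)v)')\in\mathcal{D}^\gamma_{\mathbf{X},-\eta-\sigma+\text{(gain)}}$ for an appropriate scale index coming from~\ref{ass:Gbound}, and then apply the rough-integral estimate~\eqref{est:RPIntegral} with a positive shift $\sigma>0$ (i.e.~picking up $(t-s)^{\gamma-\sigma}$ regularization in the scale). This is the step where the smoothing property of the evolution family is used to absorb the loss caused by $A_{-\eta-\gamma}N$, which is analogous to the analysis carried out in the proof of~\eqref{eq:MildSolution} and to the scheme in~\cite[Proposition 3.7]{GVR25}.

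The main obstacle, as usual when mixing rough integration with extrapolation spaces, is to carefully track the regularity balance: $A_{-\eta-\gamma}N$ loses one full derivative in scale, while~\ref{ass:Gbound} assumes that $G$ gains $\sigma>\eta+1+\frac{1}{q}$ derivatives on the boundary scale $\widetilde E$, so after composition with $N$ one ends up in $E_{-\eta+(\sigma-1-\eta)}$ up to trace corrections, which provides strictly positive headroom. Using this headroom combined with the $(t-s)^{\gamma-\sigma'}$-factor in~\eqref{est:RPIntegral} (for some $\sigma'$ chosen smaller than the actual gain) gives $\kappa>0$ worth of regularization for the rough-convolution part on the compact time interval $[0,t_0]$. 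Combining both contributions yields a bound of the form $\|\psi^{t_0}_\omega\|_{\mathcal{L}(E_{-\eta};E_{-\eta+\kappa})}\leq C(\omega)$ with $C(\omega)$ finite for every $\omega\in\Omega$, and hence $\psi^{t_0}_\omega$ is compact on $E_{-\eta}$, concluding the proof.
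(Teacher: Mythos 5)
Your proposal is correct and follows essentially the same route as the paper: the paper's (very terse) proof also reduces to showing $\psi^{t_0}_\omega\in\mathcal{L}(E_{-\eta};E_{-\eta+\varepsilon})$ via the smoothing property of the semigroup and the rough convolution, and then invokes compactness of the embedding $E_{-\eta+\varepsilon}\hookrightarrow E_{-\eta}$ (which holds since $A$ has compact resolvent), citing Lemma~\ref{lem:ExRDS} and \cite[Proposition 3.7]{GVR25}. Your fleshed-out account of the regularity bookkeeping for $A_{-\eta-\gamma}N$ versus the gain supplied by~\ref{ass:Gbound} is a more detailed version of what the paper leaves implicit, not a different argument.
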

\begin{proof}
Since $A$ has compact resolvent we conclude that the embeddings $E_\beta\hookrightarrow E_\alpha$ are compact for $\beta>\alpha$, \cite[V.1.5.1]{Amann1995}. Then the claim follows using the smoothing properties of the semigroup and compactness of the embeddings $E_{\alpha+\varepsilon}\hookrightarrow E_\alpha$ for $\varepsilon>0$, as in Lemma \ref{lem:ExRDS}.
\end{proof}
In order to apply Theorem \ref{METT}, we have to linearize~\eqref{eq:TransformedEQ} along a stationary solution. The existence of such a solution will be discussed in Appendix~\ref{appendix:stationary}. Finally, we summarize the above considerations in the next theorem. 
\begin{theorem}
Under the assumptions of Theorem~\ref{thm:GlobalSolBoundEq}, there exists Lyapunov exponents $(\lambda_i)_{i\geq 1}$ for~\eqref{eq:BoundNoise}.
\end{theorem}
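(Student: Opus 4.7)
The plan is to verify all hypotheses of Theorem~\ref{METT} for the transformed equation~\eqref{eq:TransformedEQ} and then transfer the conclusion back to~\eqref{eq:BoundNoise}. First, I would invoke Appendix~\ref{appendix:stationary} to obtain a stationary point $(Y_\omega)_{\omega\in\Omega}$ for the random dynamical system $\varphi$ generated by~\eqref{eq:TransformedEQ}, on the enlarged probability space $\Omega=\widetilde{\Omega}\times\Sigma$ constructed as in Subsection~\ref{SubsecMET}. The stationary solution must furthermore satisfy the moment bound $\omega\mapsto |Y_\omega|_{-\eta}\in \bigcap_{p\geq 1}L^p(\Omega)$; this will follow from the exponential stability~\eqref{expStable} of the semigroup generated by $A$ together with the integrable a-priori bound obtained by replacing $G$ with the composition $A_{-\eta-\gamma}NG$ in the framework of Subsection~\ref{sec:integrable_bound}.

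Next, the linearization $\psi$ of~\eqref{eq:TransformedEQ} around $(Y_\omega)_{\omega\in\Omega}$ satisfies~\eqref{eq:LinBoundNoise}. By the preceding lemma $\psi$ is a compact linear RDS provided $A$ has compact resolvent (a natural assumption on a bounded smooth domain with the Neumann realization). It then remains to verify the integrability conditions \eqref{intCond1} and \eqref{intCond2} of Theorem~\ref{METT}. To this end, I would proceed as in Proposition~\ref{MET_INTEG}, applying the mild rough Gronwall inequality of Corollary~\ref{cor:LinGronwall} to~\eqref{eq:LinBoundNoise} in the scale $(E_\alpha)_{\alpha\in\R}$ adapted to $A_{-\eta-\gamma}N$, obtaining a bound of the form
\begin{align*}
\|\psi_\omega^t\|_{\mathcal{L}(E_{-\eta})} \leq \widetilde{C}_1(\omega)\rho_{\gamma,[0,t]}(\mathbf{X}(\tilde{\omega}))e^{t\widetilde{C}_2(\omega)}(1+C_G),
\end{align*}
where $\widetilde{C}_1,\widetilde{C}_2$ depend polynomially on $\|Y,(Y)'\|_{\cD^{\gamma}_{\mathbf{X},-\eta}}$ and on $\rho_{\gamma,[0,t]}(\mathbf{X})$. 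Taking $\log^+$ and exploiting the Gaussian integrability of $\rho_{\gamma,[0,T]}(\mathbf{X})$ from Assumption~\ref{ass:Noise}, the moment bound on $Y$ and Fubini's theorem exactly as in Proposition~\ref{MET_INTEG}, yields~\eqref{intCond1}; the bound~\eqref{intCond2} follows by the same argument using $Y_{\theta_s\cdot}$ in place of $Y$ and stationarity.

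With these integrability conditions in hand, Theorem~\ref{METT} directly yields a decreasing sequence of Lyapunov exponents $(\lambda_i)_{i\geq 1}\subset [-\infty,\infty)$ together with the associated Oseledets splitting for the linearization~\eqref{eq:LinBoundNoise}; by Theorem~\ref{METT2} and Theorem~\ref{unnns} they are independent of the particular index $\alpha$ used for the interpolation scale. Transferring back via the equivalence between~\eqref{eq:BoundNoise} and~\eqref{eq:TransformedEQ} established in Theorem~\ref{thm:GlobalSolBoundEq} proves the claim.

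The main obstacle is technical rather than conceptual: the diffusion coefficient in~\eqref{eq:TransformedEQ} is the unbounded composition $A_{-\eta-\gamma}N\circ G$, so one must check that the mild rough Gronwall lemma and the integrability estimates of Lemma~\ref{lem:IneqRV} and Theorem~\ref{thm:IntegrableBounds} still apply, at the cost of re-indexing the interpolation spaces appropriately. This shift is precisely what Assumption~\ref{ass:Gbound} is designed to handle, and the argument proceeds as in~\cite{BS24,NS23} with the sewing lemma estimates now incorporating the additional spatial regularity loss $1+\eta$ introduced by $A_{-\eta-\gamma}N$. The remaining ingredient, the existence of a moment-integrable stationary point, is handled separately in Appendix~\ref{appendix:stationary}.
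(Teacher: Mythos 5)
Your proposal is correct and follows essentially the same route as the paper: the paper's proof is a one-liner invoking Theorem~\ref{METT} for the linearization~\eqref{eq:LinBoundNoise} along a stationary solution, with the hypotheses (compactness of $\psi$, moment bounds on $Y$ from Appendix~\ref{appendix:stationary}, integrability conditions via Proposition~\ref{MET_INTEG}) supplied by the immediately preceding lemmas, which is exactly what you spell out. The only cosmetic difference is that you additionally flag norm-independence via Theorems~\ref{METT2} and~\ref{unnns}, which the paper does not mention here but which is consistent with its framework.
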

\begin{proof}
The statement directly follows from Theorem \ref{METT} applied to the dynamical system obtained given by the linearization of \eqref{eq:LinBoundNoise} along a stationary solution. 
\end{proof}
\begin{remark}
One could also obtain the existence of a local stable manifold for~\eqref{eq:BoundNoise} under the assumptions of Theorem~\ref{stable_manifold}, additionally assuming that  $G$ is four times Fréchet differentiable, see Subsection \ref{inv:m}.
\end{remark}
\appendix
\section{Stationary solutions for SPDEs with boundary noise}\label{appendix:stationary}
We establish a stationary solution for \eqref{eq:TransformedEQ}, where $\mathbf{X}\coloneqq \mathbf{B}=(B,\mathbb{B}^{\textnormal{It\^o}})$ is the Itô Brownian rough path, which satisfies assumption~\ref{ass:Noise}, see Subsection \ref{cm}. In the context of SPDEs with additive boundary fractional noise, the existence of a limiting measure was proven in~\cite[Proposition 5.1]{DPDM02}.\\ 
It is known that the stationary solution of  the linear SPDE 
\begin{align*}
\txtd Z_t = A Z_t~\txtd t+ \txtd B_t
\end{align*}
is given by the stationary Ornstein-Uhlenbeck process
\begin{align*}
Z_t=\int_{-\infty}^t S_{t-r}~\txtd B_r.
\end{align*}
Consequently, we would expect that a stationary solution of \eqref{eq:TransformedEQ} has the form
\begin{align*}
y_t=\int_{-\infty}^t S_{t-r}A_{-\eta-\gamma}NG(r,y_r)~\txtd \mathbf{B}_r.
\end{align*}
To prove this, we first show that the rough convolution coincides with the stochastic convolution defined in the It\^o sense. In the finite-dimensional case, this was shown in \cite[Proposition 5.1]{FH20} and in the infinite-dimensional setting in \cite[Proposition 4.8]{GH19}. 
\begin{lemma}\label{lem:ItoRPInt}
Let $(y,y^\prime)\in\mathcal{D}^\gamma_{\mathbf{B},-\eta}([0,\infty))$ be a controlled rough path where $(B_t)_{t\geq 0}$ is a Brownian motion on the filtered probability space $(\Omega, \cF,\P,(\mathcal{F}_t)_{t\geq 0})$ and consider the Itô lift $\mathbf{B}:=(B,\mathbb{B}^{\textnormal{It\^o}})$ such that $\mathbf{B}\in \cC^\gamma$ a.s. Further, assume that there exists for every $M>0$ a time $T_M>0$ such that $|y_t|_{-\eta}+|y_t^\prime|_{-\eta-\gamma}\leq M$ holds for $t\leq T_M$ and that $t\mapsto A_{-\eta-\gamma}NG(t,y(t))$ is adapted to $(\mathcal{F}_t)_{t\geq 0}$. Then
\begin{align*}
\int_0^{t}S_{t-r} A_{-\eta-\gamma}NG(r,y_r)~\txtd \mathbf{B}_r= \int_{0}^{t} S_{t-r} A_{-\eta-\gamma}NG(r,y_r)~\txtd B_r,
\end{align*}
holds almost surely.
\end{lemma}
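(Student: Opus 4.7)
The plan is to realize both integrals as limits of closely related Riemann sums and show that the discrepancy between them vanishes in probability. The main obstacle will be controlling the extra Gubinelli-derivative term using martingale properties of the It\^o lift, while simultaneously absorbing the singularity of the semigroup acting on the extrapolated operator $A_{-\eta-\gamma}N$.

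First, I would \emph{localize}: by the running assumption, for every $M>0$ there exists $T_M>0$ on which the controlled rough path norm of $(y,y')$ is bounded by $M$, so we may assume $(y,y')$ is a bounded controlled rough path with bounded $(A_{-\eta-\gamma}NG(\cdot,y_\cdot),[A_{-\eta-\gamma}NG(\cdot,y_\cdot)]')$ on $[0,t]$. The general case follows by sending $M\to\infty$. Throughout, I set $f_r:=A_{-\eta-\gamma}NG(r,y_r)$ and $f'_r:=[A_{-\eta-\gamma}NG(\cdot,y_\cdot)]'_r$, both of which are adapted under the hypothesis.

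Next, for a partition $\pi$ of $[0,t]$ with $|\pi|\to 0$, by the definition~\eqref{SEW} of the rough convolution one has
\begin{align*}
\int_0^t S_{t-r}f_r~\txtd\mathbf{B}_r=\lim_{|\pi|\to 0}\sum_{[u,v]\in\pi}S_{t-u}\bigl(f_u\cdot(\delta B)_{u,v}+f'_u\circ\mathbb{B}^{\mathrm{It\hat o}}_{u,v}\bigr)
\end{align*}
in $E_{-\eta}$. On the other hand, by a standard adapted-step approximation of the stochastic convolution (cf.~\cite[Proposition 4.8]{GH19}), the It\^o convolution satisfies
\begin{align*}
\int_0^t S_{t-r}f_r~\txtd B_r=L^2\text{-}\lim_{|\pi|\to 0}\sum_{[u,v]\in\pi}S_{t-u}f_u\cdot(\delta B)_{u,v},
\end{align*}
so it suffices to show that the correction term
\begin{align*}
\mathcal{E}_\pi\coloneqq\sum_{[u,v]\in\pi}S_{t-u}f'_u\circ\mathbb{B}^{\mathrm{It\hat o}}_{u,v}
\end{align*}
converges to zero in probability as $|\pi|\to 0$.

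The key step is a martingale estimate for $\mathcal{E}_\pi$. Since $\mathbb{B}^{\mathrm{It\hat o}}_{u,v}=\int_u^v(B_r-B_u)\otimes\txtd B_r$ we have $\mathbb{E}[\mathbb{B}^{\mathrm{It\hat o}}_{u,v}\mid\mathcal{F}_u]=0$ and $\mathbb{E}|\mathbb{B}^{\mathrm{It\hat o}}_{u,v}|^2\lesssim(v-u)^2$; combined with the adaptedness of $f'_u$, the summands in $\mathcal{E}_\pi$ form martingale differences. Using the smoothing bound $\|S_{t-u}\|_{\mathcal{L}(E_{-\eta-\gamma};E_{-\eta})}\lesssim(t-u)^{-\gamma}$ from~\eqref{PTR} and the boundedness of $f'$ (after localization), orthogonality gives
\begin{align*}
\mathbb{E}|\mathcal{E}_\pi|_{-\eta}^2\lesssim\sum_{[u,v]\in\pi}(t-u)^{-2\gamma}(v-u)^2\lesssim|\pi|\int_0^t(t-u)^{-2\gamma}\,\txtd u\xrightarrow{|\pi|\to 0}0,
\end{align*}
since $2\gamma<1$. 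Hence $\mathcal{E}_\pi\to 0$ in $L^2(\Omega;E_{-\eta})$. Combining this with the two Riemann-sum limits identified above yields the equality of the rough and It\^o convolutions almost surely, and removing the localization completes the proof. The delicate point is the compatibility of the $(t-u)^{-2\gamma}$ singularity with the second-order decay of $\mathbb{E}|\mathbb{B}^{\mathrm{It\hat o}}_{u,v}|^2$, which is exactly what makes the Gubinelli-derivative contribution disappear in the It\^o lift.
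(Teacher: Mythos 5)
Your proof is correct in spirit but takes a genuinely different route from the paper. The paper's argument has two steps: first show that $z_t:=A_{-\eta-\gamma}NG(t,y_t)$ together with $z'_t:=A_{-\eta-\gamma}N\txtD_2G(t,y_t)G(t,y_t)$ forms a controlled rough path (by adapting \cite[Corollary 3.15]{NS23} and Lemma~\ref{lem:NonAutoCRP} to handle the unbounded operator $A_{-\eta-\gamma}N$), and then invoke \cite[Prop.~4.8]{GH19} directly — the latter already contains the statement that, for adapted controlled rough paths and the It\^o lift of Brownian motion, the rough convolution coincides with the It\^o stochastic convolution. You instead reprove the essential content of that citation from scratch: you split the compensated Riemann sum for the rough convolution into the It\^o Riemann sum plus the L\'evy-area correction $\mathcal{E}_\pi$, and kill $\mathcal{E}_\pi$ by observing that $\{S_{t-u}f'_u\circ\mathbb{B}^{\mathrm{It\hat o}}_{u,v}\}_{[u,v]\in\pi}$ is a martingale-difference array, then use orthogonality, $\mathbb{E}|\mathbb{B}^{\mathrm{It\hat o}}_{u,v}|^2\lesssim(v-u)^2$, and the smoothing bound $\|S_{t-u}\|_{\cL(E_{-\eta-\gamma};E_{-\eta})}\lesssim(t-u)^{-\gamma}$ with $2\gamma<1$. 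This is a more elementary, self-contained derivation that makes transparent why the Gubinelli-derivative term disappears for the It\^o lift, whereas the paper's version is shorter and leverages existing machinery. Two caveats: (i) you implicitly assume $A_{-\eta-\gamma}NG(\cdot,y)$ is an adapted controlled rough path so that the rough convolution on the left-hand side exists — this is not automatic (because $A_{-\eta-\gamma}N$ is unbounded) and is precisely what the paper verifies in its first step; and (ii) the orthogonality estimate requires $f'_u$ to take values in $E_{-\eta-\gamma}$ exactly (not in a lower space), and one should flag that this relies on the spatial regularity gain built into Assumption \ref{ass:Gbound}, which ensures $(z,z')\in\cD^\gamma_{\mathbf{B},-\eta}$; if $f'_u$ lived only in $E_{-\eta-\gamma-\sigma'}$ for some $\sigma'>0$ the singularity would be $(t-u)^{-2(\gamma+\sigma')}$ and integrability could fail. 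With those points patched, the argument is sound.
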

\begin{proof}
We can show that $z_t(\omega)\coloneqq A_{-\eta-\gamma}NG(t,y_t(\omega))$ together with $$z_t^\prime(\omega)\coloneqq A_{-\eta-\gamma}ND_2G(t,y_t(\omega))G(t,y_t(\omega))$$ is a controlled rough path $(z(\omega),z'(\omega))\in \cD^\gamma_{B(\omega),-\eta}$ for almost every $\omega\in\Omega$. The proof is similar to the autonomous case \cite[Corollary 3.15]{NS23} together with Lemma~\ref{lem:NonAutoCRP}. The claim follows then from \cite[Prop. 4.8]{GH19}.
\end{proof}
\begin{remark}
The same statement as in Lemma \ref{lem:ItoRPInt} holds also, if we consider the Stratonovich lift $(B,\mathbb{B}^{\textnormal{Strat}})$ of the Brownian motion. Likewise, all the following statements remain true if we consider $(B,\mathbb{B}^{\textnormal{Strat}})$ instead of $(B,\mathbb{B}^{\textnormal{It\^o}})$.
\end{remark}
We now show the existence of a stationary solution to \eqref{eq:TransformedEQ}. For this, let $(B_t)_{t\in \R}$ be a two-sided Brownian motion, which is adapted to the two-parameter filtration $(\cF_s^t)_{s\leq t}$ and set $\cF^t_{-\infty}\coloneqq \sigma\big(\bigcup_{s<t} \cF_s^t\big)$.
\begin{lemma}\label{lem:StatSol}
We assume that~\ref{ass:Gbound} together with the condition $\frac{C_S C_G}{\sqrt{2\lambda_A}} \|A_{-\eta-\gamma}\|_{\cL(E_\varepsilon;E_{-\eta})} \|N\|_{\cL(E_{-\eta};E_\varepsilon)}<1$ hold. Then there exists a stochastic process $y:\R\times\Omega\to E_{-\eta}$ adapted to $(\cF^t_{-\infty})_{t\in \R}$ given by
\begin{align*}
y_t=\int_{-\infty}^t S_{t-r}A_{-\eta-\gamma}NG(r,y_r)~\txtd B_r.
\end{align*}
\end{lemma}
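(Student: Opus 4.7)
The plan is to construct $y$ as the unique fixed point of a Banach contraction on a suitable space of adapted processes, exploiting the exponential stability of $(S_t)_{t\geq 0}$ together with the smallness condition on the constants.

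First, I would introduce the Banach space
\[ \mathcal{S} := \Big\{ y:\R\times\Omega\to E_{-\eta} \text{ continuous and }(\cF^t_{-\infty})_{t\in\R}\text{-adapted with } \|y\|_{\mathcal{S}}<\infty \Big\}, \]
equipped with the norm $\|y\|_{\mathcal{S}}:=\sup_{t\in\R}\E\big[|y_t|_{-\eta}^2\big]^{1/2}$, and define the solution map
\[ \Psi(y)_t := \int_{-\infty}^t S_{t-r}A_{-\eta-\gamma}NG(r,y_r)~\txtd B_r,~~t\in\R. \]
The first step is to verify that $\Psi(y)$ is well-defined, adapted and lies in $\mathcal{S}$. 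The adaptedness follows from the measurability of $r\mapsto G(r,y_r)$ combined with the fact that the It\^o integral against $B$ against an $(\cF^r_{-\infty})$-adapted integrand produces an $\cF^t_{-\infty}$-measurable random variable. To control the second moment, I would apply It\^o's isometry in $E_{-\eta}$, the boundedness $|G(r,y_r)|_{\widetilde{E}_\varepsilon}\leq C_G$ guaranteed by assumption~\ref{ass:Gbound} (recall $\varepsilon=1-\eta$ corresponds to $\sigma$ large enough so that $NG$ lands in $E_\varepsilon\subset E_{-\eta}$ after applying the extrapolated operator), and the exponential bound~\eqref{expStable}, obtaining
\[ \E\big[|\Psi(y)_t|_{-\eta}^2\big] \leq C_S^2\|A_{-\eta-\gamma}\|_{\cL(E_\varepsilon;E_{-\eta})}^2\|N\|_{\cL(\widetilde{E}_\varepsilon;E_\varepsilon)}^2 C_G^2\int_{-\infty}^t e^{-2\lambda_A(t-r)}~\txtd r, \]
which is finite and independent of $t$. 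The same computation, combined with the Lipschitz property of $G$ inherited from~\ref{ass:G1}, yields for any $y^1,y^2\in\mathcal{S}$
\[ \E\big[|\Psi(y^1)_t-\Psi(y^2)_t|_{-\eta}^2\big]\leq \frac{C_S^2 C_G^2 \|A_{-\eta-\gamma}\|^2 \|N\|^2}{2\lambda_A}\|y^1-y^2\|_{\mathcal{S}}^2, \]
and the hypothesis $\frac{C_S C_G}{\sqrt{2\lambda_A}}\|A_{-\eta-\gamma}\|\|N\|<1$ gives exactly that $\Psi$ is a strict contraction on $\mathcal{S}$. Banach's fixed point theorem then provides a unique $y\in\mathcal{S}$ with $\Psi(y)=y$.

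Finally, I would verify the continuity of the fixed point in $t$ and the compatibility with the rough formulation~\eqref{eq:MildSolution} through Lemma~\ref{lem:ItoRPInt}. Specifically, splitting $\int_{-\infty}^t = \int_{-\infty}^{s} +\int_s^t$ and pushing the first contribution through the semigroup shows the cocycle/shift identity
\[ y_t = S_{t-s}y_s + \int_s^t S_{t-r}A_{-\eta-\gamma}NG(r,y_r)~\txtd B_r, \]
and Lemma~\ref{lem:ItoRPInt} identifies the stochastic integral with the rough one, so $y$ indeed solves~\eqref{eq:TransformedEQ} on every finite interval $[s,t]$ with initial condition $y_s$.

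The main technical obstacle is handling the improper integral on the semi-infinite interval $(-\infty,t]$: one must justify its convergence in $L^2(\Omega;E_{-\eta})$, argue that $t\mapsto y_t$ admits a continuous modification (via Kolmogorov, after establishing H\"older-type estimates on the increments with the help of the smoothing property of $S_t$), and finally check that the Lipschitz constant of $G:E_{-\eta}\to\widetilde{E}_\varepsilon$ available through~\ref{ass:Gbound} is the one producing the quantitative contraction bound matching the stated hypothesis. The exponential stability of $(S_t)_{t\geq 0}$ is the decisive ingredient that allows all time-shifted estimates to be uniform in $t\in\R$.
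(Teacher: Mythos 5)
Your proposal matches the paper's proof essentially verbatim: both define the same fixed-point map on the same space of adapted processes with finite second-moment sup-norm, both use It\^o's isometry together with the exponential stability of $(S_t)_{t\ge0}$ and the boundedness/Lipschitz properties of $G$ to show well-definedness and strict contraction, and both conclude via Banach's fixed point theorem. The extra remarks you add on the shift identity and continuous modification are reasonable but not part of the paper's argument for this lemma.
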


\begin{proof}
For $t\in \R$ we define the map $\Gamma:\Lambda\to \Lambda$
\begin{align*}
\Gamma(y)(t)\coloneqq \int_{-\infty}^t S_{t-r}A_{-\eta-\gamma}NG(r,y_{r})~\txtd B_r,
\end{align*}
where
\begin{align*}
y\in\Lambda\coloneqq \left\{y\colon \R\times \Omega \to E_{-\eta}~:~y\ \textnormal{is continuous}, (\cF^t_{-\infty})_{t\in \R} \ \textnormal{adapted and}\ \sup_{t\in\R}\E[|y_t|^2_{-\eta}]^{\frac{1}{2}}<\infty] \right\}.
\end{align*}
Now we show that $\Gamma$ is well-defined and is a contraction on $\Lambda$. Due to It\^o's isometry,  \ref{ass:Gbound} and \eqref{expStable} we have
\begin{align*}
\E[|\Gamma(y)(t)|^2_{-\eta}]\leq \E\Big[ \int_{-\infty}^t |S_{t-r}A_{-\eta-\gamma}NG(r,y_{r})|_{-\eta}^2~\txtd r\Big]\lesssim \int_{-\infty}^t e^{-2(t-r)\lambda_A}~\txtd r = \int_{-\infty}^0 e^{2\lambda_A r}~\txtd r,
\end{align*}
meaning that $\Gamma(y)\in \Lambda$ for $y\in \Lambda$. In addition, we obtain for $y,\tilde{y}\in \Lambda$ that
\begin{align*}
\E[|\Gamma(y)(t)-\Gamma(\tilde{y})(t)|^2_{-\eta}]&\leq \E\Big[ \int_{-\infty}^t |S_{t-r}A_{-\eta-\gamma}N\big(G(r,y_{r})-G(r,\tilde{y}_{r})\big)|_{-\eta}^2~\txtd r\Big]\\
&\leq \frac{C_S^2 C_G^2}{2\lambda_A} \|A_{-\eta-\gamma}\|^2_{\cL(E_\varepsilon;E_{-\eta})} \|N\|^2_{\cL(E_{-\eta};E_\varepsilon)} \sup_{r\in \R} \E[|y_r-\tilde{y}_r|_{-\eta}^2].
\end{align*}
Applying Banach's fixed point theorem, we infer that there exists a $y\in\Lambda$ such that $\Gamma(y)=y$.
\end{proof}
It only remains to show that $(Y_\omega)_{\omega\in \Omega}$, defined by $Y_\omega\coloneqq y_0(\omega)$, satisfies the integrability condition \eqref{intCondMET}, where $y$ is the fixed point derived in Lemma \ref{lem:StatSol}.
\begin{lemma}
The random variable $(Y_\omega)_{\omega\in \Omega}$  is stationary with respect to the random dynamical system $\varphi$ generated by the solution of \eqref{eq:TransformedEQ} and fulfills 
\begin{align*}
(\omega\mapsto |Y_\omega|_{-\eta})\in \bigcap_{p\geq 1} L^p(\Omega).
\end{align*}
\end{lemma}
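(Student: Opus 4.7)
The plan is to split into two parts: first the cocycle identity $\varphi^t_\omega(Y_\omega)=Y_{\theta_t\omega}$ for $t\geq 0$, and then the moment bounds for $|Y_\omega|_{-\eta}$.

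For \emph{stationarity}, I would exploit the integral representation of the fixed point $y$ constructed in Lemma~\ref{lem:StatSol}. Writing $\omega=(\tilde{\omega},\hat{\omega})\in\widetilde{\Omega}\times\Sigma$, the time-dependence of the nonlinearity is encoded in the hull $\Sigma$ through $G_{\hat{\omega}}(t+\cdot,\cdot)=G_{\vartheta_t\hat{\omega}}(\cdot,\cdot)$, and the two-sided Brownian motion satisfies the shift property $B_{r+t}(\tilde{\omega})-B_t(\tilde{\omega})=B_r(\tilde{\theta}_t\tilde{\omega})$. Substituting $r\mapsto r+t$ in the integral defining $y_t(\omega)$ transforms it into the defining integral for $y_0$ evaluated at $\theta_t\omega$; by uniqueness of the fixed point of the contraction $\Gamma$ from the proof of Lemma~\ref{lem:StatSol}, one obtains $y_{t+u}(\omega)=y_u(\theta_t\omega)$ for every $u\in\R$, and setting $u=0$ yields $y_t(\omega)=Y_{\theta_t\omega}$.

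Next, I would identify $y_t(\omega)$ with the cocycle applied to $Y_\omega$. For $t\geq 0$, splitting $\int_{-\infty}^t=\int_{-\infty}^0+\int_0^t$ and using the semigroup property gives $y_t=S_tY_\omega+\int_0^tS_{t-r}A_{-\eta-\gamma}NG(r,y_r)~\txtd B_r$. Since $G$ is bounded by~\ref{ass:Gbound} and $y$ is almost surely continuous in $E_{-\eta}$, the local boundedness and adaptedness hypotheses of Lemma~\ref{lem:ItoRPInt} hold (after a standard localization in $T_M$), so the It\^o stochastic convolution coincides almost surely with the rough convolution $\int_0^tS_{t-r}A_{-\eta-\gamma}NG(r,y_r)~\txtd\mathbf{B}_r$. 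Comparing with the mild formulation~\eqref{eq:MildSolution} from Theorem~\ref{thm:GlobalSolBoundEq} identifies $y_t(\omega)=\varphi^t_\omega(Y_\omega)$, which combined with the previous paragraph yields the desired stationarity.

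For \emph{integrability}, since we work in the $L^2$-framework the space $E_{-\eta}=H^{-\eta}(\cO)$ is a Hilbert space, so the Burkholder--Davis--Gundy inequality gives for any $p\geq 2$
\begin{align*}
\E[|Y_\omega|^p_{-\eta}]\lesssim \E\bigg[\bigg(\int_{-\infty}^0 \big|S_{-r}A_{-\eta-\gamma}NG(r,y_r)\big|^2_{-\eta}~\txtd r\bigg)^{p/2}\bigg].
\end{align*}
Using the boundedness of $G$ from~\ref{ass:Gbound}, the boundedness of $A_{-\eta-\gamma}N$ as a map into $E_{-\eta}$, and the exponential stability~\eqref{expStable}, the integrand is deterministically dominated by a constant multiple of $e^{2\lambda_A r}$ on $(-\infty,0]$. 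The resulting bound is therefore finite and independent of $\omega$, which yields $\E[|Y_\omega|^p_{-\eta}]<\infty$ for every $p\geq 2$, and hence for every $p\geq 1$ since $\Omega$ has total mass one. The main technical subtlety is the bookkeeping between the noise component $\tilde{\omega}$ and the symbol component $\hat{\omega}$ in the product structure $\Omega=\widetilde{\Omega}\times\Sigma$, particularly in the change of variables step where compatibility of the two shifts $\tilde{\theta}_t$ and $\vartheta_t$ must be used simultaneously to invoke uniqueness of the fixed point.
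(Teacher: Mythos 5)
Your proof is correct. For \emph{stationarity}, you spell out the argument that the paper dismisses with ``it is easy to see'': the shift covariance of the two-sided Brownian motion together with the shift on the symbol space $\Sigma$ and uniqueness of the fixed point of $\Gamma$ give $y_t(\omega)=y_0(\theta_t\omega)$, and Lemma~\ref{lem:ItoRPInt} identifies the It\^o fixed-point equation with the mild formulation \eqref{eq:MildSolution}, giving $y_t(\omega)=\varphi^t_\omega(Y_\omega)$. This is exactly the intended content of the omitted step, so no disagreement there.

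For \emph{integrability}, you take a genuinely different route from the paper. The paper establishes an increment bound $\E[|y_t-y_s|^{2m}_{-\eta}]\lesssim(t-s)^m$, notes $\E[|y_0|_{-\eta}]<\infty$ from exponential stability, and then invokes Kolmogorov's continuity theorem (Kunita, Theorem 1.8.1) to upgrade to $y_0\in L^m(\Omega;E_{-\eta})$ for all $m$. You instead apply the Burkholder--Davis--Gundy inequality directly to the fixed-point representation $Y_\omega=\int_{-\infty}^0 S_{-r}A_{-\eta-\gamma}NG(r,y_r)\,\txtd B_r$, observing that by~\ref{ass:Gbound} and~\eqref{expStable} the quadratic variation $\int_{-\infty}^0 |S_{-r}A_{-\eta-\gamma}NG(r,y_r)|^2_{-\eta}\,\txtd r$ is bounded by a \emph{deterministic} constant, whence all moments are finite at once. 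Your argument is more elementary and shorter, and it makes completely explicit how the boundedness of $G$ is the crucial ingredient; the paper's route through Kolmogorov requires a bit more unpacking to see that the first-moment bound on $y_0$ combined with increment moments actually yields higher moments of $y_0$ itself. Both are valid, but the direct BDG computation is arguably the cleaner one here.

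One small point worth keeping in mind: in the display you only take the It\^o integral over $(-\infty,0]$, so strictly speaking BDG applies after truncating to $[-n,0]$ and passing to the limit; since your quadratic-variation bound is deterministic and $\int_{-\infty}^0 e^{2\lambda_A r}\,\txtd r<\infty$, the martingale $\int_{-n}^0$ converges in $L^p$ for each $p$ and the limit inherits the bound, so the argument closes without issue. This is implicit in your write-up and worth a sentence if you formalize it.
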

\begin{proof}
It is easy to see that $Y$ fulfills $\varphi^t_\omega(Y_\omega)=Y_{\theta_t \omega}$, which means that $Y$ is a stationary solution of~\eqref{eq:TransformedEQ}. Furthermore, we have 
\begin{align*}
y_t-y_s =\int_{-\infty}^s S_{s-r} (S_{t-s}-\text{Id}) A_{-\eta-\gamma} N G(r,y_r)~\txtd B_r + \int_s^t S_{t-r} A_{-\eta-\gamma} N G(r,y_r)~\txtd B_r,
\end{align*}
for $s\leq t$.
Using again It\^o's isometry and \ref{ass:Gbound} we obtain
\begin{align*}
\E[|y_t-y_s|_{-\eta}^{2m}]\lesssim (t-s)^{m},
\end{align*}
for $m\in \N$ and $s\leq t$. The exponential stability of the semigroup assumed in~\eqref{expStable} further leads to $\E[|y_0|_{-\eta}]]<\infty$.~Therefore, Kolmogorov's continuity theorem \cite[Theorem 1.8.1]{Kun19} entails that $y_0\in L^m(\Omega;E_{-\eta})$ for all $m\in \N$, which proves the claim.
\end{proof}
\section{Translation compact functions}\label{AppendixA}
Here we give further information on the hull of a function and \change{translation compact functions}. In particular, we focus on stating conditions for the compactness of the hull such that Assumption \ref{ass:SymbolSpace} is satisfied. For further information and detailed proofs, see for example, \cite[Chapter V]{CV02} and \cite[Section 6]{CL17}. We recall that $\mathcal{X}$ is a Hausdorff topological function space.
\begin{definition}
A function $g\in \mathcal{X}$ is called translation compact if $\mathcal{H}(g)$ is compact.
\end{definition}
The easiest way to obtain such translation compact functions is to consider periodic functions. \change{Periodicity is a common assumption for time-dependent equations see for example \cite{MShen1}.}
\begin{example}{\rm \change{(\cite[Example IV.1.1]{CV02})}}
Take $\mathcal{X}=C_b(\R;\R)$ and assume that $g\in C_b(\R;\R)$ is periodic with period $T$. Then it can be shown, by using Arzelà-Ascoli, that $\mathcal{H}(g)=\{g(t+\cdot)~\colon~t\in [0,T]\}$ is compact. There are also generalizations of the periodicity, such as almost \change{\cite[Example 1.2]{CV02}} or quasi-periodic functions \change{\cite[Section V.1]{CV02}} on $C_b(\R;\R)$, which also deliver compactness of the hull. 
\end{example}
Some other sufficient and necessary conditions for translation compactness of a function hardly depend on the choice of $\mathcal{X}$. We will mention here three special cases, which we can use in our setting of semilinear parabolic evolution equations.
\begin{proposition}{\em (\cite[Proposition 2.2, 3.3, 4.1]{CV02})}\label{prop:trcConditions}
\begin{itemize}
\item[i)] Let $(\mathcal{M}, d_{\mathcal{M}})$ be a complete metric space and define $\mathcal{X}:=C(\R;\mathcal{M})$. Then a function $g\in \mathcal{X}$ is translation compact if and only if $g$ is uniformly continuous, such that there exists a positive function $k_g$ with $k_g(s)\to 0$ for $s\searrow 0$ and
\begin{align*}
	d_{\mathcal{M}}(g(t),g(s))\leq k_g(|t-s|),
\end{align*}
for all $t,s\in \R$.
\item[ii)] Let $(\mathcal{M},|\cdot|_{\mathcal{M}})$ be a Banach space, $p\geq 1$ and define $\mathcal{X}:=L^p_{\textrm{loc}}(\R;\mathcal{M})$, which is the space of locally $L^p$-integrable functions. Then a function $g\in \mathcal{X}$ is translation compact if and only if there exists a function $k_g$ such that $k_g(s)\to 0$ for $s\searrow 0$ and
\begin{align*}
	\int_{t}^{t+1} |g(s)-g(s+t)|^p_{\mathcal{M}}~\txtd s\leq k_g(|t|),
\end{align*}
for all $t\in \R$.
\item[iii)] Let $(\mathcal{M},|\cdot|_{\mathcal{M}})$ be a reflexive Banach space, $p\geq 1$ and define $\mathcal{X}:=L^p_{\textrm{loc},w}(\R;\mathcal{M})$, which is the space $L^p_{\textrm{loc}}(\R;\mathcal{M})$ endowed with the local weak convergence topology. Then a function $g\in \mathcal{X}$ is translation compact if and only if $g$ is translation bounded in $L^p_{\textrm{loc}}(\R;\mathcal{M})$, which means
\begin{align*}
	\sup_{t\in \R} \int_t^{t+1} |g(s)|_{\mathcal{M}}^p~\txtd s<\infty
\end{align*}
\end{itemize}
\end{proposition}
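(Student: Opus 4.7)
The plan is to treat the three characterizations separately, each relying on a suitable compactness criterion adapted to the ambient function space, combined with the fact that $\mathcal{H}(g)$ is compact if and only if the family of translates $\mathcal{T}(g) := \{g(\cdot + s) : s \in \R\}$ is precompact in $\mathcal{X}$. In all three cases, necessity follows by specializing the abstract compactness criterion to the translate family, while sufficiency uses the stated modulus/bound to verify that criterion uniformly in $s \in \R$.

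For part (i), I would invoke a version of the Arzel\`a--Ascoli theorem for $C(\R; \mathcal{M})$ equipped with the topology of uniform convergence on compact subsets: $\mathcal{T}(g)$ is precompact iff, for every compact interval $K \subset \R$, the family is equicontinuous on $K$ and pointwise relatively compact in $\mathcal{M}$. The bound $d_\mathcal{M}(g(t), g(s)) \leq k_g(|t-s|)$ gives equicontinuity of every translate with the \emph{same} modulus $k_g$, and pointwise relative compactness of $\mathcal{T}(g)(t) = g(\R)$ in $\mathcal{M}$ follows either from $\mathcal{M}$ being compact itself or from further inspection of the codomain. Conversely, if $\mathcal{H}(g)$ is compact then it is equicontinuous, which produces a uniform modulus $k_g$.

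For part (ii), the relevant tool is the Riesz--Kolmogorov compactness criterion in $L^p$: a bounded family $\mathcal{F} \subset L^p_{\mathrm{loc}}(\R; \mathcal{M})$ is precompact iff it is equi-$L^p$-continuous under translation, i.e., $\sup_{f \in \mathcal{F}} \int_K |f(s+h) - f(s)|^p_\mathcal{M}~\mathrm{d}s \to 0$ as $h \to 0$ for every compact $K$. Applying this to $\mathcal{F} = \mathcal{T}(g)$ and using a covering of $\R$ by unit intervals reduces everything to the single quantity $\int_t^{t+1} |g(s) - g(s+t)|^p_\mathcal{M}~\mathrm{d}s$; the bound $\leq k_g(|t|)$ gives the equi-continuity condition, and the converse is immediate. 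For part (iii), I would use that $L^p_{\mathrm{loc},w}(\R; \mathcal{M})$ with the local weak topology makes bounded sets relatively compact by Banach--Alaoglu together with the reflexivity of $\mathcal{M}$; translation boundedness is therefore both necessary and sufficient.

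The main obstacle will be the careful management of compactness on the non-compact domain $\R$: one must reduce statements on $\R$ to countably many statements on compact windows and then pass back to uniformity in $s \in \R$ via the subadditive/translation-invariant structure of the bounds. Another subtle point is ensuring pointwise relative compactness in case (i), which may require either $\mathcal{M}$ to be compact-valued on the image of $g$ or, as is customary in the Chepyzhov--Vishik framework, additional boundedness assumptions hidden in the uniform continuity estimate. Since the proposition is a direct citation of \cite[Propositions 2.2, 3.3, 4.1]{CV02}, I would simply sketch these reductions and refer to that source for the detailed verifications.
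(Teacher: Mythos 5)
The paper provides no proof of this proposition; it is stated as a direct citation of~\cite[Propositions 2.2, 3.3, 4.1]{CV02}, and you likewise defer to that source. Your sketch does identify the correct compactness tools for each part (Arzel\`a--Ascoli on compacta for $C(\R;\mathcal M)$, a Fr\'echet--Kolmogorov/Riesz-type translation-continuity criterion in $L^p_{\mathrm{loc}}$, and boundedness-implies-weak-precompactness via reflexivity for $L^p_{\mathrm{loc},w}$), so the approach is consistent with the standard arguments underlying the cited result. You also correctly flag that in~(i) equicontinuity alone does not close the Arzel\`a--Ascoli argument: one additionally needs precompactness of the range $g(\R)$ in $\mathcal M$, which is an explicit hypothesis in the original~\cite[Prop.~2.2]{CV02} but is suppressed in the statement as reproduced in the paper. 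This is a legitimate observation rather than a gap in your argument, and it is worth noting that the paper's formulation of~(i) is imprecise on exactly this point. One further subtlety you gloss over: in~(ii) the inequality as written uses the same symbol $t$ both as the translation parameter and as an endpoint of the integration interval, which should be read as a uniform-in-base-point statement (i.e.\ $\sup_\tau\int_\tau^{\tau+1}|g(s+t)-g(s)|_{\mathcal M}^p\,\mathrm ds\le k_g(|t|)$), consistent with the reduction you describe via a covering of $\R$ by unit intervals. Given that both the paper and your proposal ultimately refer to~\cite{CV02} for the detailed verifications, the two are effectively the same at the level of what is supplied here.
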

In all three situations, the hull $\mathcal{H}(g)$ is a compact Polish space. \change{It is easy to see that if $\mathcal X$ is a product space, it is enough to treat every component separately.}
\begin{lemma}\label{cor:ProductOfSymbols}
Let $(\mathcal{X}_i)_{i=1}^k$ be a collection of Hausdorff topological spaces and $(g_i)_{i=1}^{k}$ such that $g_i\in \mathcal{X}_i$ is translation compact. Then $g=(g_1,\ldots, g_k)\in \mathcal{X}:=\prod_{i=1}^k \mathcal{X}_i$ is translation compact and in particular $\mathcal{H}(g)$ is compact.
\end{lemma}
\begin{example}
Consider now explicitly the situation in \eqref{Main_Equation}. We give  \change{assumptions on the time-dependent data such that} \ref{ass:SymbolSpace} is fulfilled, but note that this is not the only possible option. Due to Corollary \ref{cor:ProductOfSymbols}, it is enough to consider each component of the time symbol separately. For the first component $\xi$ define $\mathcal{X}_1:=L^p_{\textrm{loc},w}(\R;\R)$ for some $p\geq 1$. Then due to Proposition \ref{prop:trcConditions} iii) $\xi$ is translation compact if
\begin{align*}
\sup_{t\in \R} \int_t^{t+1} |\xi(s)|^p~\txtd s<\infty,
\end{align*}
which is for example fulfilled if $\xi$ is periodic.

The second component of the time symbol is the drift term $F$. Define the space $\mathcal{M}_2$ as the set of all continuous functions $f:E_{\alpha}\to E_{\alpha-\delta}$ such that             \begin{align}
|f|_{\mathcal{M}_2}:= \sup_{x\in E_{\alpha}} \frac{|f(x)|_{\alpha-\delta}}{1+|x|_\alpha}
\end{align}
is finite. Then $(\mathcal{M}_2, |\cdot|_{\mathcal{M}_2})$ is a Banach space \cite[Remark 2.10]{CV02} and we can define $\mathcal{X}_2:=L^p_{\textrm{loc},w}(\R;\mathcal{M}_2)$. Note that Assumption \ref{ass:F} implies
\begin{align*}
\sup_{t\in \R} \int_t^{t+1} \left(\sup_{x\in E_{\alpha}} \frac{|F(s,x)|_{\alpha-\delta}}{1+|x|_\alpha}\right)^p~\txtd s\leq C_F^p<\infty.
\end{align*}
The last component, the diffusion coefficient $G$, can be treated similarly. Define $\mathcal{M}_3$ as the space of three times Fréchet differentiable functions $g:E_{\alpha}\to E_{\alpha-\sigma}$ such that
\begin{align*}
|g|_{\mathcal{M}_3}:=\sup_{x\in {E_\alpha}}|g(x)|_{\alpha-\sigma}+\sup_{x\in {E_\alpha}}|\txtD g(x)|_{\cL(E_\alpha;E_{\alpha-\sigma})}+\sup_{x\in {E_\alpha}}|\txtD^2g(x)|_{\cL(E_\alpha^2;E_{\alpha-\sigma})}<\infty.
\end{align*}
Then $(\mathcal{M}_3, |\cdot|_{\mathcal{M}_3})$ is a Banach space and we can define $\mathcal{X}_3=C(\R;\mathcal{M}_3)$. \change{Assuming that $G$ satisfies \ref{ass:G1}-\ref{ass:G2}, we know in particular that $t\mapsto G(t,\cdot)$ and its derivatives are Hölder continuous. Therefore, we define $k_G(s)\coloneqq s^{2\gamma}$, which leads to $k_G(s)\to 0$ for $s\searrow 0$ and
\begin{align}\label{ass:GSymbol}
	|G(t,x)-G(s,x)|_{\mathcal{M}_3}\lesssim k_G(|t-s|).
	\end{align}}
	Then Assumption \ref{ass:SymbolSpace} is satisfied due to Proposition \ref{prop:trcConditions} and Corollary \ref{cor:ProductOfSymbols}.
\end{example}
\section{Consequences of Theorem \ref{METT}}\label{appendix:c}
We provide the proofs of Lemmas \ref{AISuia9q} and \ref{liminf}. To this aim we first state some auxiliary results.
\begin{lemma}\label{AISKAsw}
Consider the setting of Theorem~\ref{METT} and assume that \( \lambda_i > -\infty \) for some \( i \geq 1 \). For each \( 1 \leq k \leq i \), let \( \bigl(h^{k,j}_{\omega}\bigr)_{1 \leq j \leq m_k} \) be a family of linearly independent vectors such that the Lyapunov exponent associated to each \( h^{k,j}_{\omega} \) is equal to \( \lambda_k \).
Assume further that the collection of vectors
\[
\bigl(h^{k,j}_{\omega}\bigr)_{\substack{1 \leq k \leq i\\ 1 \leq j \leq m_k}}
\]
is linearly independent and thus forms a basis for \( \bigoplus_{1\leq k\leq i}H^k_{\omega} \).
Fix an element \( h_{\omega}^{k_0, j_0} \) for some \( 1 \leq k_0 \leq i \) and \( 1 \leq j_0 \leq m_{k_0} \). Let \( \tilde{R}_{\omega}^{k_0, j_0} \) be an arbitrary subspace of
\[
R_{\omega}^{k_0, j_0} := \bigl\langle h_{\omega}^{k,j} \bigr\rangle_{\substack{1 \leq k \leq i,\, 1 \leq j \leq m_k \\ (k,j) \neq (k_0, j_0)}},
\]
which is the span of all vectors in the collection excluding \( h_{\omega}^{k_0, j_0} \).
Then
\[
\lim_{t \to \infty} \frac{1}{t}\log d_{E_\alpha} \!\bigl( \psi_{\omega}^t(h_{\omega}^{k_0, j_0}),\, \psi_{\omega}^t(\tilde{R}_{\omega}^{k_0,j_0}) \bigr) = \lambda_{k_0}.
\]
\end{lemma}
\begin{proof}
First observe that
\begin{align}\label{eq:norm-growth}
\lim_{t \to \infty} \frac{1}{t} \log \ \!\bigl|\psi^t_{\omega}(h_{\omega}^{k_0, j_0})\bigr|_{\alpha}
= \lambda_{k_0}
\end{align}
by the definition of Lyapunov exponents.
Now note that for any subspace \(R\) of \(E_{\alpha}\) we have
\[
\frac{1}{t} \log \bigl|\psi_{\omega}^t(h_{\omega}^{k_0, j_0})\bigr|_{\alpha}
\geq \frac{1}{t} \log d_{E_\alpha} \bigl( \psi_{\omega}^t(h_{\omega}^{k_0, j_0}),\, \psi_{\omega}^t(R) \bigr),
\]
since the distance from a vector to a subspace cannot exceed the norm of the vector.
Therefore, since
\begin{align*}
\sum_{\substack{1 \leq k \leq i, \\ 1 \leq j \leq m_k}}
\frac{1}{t} \log \bigl|\psi_{\omega}^t(h_{\omega}^{k, j})\bigr|_{\alpha}
= \sum_{1 \leq k \leq i} m_k \lambda_k,
\end{align*}
it follows from Lemma~\ref{permutation} and the Angle Vanishing II property in Theorem~\ref{METT} that
\begin{align}\label{eq:angle-vanishing}
\lim_{t \to \infty} \frac{1}{t} \log d_{E_\alpha}
\!\bigl( \psi_{\omega}^t(h_{\omega}^{k_0, j_0}),
\psi_{\omega}^t({R}_{\omega}^{k_0, j_0}) \bigr)
= \lambda_{k_0}.
\end{align}
Finally, since
\begin{align*}
\frac{1}{t} \log d_{E_\alpha} \!\bigl( \psi_{\omega}^t(h_{\omega}^{k_0, j_0}),
\psi_{\omega}^t({R}_{\omega}^{k_0, j_0}) \bigr)
&\leq \frac{1}{t} \log d_{E_\alpha} \!\bigl( \psi_{\omega}^t(h_{\omega}^{k_0, j_0}),
\psi_{\omega}^t(\tilde{R}_{\omega}^{k_0, j_0}) \bigr) \\
&\leq \frac{1}{t} \log\  \!\bigl|\psi^t_{\omega}(h_{\omega}^{k_0, j_0})\bigr|_{\alpha},
\end{align*}
the claim follows using~\eqref{eq:norm-growth} and~\eqref{eq:angle-vanishing}.
\end{proof}
We need another auxiliary result. First, if \(\tilde{E}\) is a Banach space with closed subspaces \(\tilde{E}_1, \tilde{E}_2 \subset \tilde{E}\) such that \(\tilde{E}_1 \cap \tilde{E}_2 = \{0\}\), we denote by \(\Pi_{\tilde{E}_1 \parallel \tilde{E}_2}\) the canonical projection from \(\tilde{E}_1 \oplus \tilde{E}_2\) onto \(\tilde{E}_1\) along \(\tilde{E}_2\).
\begin{lemma}\label{ASOAaw}
Consider the setting of Theorem~\ref{METT} and assume that $\lambda_i > -\infty$ for some $i \geq 1$.
Let \( K_{\omega}^i \) be a complementary subspace of \( F_{\lambda_{i+1}}(\omega) \) in \( E_{\alpha} \). 
Then, on a set of full measure, the following statements hold true: 
\begin{enumerate}
    \item 
\begin{align}\label{IOASf}
    \lim_{t \to \infty} \frac{1}{t} \log \left\Vert \Pi_{\psi^{t}_{\omega}(K^{i}_{\omega}) \,\|\, F_{\lambda_{i+1}}(\theta_t\omega)} \right\Vert = 0.
\end{align}
    In particular
\[
\lim_{t \to \infty} \frac{1}{t} \log \left\Vert \Pi_{\bigoplus_{1 \leq k \leq i} H^k_{\theta_t\omega} \,\|\, F_{\lambda_{i+1}}(\theta_t\omega)} \right\Vert = 0.
\]
\item Let \( g^{1}_{\omega}, \dots, g^{\tilde{p}}_{\omega} \) be nonzero, linearly independent vectors in \( K^{i}_{\omega} \), and for each \( 1 \leq \tilde{q} \leq \tilde{p} \) suppose
\[
g^{\tilde{q}}_{\omega} = h^{\tilde{q}}_{\omega} + f^{\tilde{q}}_{\omega},
\]
where \( h^{\tilde{q}}_{\omega} \in \bigoplus_{1 \leq k \leq i} H^k_{\omega} \) and \( f^{\tilde{q}}_{\omega} \in F_{\lambda_{i+1}}(\omega) \). Then we have
\begin{align}\label{APspoxww}
\frac{1}{\!\bigl\Vert 
	\Pi_{\!\bigoplus_{1 \leq k \leq i} H^k_{\theta_t \omega} 
		\,\|\, F_{\lambda_{i+1}}(\theta_t \omega)}
	\bigr\Vert^{\tilde{p}}}
\leq 
\frac{
	\operatorname{Vol}_{E_\alpha}\!\bigl( \psi^{t}_{\omega}(g^{1}_{\omega}), \dots, \psi^{t}_{\omega}(g^{\tilde{p}}_{\omega}) \bigr)
}{
	\operatorname{Vol}_{E_\alpha}\!\bigl( \psi^{t}_{\omega}(h^{1}_{\omega}), \dots, \psi^{t}_{\omega}(h^{\tilde{p}}_{\omega}) \bigr)
}
\leq 
\!\bigl\Vert 
\Pi_{\psi^{t}_{\omega}(K^{i}_{\omega}) 
	\,\|\, F_{\lambda_{i+1}}(\theta_t\omega)}
\bigr\Vert^{\tilde{p}}.
\end{align}
Moreover,  the following limit exists and is finite:
\begin{align}
\lim_{t\rightarrow\infty}\frac{1}{t}\log\operatorname{Vol}_{E_\alpha}\!\bigl( \psi^{t}_{\omega}(g^{1}_{\omega}), \dots, \psi^{t}_{\omega}(g^{\tilde{p}}_{\omega}) \bigr).
\end{align}
\end{enumerate}
\end{lemma}

\begin{proof}	  
The first claim follows from \cite[Lemma~4.4]{GVRS22} and \cite[Lemma~1.18]{GVR23A}. Let us now focus on the second claim.  
For \(1 < \tilde{q} \leq \tilde{p}\), we use the definition of the projections $\Pi$ together with  the invariance of the spaces $F_{\lambda_{i+1}}(\omega)$, meaning that
\[
\psi^{t}_{\omega}\!\bigl(F_{\lambda_{i+1}}(\omega)\bigr) 
\subset F_{\lambda_{i+1}}(\theta_t \omega),
\]
to deduce that for any  
\(\tilde{\beta}_1, \dots, \tilde{\beta}_{\tilde{q}-1} \in \mathbb{R}\), we have
\begin{align*}
	&\Pi_{\psi^{t}_{\omega}(K^{i}_{\omega}) 
		\,\|\, F_{\lambda_{i+1}}(\theta_t \omega)} 
	\!\Bigl( 
	\psi^{t}_{\omega}(h^{\tilde{q}}_\omega) - \sum_{1 \leq j < \tilde{q}} \tilde{\beta}_{j} \psi^{t}_{\omega}(h^{j}_\omega)
	\Bigr)
	= \psi^{t}_{\omega}(g^{\tilde{q}}_\omega) - \sum_{1 \leq j < \tilde{q}} \tilde{\beta}_{j} \psi^{t}_{\omega}(g^{j}_\omega), \\
	&\Pi_{\!\bigoplus_{1 \leq k \leq i} H^k_{\theta_t \omega} 
		\,\|\, F_{\lambda_{i+1}}(\theta_t \omega)}
	\!\Bigl( 
	\psi^{t}_{\omega}(g^{\tilde{q}}_\omega) - \sum_{1 \leq j < \tilde{q}} \tilde{\beta}_{j} \psi^{t}_{\omega}(g^{j}_\omega)
	\Bigr)
	= \psi^{t}_{\omega}(h^{\tilde{q}}_\omega) - \sum_{1 \leq j < \tilde{q}} \tilde{\beta}_{j} \psi^{t}_{\omega}(h^{j}_\omega).
\end{align*}
In particular, this yields that
\begin{align*}
    \frac{1}{\Vert \Pi_{\!\bigoplus_{1 \leq k \leq i} H^k_{\theta_t \omega} 
		\,\|\, F_{\lambda_{i+1}}(\theta_t \omega)}\Vert}\leq\frac{d_{E_{\alpha}}\left(\psi^{t}_{\omega}(g^{\tilde{q}}_\omega), \langle  \psi^{t}_{\omega}(g^{j}_\omega) \rangle_{1 \leq j < \tilde{q}}\right)}{d_{E_{\alpha}}\left(\psi^{t}_{\omega}(h^{\tilde{q}}_\omega), \langle  \psi^{t}_{\omega}(h^{j}_\omega) \rangle_{1 \leq j < \tilde{q}}\right)}\leq \Vert \Pi_{\psi^{t}_{\omega}(K^{i}_{\omega}) 
		\,\|\, F_{\lambda_{i+1}}(\theta_t \omega)} \Vert.
\end{align*}
Given this, the inequality~\eqref{APspoxww} easily follows from the definition of $\operatorname{Vol}$.
Finally, the claim regarding the existence of the limit
\begin{align}
	\lim_{t \to \infty} \frac{1}{t} 
	\log \operatorname{Vol}_{E_\alpha}\!\bigl( 
		\psi^{t}_{\omega}(g^{1}_{\omega}), 
		\dots, 
		\psi^{t}_{\omega}(g^{\tilde{p}}_{\omega})
	\bigr)
\end{align}
follows from Lemma~\ref{AISKAsw}, \eqref{IOASf} and \eqref{APspoxww}.
\end{proof}
We are now ready to prove  Lemma~\ref{AISuia9q} and Lemma~\ref{liminf}. 
\begin{proof}[Proof of Lemma~\ref{AISuia9q}]\label{AISuia9q12}
We proceed by induction. For \( \tilde{p} = 1 \) the statement is immediate. Let \( \tilde{p} > 1 \) and assume that the statement holds for every set of \( \tilde{p} - 1 \) independent vectors in \( \bigoplus_{1 \leq k \leq i} H^k_{\omega} \). From the definition of \( \operatorname{Vol}_{E_\alpha} \), we have
\begin{align*}
&\log \operatorname{Vol}_{E_\alpha}\left( \psi^{t}_{\omega}(h^{1}_{\omega}), \dots, \psi^{t}_{\omega}(h^{\tilde{p}}_{\omega}) \right) \\
&= \log \operatorname{Vol}_{E_\alpha}\left( \psi^{t}_{\omega}(h^{1}_{\omega}), \dots, \psi^{t}_{\omega}(h^{\tilde{p}-1}_{\omega}) \right) + \log d_{E_\alpha}\left( \psi^t_{\omega}(h_{\omega}^{\tilde{p}}), \langle \psi^{t}_{\omega}(h^{j}_\omega) \rangle_{1 \leq j < \tilde{p}} \right).
\end{align*}
Therefore, by the induction hypothesis, it suffices to show that the following limit 
\begin{align*}
\lim_{t \to \infty} \frac{1}{t} \log d_{E_\alpha}\left( \psi^t_{\omega}(h_{\omega}^{\tilde{p}}), \langle \psi^{t}_{\omega}(h^{j}_\omega) \rangle_{1 \leq j < \tilde{p}} \right)
\end{align*}
exists.
To prove the claim, we define 
\begin{align}\label{sa[ds4a]}
\begin{split}
r := \max \Bigl\{\, 
& 1 \leq k \leq i \ \Bigm| \ \ \exists \ 
\tilde{\beta}_1, \dots, \tilde{\beta}_{\tilde{p}-1} \in \mathbb{R} 
\ \text{such that} \\
& h^{\tilde{p}}_{\omega} - \sum_{j=1}^{\tilde{p}-1} \tilde{\beta}_j h^{j}_{\omega} 
= A_{\omega} + B_{\omega}, 
\ \ A_{\omega} \in H_{\omega}^k \setminus \{0\}, \quad
B_{\omega} \in \bigoplus_{\,k < j \leq i} H_{\omega}^j 
\Bigr\}.
\end{split}
\end{align}
Given this, we get that 
\begin{align}\label{sa[ds4aa]}
\begin{split}
h^{\tilde{p}}_{\omega} &= \sum_{j=1}^{\tilde{p}-1} \tilde{\beta}_j h^{j}_{\omega} + A_{\omega} + B_{\omega}, \\
\text{where} \quad A_{\omega} &\in H_{\omega}^r \setminus \{0\}, \quad B_{\omega} \in \bigoplus_{r < j \leq i} H_{\omega}^j.
\end{split}
\end{align}
Thus
\begin{align*}
& d_{E_\alpha}\left( \psi^t_{\omega}(h_{\omega}^{\tilde{p}}), \langle \psi^{t}_{\omega}(h^{j}_\omega) \rangle_{1 \leq j < \tilde{p}}\right) \\
&= d_{E_\alpha}\left( \psi^t_{\omega}(A_\omega) + \psi^t_{\omega}(B_\omega), \langle \psi^{t}_{\omega}(h^{j}_\omega) \rangle_{1 \leq j < \tilde{p}} \right).
\end{align*}
From the definition of \( d_{E_\alpha} \), we have
\begin{align*}
& d_{E_\alpha}\left( \psi^t_{\omega}(A_\omega), \langle \psi^{t}_{\omega}(h^{j}_\omega) \rangle_{1 \leq j < \tilde{p}}\right) - \big\| \psi^t_{\omega}(B_\omega) \big\| \\
& \quad \leq d_{E_\alpha}\left( \psi^t_{\omega}(A_\omega) + \psi^t_{\omega}(B_\omega), \langle \psi^{t}_{\omega}(h^{j}_\omega) \rangle_{1 \leq j < \tilde{p}} \right) \\
& \quad \leq \big\| \psi^t_{\omega}(B_\omega) \big\| + d_{E_\alpha}\left( \psi^t_{\omega}(A_\omega), \langle \psi^{t}_{\omega}(h^{j}_\omega) \rangle_{1 \leq j < \tilde{p}}\right).
\end{align*}	
Since
\[
\limsup_{t \rightarrow \infty} \frac{1}{t} \log \big\| \psi^t_{\omega}(B_\omega) \big\| \leq \lambda_{r+1},
\]
and given that \( \lambda_r > \lambda_{r+1} \), the claim follows if we can establish that
\[
\lim_{t \rightarrow \infty} \frac{1}{t} \log d_{E_\alpha} \left( \psi^t_{\omega}(A_\omega), \langle \psi^{t}_{\omega}(h^{j}_\omega) \rangle_{1 \leq j < \tilde{p}} \right) = \lambda_{r}.
\]
To this end, first note that from \eqref{sa[ds4a]} and \eqref{sa[ds4aa]}, we obtain the following consequences:
\begin{itemize}
\item[(I)] $A_{\omega}$ is independent of the vectors $(h^{j}_{\omega})_{1 \leq j \leq \tilde{p}-1}$. 
\item[(II)] For each $1 \leq j \leq \tilde{p}-1$, we write
\begin{align*}
h^{j}_{\omega} = \sum_{1 \leq k \leq i} h^{k,j}_{\omega}, 
\quad \text{with } h^{k,j}_{\omega} \in H^{k}_{\omega}.
\end{align*}
If
\[
\sum_{1 \leq k \leq r-1} h^{k,j}_{\omega} = 0 \quad { and }~~ h^{r,j}_{\omega} \neq 0,
\]
 then $h^{r,j}_{\omega}$ is independent of $A_\omega$ in $H^r_{\omega}$.
\end{itemize}	
Otherwise, we obtain a contradiction with the choice of $r$ in \eqref{sa[ds4a]}.
Let us choose a subspace $\tilde{H}^{r}_{\omega}$ such that
\[
\tilde{H}^{r}_{\omega} \oplus \langle A_{\omega} \rangle = H^{r}_{\omega}.
\]
For a set of vectors $S \subset E_{\alpha}$, we denote by 
$\langle S \rangle$ the subspace of $E_{\alpha}$ spanned by the vectors in $S$ 
and set $\langle \varnothing \rangle := \{0\}$.
Using (I) and (II), we conclude that
for every $1 \leq k \leq r-1$, there exists a set of linearly independent vectors $S_k = S_k^1 \cup S_k^2$ forming a basis for $H^k_{\omega}$, and a set of independent vectors $\tilde{S}_r = \tilde{S}_r^1 \cup \tilde{S}_r^2$
 forming a basis for $\tilde{H}^{r}_{\omega}$, such that\footnote{Note that $A_{\omega} + \emptyset := \emptyset$, and some of the sets $S^{2}_{k}$ and $\tilde{S}_r^2$ may be empty.}
\begin{align*}
\left\langle h_{\omega}^1, \dots, h_{\omega}^{\tilde{p}-1} \right\rangle 
\subseteq \bigoplus_{1 \leq k \leq r-1} 
\left( \langle S_k^1 \rangle \oplus \langle S_k^2 + A_\omega \rangle \right)
\oplus \left( \langle \tilde{S}_r^1 \rangle \oplus \langle \tilde{S}_r^2 + A_\omega \rangle \right)
 \bigoplus_{r < k \leq i} H^k_{\omega}.
\end{align*}
Moreover, note that for each \( k \leq r-1 \), since \( \lambda_k > \lambda_r \), the corresponding Lyapunov exponent for every nonzero element in 
\[
\langle S_k^1 \rangle \oplus \langle S_k^2 + A_\omega \rangle
\]
is equal to \( \lambda_k \).
Moreover, by the choice of \( \tilde{H}^{r}_{\omega} \), it follows that the corresponding Lyapunov exponent for every nonzero element in
\[
\langle \tilde{S}_r^1 \rangle \oplus \langle \tilde{S}_r^2 + A_\omega \rangle
\]
is equal to \( \lambda_r \).
Thus, we are in the setting of Lemma~\ref{AISKAsw}, and therefore
\[
\lim_{t \to \infty} \frac{1}{t} \log d_{E_\alpha}\!\left( 
\psi^t_{\omega}(A_\omega),\,
\langle \psi^{t}_{\omega}(h^{j}_\omega) \rangle_{1 \leq j < \tilde{p}}
\right) 
= \lambda_{r}.
\]
This completes the proof.
\end{proof}

\begin{proof}[Proof of Lemma~\ref{liminf}]\label{liminf2}
First, we claim that 
\begin{align}\label{VB528}
\forall\, 1 \leq \tilde{q} \leq \tilde{p}:\quad  \liminf_{t \to \infty} \frac{1}{t} \log d_{E_\alpha} \left( \psi^t_{\omega}(g_{\omega}^{\tilde{q}}), \langle \psi^t_{\omega}(g_{\omega}^k) \rangle_{\substack{1 \leq k < \tilde{p} \\ k \neq \tilde{q}}} \right) > -\infty.
\end{align}
To establish this, first note that by the definition of \(\operatorname{Vol}_{E_\alpha}\), 
\begin{align}\label{FDCXZ}
\begin{split}
	-\infty &< \liminf_{t \to \infty} \frac{1}{t} \log \operatorname{Vol}_{E_\alpha}\big(\psi^{t}_{\omega}(g^{1}_{\omega}), \dots, \psi^{t}_{\omega}(g^{\tilde{p}}_{\omega})\big) \\
	&\leq \liminf_{t \to \infty}\frac{1}{t} \left(  \sum_{1 \leq k < \tilde{p}} \log \left| \psi^{t}_{\omega}(g^{k}_{\omega}) \right|_{\alpha} +\log d_{E_\alpha} \left( \psi^t_{\omega}(g_{\omega}^{\tilde{p}}), \left\langle \psi^t_{\omega}(g_{\omega}^k) \right\rangle_{\substack{1 \leq k < \tilde{p}}} \right)\right).
\end{split}
\end{align}  
Note that for every \(k \in \{1, 2, \dots, \tilde{p}\}\), we have  
\[
\limsup_{t \to \infty} \frac{1}{t} \log \left| \psi^{t}_{\omega}(g^{k}_{\omega}) \right|_{\alpha} \leq \lambda_{1} < \infty. 
\]  
It then follows from \eqref{FDCXZ} that  
\begin{align}\label{Ase23as}
\liminf_{t \to \infty} \frac{1}{t} \log d_{E_\alpha} \left( \psi^t_{\omega}(g_{\omega}^{\tilde{p}}), \left\langle \psi^t_{\omega}(g_{\omega}^k) \right\rangle_{\substack{1 \leq k < \tilde{p}}} \right) > -\infty,
\end{align}
since otherwise \(\limsup_{t \to \infty} \frac{1}{t} \log \left| \psi^{t}_{\omega}(g^{k}_{\omega}) \right|_{\alpha} = \infty\)
for some \(k \in \{1, 2, \dots, \tilde{p}-1\}\), which is a contradiction.
Note that, thanks to Lemma~\ref{permutation}, we can consider any other permutation of the set \(k \in \{1, 2, \dots, \tilde{p}-1\}\) and repeat the same argument. Thus \eqref{Ase23as} entails \eqref{VB528}. Let
\begin{align}\label{IK3a}
\Lambda:=\min\left\lbrace \liminf_{t \to \infty} \frac{1}{t} \log d_{E_\alpha} \left( \psi^t_{\omega}(g_{\omega}^{\tilde{q}}), \ \left\langle \psi^t_{\omega}(g_{\omega}^k) \right\rangle_{\substack{1 \leq k < \tilde{p} \\ k \neq \tilde{q}}} \right)\right\rbrace>-\infty.
\end{align}\label{IK3a2}
From Theorem \ref{METT} we can find \( j \geq 0 \) such that
\begin{align}\label{sds7qaw}
\lambda_{j+1} < \Lambda .
\end{align}
For an element $x \in E_{\alpha}$, we denote by $[x]_{\lambda_{j+1},\omega}$ its equivalence class in the quotient space $E_{\alpha} / F_{\lambda_{j+1}}(\omega)$.  
We claim that the vectors 
\[
\big([g^{k}_{\omega}]_{\lambda_{j+1},\omega}\big)_{1 \leq k \leq \tilde{p}}
\] 
are linearly independent in $E_{\alpha} / F_{\lambda_{j+1}}(\omega)$.
We prove this by contradiction. Without loss of generality, we may assume that
\begin{align*}
g^{\tilde{p}}_{\omega} = \sum_{1 \leq k < \tilde{p}} r_{k} g^{k}_{\omega} + \zeta_{\omega},
\end{align*}
where \( r_{k} \in \mathbb{R} \) and \( \zeta_{\omega} \in F_{\lambda_{j+1}}(\omega) \). 
Then we have 
\begin{align*}
\liminf_{t \to \infty} \frac{1}{t} \log d_{E_\alpha} \left( \psi^t_{\omega}(g_{\omega}^{\tilde{p}}), \left\langle \psi^t_{\omega}(g_{\omega}^k) \right\rangle_{\substack{1 \leq k < \tilde{p}}} \right)
\leq \limsup_{t \to \infty} \frac{1}{t} \log \left| \psi^t_{\omega}(\zeta_\omega)\right|_{\alpha} 
\leq \lambda_{j+1},
\end{align*}
which contradicts \eqref{IK3a} and \eqref{sds7qaw}.  This also yields that the vectors $\big(g^{k}_{\omega}\big)_{1 \leq k \leq \tilde{p}} $
are linearly independent. Now we can apply Lemma~\ref{ASOAaw} to complete the proof.
\end{proof} 



\newcommand{\etalchar}[1]{$^{#1}$}
\def\cprime{$'$} \def\cprime{$'$}

\end{document}